\documentclass[reqno]{amsart}
\usepackage[margin = 1in]{geometry}
\usepackage{amsmath, amssymb, amsthm, fancyhdr, verbatim, graphicx, relsize, mathtools, xfrac}
\usepackage{enumerate}
\usepackage{enumitem} 
\usepackage[all]{xy}
\usepackage[dvipsnames]{xcolor}
\usepackage{mathrsfs}
\usepackage{tikz}
\usetikzlibrary{arrows.meta, positioning, calc, patterns, decorations.pathmorphing, decorations.pathreplacing}
\usepackage{tikz-cd}
\usetikzlibrary{calc}
\usepackage[T1]{fontenc} 
\usepackage{framed}
\usepackage[hypertexnames=false]{hyperref}
\usepackage[OT2, T1]{fontenc}  
\usepackage[titletoc]{appendix}
\usepackage{bbm}
\usepackage{adjustbox}
\usepackage{amssymb} 
\usepackage{float}

\numberwithin{equation}{subsection}

\DeclareSymbolFont{cyrletters}{OT2}{wncyr}{m}{n}
\DeclareMathSymbol{\Sha}{\mathalpha}{cyrletters}{"58}

\usepackage{color}

\newcommand{\F}{\mathbf{F}}

\newcommand{\CC}{\mathbf{C}}
\newcommand{\G}{\mathbf{G}}

\newcommand{\wt}[1]{\widetilde{#1}}

\newcommand{\Q}{\mathbf{Q}}
\newcommand{\Z}{\mathbf{Z}}

\newcommand{\Gal}{\operatorname{Gal}}
\newcommand{\Mat}{\operatorname{Mat}}

\newcommand{\ol}[1]{\overline{#1}}
\newcommand{\wh}[1]{\widehat{#1}}

\newcommand{\co}{\colon}

\newcommand{\ld}{{}^L}

\newcommand{\inj}{\hookrightarrow}

\newcommand\cA{\mathcal{A}}
\newcommand\cB{\mathcal{B}}

\newcommand\cE{\mathcal{E}}
\newcommand\cF{\mathcal{F}}
\newcommand\cG{\mathcal{G}}

\newcommand\cL{\mathcal{L}}

\newcommand\cO{\mathcal{O}}

\newcommand\cS{\mathcal{S}}
\newcommand\cT{\mathcal{T}}

\newcommand\cZ{\mathcal{Z}}

\newcommand\sH{\mathscr{H}}

\newcommand\fm{\mathfrak{m}}

\DeclareMathOperator{\GL}{GL}
\DeclareMathOperator{\SL}{SL}

\DeclareMathOperator{\Tr}{Tr}

\DeclareMathOperator{\PGL}{PGL}
\DeclareMathOperator{\Sp}{Sp}

\DeclareMathOperator{\Hom}{Hom}

\DeclareMathOperator{\ord}{ord}
\DeclareMathOperator{\Aut}{Aut}
\DeclareMathOperator{\Rep}{Rep}
\DeclareMathOperator{\Irr}{Irr}
\DeclareMathOperator{\Nm}{Nm}

\DeclareMathOperator{\End}{End}

\DeclareMathOperator{\unr}{unr}
\DeclareMathOperator{\Res}{Res}

\DeclareMathOperator{\Span}{Span}

\DeclareMathOperator{\tors}{tors}

\DeclareMathOperator{\ind}{ind}

\DeclareMathOperator{\FS}{FS}
\DeclareMathOperator{\cInd}{c-Ind}

\DeclareMathOperator{\red}{red}

\DeclareMathOperator{\der}{der}

\DeclareMathOperator{\rk}{rk}
\DeclareMathOperator{\Gla}{Gla}

\DeclareMathOperator{\DR}{DR}

\newcommand{\tw}[1]{\langle #1 \rangle}

\DeclareMathOperator{\Fr}{Fr}

\RequirePackage{xspace}

\newcommand{\rH}{\ensuremath{\mathrm{H}}\xspace}

\newcommand{\rT}{\ensuremath{\mathrm{T}}\xspace}

\newcommand{\bF}{\mathbf{F}}
\newcommand{\bT}{\mathbf{T}}

\renewcommand{\ss}{\mathrm{ss}}

\newtheorem{thm}{Theorem}[subsection]
\newtheorem{lemma}[thm]{Lemma}
\newtheorem{prop}[thm]{Proposition}
\newtheorem{cor}[thm]{Corollary}

\theoremstyle{remark}
\newtheorem{remark}[thm]{Remark} 
 
\newtheorem{defn}[thm]{Definition}

\newtheorem{hypothesis}[thm]{Hypothesis}

\newtheorem{example}[thm]{Example}

\newtheorem{question}[thm]{Question}

\makeatletter
\def\th@remark{%
  \thm@headfont{\bfseries}%
  \normalfont 
  \thm@preskip \thm@preskip 
  \thm@postskip\thm@preskip
}
\def\imod#1{\allowbreak\mkern5mu({\operator@font mod}\,\,#1)}
\makeatother

\numberwithin{equation}{subsection}
\numberwithin{figure}{subsection}

\title[Modular functoriality for finite groups]{Modular functoriality for finite groups}

\author{Sean Cotner}

\begin{document}

\begin{abstract}
We develop an extension of Deligne--Lusztig theory to certain (possibly infinite type) disconnected reductive groups arising from the special fibers of point stabilizers in the Bruhat--Tits building, which we call \emph{paraductive}. We then compute explicit lower bounds for the Tate cohomology of representations of paraductive groups, relating these to Shintani descent, Lusztig restriction, and the Glauberman correspondence. As an application, using Feng's modular functoriality and Scholze's independence of $\ell$, we compute the Fargues--Scholze L-parameters of non-singular depth $0$ cuspidal representations of a (possibly wildly ramified) reductive group over a nonarchimedean local field.
\end{abstract}

\maketitle

\tableofcontents


\section{Introduction}

\subsection{The main theorem}

This paper forms one of the technical cores of the author's work with Tony Feng \cite{CF26a}, \cite{CF26b} comparing the Fargues--Scholze Local Langlands Correspondence \cite{FS} to Kaletha's Local Langlands Correspondence for non-singular supercuspidal representations \cite{Kal21b}, as well as an inertial extension of the latter to singular cuspidal representations. The two recent innovations making this comparison possible are Feng's ``modular functoriality'' \cite[Theorem~1.3.1]{F24} and Scholze's ``independence of $\ell$'' \cite[Theorem~1.1]{Sch25}; the primary motivation of this paper is to set up enough machinery to make modular functoriality easy to apply in practice. We will illustrate our results by proving the following theorem, which is related to, but is neither a strict generalization nor a strict specialization of, the main theorem of \cite{CF26b}.

\begin{thm}[Theorem~\ref{thm:main-depth-0-comparison}]\label{thm:intro-debacker-reeder}
    If $G$ is a connected reductive group over a non-archi\-medean local field $F$ and $\pi$ is a depth $0$ non-singular supercuspidal $\ol\Q_\ell$-representation of $G(F)$, then the Fargues--Scholze L-parameter $\rho^{\FS}(\pi)$ is equal to the L-parameter $\rho^{\DR}(\pi)$ associated to $\pi$ by the work of DeBacker--Reeder \cite{DR09} and Kaletha \cite{Kal19}, \cite{Kal21b}.
\end{thm}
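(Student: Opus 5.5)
We will deduce the equality $\rho^{\FS}(\pi)=\rho^{\DR}(\pi)$ from Feng's modular functoriality \cite[Theorem~1.3.1]{F24} together with Scholze's independence of $\ell$ \cite[Theorem~1.1]{Sch25}, reducing everything to a statement about Tate cohomology of depth $0$ types for a finite-order automorphism. Write $\pi=\cInd_{G(F)_x}^{G(F)}\kappa$, where $G(F)_x$ is the stabilizer of a vertex $x$ of the building. The representation $\kappa$ is inflated from a Deligne--Lusztig-type character $\pm R_{\mathsf S}^{\theta}$ of the finite group $\mathsf G_x(k)$, the group of $k$-points of the (possibly disconnected, possibly non-finite-type) special fiber. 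Here $\mathsf S$ is an elliptic maximal torus and $\theta$ is a character in general position, which is the non-singularity hypothesis. The DeBacker--Reeder parameter is built from the pair $(S,\theta)$, where $S$ is an unramified elliptic maximal torus of $G$ lifting $\mathsf S$. It is the composite of the L-embedding ${}^LS\to{}^LG$ with the Langlands parameter of $\theta$. So the goal is to show that the Fargues--Scholze parameter factors the same way.

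\textbf{Step 1: choose a prime and an automorphism.} By independence of $\ell$, it suffices to prove the equality of semisimplified parameters for one convenient $\ell$, after transport of the $\ol\Q_\ell$-structure. Following Feng, choose a prime $p'\neq p$ and a finite-order automorphism $\sigma$ of $G$. The simplest choice is an unramified extension $E/F$ of degree $p'$ with base change, or alternatively an automorphism acting on a torus. The choice should satisfy two conditions: the fixed points realize the relevant functorial transfer, and $\sigma$ stabilizes $x$, $\mathsf S$, and a suitable modification of $\theta$.

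\textbf{Step 2: apply modular functoriality.} Feng's theorem says the following. If the Tate cohomology $\wh{\rH}^*(\langle\sigma\rangle,\pi\otimes\ol\F_{p'})$ contains an irreducible constituent $\pi'$ of $G^\sigma(F)$, then the mod-$p'$ Fargues--Scholze parameters of $\pi$ and $\pi'$ are related by the corresponding L-homomorphism, suitably Frobenius-twisted. Tate cohomology commutes with compact induction, so it reduces to $\wh{\rH}^*(\sigma,\kappa)$ for the finite group $\mathsf G_x(k)$. This is exactly what the paper's lower bounds on Tate cohomology of representations of paraductive groups provide. Via Shintani descent, Lusztig restriction and the Glauberman correspondence, they show that $\wh{\rH}^*(\sigma,R_{\mathsf S}^\theta)$ contains the reduction mod $p'$ of $R_{\mathsf S^\sigma}^{\theta'}$, where $\theta'$ is determined by $\theta$. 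In the base change version, $\wh{\rH}^*$ of the base-changed representation recovers $\pi$ itself.

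\textbf{Step 3: iterate down to a torus and compare.} Choose a chain of such automorphisms so that, in the limit, the target group is the torus $S$ itself. Equivalently, use enough choices of $p'$ to pin down the parameter modulo infinitely many primes. For tori the Fargues--Scholze parameter agrees with local class field theory. Modular functoriality then gives $\ol\rho^{\FS}(\pi)\bmod p'\simeq (\iota\circ\rho_\theta)\bmod p'$ for infinitely many $p'$. Since the semisimplified $\ol\Z_\ell$-parameters are determined by traces, and these are congruent modulo infinitely many primes, the two parameters are equal. The equality of full L-parameters then follows because $\rho^{\DR}$ is trivial on $\mathrm{SL}_2$, or from semisimplicity of supercuspidal parameters.

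\textbf{Main obstacle.} The hardest step is Step 2: proving that Tate cohomology is nonzero and identifying the specific constituent. This must hold uniformly for disconnected, possibly infinite-type special fibers when $G$ is wildly ramified. It also requires that the general position condition on $\theta$ survive reduction mod $p'$ for infinitely many $p'$. I would first attempt it through Shintani descent combined with the Glauberman correspondence. Then I would check that the Lusztig restriction term has the predicted sign, so that the resulting virtual character is nonzero.
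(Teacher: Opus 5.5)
Your overall framework is the one the paper uses: Feng's modular functoriality, Scholze's independence of $\ell$, Tate cohomology commuting with compact induction, and lower bounds for Tate cohomology of representations of $\ol G_{[x]}(\F_q)$. However, Step~3 has a genuine gap, and Step~1 does not choose the automorphisms in a way that can work.

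\textbf{Passing from congruences to equality.} The argument ``congruent modulo infinitely many primes, hence equal'' is not available.

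\emph{Why it fails.} Each application of Feng's theorem at a prime $\ell$ relates $\rho^{\FS}(\ol\pi)$ to $\rho^{\FS}(\ol\pi_H)$ for a constituent $\ol\pi_H$ of Tate cohomology, whose parameter is itself unknown. A congruence at one prime cannot be fed into a step at another prime unless it is first upgraded to an equality in characteristic $0$. So your ``chain'' has to be an induction on semisimple rank, with the theorem for $H$ already known over $\ol\Q_\ell$, and over $\ol\F_\ell$ via Lemma~\ref{lemma:ns-cuspidal-lift}. There are further obstructions:
\begin{itemize}
\item $\rho^{\FS}(\pi)$ is not known a priori to have finite image or algebraic values on Frobenius.
\item Traces do not determine $\wh G$-conjugacy classes for general $G$.
\item For an inner automorphism $\sigma=\mathrm{Ad}(t)$, the $\sigma$-dual L-homomorphism is only identified with $\Fr_\ell\circ\ld j_{H,G}$ on $I_F$.
\end{itemize}

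\emph{What the paper does instead.} It splits the problem in two.
\begin{itemize}
\item On inertia, both parameters have finite image, so a single prime $\ell\neq p$ exceeding the orders of $\rho^{\FS}(\pi)(s)$ and $\rho^{\DR}(x,\tau)(s)$ suffices. Lemma~\ref{lemma:finite-groups-mod-ell} (orbit maps are smooth because $\rH^1$ of a prime-to-$\ell$ group vanishes) lifts $\wh G(\ol\F_\ell)$-conjugacy to $\wh G(\ol\Q_\ell)$-conjugacy.
\item Frobenius is then recovered by an argument your proposal lacks entirely. After twisting to finite-order central character and passing to a z-embedding so that $\wh G_{\der}$ is simply connected, non-singularity forces $\wh S=Z_{\wh G}(\rho^{\DR}(x,\tau)|_{I_F})$ to be a torus (using Lemma~\ref{lemma:pinning-preserving-fundamental-group}). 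Once the inertial restrictions agree, the two Frobenius images differ by an element of $\wh S$. Lemma~\ref{lemma:conjugate-weyl-elements}, together with compatibility with central characters, then makes them $\wh S$-conjugate.
\end{itemize}
Your appeal to triviality on $\SL_2$ is beside the point. Fargues--Scholze parameters are already semisimple $W_F$-parameters; the difficulty is Frobenius, not monodromy.

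\textbf{Choice of automorphisms.} Base change and ``an automorphism acting on a torus'' are not alternatives: both are needed, in sequence. Base change alone relates $\pi$ to $\pi_E$, which is no simpler. Conversely, for the large primes Feng's theorem requires, $T(F)$ has no $\ell$-torsion, so there is no order-$\ell$ element of $T(F)$ to conjugate by.

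The paper proceeds as follows.
\begin{itemize}
\item It takes $\ell=\ell_1$ large and base changes along the unramified extension $E/F$ of degree $\ell_1$. Here the Glauberman correspondence, not Shintani descent, gives $\rT^a(\sigma,\tau_E)\cong\tau$ and preserves cuspidality and non-singular semi-rational Lusztig series.
\item Lemma~\ref{lemma:torus-torsion-grows} then produces a non-central $t\in T_0(E)$ of large prime order $\ell_0$.
\item It switches to $\ell=\ell_0$ by independence of $\ell$ and conjugates by $t$. Then $H=Z_G(t)$ is an unramified twisted Levi of smaller semisimple rank.
\end{itemize}
The lower bound in that last step is Proposition~\ref{prop:tate-cohom-dl-restriction}: Tate cohomology dominates the absolute value of the mod-$\ell_0$ reduction of ${}^*R^{\ol G_{[x]}}_{\ol H_{[x]}}(\tau)$. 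This is nonzero because $\ell_0$ is good and the Lusztig series is prime to $\ell_0$. The bound requires $\tau$ to be defined over a finite extension of $\Q_{\ell_0}^{\unr}$ of degree prime to $\ell_0-1$. That holds after the base-change step by Lemmas~\ref{lemma:integer-valued-characters} and~\ref{lemma:character-values-irreducible-factors}.

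Finally, non-singular is weaker than general position, so $\tau$ is only a constituent of $\pm R^{\ol G_{[x]}}_{\ol T}(\theta)$. Tate cohomology therefore cannot be identified with a specific $R_{\mathsf S^\sigma}^{\theta'}$ as in your Step~2. What is controlled, via Corollary~\ref{cor:bonnafe-11.11}, is only the semi-rational Lusztig series of the resulting constituent. That suffices, because $\rho^{\DR}$ depends only on that series.
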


Notably, Theorem~\ref{thm:intro-debacker-reeder} allows $G$ to be wildly ramified and $p$ to be arbitrary; this is why it is not a special case of \cite[Theorem 1.1.1]{CF26b}. If $G$ is quasi-split, then it was known from \cite[Corollary 1.1.1]{DL26} that the parameter $\rho^{\FS}(\pi)$ in Theorem~\ref{thm:intro-debacker-reeder} is tamely ramified. Theorem~\ref{thm:intro-debacker-reeder} was obtained in Eteve's thesis \cite[Theorem~2.3.9]{Ete23} when $G$ is split and $F$ is a function field. Building on this, it has also recently been obtained conditionally by Fu \cite{Fu26} when $G$ is unramified and $F$ is arbitrary.\footnote{In fact, neither \cite{DL26}, \cite{Ete23}, nor \cite{Fu26} requires that $\pi$ is non-singular, but in general their results only concern $\rho^{\FS}(\pi)|_{I_F}$. In Remark~\ref{rmk:singular-generalization}, we briefly describe two methods of similarly extending Theorem~\ref{thm:intro-debacker-reeder} beyond the non-singular case.} The analogue of Theorem~\ref{thm:intro-debacker-reeder} for Zhu's depth $0$ Local Langlands Correspondence is proven in \cite[\S 5.3.4]{Zhu25} when $G$ is unramified, and in view of \cite{GHILZ26} it seems plausible that it will eventually be possible (with considerably more work) to use this result to deduce Theorem~\ref{thm:intro-debacker-reeder} when $G$ is unramified. Our proof is independent of, and bears little resemblance to, these prior arguments.

\subsection{Fargues--Scholze and DeBacker--Reeder}

Let $F$ be a non-archimedean local field with residue field $\F_q$, let $W_F$ be the Weil group of $F$, and let $G$ be a connected reductive $F$-group. Let $\wh G$ be the Langlands dual group of $G$, defined over $\Z$ and equipped with a pinning-preserving action of $W_F$, and let $\ld G = \wh G \rtimes W_F$ be the L-group of $G$. Choose a prime number $\ell$ not dividing $q$. Let $k$ be a field among $\ol\Q_\ell$ and $\ol\F_\ell$, let $\Pi_k(G)$ be the set of irreducible smooth $k$-representations of $G(F)$ up to isomorphism, and let $\Phi_k^{\ss}(G)$ denote the set of semisimple L-parameters $W_F \to \ld G(k)$.

\subsubsection{Fargues--Scholze}

The Fargues--Scholze Local Langlands Correspondence \cite[\S I.9]{FS} is a map of sets
\[
\rho^{\FS}\co \Pi_k(G) \to \Phi_k^{\ss}(G),
\]
which is widely believed to be the ``true'' (semisimple) Local Langlands Correspondence.

The semisimple Local Langlands Correspondence is expected to satisfy many properties, a modern list of which can be found in \cite[\S 6]{Tai25}. Many of these properties are known for $\rho^{\FS}$, including compatibility with the usual Local Langlands Correspondence for tori, parabolic induction, and the classical case of $G = \GL_n$ \cite[Theorem~I.9.6]{FS}; the latter has since been extended to many other groups by many authors \cite{HKW22}, \cite{BMHN24}, \cite{Ham25}, \cite{Pen26}, \cite{Han26}, \cite{DvHKZ26}. However, beyond classical groups and their forms, little is known about $\rho^{\FS}$: for instance, David Hansen has informed us that if $F$ is a $p$-adic field then the literature does not exhibit a single supercuspidal representation $\pi$ of $\mathrm{E}_8(F)$ such that $\rho^{\FS}(\pi)$ is nontrivial.

\subsubsection{DeBacker--Reeder}\label{sss:intro-dr}

Another approach to constructing the ``true'' (semisimple) local Langlands correspondence was introduced in \cite{DR09} and developed in \cite{Kal19}, \cite{Kal21b}; we will briefly recall it here in the special case that $G$ is semisimple and simply connected. If $\pi$ is as in Theorem~\ref{thm:intro-debacker-reeder}, then \cite[Proposition 6.8]{MP96} shows that there is a vertex $[x]$ in the (reduced) Bruhat--Tits building $\cB(G)$ and an irreducible cuspidal $\ol\Q_\ell$-representation $\tau$ of $G(F)_{[x]}/G(F)_{x,0+}$ such that 
\begin{equation}\label{eqn:intro-depth-0-rep}
    \pi \cong \cInd_{G(F)_{[x]}}^{G(F)}(\tau).
\end{equation}
Recall that there exists a connected reductive $\F_q$-group $\ol G_{[x]}$ such that $G(F)_{[x]}/G(F)_{[x],0+} \cong \ol G_{[x]}(\F_q)$. Using Deligne--Lusztig theory \cite{DL76} and deformation theory for tori, one extracts a maximally unramified anisotropic maximal $F$-torus $T \subset G$ and a depth $0$ character $\theta$ of $T(F)$. There is a canonical L-embedding $\ld j_{T,G}\co \ld T \to \ld G$, and one defines
\[
\rho^{\DR}(\pi) = \ld j_{T,G} \circ \ld\theta,
\]
where $\ld\theta\co W_F \to \ld T(\ol\Q_\ell)$ is the L-homomorphism arising from the local Langlands correspondence for tori.

If $G$ is not semisimple and simply connected, then one can still define $\rho^{\DR}$ in a similar manner. The main problem is that the group $\ol G_{[x]}$ appearing above may no longer be connected, nor even of finite type; one therefore requires a version of Deligne--Lusztig theory for disconnected reductive groups. Such a theory was developed in \cite{DM94} and \cite{Kal21b}, but neither reference goes quite as far as we need in practice. We will discuss this further below after describing the key tool which allows us to compare $\rho^{\FS}$ and $\rho^{\DR}$.

\subsection{Modular functoriality in the local Langlands program}

Recently, Feng \cite{F24} has given a new local method for studying $\rho^{\FS}$ when $k = \ol\F_\ell$. We briefly recall the set-up, which is inspired by Treumann--Venkatesh \cite{TV}. 

Let $\sigma$ be an $F$-automorphism of $G$ of order $\ell$, and let $H = (G^\sigma)^\circ$ denote the identity component of the $\sigma$-fixed subgroup of $G$. Let $\ol\pi$ be a smooth irreducible $\ol\F_\ell$-representation of $G(F) \rtimes \langle\sigma\rangle$. Define the \emph{Tate cohomology} groups $\rT^a(\sigma, \ol\pi)$ for $a \in \Z/2$ by
\[
\rT^0(\sigma, \ol\pi) \coloneqq \frac{\ker(1-\sigma | \ol\pi)}{N_\sigma(\ol\pi)} \quad \text{ and } \quad 
\rT^1(\sigma, \ol\pi) \coloneqq \frac{\ker(N_\sigma | \ol\pi)}{( 1-\sigma  ) \ol\pi},
\]
where $N_\sigma = \sum_{i=0}^{\ell-1} \sigma^i$. Observe that $\rT^a(\sigma, \ol\pi)$ is a smooth $\ol\F_\ell$-representation of $H(F)$. In fact, recent work of Dhar--Nadimpalli \cite[Theorem~1.1]{DN25} proves a conjecture of Treumann--Venkatesh asserting that $\rT^a(\sigma, \ol\pi)$ is of finite length.

\begin{thm}[{\cite[Theorem~1.3.1]{F24}}, ``modular functoriality'']\label{thm:feng-modular-functoriality}
    There are constants $b(\wh H)$ and $b(\wh G)$ such that if $\ell > \max(b(\wh G), b(\wh H))$, then for every $\sigma$ as above there exists an L-homomorphism $\ld\psi\co \ld H_{\ol\F_\ell} \to \ld G_{\ol\F_\ell}$ such that for every $\ol\pi$ as above, both $a \in \Z/2$, and every irreducible constituent $\ol\pi_H$ of $\rT^a(\sigma, \ol\pi)$, we have
    \[
    \rho^{\FS}(\ol\pi^{(\ell)}) \sim (\ld\psi \circ \rho^{\FS}(\ol\pi_H))^{\ss},
    \]
    where $\ol\pi^{(\ell)}$ denotes the $\ell$-Frobenius twist of $\ol\pi$ and the superscript $\ss$ denotes the semisimplification in $\ld G$.
\end{thm}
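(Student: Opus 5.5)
The plan is to work geometrically on $\mathrm{Bun}_G$, following the Fargues--Scholze formalism. There $\rho^{\FS}(\ol\pi)$ is characterized by the eigenvalues of the excursion operators, which are built from Hecke operators $T_V$ for $V \in \Rep(\ld G)$. The goal is to show that Tate cohomology with respect to $\sigma$ intertwines the excursion algebra of $G$ with that of $H$ along a homomorphism induced by $\ld\psi$, up to an $\ell$-Frobenius twist. The intended shape of the final identity is
\[
S_{V^{(\ell)}}\big|_{\ol\pi} \;\leftrightarrow\; S_{\ld\psi^*V}\big|_{\ol\pi_H}.
\]
Here $S$ denotes an excursion operator (for each excursion datum), and $V^{(\ell)}$ is the Frobenius twist of $V$.

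\textbf{Step 1: Smith theory and Tate cohomology of sheaves.} Since $\sigma$ acts on $G$ over $F$, it acts on $\mathrm{Bun}_G$ and on the Hecke stacks $\mathrm{Hck}_G$. Their $\sigma$-fixed loci receive maps from $\mathrm{Bun}_H$ and $\mathrm{Hck}_H$, and these maps are isomorphisms on the relevant components. I would then introduce the Tate category of $\sigma$-equivariant $\ol\F_\ell$-sheaves, which is the quotient by perfect $\F_\ell[\langle\sigma\rangle]$-complexes. In this category, Smith theory gives an equivalence between restriction to fixed points and Tate cohomology, compatibly with $*$-pullback and $!$-pushforward. Applied to the smooth representation $\ol\pi$, viewed as a sheaf on the open stratum $[*/G(F)]$, this recovers $\rT^a(\sigma,\ol\pi)$ as a sheaf on $[*/H(F)]$.

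\textbf{Step 2: Tate cohomology of the Satake category.} Next I would compute $\rT^\ast(\sigma,\mathcal{S}_V)$ for the Satake sheaf $\mathcal{S}_V$ on the $B_{dR}^+$-affine Grassmannian of $G$, restricted to the fixed locus, which is the Grassmannian of $H$. I expect this to be the main obstacle. Following Treumann--Venkatesh and Leslie--Lonergan, the strategy is to show that for $\ell$ large the Satake sheaves (or their parity-sheaf incarnations) have Tate cohomology which is again perverse, or parity, on $\mathrm{Gr}_H$. Then one checks that the resulting functor $\Rep(\ld G) \to \Rep(\ld H)$ is symmetric monoidal and compatible with fiber functors. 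Tannakian reconstruction then produces $\ld\psi$, with the Frobenius twist $V \mapsto V^{(\ell)}$ arising from the $\ell$-th power map intrinsic to Tate cohomology. The constants $b(\wh G)$ and $b(\wh H)$ enter here, guaranteeing that tilting or parity objects behave well and that the Tate construction lands in the heart. One must also check that this functor is compatible with the Weil group action.

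\textbf{Step 3: compatibility with Hecke and excursion operators.} Combining Steps 1 and 2 with base change and the projection formula in the Tate category gives
\[
\rT(\sigma, T_V \mathcal{F}) \cong T_{\ld\psi^*V}\, \rT(\sigma,\mathcal{F})
\]
up to the twist, naturally in $V$ and compatibly with the factorization structure. Excursion operators are compositions of creation and annihilation maps built from $T_V$ and Weil group actions, so they also commute with $\rT$.

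\textbf{Step 4: conclusion.} Let $\ol\pi_H$ be an irreducible constituent of $\rT^a(\sigma,\ol\pi)$. An excursion operator acts on $\ol\pi$ by a scalar, namely the value at $\rho^{\FS}(\ol\pi)$. By Step 3, the corresponding operator for $H$ acts on $\rT^a(\sigma,\ol\pi)$, and hence on $\ol\pi_H$, by the Frobenius-twisted scalar. The constituent $\ol\pi_H$ exists because $\rT^a$ has finite length by \cite{DN25}. Since semisimple parameters are determined by the values of all excursion functions, we obtain
\[
\rho^{\FS}(\ol\pi^{(\ell)}) \sim (\ld\psi\circ\rho^{\FS}(\ol\pi_H))^{\ss}.
\]
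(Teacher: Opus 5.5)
\textbf{Review of the proposed proof of the modular functoriality theorem}

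The paper gives no proof of this statement; it imports it from \cite[Theorem~1.3.1]{F24}. Your outline follows the broad architecture of Feng's argument: Smith theory in Tate categories on the Fargues--Scholze stacks, Smith restriction of Satake sheaves, commutation with Hecke and excursion operators, and then reading off eigenvalues. The step that fails as written is Step~2.

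The Smith restriction of $\mathcal S_V$ lives in a Tate category. That category is $2$-periodic ($[2]\cong\mathrm{id}$), so it admits no nondegenerate $t$-structure. There is therefore no ``heart'' for $\rT^\ast(\sigma,\mathcal S_V)$ to land in, and no abelian category on which to run Tannakian reconstruction.

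What one can actually control is the Smith restriction of \emph{parity} sheaves. Leslie--Lonergan show that these restrictions are Tate-parity and lift to parity sheaves on the fixed locus. Under geometric Satake, parity sheaves correspond to tilting modules (for $\ell$ good, by Juteau--Mautner--Williamson and Mautner--Riche). No bound on $\ell$ depending only on $\wh G$ and $\wh H$ can make every $\mathcal S_V$ parity: for every $\ell$ there are non-tilting simple modules, e.g.\ $L(\ell)=L(1)^{[1]}$ for $\SL_2$. So Steps~1--2 give you at best a Frobenius-twisted monoidal functor $\mathrm{Tilt}(\wh G)\to\mathrm{Tilt}(\wh H)$, compatible with the Weil group actions. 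They do not give an exact tensor functor $\Rep(\ld G)\to\Rep(\ld H)$.

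This gap propagates. Step~3 only yields $\rT(\sigma,T_V\mathcal F)\cong T_{\ld\psi^\ast V}\,\rT(\sigma,\mathcal F)$ for tilting $V$, with the creation and annihilation maps $\alpha,\beta$ taken among tilting objects. Step~4, however, invokes the fact that \emph{all} excursion functions determine a semisimple parameter. To close the argument you need two further inputs that your sketch does not supply.
\begin{enumerate}
\item[(i)] A construction of the L-homomorphism $\ld\psi$ from a tensor functor defined only on tilting objects. One option is to show that $\wh G$ is the automorphism group of the fiber functor restricted to $\mathrm{Tilt}(\wh G)$. Another is to write $\ld\psi$ down directly (as Treumann--Venkatesh do) and prove, $W_F$-equivariantly, that the Smith restriction of tilting Satake sheaves is given by $\ld\psi^\ast$ followed by the Frobenius twist.
\item[(ii)] A proof that excursion operators attached to tilting data already separate semisimple $\ld G$-valued parameters. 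Equivalently, the corresponding invariant functions on $(\ld G)^n$ must determine closed $\wh G$-orbits.
\end{enumerate}
These are the natural places where hypotheses on $\ell$ in terms of $\wh G$ and $\wh H$ enter. Attributing the constants to ``landing in the heart'' assigns them a job that no constant can do.
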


The importance of Theorem~\ref{thm:feng-modular-functoriality} from the point of view of the classical Local Langlands Correspondence (with characteristic $0$ coefficients) comes from the fact that, if $\pi$ is a smooth irreducible $\ol\Q_\ell$-representation of $G(F)$ which admits a $\ol\Z_\ell$-lattice $\Lambda$, then for every irreducible $\ol\F_\ell$-subquotient $\ol\pi$ of $\Lambda \otimes_{\ol\Z_\ell} \ol\F_\ell$, the L-parameter $\rho^{\FS}(\ol\pi)$ is in a precise sense the $\ell$-modular reduction of $\rho^{\FS}(\pi)$, as will be discussed in \cite[\S 9.1]{CF26b}. Thus Theorem~\ref{thm:feng-modular-functoriality} can be viewed as providing a principled way to construct congruences between L-parameters, which one may then hope to ``propagate'' to equalities in character $0$. This will be discussed further below.

This discussion suggests possible inductive methods for studying $\rho^{\FS}$. However, to apply Theorem~\ref{thm:feng-modular-functoriality} in practice for a given $\sigma$, one needs to answer two basic questions:
\begin{enumerate}
    \item What is $\ld\psi$?
    \item What is $\rT^a(\sigma, \ol\pi)$? (For instance, is it nonzero?)
\end{enumerate}
The first question is addressed in some generality in \cite[Proposition~10.2.1]{F24}, and it will be addressed in further generality in \cite{CF26b}. The main goal of this paper is to address the second question precisely enough to prove Theorem~\ref{thm:intro-debacker-reeder}. Our proof will rely on a small number of technical results concerning L-embeddings, which are proven in a self-contained manner in \cite{CF26a} and \cite{CF26b}. Only three lemmas (Lemmas~\ref{lemma:finite-groups-mod-ell}, \ref{lemma:conjugate-weyl-elements}, and \ref{lemma:torus-torsion-grows}) of \S\ref{section:debacker-reeder} will be used in the sequel papers, and since their proofs are self-contained there is no circularity.

\subsection{Tate cohomology for finite reductive groups}

We now outline the proof of Theorem~\ref{thm:intro-debacker-reeder}. A simple argument reduces one to showing that $\rho^{\FS}(\pi)$ and $\rho^{\DR}(\pi)$ have the same restrictions to the inertia subgroup $I_F \subset W_F$.

\subsubsection{Paraductive group schemes}

Tate cohomology commutes with compact inductions in a suitable sense (Lemma~\ref{lemma:tate-cohom-compact-induction}), so the key point in view of \eqref{eqn:intro-depth-0-rep} is to study the Tate cohomology of $\ol\tau$. This representation is inflated from an irreducible representation of the group $\ol G_{[x]}(\F_q)$ of $\F_q$-points of a certain smooth $\F_q$-group scheme $\ol G_{[x]}$ with reductive identity component. Notably, $\ol G_{[x]}$ is not typically connected, nor even of finite type; we call the group schemes appearing in this way \emph{paraductive} (see Definition~\ref{def:paraductive} for an actual list of conditions).

We begin in \S\ref{section:extended-dl-theory} by developing Deligne--Lusztig theory \cite{DL76} for paraductive group schemes, extending the theory for disconnected reductive groups introduced in \cite{DM94} and \cite{Kal21b}. Importantly, we develop analogues of Lusztig induction $R_{\ol L}^{\ol G}$ and Lusztig restriction $^*R^{\ol G}_{\ol L}$ for ``twisted Levi subgroups'' $\ol L \subset \ol G$, as well as ``semi-rational'' Lusztig series $\cE(\ol G, [\ol T, \theta])$ associated to ``generalized maximal tori'' $\ol T \subset \ol G$ and characters $\theta$ of $\ol T(\F_q)$. The definition of $\rho^{\DR}$ can be phrased in terms of the Lusztig series to which $\tau$ belongs: indeed, the fact that the pair $(\ol T, \theta)$ defined in \S\ref{sss:intro-dr} is associated to $\tau$ means precisely that $\tau \in \cE(\ol G, [\ol T, \theta])$. In general, one should think of $\cE(\ol G, [\ol T, \theta])$ as a (subset of a) ``semisimple inertial L-packet'', in the sense that the inertial restriction of the semisimple L-parameter attached to $\pi$ should depend only on $\cE(\ol G, [\ol T, \theta])$, even if $\pi$ is singular.

We will use the language of paraductive group schemes for the remainder of the introduction, but the reader will not lose much on a first pass by interpreting ``paraductive'' as ``connected reductive''. In particular, if $G$ is semisimple and simply connected then all paraductive group schemes which arise in the proof of Theorem~\ref{thm:intro-debacker-reeder} are actually connected reductive.

\subsubsection{Lower bounds on Tate cohomology}

If $\Gamma$ is a group and $U$ and $V$ are two semisimple finite-dimensional representations of $\Gamma$ over a field, then we write $U \leq V$ if $U$ is isomorphic to a subrepresentation of $V$. The following theorem, which will be improved in Theorem~\ref{thm:eigenvalues-and-jordan-blocks-variant}, is one of our main calculations. If $U = \sum_{i=1}^m n_i V_i$ is a virtual representation of $\Gamma$, where $n_i \in \Z$ and the $V_i$ are pairwise non-isomorphic irreducible representations of $\Gamma$, then the \emph{absolute value} $|U|$ is defined to be $\sum_{i=1}^m |n_i| V_i$.

\begin{thm}[Theorem~\ref{thm:eigenvalues-and-jordan-blocks-variant}]\label{thm:intro-tate-lower-bound}
    Let $\Gamma$ be a finite group, let $\sigma$ be an automorphism of $\Gamma$ of order $\ell$, and let $V$ be a $\ol\Z_\ell[\Gamma \rtimes \langle\sigma\rangle]$-module which is finite free as a $\ol\Z_\ell$-module. Suppose that $V \otimes_{\ol\Z_\ell} \ol\Q_\ell$ is defined over the maximal unramified extension $\Q_\ell^{\unr}$ as a $\Gamma$-representation. Then
    \[
    \rT^a(\sigma, V \otimes_{\ol\Z_\ell} \ol\F_\ell)^{\ss} \geq \ol U
    \]
    for both $a \in \Z/2$, where $\ol U$ is the semisimple representation of $\Gamma^\sigma$ which is the absolute value of the virtual representation which is the reduction modulo $\ell$ of the $\ol\Q_\ell$-representation with character $\gamma \mapsto \Tr(\gamma \rtimes \sigma|V_{\ol\Q_\ell})$.
\end{thm}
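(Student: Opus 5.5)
The plan is to compare the Tate cohomology of the mod $\ell$ reduction $\ol V \coloneqq V \otimes_{\ol\Z_\ell} \ol\F_\ell$ with the Tate cohomology of the lattice itself, and to compute the latter's Brauer character by a Lefschetz-type formula. Throughout I write $[\,\cdot\,]$ for classes in the Grothendieck group of $\ol\F_\ell$-representations of $\Gamma^\sigma$, so that statements about semisimplifications become statements about Brauer characters.

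\emph{Step 1: reduction to an unramified lattice.} All the Tate cohomology groups involved are $\Gamma^\sigma$-modules, since $\Gamma^\sigma$ commutes with $\sigma$ and $N_\sigma$. I would first descend $V$ to a $\Gamma \rtimes \langle\sigma\rangle$-stable lattice over the ring of integers $\cO$ of a finite unramified extension of $\Z_\ell$, using the hypothesis that $V_{\ol\Q_\ell}$ is defined over $\Q_\ell^{\unr}$.
\begin{itemize}
\item The hypothesis only concerns $\Gamma$. I would extend it to $\Gamma \rtimes \langle\sigma\rangle$ by Clifford theory: extensions of a $\sigma$-stable irreducible differ by characters of order $\ell$, which are also realizable over $\Q_\ell^{\unr}$ after enlarging the field. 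Note that $\zeta_\ell$ is ramified, so this step has to be handled with care.
\item Since only semisimplifications matter, changing the lattice does not affect the right-hand side. I also expect the left-hand side to be insensitive to the choice of lattice; this follows from Step 2 together with the Brauer character computation below.
\end{itemize}

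\emph{Step 2: a short exact sequence.} Apply Tate cohomology to $0 \to V \xrightarrow{\ell} V \to \ol V \to 0$. The groups $\rT^a(\sigma, V)$ are killed by $\ell$, so the maps induced by multiplication by $\ell$ vanish. The hexagon therefore splits into short exact sequences of $\Gamma^\sigma$-modules
\[
0 \to \rT^a(\sigma, V) \to \rT^a(\sigma, \ol V) \to \rT^{a+1}(\sigma, V) \to 0 .
\]
Writing $A = [\rT^0(\sigma, V)]$ and $B = [\rT^1(\sigma, V)]$, this gives $[\rT^a(\sigma, \ol V)] = A + B$ for both $a$. Since $A$ and $B$ are classes of genuine representations, $A + B \geq |A - B|$. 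It therefore suffices to prove that $A - B$ is the reduction $\ol U$ of the $\ol\Q_\ell$-character $\gamma \mapsto \Tr(\gamma \rtimes \sigma \mid V_{\ol\Q_\ell})$ on $\Gamma^\sigma$.

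\emph{Step 3: the Brauer character of $A - B$ (main obstacle).} Fix an $\ell$-regular $g \in \Gamma^\sigma$. Since $g$ commutes with $\sigma$ and $\cO$ contains the prime-to-$\ell$ roots of unity (after enlarging the unramified extension), $V$ splits into $g$-eigenlattices $V_\lambda$, each stable under $\sigma$. By Reiner's classification, valid because $\cO$ is unramified, each $V_\lambda$ is a direct sum of three kinds of indecomposable $\cO[\langle\sigma\rangle]$-lattices:
\begin{itemize}
\item the trivial lattice $\cO$, with $\rT^0 = \ol\F_\ell$, $\rT^1 = 0$ and $\Tr(\sigma) = 1$;
\item $\cO[\zeta_\ell]$, with $\rT^0 = 0$, $\rT^1 = \ol\F_\ell$ and $\Tr(\sigma) = -1$;
\item the free lattice $\cO[\langle\sigma\rangle]$, with vanishing Tate cohomology and $\Tr(\sigma) = 0$.
\end{itemize}
(Reiner's list also contains non-split extensions of $\cO[\zeta_\ell]$ by $\cO$, which I would check contribute $0$ to both sides.) Hence the Brauer character of $A - B$ at $g$ equals $\sum_\lambda \tilde\lambda \cdot \Tr(\sigma \mid V_\lambda) = \Tr(g\sigma \mid V)$, where $\tilde\lambda$ is the Teichmüller lift of $\lambda$. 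This is exactly the Brauer character of $\ol U$ at $g$, and it also shows that $A$ and $B$ are independent of the lattice, as needed in Step 1.

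Verifying the classification input and the descent in Step 1 carefully is where I expect the real work to lie. Ramified coefficients would produce further indecomposables and break the bookkeeping, which is precisely why the $\Q_\ell^{\unr}$ hypothesis appears.
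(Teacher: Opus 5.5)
There is a genuine gap in Step~1, and Steps~2--3 rest on it. The hypothesis constrains only the generic fiber $V_{\ol\Q_\ell}$. It does not make the given lattice $V$ the base change of a lattice over an unramified $\cO$. You also cannot trade $V$ for such a lattice, because $\rT^a(\sigma,V\otimes_{\ol\Z_\ell}\ol\F_\ell)$ genuinely depends on the lattice. Your claim that Step~3 shows $A$ and $B$ are lattice-independent is false; it only shows this for $A-B$.

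Here is a counterexample. Take $\Gamma=1$, $\ell$ odd, and $V_{\ol\Q_\ell}$ the regular representation of $\langle\sigma\rangle$, which is defined over $\Q$.
\begin{itemize}
\item The lattice spanned by $\sigma$-eigenvectors has $\sigma$ acting trivially on its reduction, so $\dim\rT^a=\ell$.
\item Every lattice in this representation defined over an unramified $\cO$ is isomorphic to $\cO[\langle\sigma\rangle]$ or to $\cO\oplus\cO[\zeta_\ell]$, so $\dim\rT^a$ is $0$ or $2$.
\end{itemize}
Remark~\ref{remark:strict-inequality} exhibits the same phenomenon for $\Gamma=S_3$.

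Step~2 cannot be run as written over $\ol\Z_\ell$ or over a ramified $\cO_L$ either. The cokernel of multiplication by $\ell$ is $V/\ell V$, not the reduction. Over a ramified $\cO_L$, the length-weighted Euler characteristic $A-B$ is also no longer the trace character. So your argument proves the theorem only for lattices defined over an unramified ring, where it even gives the exact formula $[\rT^a(\sigma,\ol V)]=A+B$.

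The paper never changes the lattice. Fix an ordering $V_0,\dots,V_{\ell-1}$ of the $\sigma$-eigenspaces and form the saturated flag $W_i=((\bigoplus_{j\le i}V_j)\cap V)\otimes\ol\F_\ell$.
\begin{itemize}
\item $\sigma$ acts trivially on the graded pieces, so $N_\sigma(\ol V)=(\sigma-1)^{\ell-1}\ol V\subset W_0\subset\ol V^\sigma$.
\item Varying the ordering gives $\chi_{\rT^0(\sigma,\ol V)}\ge\sup_i\chi_{\ol V_i}-\inf_i\chi_{\ol V_i}$.
\item The rationality hypothesis enters only through the Brauer characters $\chi_{\ol V_i}$, which depend on the generic fiber alone.
\item Via Lemma~\ref{lemma:stable-module-extension} and Galois conjugation, rationality forces $\chi_{\ol V_i}$ to be independent of $i\ne0$. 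The bound then becomes $|\chi_{\ol V_0}-\chi_{\ol V_1}|$, which is the Brauer character of $\ol U$.
\end{itemize}

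Your own route can also be repaired. Descend $V$ to $\cO_L$ with $\zeta_\ell\in L$, ramification index $e$ and uniformizer $\varpi$.
\begin{itemize}
\item Your Step~2 for multiplication by $\ell$ still gives $[\rT^a(\sigma,V/\ell V)]=A+B$.
\item The $\varpi$-adic filtration of $V/\ell V$ has $e$ graded pieces isomorphic to $V/\varpi V$. Half-exactness of $\rT^a$ then gives $[\rT^a(\sigma,V/\ell V)]\le e\,[\rT^a(\sigma,\ol V)]$.
\item $A-B$ is unchanged under passage to a finite-index sublattice, since the argument of Lemma~\ref{lem:semisimplifications-Tate-cohomology-coincide} kills finite-length modules. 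So it can be computed on $\bigoplus_i(V_i\cap V)$, giving $A-B=e[\ol V_0]-\frac{e}{\ell-1}\sum_{i\ne0}[\ol V_i]$.
\item Under the same rationality input, this is $e$ times the reduced trace character.
\end{itemize}
Hence $e\,[\rT^a(\sigma,\ol V)]\ge A+B\ge|A-B|=e\,[\ol U]$, and dividing by $e$ gives the theorem.
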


In the case $a=0$, Tate cohomology is also known in the finite group theory literature as the \emph{Brauer construction} or \emph{Brauer map}. In this connection, Brauer's second main theorem (see for instance \cite[Th\'eor\`eme 4.3]{Dig86b}) is reminiscent of Theorem~\ref{thm:intro-tate-lower-bound}, and it requires no assumption on fields of definition. The main additional content of Theorem~\ref{thm:intro-tate-lower-bound} is that it gives a concrete computational tool for checking that Tate cohomology is \emph{nonzero} (as is needed for Theorem~\ref{thm:feng-modular-functoriality} to have content).

The proof of Theorem~\ref{thm:intro-tate-lower-bound} involves using the eigenspaces of $\sigma$ on $V_{\ol\Q_\ell}$ to build many $\sigma$-stable flags in $V \otimes_{\ol\Z_\ell} \ol\F_\ell$ on whose subquotients $\sigma$ acts trivially. These are then played against each other to yield lower bounds on $V^\sigma$ and upper bounds on $N_\sigma(V)$, which imply a lower bound on $\rT^0(\sigma, V \otimes_{\ol\Z_\ell} \ol\F_\ell)$. To illustrate the strength of Theorem~\ref{thm:intro-tate-lower-bound}, we note two special cases.

\begin{cor}\label{cor:intro-tate-lower-bounds}
    Let $\ol G$ be a connected reductive $\F_q$-group scheme.
    \begin{enumerate}
        \item \emph{(Proposition~\ref{prop:tate-cohomology-shintani-descent})} Let $\sigma$ denote the $\F_q$-automorphism of the Weil restriction $\Res_{\F_{q^\ell}/\F_q}(\ol G_{\F_{q^\ell}})$ induced by a generator of $\Gal(\F_{q^\ell}/\F_q)$. If $V_\ell$ is a $\ol\Z_\ell[\ol G(\F_{q^\ell}) \rtimes \langle\sigma\rangle]$-module which is finite free as a $\ol\Z_\ell$-module, and $V_\ell \otimes_{\ol\Z_\ell} \ol\Q_\ell$ is defined over $\Q_\ell^{\unr}$, then
        \[
            \rT^a(\sigma, V_\ell \otimes_{\ol\Z_\ell} \ol\F_\ell)^{\ss} \geq \ol V,
        \]
        where $\ol V$ is the $\ell$-Frobenius twist of the absolute value of the $\ell$-modular reduction of the Shintani descent\footnote{An important technical point is that Shintani descent requires as input a convention for a ``norm map'', and this convention is not completely standard in the literature. We discuss this in Remark~\ref{remark:glauberman-shintani} (following \cite{Dig86b}), and we note that Langlands functoriality (in the form of this corollary and Theorem~\ref{thm:feng-modular-functoriality}) suggests the ``correct'' choice of norm. It turns out that this matches the choice in \cite{Kaw87}.} of $V_\ell \otimes_{\ol\Z_\ell} \ol\Q_\ell$.
        \item \emph{(Proposition~\ref{prop:tate-cohom-dl-restriction})} Let $t \in \ol G(\F_q)$ be an element of order $\ell$ such that $\ol H = Z_{\ol G}(t)$ is a twisted Levi $\F_q$-subgroup of $\ol G$, and let $\sigma$ denote the automorphism of $\ol G$ induced by $t$-conjugation. If $V$ is a $\ol\Z_\ell[\ol G(\F_q)]$-module which is finite free as a $\ol\Z_\ell$-module, and $V \otimes_{\ol\Z_\ell} \ol\Q_\ell$ is defined over $\Q_\ell^{\unr}$ and lies in a prime-to-$\ell$ Lusztig series, then
        \[
            \rT^a(\sigma, V \otimes_{\ol\Z_\ell} \ol\F_\ell)^{\ss} \geq \left|\ol{{}^*R^{\ol G}_{\ol H}(V)}\right|
        \]
        for both $a \in \Z/2$. If $\ell$ is good for $\ol G^\circ$, then the $\ell$-modular reduction $\ol{{}^*R^{\ol G}_{\ol H}(V)}$ of $^*R^{\ol G}_{\ol H}(V)$ is nonzero.
    \end{enumerate}
\end{cor}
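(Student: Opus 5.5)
Both parts follow from Theorem~\ref{thm:intro-tate-lower-bound} with $\Gamma$ the relevant finite group. In each case the work is to identify the twisted character $\gamma \mapsto \Tr(\gamma \rtimes \sigma \mid V_{\ol\Q_\ell})$ on $\Gamma^\sigma$ with the named virtual representation, then take $\ell$-modular reduction and absolute value.

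For part (1), take $\Gamma = \ol G(\F_{q^\ell})$. Then $\Gamma^\sigma = \ol G(\F_q)$, and $\sigma$ acts as the Frobenius $F_q$ on $\Gamma$.
\begin{enumerate}
\item Recall the definition of Shintani descent. The twisted character $g \mapsto \Tr(g \rtimes \sigma)$ on $\ol G(\F_{q^\ell})$ is constant on $F_q$-twisted conjugacy classes. Lang's theorem gives a bijection between these classes and conjugacy classes of $\ol G(\F_q)$, via the norm map $g \mapsto g\,F_q(g)\cdots F_q^{\ell-1}(g)$ (or its inverse, according to the convention of Remark~\ref{remark:glauberman-shintani}). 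The Shintani descent is, by definition, the class function on $\ol G(\F_q)$ obtained by transporting the twisted character through this bijection.
\item Compare this with what Theorem~\ref{thm:intro-tate-lower-bound} produces, namely $\gamma \mapsto \Tr(\gamma \rtimes \sigma)$ for $\gamma \in \ol G(\F_q)$. For such $\gamma$, a Lang-type computation shows that $\gamma \rtimes \sigma$ is conjugate in $\Gamma \rtimes \langle\sigma\rangle$ to $g \rtimes \sigma$ with $N(g)$ conjugate to $\gamma^\ell$ (up to the norm convention). The semisimple part of $\gamma^\ell$ is the $\ell$-th power of the semisimple part of $\gamma$.
\item Reduction modulo $\ell$ only sees $\ell'$-elements. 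On $\ell'$-elements, the substitution $\gamma \mapsto \gamma^\ell$ acts on Brauer characters as the $\ell$-Frobenius twist. This is where the Frobenius twist in the statement comes from.
\end{enumerate}
The main obstacle is making this identification exact: tracking the norm convention, and checking that the reduced Shintani descent is an honest virtual Brauer character so that its absolute value makes sense.

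For part (2), take $\Gamma = \ol G(\F_q)$ and let $\sigma$ be conjugation by $t$. The extension $\Gamma \rtimes \langle\sigma\rangle$ maps to $\Gamma$ by $\gamma \rtimes \sigma^i \mapsto \gamma t^i$. Pulling back along this map, the twisted character becomes the ordinary character $h \mapsto \chi_V(ht)$ on $\ol H(\F_q) = \Gamma^\sigma$. The key identity is the Deligne--Lusztig character formula for Lusztig restriction: for $h \in \ol H(\F_q)$ and $t$ central in $\ol H$ of order $\ell$, the function $h \mapsto \chi_V(ht)$ reduces, on $\ell'$-elements, to ${}^*R^{\ol G}_{\ol H}(\chi_V)$. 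More precisely:
\begin{enumerate}
\item Write $h = su$ with $s$ semisimple and $u$ unipotent. Then $\chi_V(su\,t) = \chi_V((st)u)$ is given by the character formula in terms of Green functions of $Z_{\ol G}(st)^\circ$.
\item We have $Z_{\ol G}(st) = Z_{\ol H}(s)$ when $s$ is $\ell'$, since $t$ is the $\ell$-part of $st$.
\item Compare with the character formula for ${}^*R^{\ol G}_{\ol H}$ evaluated at $su$.
\end{enumerate}
Because $V$ lies in a prime-to-$\ell$ Lusztig series, the central character contributions of $t$ are trivial on the relevant tori. The two expressions should then agree after restriction to $\ell'$-elements, which is all that matters for the reduction. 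This extends to the paraductive setting via the Lusztig restriction machinery of \S\ref{section:extended-dl-theory}.

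Finally, we need nonvanishing for good $\ell$. Express $V$ as a combination of $R_{\ol T}^{\ol G}\theta$ with $\theta$ of prime-to-$\ell$ order. Since $\ell$ is good, every $\ell'$ class meets enough tori $\ol T \subset \ol H$ (Mackey formula plus $\ell \nmid |W|$-type arguments). Then ${}^*R^{\ol G}_{\ol H}V$ has a nonzero pairing with some $R_{\ol T}^{\ol H}\theta$ whose reduction is nonzero, because distinct $\ell'$-characters of $\ol T(\F_q)$ stay distinct mod $\ell$. I expect this last nonvanishing step to be the most delicate.
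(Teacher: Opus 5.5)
Your reduction of both bounds to Theorem~\ref{thm:intro-tate-lower-bound} is the paper's route. However, the final nonvanishing claim in part (2) has a genuine gap, and the two identification steps need sharper inputs than your sketch gives.

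\textbf{The identification steps.} In (1), no hands-on Lang computation identifies $\gamma\mapsto\Tr(\gamma\rtimes\sigma)$ with $\chi_0\circ[\ell]$. With the naive norm, $n_\ell(\gamma)$ is only $\ol G(\ol\F_q)$-conjugate to $\gamma^\ell$; the discrepancy is the class of $\gamma$ in $\rH^1(\F_q,\pi_0 Z_{\ol G}(\gamma^\ell))$. The paper instead uses Kawanaka's normalized norm and the cited identity \eqref{eqn:restriction-of-shintani-power}, which requires $\ell\nmid M_{\ol G}$.

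In (2), there is no Green-function formula for $\chi_V(tsu)$ when $V$ is not uniform. The formulas for $\chi_V(tsu)$ and $({}^*R^{\ol G}_{\ol H}\chi_V)(su)$ involve different centralizers, so they cannot be matched term by term. The paper argues in two steps:
\begin{enumerate}
\item $\chi_V(tg)=({}^*R^{\ol G}_{\ol H}\chi_V)(tg)$ by Lemma~\ref{lemma:dl-restriction-character}, because the centralizer of the semisimple part of $tg$ lies in $\ol H$.
\item Every constituent of ${}^*R^{\ol G}_{\ol H}\chi_V$ lies in a prime-to-$\ell$ series of $\ol H$, because Lusztig restriction preserves series (Corollary~\ref{cor:bonnafe-11.11}). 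Hence the central $\ell$-element $t$ acts trivially on each constituent.
\end{enumerate}
Your remark about ``central character contributions'' needs exactly the input in step 2.

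\textbf{The gap: nonvanishing.} There are two problems.

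First, ${}^*R^{\ol G}_{\ol H}(V)\neq0$ does not follow from goodness of $\ell$ or from $\ell'$-classes meeting tori of $\ol H$. In fact it fails under the hypotheses as literally stated. Take $\ol G=\GL_2$, an odd $\ell\mid q-1$, and $t$ a non-central diagonal element of order $\ell$. Then $\ol H$ is the diagonal torus and ${}^*R^{\ol G}_{\ol H}$ is Harish-Chandra restriction. A cuspidal $V$ attached to a regular character of $\F_{q^2}^\times$ of prime-to-$\ell$ order meets every hypothesis, yet ${}^*R^{\ol G}_{\ol H}V=0$.

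You need the extra hypothesis of Proposition~\ref{prop:tate-cohom-dl-restriction}: $V$ is a constituent of $R^{\ol G}_{\ol T}(\theta)$ for some $\ol T\subset\ol H$, with $\theta$ of prime-to-$\ell$ order. Adjunction and transitivity then give $\langle{}^*R^{\ol G}_{\ol H}V,R^{\ol H}_{\ol T}\theta\rangle=\langle V,R^{\ol G}_{\ol T}\theta\rangle\neq0$.

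Second, even with this, knowing that $R^{\ol H}_{\ol T}\theta$ has nonzero reduction says nothing about the reduction of $X={}^*R^{\ol G}_{\ol H}V$. Reductions of virtual characters can cancel: in the $\Sp_4$ example after Remark~\ref{remark:non-cuspidal-dl-restriction}, $V_2-V_1$ reduces to $U$. Distinctness of $\ell'$-characters of $\ol T(\F_q)$ mod $\ell$ does not control this. The paper closes the gap with \cite[Theorem~1.7]{CE99}, which is where goodness of $\ell$ enters.

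Your pairing idea can also be made rigorous with a different test function. Let
\[
\Psi=\sum_\lambda R^{\ol H}_{\ol T}(\theta\lambda),
\]
where $\lambda$ runs over the characters of $\ell$-power order of $\ol T(\F_q)$.
\begin{itemize}
\item By the Deligne--Lusztig character formula, $\Psi$ vanishes on $\ell$-singular elements, so $\langle X,\Psi\rangle$ depends only on $\ol X$.
\item For $\lambda\neq1$, the constituents of $R^{\ol H}_{\ol T}(\theta\lambda)$ lie in series whose semisimple label has nontrivial $\ell$-part. They are therefore orthogonal to $X$, whose constituents lie in prime-to-$\ell$ series.
\item Hence $\langle X,\Psi\rangle=\langle V,R^{\ol G}_{\ol T}\theta\rangle\neq0$, and so $\ol X\neq0$.
\end{itemize}
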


In the case $\ol G = \GL_n$ and $(\ell p, n) = 1$, a sharper version of Corollary~\ref{cor:intro-tate-lower-bounds}(1) can be found in \cite[Theorem~13]{Ron16}. A technical refinement of Corollary~\ref{cor:intro-tate-lower-bounds}(2) can be found in Proposition~\ref{prop:tate-cohom-dl-restriction}. For our purposes, the content of Corollary~\ref{cor:intro-tate-lower-bounds}(2) is twofold:
\begin{enumerate}[label=(\alph*)]
    \item By a variation (Corollary~\ref{cor:bonnafe-11.11}) on a theorem of Lusztig \cite[Corollaire~11.11]{Bon06}, it implies that Tate cohomology respects Lusztig series in a certain sense.
    \item It implies that Tate cohomology is nonzero in this case.
\end{enumerate}
Note that when $\ol G$ is connected, (a) is implied by the remarkable \cite[Th\'eor\`eme 3.2]{BM89} with no assumptions on fields of definition. For applications to non-singular cuspidal representations, point (b) also follows from \cite[Th\'eor\`eme 3.2]{BM89}. However, for applications to singular cuspidal representations in \cite{CF26b} (and for the potential generalization of Theorem~\ref{thm:intro-debacker-reeder} described in Remark~\ref{rmk:singular-generalization}), point (b) is crucial.

Shintani descent is usually regarded as realizing ``base change functoriality for finite reductive groups''. In view of Theorem~\ref{thm:feng-modular-functoriality}, Corollary~\ref{cor:intro-tate-lower-bounds} can be regarded as a shadow of this statement, as well as the statement that Lusztig restriction realizes ``twisted Levi functoriality for finite reductive groups''. There are three important reasons we use the word ``shadow'' here:
\begin{enumerate}[label=(\Alph*)]
    \item Theorem~\ref{thm:feng-modular-functoriality} only refers to $\ol\F_\ell$-representations,
    \item Both Corollary~\ref{cor:intro-tate-lower-bounds}(1) and (2) require the $\ol\F_\ell$-representations to admit models over $\Z_\ell^{\unr}$,
    \item For a given twisted Levi $\F_q$-subgroup $\ol H$ there is typically no element $t$ as in (2).
\end{enumerate}
For Shintani descent, we do not see a way to deal with these issues in general, but for Lusztig restriction it can be done, as we now explain.

\subsubsection{Base change of large prime degree}

If $\pi$ is a $\ol\Q_\ell$-representation of $G(F)$, then because $\rho^{\FS}(\pi)|_{I_F}$ has finite image, Lemma~\ref{lemma:finite-groups-mod-ell} shows that one can compute it by computing it modulo $\ell$ if $\ell$ is ``large'' (e.g., larger than the order of the image of $\rho^{\FS}(\pi)|_{I_F}$). We aim to perform this calculation by induction on the semisimple rank of $G$, the base case that $G$ is a torus following from the fact that $\rho^{\FS}$ and $\rho^{\DR}$ are both compatible with the Local Langlands Correspondence for tori.

The induction step is principally based on Corollary~\ref{cor:intro-tate-lower-bounds}(2). To pass to a situation to which this applies, the idea is to first pass to an unramified extension of $F$ of ``large'' prime degree so that $G(F)$ has torsion elements $t$ of large prime degree, and then to use such $t$ in Corollary~\ref{cor:intro-tate-lower-bounds}. As mentioned above, issues (A) and (B) are serious when dealing with general base change; however, in a special case, they both disappear.

Recall the \emph{Glauberman correspondence} from \cite{Gla68}: in a special case, this states that if $\Gamma$ is a finite group of order prime to $\ell$ and $\sigma$ is an automorphism of $\Gamma$ of order $\ell$, then there exists a natural bijection
\[
\Irr_{\ol\Q_\ell}(\Gamma)^\sigma \cong \Irr_{\ol\Q_\ell}(\Gamma^\sigma)
\]
which is characterized by a character identity \eqref{eqn:glauberman-condition}. Since $\ell$ does not divide the order of $\Gamma$, every $\ol\Q_\ell$-representation of $\Gamma$ is defined over $\Q_\ell^{\unr}$, and there are canonical bijections
\[
\Irr_{\ol\Q_\ell}(\Gamma)^\sigma \cong \Irr_{\ol\F_\ell}(\Gamma)^\sigma \qquad \text{and} \qquad \Irr_{\ol\Q_\ell}(\Gamma^\sigma) \cong \Irr_{\ol\F_\ell}(\Gamma^\sigma)
\]
by \cite[Part~III, no.\ 15.5, Proposition~43]{Serre77}.

In fact, the Glauberman correspondence can be extended slightly to a certain (very restricted) class of infinite groups, such as $\ol G(\F_{q^\ell})$, as can Theorem~\ref{thm:intro-glauberman}; see \S\ref{sss:infinite-glauberman} and especially Hypothesis~\ref{hypothesis:infinite-glauberman}. In Remark~\ref{remark:glauberman-shintani}, we will explain (following \cite{Dig86b}) that if $\Gamma = \ol G(\F_{q^\ell})$ as in the setting of Shintani descent, where $\ol G$ is connected, then the Glauberman correspondence realizes the $\ell$-Frobenius twist of the $\ell$-modular reduction of Shintani descent. Thus issues (A) and (B) above do not appear for large $\ell$.

The following theorem is a sharper version of Corollary~\ref{cor:intro-tate-lower-bounds}(1) in this case.

\begin{thm}[Alperin \cite{Alp76}, Dade \cite{Dade78}, Corollary~\ref{cor:glauberman-correspondence}]\label{thm:intro-glauberman}
    Let $\Gamma$ be a finite group, and let $\sigma$ be an automorphism of $\Gamma$ of order $\ell$ prime to $|\Gamma|$. Then $\rT^a(\sigma, -)$ induces the Glauberman correspondence for each $a \in \Z/2$.
\end{thm}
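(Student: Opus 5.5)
Let $V$ be an irreducible $\ol\F_\ell$-representation of $\Gamma$ that is fixed by $\sigma$. Since $\ell \nmid |\Gamma|$, the group $\langle\sigma\rangle$ has no cohomology in degree $2$ with values in $\ol\F_\ell^\times$, and the relevant obstruction class is killed by a coprime order argument. So $V$ extends to a representation of $\Gamma \rtimes \langle\sigma\rangle$. Such extensions differ only by twists by characters of $\langle\sigma\rangle$, and these are trivial in characteristic $\ell$. Hence $\rT^a(\sigma,V)$ is well defined as a representation of $\Gamma^\sigma$. If $V$ is not $\sigma$-fixed, then $\bigoplus_i \sigma^{i*}V$ is induced from $\Gamma$ to $\Gamma\rtimes\langle\sigma\rangle$, so it is cohomologically trivial and its Tate cohomology vanishes. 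The claim to prove is therefore: for $\sigma$-stable irreducible $V$, the representation $\rT^a(\sigma,V)$ is irreducible and corresponds to $V$ under the Glauberman correspondence composed with the reduction bijections of \cite{Serre77}.

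The plan is to compare Brauer characters. Lift $V$ to the unique $\ol\Z_\ell$-lattice representation $\wt V$ of $\Gamma\rtimes\langle\sigma\rangle$; this lift exists because $\Gamma$ has order prime to $\ell$ and is unique after normalizing the $\sigma$-action. The normalization is the one picked out by the character identity \eqref{eqn:glauberman-condition}, e.g.\ determinant or a canonical extension. By Theorem~\ref{thm:intro-tate-lower-bound}, applied with $\Q_\ell^{\unr}$-rationality automatic since $\ell\nmid|\Gamma|$, the semisimplification of $\rT^a(\sigma,V)$ contains $\ol U$. Here $\ol U$ is the absolute value of the reduction of the virtual character $\gamma\mapsto \Tr(\gamma\sigma \mid \wt V_{\ol\Q_\ell})$ on $\Gamma^\sigma$. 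Glauberman's character identity says exactly that this virtual character equals $\pm\chi_{V^*}$, where $V^*$ is the Glauberman correspondent; the sign is $\epsilon \in \{\pm1\}$, possibly up to an additional sign twist. Hence $\ol U = \ol{V^*}$, which is irreducible.

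For the upper bound, I would use an Euler characteristic argument. For a $\ol\Z_\ell[\langle\sigma\rangle]$-lattice, the Herbrand quotient and the Brauer character of $\rT^0-\rT^1$ can be computed from the trace of $\sigma$. More precisely, the virtual $\Gamma^\sigma$-module $\rT^0(\sigma,V)-\rT^1(\sigma,V)$ has Brauer character equal to the reduction of $\gamma\mapsto\Tr(\gamma\sigma\mid\wt V)$, up to the Frobenius twist and the sign conventions of the paper. To see this, decompose $\wt V$ as a $\ol\Z_\ell[\langle\gamma\rangle\times\langle\sigma\rangle]$-lattice for each $\gamma\in\Gamma^\sigma$. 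Since $\gamma$ has prime-to-$\ell$ order, one reduces to $\gamma$-eigenspaces, and on each of these the standard count applies: indecomposable $\ol\Z_\ell[C_\ell]$-lattices are the trivial lattice, the augmentation-type lattices, and the free lattice. Combined with the lower bound, which applies to both $a$, this gives: one of $\rT^0$ or $\rT^1$ equals $\ol{V^*}$ and the other is $0$, unless cancellation occurs.

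The main obstacle is ruling out such cancellation, i.e.\ proving that each $\rT^a$ is exactly $\ol{V^*}$ and not larger. For a single lattice $\wt V|_{\langle\sigma\rangle}$ over $\ol\Z_\ell$, the Tate groups $\rT^0$ and $\rT^1$ are both $\ol\F_\ell$-vector spaces. Their dimensions are equal, since the Herbrand quotient of a finite module is $1$, and both equal the number of indecomposable summands of rank neither $0$ nor divisible by $\ell$. I would therefore bound $\dim\rT^a$ from above by $|\Tr(\sigma\mid \wt V)|$, after reducing to eigenvalue multiplicities. The multiplicities $m_\zeta$ of the $\ell$-th roots of unity in $\sigma$ acting on $\wt V_{\ol\Q_\ell}$ satisfy $m_1 - m_\zeta = \pm\dim V^*$. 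Whenever these differences are constant in $\zeta$, the Tate cohomology has dimension exactly $|m_1-m_\zeta|$, using that lattices over the totally ramified ring $\ol\Z_\ell[\zeta_\ell]$ are well understood. This dimension count, together with the lower bound $\ol U\le \rT^a$ of Theorem~\ref{thm:intro-tate-lower-bound}, forces $\rT^a(\sigma,V)\cong\ol{V^*}$ for both $a$.
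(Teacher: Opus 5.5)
Your reduction to $\sigma$-stable $V$ and your lower bound are fine. With the lift $\wt V$ normalized by $\Tr(\sigma\mid\wt V_{\ol\Q_\ell})\in\Z$, Theorem~\ref{thm:eigenvalues-and-jordan-blocks-variant} gives $\chi_{\rT^a(\sigma,V)}\geq\chi_{\ol{V^*}}$ for both $a$. The argument breaks at the upper bound, in two places.

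First, your Euler-characteristic step conflates the Tate cohomology of the lattice $\wt V$ with that of its reduction $V$. For the $\ol\F_\ell$-module $V$ one has $[\rT^0(\sigma,V)]=[\rT^1(\sigma,V)]$ by Lemma~\ref{lem:semisimplifications-Tate-cohomology-coincide}, so their difference is zero. Your conclusion that one of $\rT^0,\rT^1$ is $\ol{V^*}$ and the other is $0$ therefore contradicts your own lower bound. What is true is the following, for a lattice over an unramified ring such as $W(\ol\F_\ell)$, where $\ell$ kills $\hat\rH^*(\sigma,\wt V)$. The sequence $0\to\wt V\xrightarrow{\ell}\wt V\to V\to 0$ gives $[\rT^a(\sigma,V)]=[\hat\rH^0(\sigma,\wt V)]+[\hat\rH^1(\sigma,\wt V)]$ for both $a$, whereas the trace only computes the \emph{difference} $[\hat\rH^0]-[\hat\rH^1]$. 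So the whole content of the theorem is that one of these two integral groups vanishes.

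Second, your proposed way of excluding this cancellation is false. The multiplicities $m_\zeta$ of $\sigma$ on $\wt V_{\ol\Q_\ell}$ do not determine the Jordan type of $\sigma$ on $\wt V\otimes\ol\F_\ell$, even when $m_1-m_\zeta$ is independent of $\zeta$. Over $\cO=W(\ol\F_\ell)$, the lattices $\cO\oplus\cO[\zeta_\ell]$ and $\cO[\langle\sigma\rangle]$ have the same rational fiber, but the Tate cohomology of their reductions has dimensions $2$ and $0$. The three-indecomposable classification you cite holds over unramified $\cO$, not over $\ol\Z_\ell$, and in any case it does not tell you which summands occur. Remark~\ref{remark:strict-inequality} even gives a $\Gamma$-stable lattice with constant differences where the bound is strict.

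This step of your argument never uses $\ell\nmid|\Gamma|$ or the irreducibility of $V$, so it cannot succeed. What is missing is a group-theoretic constraint on the Jordan type. The paper obtains it in Theorem~\ref{theorem:eigenvalues-and-jordan-blocks}, applied with $\Delta=\Gamma$:
\begin{enumerate}
\item Because $\wt V$ is $\ol\Z_\ell[\Gamma]$-projective with simple reduction, there is a $\sigma$-stable splitting $\ol\Z_\ell[\Gamma]\cong\End(\wt V)\oplus A$.
\item $\dim_k k[\Gamma]^\sigma$ is independent of $k\in\{\ol\Q_\ell,\ol\F_\ell\}$, while invariants can only grow under reduction. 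Hence $\dim\End(V)^\sigma=\sum_i d_i^2$, as in characteristic $0$.
\item Lemma~\ref{lemma:maximal-jordan-blocks} then forces exactly $d_{i+1}-d_i$ Jordan blocks of size $\ell-i$.
\item Integrality of $\Tr(\sigma)$ gives extremality and $\chi_{\rT^a(\sigma,V)}(\gamma)=\epsilon\Tr(\gamma\rtimes\sigma\mid\wt V_{\ol\Q_\ell})$ exactly.
\end{enumerate}
A global count would also work. Summing over $\sigma$-stable irreducible $V$, one has $\sum_V\dim\rT^0(\sigma,\End V)=\dim\rT^0(\sigma,\ol\F_\ell[\Gamma])=|\Gamma^\sigma|$. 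Each term is at least $(\dim\rT^0(\sigma,V))^2\geq(\dim V^*)^2$, and $\sum_V(\dim V^*)^2=|\Gamma^\sigma|$, which forces equality throughout.
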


Theorem~\ref{thm:intro-glauberman} is not new as stated; it also appears in \cite{Alp76} and the last sentence of \cite{Dade78}. In practice, we need a slightly sharper version, which incorporates some cases in which $\ell$ divides $|\Gamma|$ and has a slightly more precise conclusion. This somewhat technical strengthening appears as Theorem~\ref{theorem:eigenvalues-and-jordan-blocks}, and it is proven independently of previous results in the literature. Our proof is similar in spirit to the proof of Theorem~\ref{thm:intro-tate-lower-bound}.


Partially using Theorem~\ref{thm:intro-glauberman}, we establish the following further results.

\begin{cor}\label{cor:intro-glauberman-properties}
    Let $\ol G$ be a paraductive $\F_q$-group, let $\ell$ be a prime not dividing $|\ol G(\F_q)|$, let $\ol\tau$ be an irreducible $\ol\F_\ell$-representation of $\ol G(\F_q)$, and let $\wt\tau$ be the irreducible $\ol\F_\ell$-representation of $\ol G(\F_{q^\ell})$ corresponding to $\ol\tau$ under the Glauberman correspondence.
    \begin{enumerate}
        \item (Proposition~\ref{prop:glauberman-cuspidal}) $\ol\tau$ is cuspidal if and only if $\wt\tau$ is cuspidal.
        \item (Proposition~\ref{prop:digne-3.5}) If $\ol T \subset \ol G$ is a generalized maximal torus and $\theta\co \ol T(\F_q) \to \ol\F_\ell^\times$ is a character such that $\ol\tau$ lies in the Lusztig series $\cE(\ol G, [\ol T, \theta])$, then $\wt\tau$ lies in the Lusztig series $\cE(\ol G_{\F_{q^\ell}}, [\ol T_{\F_{q^\ell}}, \theta_\ell])$, where $\theta_\ell\co \ol T(\F_{q^\ell}) \to \ol\F_\ell^\times$ is the unique $\Gal(\F_{q^\ell}/\F_q)$-stable character extending $\theta$.
        \item (Corollary~\ref{cor:glauberman-lusztig-bijection}) If $\ell$ is large enough, then the induced map $\cE(\ol G, [\ol T, \theta]) \to \cE(\ol G_{\F_{q^\ell}}, [\ol T_{\F_{q^\ell}}, \theta_\ell])$ is a bijection.
    \end{enumerate}
\end{cor}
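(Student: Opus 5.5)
The three parts share one engine: the Glauberman character identity, which Theorem~\ref{thm:intro-glauberman} identifies with Tate cohomology. Throughout, set $\Gamma = \ol G(\F_{q^\ell})$, let $\sigma$ be the $q$-Frobenius generating $\Gal(\F_{q^\ell}/\F_q)$, and note $\Gamma^\sigma = \ol G(\F_q)$. Since $\ol G$ may be of infinite type, I first pass to the finite situation covered by Hypothesis~\ref{hypothesis:infinite-glauberman}, where the component group is replaced by a finite $\sigma$-stable quotient.

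Recall the Glauberman identity: if $\wt\tau \leftrightarrow \ol\tau$, then $\Tr(g\rtimes\sigma \mid \wt\tau) = \epsilon\,\Tr(g\mid\ol\tau)$ for $g \in \Gamma^\sigma$, with $\epsilon = \pm1$. Read through Remark~\ref{remark:glauberman-shintani}, this says that Glauberman realizes the $\ell$-twisted reduction of Shintani descent. All three statements should therefore reduce to the compatibility of Shintani descent, or of the twisted trace, with the relevant Deligne--Lusztig constructions, in the spirit of Digne \cite{Dig86b}.

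\textbf{Part (1).} Cuspidality means that ${}^*R^{\ol G}_{\ol L}$ kills the representation for every proper split Levi $\ol L$, equivalently that $U$-invariants vanish for the unipotent radicals of proper rational parabolics. I will argue both directions as follows.
\begin{itemize}
\item Take a proper $\sigma$-stable parabolic $\ol P = \ol L\ol U$ defined over $\F_q$. Since $\ell \nmid |\ol U(\F_{q^\ell})|$, Tate cohomology commutes with taking $\ol U(\F_{q^\ell})$-invariants, and $\ol U(\F_{q^\ell})^\sigma = \ol U(\F_q)$.
\item Hence $\rT^a(\sigma, \wt\tau^{\ol U(\F_{q^\ell})}) = \rT^a(\sigma,\wt\tau)^{\ol U(\F_q)}$.
\item By Theorem~\ref{thm:intro-glauberman} applied to $\ol L(\F_{q^\ell})$, the left side is nonzero exactly when $\wt\tau^{\ol U(\F_{q^\ell})} \neq 0$.
\item This handles parabolics defined over $\F_q$. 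For a parabolic $\ol P$ defined over $\F_{q^\ell}$ but not over $\F_q$, the $\sigma$-stability of $\wt\tau$ lets me replace $\ol P$ by its $\sigma$-translates, and their intersection gives a $\sigma$-stable parabolic.
\end{itemize}
I expect this last step, producing an $\F_q$-rational parabolic from a $\sigma$-stable $\wt\tau$ with nonzero $U$-invariants, to need care. The cleanest route is probably through Harish-Chandra series: $\sigma$ permutes the cuspidal supports, and Lang's theorem on the building makes a $\sigma$-stable cuspidal pair conjugate to an $\F_q$-rational one.

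\textbf{Part (2).} Lusztig series are detected by pairing with $R_{\ol T}^{\ol G}(\theta)$. I will use the twisted character formula for Deligne--Lusztig characters of $\ol G(\F_{q^\ell})\rtimes\langle\sigma\rangle$, namely Digne's theorem that the Shintani descent of $R_{\ol T_{\F_{q^\ell}}}(\theta_\ell)$ is $R_{\ol T}(\theta)$, extended to the paraductive setting. Combined with the Glauberman identity and orthogonality, this gives:
\begin{itemize}
\item $\langle \wt\tau, R(\theta_\ell)\rangle$ is congruent modulo $\ell$, up to sign, to $\langle \ol\tau, R(\theta)\rangle$;
\item more robustly, one can compute $\langle \wt\tau, R(\theta_\ell)\rangle$ on the $\sigma$-twisted coset.
\end{itemize}
Nonvanishing of the right side then forces $\wt\tau \in \cE(\ol G_{\F_{q^\ell}},[\ol T,\theta_\ell])$. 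The uniqueness of $\theta_\ell$ uses $\ell \nmid |\ol T(\F_q)|$, which gives $H^1$-vanishing for the norm.

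\textbf{Part (3).} Glauberman is a bijection on $\sigma$-fixed irreducibles, and $\sigma$ fixes $\cE(\ol G_{\F_{q^\ell}},[\ol T,\theta_\ell])$ setwise. So it suffices to show that, for $\ell$ large, every member of the target series is $\sigma$-fixed and the preimage lies in $\cE(\ol G,[\ol T,\theta])$.
\begin{itemize}
\item The series partition is respected by Part (2). The remaining issue is that distinct pairs $(\ol T',\theta')$ over $\F_q$ must not become geometrically conjugate to $(\ol T,\theta_\ell)$ after base change.
\item For $\ell$ large relative to the finite set of relevant Weyl and component data, this holds by a counting argument: the number of pairs grows like $q^{\ell\cdot\rk}$, while the fixed points of $\sigma$ are controlled.
\item Alternatively, and more simply, compare cardinalities. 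Both series are parametrized via Lusztig's Jordan decomposition by unipotent data of the centralizer, and for $\ell$ coprime to the component groups Frobenius acts trivially on this data, so the cardinalities agree.
\end{itemize}
I expect this $\sigma$-fixedness of the target series to be the main obstacle.
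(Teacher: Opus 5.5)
Your proposal has genuine gaps in parts (1) and (3), and part (2) is only sketched.

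\textbf{Part (1).} Your argument proves only one direction. The identity $\rT^a(\sigma,\wt\tau)^{\ol U(\F_q)}\cong\rT^a(\sigma,\wt\tau^{\ol U(\F_{q^\ell})})$ is true. It needs more than $\ell\nmid|\ol U(\F_{q^\ell})|$: you also need that Glauberman for the $p$-group $\ol U(\F_{q^\ell})$ matches trivial with trivial, so the complement of the invariants contributes no $\ol U(\F_q)$-invariants. Given that, it yields \emph{$\wt\tau$ cuspidal $\Rightarrow$ $\ol\tau$ cuspidal}, which is the paper's Proposition~\ref{prop:tate-cohomology-cuspidal}.

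Your third bullet claims $\rT^a(\sigma,W)\neq 0$ exactly when $W=\wt\tau^{\ol U(\F_{q^\ell})}\neq 0$. This does not follow from Theorem~\ref{thm:intro-glauberman}, which concerns $\sigma$-stable irreducibles. Your $W$ is reducible, and Tate cohomology vanishes on $\ol\F_\ell[\sigma]$-projective pieces. Examples are free $\sigma$-orbits of non-stable constituents, or a stable constituent whose multiplicity space is $\ol\F_\ell[\sigma]$-free. So the converse direction is unproved.

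Rationality of parabolics is not the real issue. Since $\ell>\rk\ol G+1$, every parabolic of $\ol G_{\F_{q^\ell}}$ is $\ol G(\F_{q^\ell})$-conjugate to an $\F_q$-rational one (Lemma~\ref{lemma:split-rank-stays-same}). Also, intersecting $\sigma$-translates of a parabolic does not produce a parabolic.

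The paper's argument for the converse runs as follows.
\begin{enumerate}
\item Take a cuspidal $\chi$ of $\ol L(\F_{q^\ell})$ with $\wt\tau$ a constituent of $\ind_{\ol P(\F_{q^\ell})}^{\ol G(\F_{q^\ell})}\chi$.
\item Replace $\chi$ by a $\sigma$-stable $N_{\ol G}(\ol L)(\F_{q^\ell})$-conjugate, using uniqueness of cuspidal support and $\ell\nmid|N_{\ol G}(\ol L)(\F_{q^\ell})/\ol L(\F_{q^\ell})|$.
\item Set $\chi_0=\Gla^{-1}(\chi)$. By the Howlett--Lehrer formula and canonicity of $\Gla$, $\ind\chi$ and $\ind\chi_0$ have the same self-pairing.
\item Corollary~\ref{cor:induction-commutes-sometimes} then shows $\wt\tau$ is the Glauberman correspondent of a constituent of $\ind_{\ol P(\F_q)}^{\ol G(\F_q)}\chi_0$. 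Injectivity of $\Gla$ contradicts cuspidality of $\ol\tau$.
\end{enumerate}
Your Harish-Chandra remark gestures in this direction but supplies none of these steps.

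\textbf{Part (3).} The decisive idea is missing. You flag $\sigma$-fixedness of the target series as the main obstacle, but it has a simple solution: the target series $\cE(\ol G_{\F_{q^\ell}},[\ol T_{\F_{q^\ell}},\theta_\ell])$ has cardinality bounded independently of $\ell$. The growth of the total number of pairs is irrelevant. After passing to finite type:
\begin{itemize}
\item every maximal $\F_{q^\ell}$-torus comes from one of finitely many $\F_q$-tori;
\item each $R_{\ol T'}^{\ol G}(\theta')$ has boundedly many constituents by Lemma~\ref{lemma:dl-6.8-variant};
\item the relevant $\theta'$ have bounded order.
\end{itemize}
Since $\sigma$ permutes the series with orbits of size $1$ or $\ell$, once $\ell$ exceeds this bound every member is $\sigma$-stable and so lies in the image of $\Gla$.

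Your fallback through Lusztig's Jordan decomposition might work for connected $\ol G$ with connected center. It is a much heavier input, and you have not checked it for disconnected centralizers, semi-rational series, or paraductive groups.

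\textbf{Part (2).} Your core input, Digne's compatibility of Shintani descent with Deligne--Lusztig induction via Remark~\ref{remark:glauberman-shintani}, agrees with the paper's. However, membership in $\cE(\ol G,[\ol T,\theta])$ is not detected by pairing with the single $R_{\ol T}^{\ol G}(\theta)$. Moreover, the passage from connected to paraductive groups is not automatic. It is exactly the paper's chain of reductions, which your proposal leaves as ``extended to the paraductive setting'':
\begin{itemize}
\item an embedding with $\rH^1(\F_q,-)=\rH^1(\F_{q^\ell},-)=1$ from Lemma~\ref{lemma:embedding-into-large-tori};
\item then reduction to $\ol G^\circ\cdot\ol Z$ via Tate cohomology of induced modules;
\item then reduction to $\ol G^\circ$.
\end{itemize}
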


One direction of Corollary~\ref{cor:intro-glauberman-properties}(1), namely the fact that Tate cohomology sends cuspidal $\ol\F_\ell$-representations to cuspidal $\ol\F_\ell$-representations, was proven in \cite[Corollary~3.3.3]{DN25a}; the converse is a special feature of the Glauberman correspondence. When $\ol G$ is connected and $p$ is \emph{good} for $\ol G$, Corollary~\ref{cor:intro-glauberman-properties}(2) was proven in the more general setting of Shintani descent in \cite[Corollaire~3.5]{Dig99}, under the hypothesis that Deligne--Lusztig induction is independent of the choice of parabolic (now known by \cite[Theorem~8.7.2]{GRV26}). Note that Corollary~\ref{cor:intro-glauberman-properties}(3) shows that base change along an unramified extension of degree $\ell$ is ``harmless''; this will be important in \cite{CF26b} but is not necessary for Theorem~\ref{thm:intro-debacker-reeder}. In the case that $\ol G$ is connected, Corollary~\ref{cor:intro-glauberman-properties}(3) was established in \cite[Lemma~3.3]{Cot26a}, and the proof of Corollary~\ref{cor:intro-glauberman-properties}(3) is an amplification of that proof.

The identity \eqref{eqn:glauberman-condition} characterizing the Glauberman correspondence shows that it preserves the field of definition of a character, and since $\Q_\ell^{\unr}$ has trivial Brauer group, it preserves the field of definition of a representation. Thus one can apply the Glauberman correspondence to pass to a setting in which issues (A), (B), and (C) do not intervene in the ``twisted Levi functoriality'' situation (see Lemmas~\ref{lemma:integer-valued-characters} and \ref{lemma:character-values-irreducible-factors}). Thus we first use ``independence of $\ell$'' \cite[Theorem~1.1]{Sch25} and Theorem~\ref{thm:intro-glauberman} to pass to an unramified extension of $F$ of large prime degree $\ell_1$ so that $T(F)$ has a torsion element of large prime order $\ell_2$, and then we apply Corollary~\ref{cor:intro-tate-lower-bounds}(2) modulo $\ell_2$ to perform the induction step described above and thereby prove Theorem~\ref{thm:intro-debacker-reeder}.

\subsection{Outline of the paper}

In \S\ref{section:extended-dl-theory}, we develop the theory of paraductive $\F_q$-group schemes and extend Deligne--Lusztig theory to such groups. In \S\ref{section:rosetta-stone}, we provide various general methods for computing (or at least providing lower bounds for) Tate cohomology, and we use these methods to partially calculate Tate cohomology in various settings of interest. In \S\ref{sec:large-prime-degree-base-change}, we use our Tate cohomology calculations to establish the properties of the Glauberman correspondence described above. Finally, in \S\ref{section:debacker-reeder}, we recall the definition of $\rho^{\DR}$ and prove a few basic results about it, use most of the preceding theory to prove results about $\rho^{\FS}$, and finally prove Theorem~\ref{thm:intro-debacker-reeder}.

\subsection{Notation and conventions}

The symbol $q$ will always denote a power of the prime number $p$, and $\ell$ will always denote a prime number distinct from $p$.

If $H$ is a group scheme over a field $k$, then $H^\circ$ denotes the identity component of $H$. If $M \subset H$ is a closed subscheme, then $N_H(M)$ (resp.\ $Z_H(M)$) denotes the functorial normalizer (resp.\ centralizer) of $M$ in $H$, which will be representable by a closed $k$-subgroup scheme of $H$ in all situations in which it appears. If $H$ is smooth and connected, then $H_{\der}$ is the derived group of $H$ (in the sense of algebraic groups).

\subsection{Acknowledgements}

I thank Jeff Adler, Ad\`ele Bourgeois, Charlotte Chan, Stephen DeBacker, Tony Feng, Jessica Fintzen, Alex Hazeltine, Josh Lansky, Santosh Nadimpalli, Monica Nevins, David Schwein, Jack Sempliner, Loren Spice, and Jay Taylor for helpful conversations. I especially thank Tony Feng for suggesting various edits, as well as permission to include the proof of Theorem~\ref{thm:intro-debacker-reeder} in \S\ref{section:debacker-reeder}, which was developed as a variant of our joint work. The author did not employ AI tools in the preparation of this paper.
I acknowledge support from the National Science Foundation under Award No.\ 2402231 and the European Research Council (ERC) under the
European Union’s Horizon 2020 research and innovation programme (grant agreement no.\ 950326).

\section{Deligne--Lusztig theory for paraductive groups}\label{section:extended-dl-theory}

In this section, we generalize elements of Deligne--Lusztig theory to disconnected groups, extending the work done in \cite[\S 2]{Kal21b} and \cite{DM94}. The motivation for this generalization comes from the theory of $p$-adic reductive groups, where disconnected groups arise naturally as the special fibers of integral models, as we will explain below. Starting in \S2.3, the symbol $\ol G$ will be used to denote a paraductive $\F_q$-group scheme.

\subsection{Paraductive groups}\label{ss:main-paraductive-example}

The following definition describes one of the main classes of objects of interest to this paper.\footnote{The name is a shortened version of \emph{parareductive}, intended to sound similar to the Bruhat--Tits-sanctioned shortening \emph{parahoric}. It is meant to emphasize that these groups are closely related to reductive groups, but that they go ``beyond'' reductivity.}

\begin{defn}\label{def:paraductive}
    Let $k$ be a field. A smooth $k$-group scheme $\ol G$ is \textit{paraductive} if
    \begin{enumerate}
        \item $\ol G^\circ$ is reductive,
        \item $\pi_0(\ol G)(\ol k)$ is a finitely generated abelian group,
        \item $\ol G^\circ \cdot Z(\ol G)$ is of finite index in $\ol G$,
        \item if $\ol T^\circ \subset \ol G^\circ$ is a maximal $k$-torus, then $Z_{\ol G}(\ol T^\circ) = \ol T^\circ \cdot Z(\ol G)$.
    \end{enumerate}
\end{defn}

\begin{example}
    Paraductive group schemes form a slightly smaller class of groups than the one described by \cite[Assumption 2.1.1]{Kal21b}, which includes only assumptions (1), (2), and (3) of Definition~\ref{def:paraductive}. For example, let $x \in \F_q^\times$ and let $\ol G$ be the smooth $\F_q$-group scheme with underlying scheme $\G_m \times \underline{\Z}^2$ (where $\underline{\Z}^2$ is the constant $\F_q$-scheme corresponding to $\Z^2$) and multiplication
    \[
    (g_1, (m_1, n_1)) \cdot (g_2, (m_2, n_2)) = (ghx^{m_1n_2}, (m_1 + n_1, m_2 + n_2)).
    \]
    Then $\ol G$ satisfies \cite[Assumption 2.1.1]{Kal21b}, but it is not paraductive: indeed, $\ol G^\circ$ is a torus but $\ol G$ is not commutative, so it does not satisfy assumption (4) of Definition~\ref{def:paraductive}. In some sense, assumption (4) is designed to eliminate such central extensions from consideration.
\end{example}

We are interested in generalizing Deligne--Lusztig theory to paraductive $\F_q$-group schemes.\footnote{The reason for assumption (4) above is that it makes various definitions and results about Lusztig series cleaner, and, as we will see, it holds in all situations of interest.} Before doing so, we explain the main example. Although we are primarily interested in the case $k = \F_q$, it will occasionally be useful to allow $k = \ol\F_q$.

\begin{lemma}\label{lemma:surj-centralizers}
    Let $k$ be a field, and let $f\co H' \to H$ be a surjective $k$-homomorphism of smooth $k$-group schemes whose kernel is a unipotent $k$-group scheme. Let $S' \subset H'$ be a $k$-torus, and let $S = f(S')$. The map $f_0\co Z_{H'}(S') \to Z_H(S)$ is surjective.
\end{lemma}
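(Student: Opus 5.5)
The plan is to reduce to a statement over $\ol k$ about $\ol k$-points, and then to use a conjugacy argument for tori inside the preimage of $S$. Write $U = \ker f$. Since $Z_{H'}(S')$ and $Z_H(S)$ are smooth (centralizers of tori in smooth groups are smooth), it suffices to show that $f_0$ is surjective on $\ol k$-points. Both formation of centralizers of tori and surjectivity of a map of smooth group schemes can be checked after base change to $\ol k$, so we may assume $k = \ol k$.

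Let $h \in Z_H(S)(k)$ and choose any $h' \in H'(k)$ with $f(h') = h$; this is possible because $f$ is surjective and $k$ is algebraically closed. Consider the closed subgroup $P = f^{-1}(S) \subset H'$. It is an extension of $S$ by the unipotent group $U$, so $P^\circ$ is a smooth connected solvable group whose maximal tori map isomorphically onto $S$. In particular, $S'$ is a maximal torus of $P^\circ$: it lies in $P$, is connected, and has the same dimension as $S$, because $S' \cap U$ is finite, being a subgroup of multiplicative type inside a unipotent group, hence trivial.

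Since $h$ centralizes $S$, conjugation by $h'$ preserves $P$, and hence preserves $P^\circ$. Therefore $h'S'h'^{-1}$ is another maximal torus of $P^\circ$. By conjugacy of maximal tori in the connected solvable group $P^\circ$, there is $u \in P^\circ(k)$ with $uh'S'h'^{-1}u^{-1} = S'$. Moreover, $P^\circ = U' \rtimes S'$ with $U' = (U \cap P^\circ)$ the unipotent radical, so we may take $u \in U'(k) \subset U(k)$. Then $uh'$ normalizes $S'$ and satisfies $f(uh') = h$.

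The remaining step, and the one needing the most care, is upgrading "normalizes" to "centralizes". The element $uh'$ induces an automorphism of $S'$, and via the isomorphism $f|_{S'}\co S' \to S$ this automorphism corresponds to conjugation by $h$ on $S$, which is trivial. Hence $uh' \in Z_{H'}(S')(k)$ maps to $h$, which proves surjectivity.

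The subtle points are these. First, one must check that $f|_{S'}$ is an isomorphism onto $S$: its kernel is of multiplicative type and contained in $U$, hence trivial, and in positive characteristic one should use that $S' \cap U$ is trivial scheme-theoretically. Second, one must justify that the unipotent radical conjugates any maximal torus to any other; this is the standard structure theory of connected solvable groups over $\ol k$.
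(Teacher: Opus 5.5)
Your argument is correct and is essentially the paper's: reduce to $k=\ol k$, conjugate $h'S'h'^{-1}$ back to $S'$ inside $f^{-1}(S)$ using conjugacy of maximal tori, and then use that $f|_{S'}\co S'\to S$ is an isomorphism to upgrade normalizing to centralizing. Strictly, if $\ker f$ is not smooth then $f^{-1}(S)$ need not be smooth, so the conjugacy should be applied in $(f^{-1}(S)_{\mathrm{red}})^\circ$; the paper glosses over this too. The only real difference is that you take the conjugator in the unipotent radical, so it already lies in $\ker f$. The paper instead conjugates by an arbitrary $v\in f^{-1}(S)(k)$ and afterwards multiplies by $(s')^{-1}$ for some $s'\in S'(k)$ with $f(s')=f(v)$.
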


\begin{proof}
    We may and do assume $k = \ol k$, so it suffices to show that $f_0$ is surjective on $k$-points. Let $h \in Z_H(S)(k)$ and choose $h'_0 \in H'(k)$ such that $f(h'_0) = h$. Since $\ker f$ is unipotent, the map $f|_{S'}\co S' \to S$ is an isomorphism. Note that $S'$ and $h'_0S'(h'_0)^{-1}$ are both maximal tori of $f^{-1}(S)$, so there exists $v \in f^{-1}(S)(k)$ such that
    \[
    v\bigl(h'_0S'(h'_0)^{-1}\bigr)v^{-1}=S'.
    \]
    Thus $vh'_0$ normalizes $S'$. The induced automorphism of $S'$ maps under $f|_{S'}$ to conjugation by $f(v)h$ on $S$, which is trivial because $f(v) \in S(k)$ and $h \in Z_H(S)(k)$. Hence $vh'_0$ centralizes $S'$. Choose $s' \in S'(k)$ such that $f(s')=f(v)$. $(s')^{-1}vh'_0 \in Z_{H'}(S')(k)$ maps to $h$ under $f_0$, as desired.
\end{proof}

Let $F$ be a non-archimedean local field with residue field $\F_q$, and let $G$ be a connected reductive $F$-group. Let $x$ be a point of the enlarged Bruhat--Tits building $\cB(G)$, and let $[x]$ denote its image in $\cB(G_{\der})$. According to Bruhat--Tits theory \cite[Remark 8.3.4]{KP}, there exists a canonical smooth separated $\cO_F$-group scheme $\cG_{[x]}$ with generic fiber $G$ and which satisfies $\cG_{[x]}(\cO_F) = G(F)_{[x]}$. Let $\ol\cG_{[x]}$ denote the special fiber of $\cG_{[x]}$, and let $\ol G_{[x]}$ denote the quotient of $\ol\cG_{[x]}$ by the unipotent radical of $\ol\cG_{[x]}^\circ$.

\begin{prop}\label{prop:main-paraductive-example}
    The $\F_q$-group scheme $\ol G_{[x]}$ is paraductive.
\end{prop}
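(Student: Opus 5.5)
We verify the four conditions of Definition~\ref{def:paraductive} for $\ol G_{[x]} = \ol\cG_{[x]}/R_u(\ol\cG_{[x]}^\circ)$, working over $\ol\F_q$ where needed. Equivalently, we may base change to $\breve F$, the completion of the maximal unramified extension. Formation of $\cG_{[x]}$ is compatible with unramified base change by \cite{KP}, and the quotient is smooth because $R_u$ is a smooth connected normal subgroup.

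Conditions (1) and (2) follow from standard Bruhat--Tits facts. The identity component $\ol G_{[x]}^\circ = \ol\cG^\circ_{[x]}/R_u$ is the reductive quotient of the special fiber of the parahoric $\cG^\circ_{[x]}$, which gives (1). For (2), we identify $\pi_0(\ol G_{[x]})(\ol\F_q) = \pi_0(\ol\cG_{[x]})(\ol\F_q)$ with $G(\breve F)_{[x]}/G(\breve F)_{x,0}$. By the Kottwitz map, this embeds into the stabilizer of the facet in $\pi_1(G)_{I}$, extended by the translation part coming from $Z(G)$. More concretely, $G(\breve F)_{[x]}/G(\breve F)_{x,0}$ sits inside the group $\Omega$-type extension $\pi_1(G)_I \times$ (the part fixing $[x]$). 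To see that it is abelian and finitely generated, we use the description $G(\breve F)_{[x]} \subset G(\breve F)^1 \cdot$ (lifts of the stabilizer $\Omega_{[x]}$) together with the Iwahori--Weyl group $\wt W = N(\breve F)/T(\breve F)_0$. The quotient is $\wt W_{[x]}/W_{x,\mathrm{aff}}$, and this is a subgroup of the abelian group $\pi_1(G)_I$ via the Kottwitz map, since $W_{x,\mathrm{aff}} = \wt W_{[x]}\cap W_{\mathrm{aff}}$. Finite generation follows because $\pi_1(G)_I$ is finitely generated.

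For (3), let $A \subset Z(G)$ be the maximal split central torus, or better the full center. Then $Z(G)(\breve F)$ stabilizes $[x]$, and its image in $\pi_1(G)_I$ has finite index: $Z(G)^\circ \to G \to G/G_{\der}$ is an isogeny, so $\pi_1(Z^\circ)_I \to \pi_1(G)_I$ has finite cokernel up to torsion. Its elements in $\cG_{[x]}(\cO)$ map into $Z(\ol G_{[x]})$. This last point needs a check: central elements of $G(\breve F)$ lying in $G(\breve F)_{[x]}$ are central in the special fiber, by density of $\cO$-points (schematic closure) and the fact that $\ol\cG_{[x]}$ is smooth, so $\breve{\cO}$-points are schematically dense.

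Condition (4) is the main obstacle. My plan is first to use Lemma~\ref{lemma:surj-centralizers} with $f\co \ol\cG_{[x]} \to \ol G_{[x]}$, reducing to computing $Z_{\ol\cG_{[x]}}(S')$ for a lift $S'$ of $\ol T^\circ$. Next, I would lift $S'$ to a maximal $\breve F$-split torus $\cS \subset \cG_{[x]}$ over $\breve\cO$, with generic fiber a maximal $\breve F$-split torus $S$ whose apartment contains $x$. Then I would take $Z_{\cG_{[x]}}(\cS)$, which is smooth with fiber $Z_{\ol\cG_{[x]}}(S')$. The generic fiber of this centralizer is $T = Z_G(S)$, the maximally unramified torus. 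Its $\breve\cO$-points are $T(\breve F)\cap G(\breve F)_{[x]}$, which equals $T(\breve F)_{0}\cdot(\text{elements fixing } [x])$. All of $T(\breve F)$ acts on the apartment by translations, so the ones fixing $[x]$ act trivially on $[x]$-apartment-roots and lie in $T(\breve F)_b\cdot Z$. I would then show that their images are $\ol T^\circ\cdot Z(\ol G_{[x]})$. An element of $T(\breve F)$ translating trivially modulo $X_*(Z)$ acts by a character on each affine root subgroup, and because the translation is trivial on $[x]$ these values are units. I must then show its reduction centralizes the root groups of $\ol G_{[x]}^\circ$ and the component group, and this is delicate. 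The component-group part uses the abelianness from (2). For commutation with lifts of $\Omega_{[x]}$, I would use that translations in $\wt W$ commute modulo $T_0$ with elements of $\Omega$ only up to the $W$-action. I expect this bookkeeping — showing that $T(\breve F)_{[x]}$ reduces into $\ol T^\circ Z(\ol G_{[x]})$ — to be the hardest step. If needed, I would handle it by passing to a $z$-extension to reduce to $G_{\der}$ simply connected, where $\ol G_{[x]}$ is connected.
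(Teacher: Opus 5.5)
Your verification of conditions (1)--(3) is a direct argument for what the paper simply imports from \cite[\S 3.2]{Kal21b}, and it is essentially sound. The Kottwitz map gives $G(\breve F)_{[x]}/G(\breve F)_{x,0}\hookrightarrow \pi_1(G)_I$. The image of $Z(G)(\breve F)$ has full rank there. Flatness of $\cG_{[x]}$ shows that central elements of $G$ reduce to central elements. For (4) your setup also matches the paper's: Lemma~\ref{lemma:surj-centralizers} reduces you to the image in $\ol G_{[x]}$ of $\ol\cT=Z_{\ol\cG_{[x]}}(\ol\cS)$, the special fiber of the closure of $T=Z_G(S)$, whose $\breve{\cO}$-points are $T(\breve F)\cap G(\breve F)_{[x]}$. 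But you stop exactly where the content lies. You never show that this image is contained in $\ol S^\circ\cdot Z(\ol G_{[x]})$; you leave it as the ``hardest step''. So the proof of (4) is incomplete.

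\emph{The missing input.} The paper takes it from the justification of \cite[Notation~2.1.2]{Kal21b} in \cite[\S 3.2]{Kal21b}: the image of $\ol\cT$ in $\ol G_{[x]}$ is exactly $\ol S^\circ\cdot\ol Z$, where $\ol Z$ is the image of the special fiber of the schematic closure $\cZ$ of $Z(G)$ in $\cG_{[x]}$. Centrality of $\ol Z$ is then automatic, since the commutator map on $\cG_{[x]}\times\cZ$ is trivial on the dense generic fiber. This is precisely where commutation with the non-identity components comes from.

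\emph{Why your proposed routes fail.}
\begin{enumerate}
\item Reductions of elements of $T(\breve F)\cap G(\breve F)_{[x]}$ do not centralize root groups; those in $T(\breve F)_0$ reduce onto $\ol S^\circ$. What is needed is a factorization, not a centralization.
\item Centralizing $\ol G_{[x]}^\circ$ together with abelianness of $\pi_0$ does not give centrality. The paper's own non-paraductive example $\G_m\times\underline{\Z}^2$ has abelian $\pi_0$ and central identity component, yet fails (4).
\item The $z$-extension fallback rests on a false claim: $G_{\der}$ simply connected does not make $\ol G_{[x]}$ connected. For $G=\GL_2$ and $[x]$ the midpoint of an edge, $\bigl(\begin{smallmatrix}0&1\\ \varpi&0\end{smallmatrix}\bigr)$ stabilizes $[x]$. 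It acts by swapping on the diagonal torus $\ol G_{[x]}^\circ$ and generates $\pi_0(\ol G_{[x]})\cong\Z$.
\end{enumerate}
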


\begin{proof}
    Conditions (1)-(3) in Definition~\ref{def:paraductive} have been verified in \cite[\S 3.2]{Kal21b}; thus it suffices to prove condition (4). It is clear that if $\ol S^\circ \subset \ol G_{[x]}$ is a maximal $\F_q$-torus, then $Z_{\ol G_{[x]}}(\ol S^\circ)$ contains $\ol S^\circ \cdot Z(\ol G_{[x]})$, so we need only show the reverse containment.

    Let $S \subset G$ be a maximally split maximal unramified maximal $F$-torus such that $x$ lies in the apartment $\cA(S)$. By \cite[Axiom 4.1.20]{KP}, if $\cS$ is the $\cO_F$-torus with generic fiber $S$, then there is a natural monic $\cO_F$-homomorphism $\cS \to \cG_{[x]}$ such that the special fiber $\ol\cS$ is a maximal $\F_q$-torus of $\ol\cG_{[x]}$. Let $\ol S^\circ$ denote the image of $\ol\cS$ in $\ol G_{[x]}$, which is a maximal $\F_q$-torus by \cite[Proposition~11.14(1)]{Bor91}. Recall that the centralizer $T = Z_G(S)$ is a maximal $F$-torus of $G$ by \cite[Remark 16.4]{KP}. If $\cT$ denotes the schematic closure of $T$ in $\cG_{[x]}$ and $\cZ$ denotes the schematic closure of $Z(G)$ in $\cG_{[x]}$, then the justification of \cite[Notation 2.1.2]{Kal21b} in \cite[\S 3.2]{Kal21b} shows that the image of $\ol\cT$ in $\ol G_{[x]}$ is equal to $\ol S^\circ \cdot \ol Z$, where $\ol Z$ is the image of $\ol\cZ$ in $\ol G_{[x]}$. But Lemma~\ref{lemma:surj-centralizers} shows that the map $\ol\cT = Z_{\ol\cG_{[x]}}(\ol\cS) \to Z_{\ol G_{[x]}}(\ol S^\circ)$ is surjective, so indeed $Z_{\ol G_{[x]}}(\ol S^\circ) \subset \ol S^\circ \cdot Z(\ol G_{[x]})$, as desired.
\end{proof}

\subsection{Twisted Levis for paraductive groups}

Recall that if $k$ is a field, then a closed $k$-subgroup scheme $\ol L^\circ \subset \ol G^\circ$ of a connected reductive $k$-group $\ol G^\circ$ is called a \textit{twisted Levi subgroup} if $\ol L^\circ_{\ol k}$ is a Levi factor of a parabolic $\ol k$-subgroup of $\ol G^\circ_{\ol k}$. We need a definition of twisted Levi subgroups of disconnected reductive groups. 

\begin{defn}\label{def:twisted-levi}
    If $\ol G$ is a paraductive group scheme over a field $k$ and $\ol S^\circ \subset \ol G^\circ$ is a $k$-torus, then $\ol H\coloneqq Z_{\ol G}(\ol S^\circ)$ is called a \textit{twisted Levi subgroup} of $\ol G$. If $\ol S^\circ$ is a maximal torus of $\ol G^\circ$, then we will call $\ol H$ a \textit{generalized maximal torus}.
\end{defn}

\begin{example}
    In the setting of Proposition~\ref{prop:main-paraductive-example}, if $L \subset G$ is a twisted Levi $F$-subgroup such that $x \in \cB(L)$ (under any choice of embedding $\cB(L) \subset \cB(G)$), the group $\ol L_{[x]}$ (where $[x]$ is still the image of $x$ in $\cB(G_{\der})$, not $\cB(L_{\der})$) is naturally a twisted Levi $\F_q$-subgroup of $\ol G_{[x]}$: this follows from Lemma~\ref{lemma:surj-centralizers}.
    
    If $G = \PGL_2$ and $x$ is the midpoint of an alcove in $\cB(\PGL_2)$, then $\ol G_{[x]} \cong \G_m \rtimes \Z/2$, where $\Z/2$ acts on $\G_m$ nontrivially. Note that $\ol G_{[x]}$ is a twisted Levi subgroup of itself, but it is not a generalized maximal torus despite having torus identity component.
\end{example}

\begin{lemma}\label{lemma:paraductive-permanence}
    If $\ol G$ is a paraductive $k$-group scheme and $\ol L \subset \ol G$ is a twisted Levi $k$-subgroup, then $\ol L$ is paraductive.
\end{lemma}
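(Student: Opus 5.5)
We verify the four conditions of Definition~\ref{def:paraductive} for $\ol L = Z_{\ol G}(\ol S^\circ)$ in turn, after first passing to $\ol k$ where harmless. Smoothness of $\ol L$ follows since centralizers of tori in smooth group schemes are smooth. The key structural input is that $\ol L^\circ = Z_{\ol G^\circ}(\ol S^\circ)$, which is a connected reductive group (a twisted Levi of $\ol G^\circ$). This gives condition (1). To see it, note that $Z_{\ol G^\circ}(\ol S^\circ)$ is connected and of finite index in $Z_{\ol G}(\ol S^\circ)$, being the intersection with the open subgroup $\ol G^\circ$.

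For condition (3), observe that $Z(\ol G) \subset \ol L$ and $Z(\ol G) \subset Z(\ol L)$. Hence $\ol L^\circ \cdot Z(\ol L) \supset \ol L^\circ \cdot Z(\ol G)$. It therefore suffices to show that $\ol L^\circ\cdot Z(\ol G)$ has finite index in $\ol L$. Since $\ol G^\circ \cdot Z(\ol G)$ has finite index in $\ol G$, the subgroup $\ol L \cap (\ol G^\circ\cdot Z(\ol G))$ has finite index in $\ol L$. Any element of it can be written as $gz$ with $g \in \ol G^\circ$ and $z\in Z(\ol G)$. Because $z$ is central and centralizes $\ol S^\circ$, we get $g \in Z_{\ol G^\circ}(\ol S^\circ) = \ol L^\circ$. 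This proves (3).

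For condition (2), the group $\pi_0(\ol L)(\ol k)$ is the quotient $\ol L(\ol k)/\ol L^\circ(\ol k)$, and by the previous paragraph it contains $\ol L^\circ Z(\ol G)/\ol L^\circ$ with finite index. Finite generation should follow from the following facts:
\begin{itemize}
\item $Z(\ol G)(\ol k)/(Z(\ol G)\cap \ol G^\circ)(\ol k)$ injects into $\pi_0(\ol G)(\ol k)$, which is finitely generated abelian;
\item $\ol L^\circ Z(\ol G)/\ol L^\circ$ is a quotient of $Z(\ol G)/(Z(\ol G)\cap \ol L^\circ)$;
\item $Z(\ol G)\cap \ol G^\circ$ is a diagonalizable-by-finite group whose $\ol k$-points differ from those of $Z(\ol G) \cap \ol L^\circ$ by a finite group, since $Z(\ol G)\cap\ol G^\circ \subset Z(\ol G^\circ)\subset \ol L^\circ$.
\end{itemize}
In fact the last point shows the two groups coincide. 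Hence $\ol L^\circ Z(\ol G)/\ol L^\circ$ is a subgroup of the finitely generated abelian group $\pi_0(\ol G)(\ol k)$. Abelianness of the full $\pi_0(\ol L)(\ol k)$ is the delicate point. Let $\ol T^\circ \supset \ol S^\circ$ be a maximal torus of $\ol L^\circ$, which is also maximal in $\ol G^\circ$. I would try to show that every component of $\ol L$ meets $Z_{\ol L}(\ol T^\circ)$: if $l\in \ol L$, then $l\ol T^\circ l^{-1}$ is a maximal torus of $\ol L^\circ$, hence $\ol L^\circ$-conjugate to $\ol T^\circ$. So after adjusting, $l \in N_{\ol L}(\ol T^\circ)$, and I hope to further adjust by the Weyl group of $\ol L^\circ$ to centralize. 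This last step is the expected obstacle: I would instead compare with $\ol G$, where condition (4) gives $N_{\ol G}(\ol T^\circ) \subset N_{\ol G^\circ}(\ol T^\circ)\cdot Z(\ol G)$. This follows by the same Weyl group argument in $\ol G$ applied together with (4); alternatively, one can simply use $\pi_0(\ol L)\hookrightarrow$ a subquotient of $\pi_0(\ol G)$ via $\ol L/\ol L^\circ \to \ol G/\ol G^\circ$, whose kernel $(\ol L\cap\ol G^\circ)/\ol L^\circ$ is the group $\pi_0(Z_{\ol G^\circ}(\ol S^\circ))$, which is trivial. I expect this injectivity argument to be the cleanest route. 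It gives (2) directly, since subgroups of finitely generated abelian groups are finitely generated abelian.

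Finally, condition (4): let $\ol T^\circ$ be a maximal torus of $\ol L^\circ$; it is maximal in $\ol G^\circ$ and contains $\ol S^\circ$, since $\ol S^\circ$ is central in $\ol L^\circ$. Then
\[
Z_{\ol L}(\ol T^\circ) \subset Z_{\ol G}(\ol T^\circ) = \ol T^\circ\cdot Z(\ol G) \subset \ol T^\circ \cdot Z(\ol L),
\]
using $Z(\ol G)\subset Z(\ol L)$. The reverse inclusion is clear, which finishes the proof.
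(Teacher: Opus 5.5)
Your proposal is correct and follows the paper's route: $\ol L^\circ = Z_{\ol G^\circ}(\ol S^\circ)$ gives (1), the injection $\pi_0(\ol L)(\ol k) \hookrightarrow \pi_0(\ol G)(\ol k)$ gives (2) (its kernel is trivial because $\ol L \cap \ol G^\circ = \ol L^\circ$), and your direct checks of (3) and (4) fill in what the paper calls immediate. There are two slips that do not affect your final argument: $\ol L^\circ$ is open in $\ol L$ but need not have finite index (e.g.\ when $\pi_0(\ol G)(\ol k) \cong \Z$), and the abandoned claim $N_{\ol G}(\ol T^\circ) \subset N_{\ol G^\circ}(\ol T^\circ)\cdot Z(\ol G)$ is false in general (it fails for the paraductive group $\G_m \rtimes \Z/2$), so it is good that your proof of (2) does not rely on it.
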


\begin{proof}
    Let $\ol S^\circ$ be the maximal central $k$-torus of $\ol L$, so $\ol L = Z_{\ol G}(\ol S^\circ)$. Note that $Z_{\ol G^\circ}(\ol S^\circ)$ is a twisted Levi $k$-subgroup of $\ol G^\circ$, so $\ol L^\circ$ is reductive and $\pi_0(\ol L)(\ol k) \subset \pi_0(\ol G)(\ol k)$, proving assumptions (1) and (2) of Definition~\ref{def:paraductive}. Assumptions (3) and (4) are immediate.
\end{proof}

\subsection{Parabolic Deligne--Lusztig varieties}\label{ss:ns-dl-reps} 
Let $\ol L \subset \ol G$ be a twisted Levi subgroup, let $\ol P^\circ \subset \ol G^\circ_{\ol\F_q}$ be a parabolic subgroup of $\ol G^\circ_{\ol\F_q}$ with Levi factor $\ol L^\circ_{\ol\F_q}$, and let $\ol U$ denote the unipotent radical of $\ol P^\circ$. We define the associated \emph{Deligne--Lusztig variety}
\[
Y_{\ol U}^{\ol G} \coloneqq \{g \in \ol G_{\ol\F_q}\co g^{-1}\Fr_q(g) \in \ol U \cdot \Fr_q(\ol U)\}/\ol U.
\]
Note that $Y_{\ol U}^{\ol G}$ admits commuting left actions of $\ol G(\F_q)$ by left multiplication and $\ol L(\F_q)$ by inverted right multiplication, and these two actions agree on $Z(\ol G)(\F_q) \subset \ol L(\F_q)$. Previously, \cite{Kal21b} developed Deligne--Lusztig theory for $\F_q$-group schemes $\ol G$ satisfying every hypothesis in Definition~\ref{def:paraductive} except (4), in the case where $\ol L = \ol T^\circ \cdot \ol Z$ for some central $\F_q$-subgroup scheme $\ol Z \subset \ol G$ such that $\ol G^\circ \cdot \ol Z$ is of finite index in $\ol G$. Our definitions clearly agree in cases of overlap. Also, \cite{DM94} developed a form of Deligne--Lusztig theory for disconnected (finite type) reductive groups over $\F_q$ which is slightly different than ours, because they use a slightly different generalization of parabolic subgroups than we (implicitly) use. We will see soon (Lemma~\ref{lemma:agreement-with-digne-michel}) that their definition agrees with ours in some cases of overlap; the following lemma implies that it does not agree in \emph{all} cases of overlap.

\begin{lemma}\label{lemma:disc-dl-induction}
    Let $\ol G$ be a paraductive $\F_q$-group scheme, let $\ol L \subset \ol G$ be a twisted Levi subgroup, and let $i \in \Z$.
    \begin{enumerate}
        \item As a $1 \times \ol L(\F_q)$-representation, we have 
        \[
        \rH_c^i(Y_{\ol U}^{\ol G^\circ\cdot \ol L}, \ol\Q_\ell) \cong \rH_c^i(Y_{\ol U}^{\ol G^\circ}, \ol\Q_\ell) \otimes_{\ol\Q_\ell[1 \times \ol L^\circ(\F_q)]} \ol\Q_\ell[1 \times \ol L(\F_q)],
        \]
        \item As a $\ol G(\F_q) \times 1$-representation, we have 
        \[
        \rH_c^i(Y_{\ol U}^{\ol G}, \ol\Q_\ell) \cong \ind_{(\ol G^\circ \cdot \ol L)(\F_q) \times 1}^{\ol G(\F_q) \times 1} \rH_c^i(Y_{\ol U}^{\ol G^\circ \cdot \ol L}, \ol\Q_\ell).
        \]
    \end{enumerate}
    In particular, if $\rho$ is an irreducible $\ol\Q_\ell$-representation of $\ol L(\F_q)$ then $\rH_c^i(Y_{\ol U}^{\ol G}, \ol\Q_\ell) \otimes_{\ol\Q_\ell[\ol L(\F_q)]} \rho$ is a finite-dimensional $\ol\Q_\ell$-representation of $\ol G(\F_q)$.
\end{lemma}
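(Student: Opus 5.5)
The plan is to decompose the Deligne--Lusztig variety geometrically into pieces indexed by cosets, and then read off both statements from this decomposition. Throughout, $\ol U \subset \ol G^\circ$, and the defining condition $g^{-1}\Fr_q(g) \in \ol U\cdot\Fr_q(\ol U)$ forces $g^{-1}\Fr_q(g) \in \ol G^\circ$.

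For (2), let $Y = Y_{\ol U}^{\ol G}$ and use the Lang map $\ol G_{\ol\F_q}\to\ol G_{\ol\F_q}$, $g \mapsto g^{-1}\Fr_q(g)$. Since $g^{-1}\Fr_q(g)\in \ol G^\circ$, the image of $g$ in $\pi_0(\ol G)(\ol\F_q)$ is $\Fr_q$-fixed. Next, I would show that every $\Fr_q$-fixed component containing a point of $Y$ has an $\F_q$-rational point lying in $(\ol G^\circ\cdot\ol L)$-translates. More precisely, the claim is that
\[
Y=\bigsqcup_{\gamma\in \ol G(\F_q)/(\ol G^\circ\cdot\ol L)(\F_q)} \gamma\, Y_{\ol U}^{\ol G^\circ\cdot \ol L}.
\]

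To prove the claim, take $g\in Y$. By Lang's theorem on the connected group $\ol G^\circ$ there is $h\in\ol G^\circ$ with $h^{-1}\Fr_q(h)=g^{-1}\Fr_q(g)$. Then $\gamma:=gh^{-1}$ is $\Fr_q$-fixed, so $\gamma\in \ol G(\F_q)$ and $g\in \gamma\ol G^\circ$. This shows $Y$ is covered by the translates $\gamma Y_{\ol U}^{\ol G^\circ}$ with $\gamma\in\ol G(\F_q)$. These translates are open and closed since they are unions of connected components of $\ol G_{\ol\F_q}$. Grouping them by cosets of $(\ol G^\circ\cdot\ol L)(\F_q)$ gives the displayed decomposition, which is $\ol G(\F_q)$-equivariant with stabilizer $(\ol G^\circ\cdot\ol L)(\F_q)$, and the right $\ol L(\F_q)$-action preserves each piece. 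Taking $\rH^i_c$ of this disjoint union then yields the induced representation in (2).

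For (1), I run the same argument with $\ol G^\circ\cdot\ol L$ in place of $\ol G$, but now I use the right action of $\ol L(\F_q)$. The components $\ol G^\circ\ell$ for $\ell\in\ol L(\F_q)$ cover $\ol G^\circ\cdot\ol L$. It remains to check that each point of $Y_{\ol U}^{\ol G^\circ\cdot\ol L}$ lies in some $Y_{\ol U}^{\ol G^\circ}\ell$. The $\Fr_q$-fixed point in a component can be chosen in $\ol L$ by Lang's theorem for $\ol L^\circ$, by the same argument as above. Right multiplication by $\ell\in\ol L(\F_q)$ preserves $Y$ because $\ell$ normalizes $\ol U$. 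This last point is the one place where I need care: $\ol U$ is normalized by $\ol L$ only if the parabolic $\ol P^\circ$ is $\ol L$-stable. I expect this to be the main obstacle. I would argue it via the twisted Levi structure: $\ol L=Z_{\ol G}(\ol S^\circ)$, and $\ol P^\circ$ is determined by a cocharacter of $\ol S^\circ$ which $\ol L$ centralizes, so $\ol L$ normalizes $\ol P^\circ$ and hence $\ol U$.

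With this in hand, $Y_{\ol U}^{\ol G^\circ\cdot\ol L}=\bigsqcup_{\ell\in \ol L(\F_q)/\ol L^\circ(\F_q)} Y_{\ol U}^{\ol G^\circ}\ell$. Here the stabilizer of the piece $Y_{\ol U}^{\ol G^\circ}$ under the right action is $\ol L(\F_q)\cap\ol G^\circ=\ol L^\circ(\F_q)$, using that $Z_{\ol G^\circ}(\ol S^\circ)$ is connected. Taking cohomology gives the tensor-induction formula in (1).

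For the final assertion, I argue finite-dimensionality as follows. $\rH^i_c(Y_{\ol U}^{\ol G^\circ})$ is finite-dimensional, since $Y_{\ol U}^{\ol G^\circ}$ is of finite type. By (1), $\rH^i_c(Y^{\ol G^\circ\cdot\ol L})\otimes_{\ol L(\F_q)}\rho \cong \rH^i_c(Y^{\ol G^\circ})\otimes_{\ol L^\circ(\F_q)}\rho$, which is finite-dimensional. By (2), inducing this up gives the result, provided the index $[\ol G(\F_q):(\ol G^\circ\cdot\ol L)(\F_q)]$ is finite. That index is finite because $\ol L\supset Z(\ol G)$ and $\ol G^\circ\cdot Z(\ol G)$ has finite index by condition (3) of Definition~\ref{def:paraductive}.
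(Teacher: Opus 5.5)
Your proof is correct and follows the paper's route. The paper's proof just says to split $Y_{\ol U}^{\ol G}$ into connected components and keep track of the two actions. Your arguments supply exactly the details behind that: Lang's theorem for $\ol G^\circ$ in (2), and Lang's theorem for $\ol L^\circ$ together with $\ol L\cap\ol G^\circ=\ol L^\circ$ in (1). Your finite-index argument for the last claim is also the paper's.

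The one thing to fix is your justification that $\ol L(\F_q)$ normalizes $\ol U$. It is not true that every parabolic with Levi factor $\ol L^\circ$ has the form $P_{\ol G^\circ}(\lambda)$ with $\lambda$ a cocharacter of $\ol S^\circ$. For a counterexample, let $\ol G=(\SL_2\times\SL_2)\rtimes\langle w\rangle$, where $w$ swaps the two factors; one checks that $\ol G$ is paraductive. Let $\ol S^\circ$ be the diagonal copy of the diagonal torus $D$, so that $\ol L=(D\times D)\rtimes\langle w\rangle$. Then $\ol P^\circ=B_+\times B_-$ has Levi factor $\ol L^\circ=D\times D$. However, $w\in\ol L(\F_q)$ conjugates $\ol P^\circ$ to $B_-\times B_+$, so right multiplication by $w$ does not preserve $Y_{\ol U}^{\ol G}$.

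This does not damage your proof of the lemma. The right $\ol L(\F_q)$-action on $Y_{\ol U}^{\ol G}$ is part of the standing setup that the statement presupposes (the paper asserts it just before the lemma). It should be read as a constraint on the choice of $\ol U$, not as something that holds automatically. The constraint can always be met: take $\ol P^\circ=P_{\ol G^\circ}(\lambda)$ for some $\lambda\in X_*(\ol S^\circ)$ with $Z_{\ol G^\circ}(\lambda)=\ol L^\circ$, which always exists. This is the case your argument actually covers.
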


\begin{proof}
    Both (1) and (2) are clear from the definition of $Y_{\ol U}^{\ol G}$: the key point is just to decompose the cohomology of $Y_{\ol U}^{\ol G}$ into the sum of the cohomology of its connected components and keep track of the actions. For the final claim, observe that
    \begin{align*}
    \rH_c^i(Y_{\ol U}^{\ol G}, \ol\Q_\ell) \otimes_{\ol\Q_\ell[\ol L(\F_q)]} \rho &\cong \ind_{\ol G^\circ(\F_q) \cdot \ol L(\F_q)}^{\ol G(\F_q)} \left(\rH_c^i(Y_{\ol U}^{\ol G^\circ}, \ol\Q_\ell) \otimes_{\ol\Q_\ell[\ol L^\circ(\F_q)]} \rho\right)
    \end{align*}
    by (1). Since $\ol G^\circ(\F_q) \cdot \ol L(\F_q)$ is of finite index in $\ol G(\F_q)$, the claim follows.
\end{proof}

\subsection{Lusztig induction}\label{ss:lusztig-ind}
In this section, let $\ol G$ be a paraductive $\F_q$-group scheme and let $\ol L \subset \ol G$ be a twisted Levi $\F_q$-subgroup. Note that because $\ol G^\circ(\F_q)$ is finite and $\ol G^\circ(\F_q) \cdot Z(\ol G)(\F_q)$ is of finite index in $\ol G(\F_q)$, every irreducible $\ol\Q_\ell$-representation of $\ol G(\F_q)$ is finite-dimensional. In particular, the K-group $K_0(\Rep_{\ol\Q_\ell}(\ol G(\F_q)))$ of the category of finite-dimensional $\ol\Q_\ell$-representations of $\ol G(\F_q)$ has basis consisting of the irreducible representations. We define the \emph{Lusztig induction} 
\[
R_{\ol L}^{\ol G}\co K_0(\Rep_{\ol\Q_\ell}(\ol L(\F_q))) \to K_0(\Rep_{\ol\Q_\ell}(\ol G(\F_q)))
\]
as follows: if $\rho$ is an irreducible $\ol\Q_\ell$-representation of $\ol L(\F_q)$, then we define
\[
\rH_c^i(Y_{\ol U}^{\ol G}, \ol\Q_\ell)_\rho \coloneqq \rH_c^i(Y_{\ol U}^{\ol G}, \ol\Q_\ell) \otimes_{\ol\Q_\ell[\ol L(\F_q)]} \rho
\]
and we set
\[
R_{\ol L}^{\ol G}(\rho) = \sum_{i \geq 0} (-1)^i \rH_c^i(Y_{\ol U}^{\ol G}, \ol\Q_\ell)_\rho,
\]
This alternating sum is well-defined by the final claim of Lemma~\ref{lemma:disc-dl-induction}. Moreover, Lemma~\ref{lemma:disc-dl-induction}(2) shows that
\[
R_{\ol L}^{\ol G}(\rho) \cong \ind_{(\ol G^\circ\cdot \ol L)(\F_q) \times 1}^{\ol G(\F_q) \times 1} R_{\ol L}^{\ol G^\circ \cdot \ol L}(\rho).
\]
Note that $(Z(\ol G) \cap \ol L)(\bF_q)$ acts on $R_{\ol L}^{\ol G}(\rho)$ through the same character as the one through which it acts on $\rho$. In the special case that $\ol L = \ol T$ is a generalized maximal torus of $\ol G$, we will refer to $R_{\ol T}^{\ol G}$ as \emph{Deligne--Lusztig induction}.

\begin{lemma}\label{lemma:independence-of-parabolic}
    $R_{\ol L}^{\ol G}$ is independent of the choice of $\ol U$.
\end{lemma}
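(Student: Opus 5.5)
The plan is to reduce to the connected case and then apply the known independence of parabolic for connected reductive groups, namely \cite[Theorem~8.7.2]{GRV26}. We use the identity $R_{\ol L}^{\ol G}(\rho) \cong \ind_{(\ol G^\circ\cdot \ol L)(\F_q)}^{\ol G(\F_q)} R_{\ol L}^{\ol G^\circ\cdot\ol L}(\rho)$ from Lemma~\ref{lemma:disc-dl-induction}(2). Since the subgroup $(\ol G^\circ\cdot\ol L)(\F_q)$ does not depend on $\ol U$, it suffices to prove that $R_{\ol L}^{\ol G^\circ\cdot \ol L}$ is independent of $\ol U$. Lemma~\ref{lemma:disc-dl-induction}(1) then identifies the virtual $\ol G^\circ(\F_q)\times\ol L(\F_q)$-bimodule
\[
\sum_i(-1)^i\rH_c^i(Y_{\ol U}^{\ol G^\circ\cdot\ol L},\ol\Q_\ell)
\]
with the bimodule $\sum_i(-1)^i\rH^i_c(Y_{\ol U}^{\ol G^\circ},\ol\Q_\ell)\otimes_{\ol\Q_\ell[\ol L^\circ(\F_q)]}\ol\Q_\ell[\ol L(\F_q)]$. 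Note that $\ol L^\circ = Z_{\ol G^\circ}(\ol S^\circ)$ is a twisted Levi of the connected reductive group $\ol G^\circ$.

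The point requiring care is that we need independence of $\ol U$ as a \emph{bimodule}, or at least as a virtual $\ol G^\circ(\F_q)\times \ol L^\circ(\F_q)$-module, together with compatibility with the extra structure coming from $\ol L(\F_q)$. The component group $\ol L(\F_q)/\ol L^\circ(\F_q)$ acts on $Y_{\ol U}^{\ol G^\circ\cdot\ol L}$ by permuting components, and this permutation is independent of $\ol U$. Condition (4) of Definition~\ref{def:paraductive} gives $\ol L = \ol L^\circ\cdot Z(\ol G)\cdot(\text{finite})$; more precisely, $\ol L(\F_q)$ is generated by $\ol L^\circ(\F_q)$, central elements, and representatives normalizing $\ol L^\circ$. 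So I will show that the virtual character of $\ol G^\circ(\F_q)\times\ol L(\F_q)$ restricted to $\ol G^\circ\cdot\ol L$ is determined by the $\ol G^\circ(\F_q)\times\ol L^\circ(\F_q)$-character together with the central action. Concretely, the character value at $(g,l)$ is nonzero only when $gl^{-1}$-type elements land in a single component of $\ol G^\circ\cdot\ol L$. Writing $l = l^\circ z$ with $z\in Z(\ol G)(\F_q)$, the action of $(g,l)$ equals the action of $(gz^{-1}, l^\circ)$, since the two actions agree on $Z(\ol G)(\F_q)$. This reduces the computation to the connected bimodule character, which is independent of $\ol U$ by \cite{GRV26}.

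The main obstacle is that the decomposition $l = l^\circ z$ need not be available for every element. Condition (3) only gives finite index for $\ol G^\circ\cdot Z(\ol G)$, not equality, so some $l$ may act through a genuine outer automorphism of $\ol G^\circ$. For such elements I would instead use the Lefschetz fixed point formula: the relevant trace on $\rH_c^*(Y_{\ol U}^{\ol G^\circ\cdot\ol L})$ for $(g,l)$ is computed on the component indexed by the image of $l$, which is $Y_{\ol U}^{\ol G^\circ}$ twisted by $l$. I would then invoke the character formula for Lusztig induction in terms of Green functions, in the form of \cite{DM94} for disconnected groups, which expresses such traces through quasi-semisimple centralizers. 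Independence of $\ol U$ would follow by the same argument as in the connected case, applied to the reductive group $Z_{\ol G^\circ\cdot\ol L}(s)^\circ$. If that route is too heavy, the fallback is to show that the elements of $\ol L(\F_q)$ with $\ol G^\circ$-conjugacy class meeting a nontrivial coset contribute traces that can be matched between two choices $\ol U,\ol U'$ via an explicit $\ol L(\F_q)$-equivariant comparison of the varieties.
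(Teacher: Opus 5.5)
\textbf{There is a genuine gap: the case of $l\in\ol L(\F_q)$ outside $\ol L^\circ(\F_q)\cdot Z(\ol G)(\F_q)$ is not proved.}

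Two of your steps agree with the paper. The first is the reduction to $\ol G = \ol G^\circ\cdot\ol L$ via Lemma~\ref{lemma:disc-dl-induction}(2). The second is the treatment of pairs $(g,l)$ with $l=l^\circ z\in\ol L^\circ(\F_q)\cdot Z(\ol G)(\F_q)$, by passing to $(gz^{-1},l^\circ)$ and invoking \cite[Theorem~8.7.2]{GRV26} for $\ol L^\circ\subset\ol G^\circ$. But this only shows that the restriction of $R_{\ol L}^{\ol G}(\rho)$ to $G_1\coloneqq\ol G^\circ(\F_q)\cdot Z(\ol G)(\F_q)$ is independent of $\ol U$; that restriction is $R_{L_1}^{G_1}(\rho|_{L_1})$ with $L_1\coloneqq\ol L^\circ(\F_q)\cdot Z(\ol G)(\F_q)$.

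Since $\ol G(\F_q)/G_1$ is nontrivial in general, this restriction says nothing about the character of $R_{\ol L}^{\ol G}(\rho)$ off $G_1$. For instance, it cannot distinguish a constituent $\chi$ from $\chi\otimes\psi$ with $\psi$ a character of $\ol G(\F_q)/G_1$. The missing data are exactly the traces of $(g,l)$ with $l\notin L_1$, and your proposal leaves them open.

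\begin{itemize}
\item ``Independence would follow by the same argument as in the connected case'' is not an argument. In the connected case independence \emph{is} the theorem of \cite{GRV26}, so there is no argument to transport; you can only apply that theorem to connected groups produced from the disconnected situation.
\item Your fallback cannot work. Already for connected $\ol G$, the varieties $Y_{\ol U}^{\ol G}$ for different $\ol U$ need not be isomorphic; they can have different dimensions. Independence is a purely cohomological statement, so no equivariant comparison of varieties is available.
\end{itemize}

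To make your route a proof, you would need to do the following.
\begin{enumerate}
\item[(i)] Reduce to $\ol G$ of finite type, by twisting $\rho$ and dividing by a torsion-free constant central subgroup as in the proof of Lemma~\ref{lemma:dl-restriction}. This is needed because \cite{DM94} works with algebraic groups.
\item[(ii)] Identify $R_{\ol L}^{\ol G}$ with the construction of \cite{DM94} via Lemma~\ref{lemma:agreement-with-digne-michel}. This requires $\ol L$ to normalize $\ol P^\circ$, which is also what makes the right $\ol L(\F_q)$-action on $Y_{\ol U}^{\ol G}$, and hence the twisted traces, well defined.
\item[(iii)] State the precise character formula of \cite{DM94} you use. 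Check that, for $g$ with semisimple (hence quasi-semisimple) part $s$, it expresses $\Tr((g,l)\mid\rH_c^*(Y_{\ol U}^{\ol G}))$ through two-variable Green functions of the \emph{connected} reductive groups $Z_{\ol G^\circ}(s)^\circ$. These should be relative to the Levi subgroups $Z_{{}^h\ol L^\circ}(s)^\circ$ and parabolics $Z_{{}^h\ol P^\circ}(s)^\circ$, with an index set independent of $\ol U$.
\item[(iv)] Apply \cite[Theorem~8.7.2]{GRV26} to those connected groups.
\end{enumerate}
With (i)--(iv) written out, your route is a genuine proof and differs from the paper's.

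The paper, after the same reductions, observes that $\ind_{G_1}^{\ol G(\F_q)}R_{L_1}^{G_1}(\rho|_{L_1})$ is semisimple, because it has a central character and is finite modulo the centre. It reads off $R_{\ol L}^{\ol G}(\rho)$ as the $\rho$-isotypic part of this induced representation and then applies \cite{GRV26} to $\ol G^\circ$. Your analysis is a useful check on that step. The induced representation equals $\ind_{G_1}^{\ol G(\F_q)}\Res_{G_1}R_{\ol L}^{\ol G}(\rho)$, so isolating the $\rho$-part must use the $\ol L(\F_q)$-module structure of the cohomology. That is exactly the twisted-trace information that (iii)--(iv) would supply explicitly.
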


\begin{proof}
    Let $\rho$ be an irreducible $\ol\Q_\ell$-representation of $\ol L(\F_q)$. By Lemma~\ref{lemma:disc-dl-induction}, we may and do assume that $\ol G = \ol G^\circ \cdot \ol L$. If $\ol Z = Z(\ol G)$, then $\ol L^\circ(\F_q) \cdot \ol Z(\F_q)$ is of finite index in $\ol L(\F_q)$. The Lusztig induction $R_{\ol L^\circ \cdot \ol Z(\F_q)}^{\ol G^\circ \cdot \ol Z(\F_q)}(\rho|_{\ol L^\circ(\F_q) \cdot \ol Z(\F_q)})$ has the same underlying $\ol G^\circ(\F_q)$-action as $R_{\ol L^\circ}^{\ol G^\circ}(\rho|_{\ol L^\circ(\F_q)})$, and its $\ol Z(\F_q)$-action is given by $\rho|_{\ol Z(\F_q)}$ (as in \cite[Remark 2.6.5]{Kal21b}). Note that $\ind_{\ol G^\circ(\F_q) \cdot \ol Z(\F_q)}^{\ol G(\F_q)}(R_{\ol L^\circ \cdot \ol Z(\F_q)}^{\ol G^\circ \cdot \ol Z(\F_q)}(\rho|_{\ol L^\circ(\F_q) \cdot \ol Z(\F_q)}))$ is a semisimple representation of $\ol G(\F_q)$ since it has a central character and $\ol G(\F_q)/\ol Z(\F_q)$ is finite. It follows that $R_{\ol L}^{\ol G}(\rho)$ is the $\rho$-isotypic component of $\ind_{\ol G^\circ(\F_q) \cdot \ol Z(\F_q)}^{\ol G(\F_q)}(R_{\ol L^\circ \cdot \ol Z(\F_q)}^{\ol G^\circ \cdot \ol Z(\F_q)}(\rho|_{\ol L^\circ(\F_q) \cdot \ol Z(\F_q)}))$, and these considerations thereby reduce us to the case that $\ol G$ is connected.
    In this case, the result follows from \cite[Theorem~8.7.2]{GRV26}.
\end{proof}

\begin{lemma}\label{lemma:agreement-with-digne-michel}
    Suppose that $\ol G$ is of finite type and $\ol G = \ol G^\circ \cdot \ol L$. Then $R_{\ol L}^{\ol G}$ agrees with the definition in \cite[D\'efinition 2.2]{DM94}.
\end{lemma}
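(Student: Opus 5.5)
To prove the lemma, I would recall what Digne--Michel actually define and then match it with the definition above. In \cite{DM94}, for a possibly disconnected reductive group $\ol G$ over $\F_q$ with $\ol G = \ol G^\circ \cdot \ol L$, one takes a ``quasi-parabolic'' subgroup $\ol P = N_{\ol G}(\ol P^\circ, \ol L^\circ)$ with ``quasi-Levi'' $\ol L = N_{\ol G}(\ol P^\circ, \ol L^\circ)\cap N_{\ol G}(\ol L^\circ)$. One then forms the variety
\[
\{g \in \ol G_{\ol\F_q} : g^{-1}\Fr_q(g) \in \ol U \cdot \Fr_q(\ol U)\}/\ol U,
\]
with $\ol U = R_u(\ol P^\circ)$. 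Lusztig induction is defined via the alternating sum of its compactly supported cohomology, viewed as a $\ol G(\F_q)$-$\ol L(\F_q)$-bimodule. Equivalently, their definition is induced from $\ol G^\circ$ in the form $\rH_c^*(Y_{\ol U}^{\ol G^\circ})\otimes_{\ol L^\circ(\F_q)}\ol\Q_\ell[\ol L(\F_q)]$. The first step is therefore to fix notation so that the $\ol U$ here is the same $\ol U$ as in \S\ref{ss:ns-dl-reps}.

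The key comparison is that under the hypothesis $\ol G = \ol G^\circ\cdot\ol L$, our twisted Levi $\ol L = Z_{\ol G}(\ol S^\circ)$ coincides with their quasi-Levi $\ol L^\circ$-normalizer group. Here $\ol S^\circ$ is the maximal central torus of $\ol L^\circ$. I would also check that $\ol L$ normalizes $\ol P^\circ$, or at least that the relevant pieces match. In fact, I do not need this in full: since $\ol G=\ol G^\circ\ol L$, both varieties are literally the same subvariety of $\ol G_{\ol\F_q}/\ol U$. Both admit the same left $\ol G(\F_q)$ and right $\ol L(\F_q)$ actions, as long as $\ol L$ normalizes $\ol U$. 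Elements of $\ol L$ centralize $\ol S^\circ$. The subgroup $\ol U$ is determined by a cocharacter of $\ol S^\circ$ (or a chamber in $X_*(\ol S^\circ)$), so $\ol L$ preserves $\ol U$ exactly when it fixes that chamber, and centralizing $\ol S^\circ$ makes this automatic. This yields $\ol L \subset N_{\ol G}(\ol P^\circ)\cap N_{\ol G}(\ol L^\circ)$.

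For the reverse containment I would invoke paraductivity, specifically condition (4) together with the identity $\ol G=\ol G^\circ \ol L$. Any $g$ in the Digne--Michel quasi-Levi can be written as $g = g_0 l$ with $g_0\in\ol G^\circ$ and $l\in \ol L$. Then $g_0$ normalizes both $\ol P^\circ$ and $\ol L^\circ$, so $g_0\in N_{\ol P^\circ}(\ol L^\circ)=\ol L^\circ$. Hence $g\in\ol L$.

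With the groups identified, both bimodules become the same cohomology. Lemma~\ref{lemma:disc-dl-induction}(1) identifies ours with the induced-from-$\ol L^\circ$ expression. It remains to compare the conventions for the right action: ours is inverted right multiplication, and one must check this matches Digne--Michel's and that the tensor product over $\ol L(\F_q)$ is taken on the same side. This is routine. The only real obstacle is the group identification in the previous paragraph, and in particular confirming that the Digne--Michel definition uses $N_{\ol G}(\ol L^\circ,\ol P^\circ)$ as stated; if it does, the rest is formal.
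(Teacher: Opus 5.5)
Your route is the paper's. You identify $\ol L$ with the Digne--Michel Levi $N_{\ol G}(\ol P^\circ,\ol L^\circ)$, so that $\ol P^\circ\cdot\ol L$ (not $N_{\ol G}(\ol P^\circ,\ol L^\circ)$) is the parabolic of \cite[D\'efinition 1.4]{DM94}. One containment comes from a cocharacter defining $\ol P^\circ$ that $\ol L$ centralizes. The other comes from $N_{\ol G^\circ}(\ol P^\circ,\ol L^\circ)=\ol L^\circ$ together with $\ol G=\ol G^\circ\cdot\ol L$; your $g=g_0l$ argument is exactly this, and condition (4) of Definition~\ref{def:paraductive} is never used.

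\textbf{First problem: the torus.} The first containment is argued with the wrong torus. Elements of $\ol L$ need not centralize the maximal central torus $Z(\ol L^\circ)^\circ$ of $\ol L^\circ$, and in general $\ol L\neq Z_{\ol G}(Z(\ol L^\circ)^\circ)$. For example, take $\ol G=\SL_3\rtimes\langle\phi\rangle$, where $\phi$ is transpose-inverse followed by conjugation by an antidiagonal matrix, let $T$ be the diagonal torus, and let $\ol S^\circ=\{\mathrm{diag}(t,1,t^{-1})\}$. Then $\ol L=Z_{\ol G}(\ol S^\circ)=T\rtimes\langle\phi\rangle$, and $\phi$ acts on $X_*(T)$ by $-w_{(13)}\neq 1$.

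The fix is to use the maximal central torus $Z_{\ol L}$ of $\ol L$, as the paper does, and to take $\ol P^\circ=P_{\ol G^\circ}(\lambda)$ with $\lambda$ a cocharacter of $Z_{\ol L}$. This is a real restriction on $\ol U$: not every parabolic with Levi $\ol L^\circ$ has this form. In the example, $\phi$ carries the Borel $P((1,-1,0))$ to $P((0,1,-1))$. A restriction of the same kind is already needed for $\ol L(\F_q)$ to act on $Y^{\ol G}_{\ol U}$ by right multiplication at all.

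\textbf{Second problem: the varieties are not literally the same.} The variety $Y_{\ol U}$ of \cite[D\'efinition 2.1]{DM94} is the Lang preimage $\{g\co g^{-1}\Fr_q(g)\in\ol U\}$, not its image in $\ol G_{\ol\F_q}/\ol U$. The missing step is that $g\mapsto g\ol U$ exhibits $Y^{\ol G}_{\ol U}$ as the quotient of this variety by $\ol U\cap\Fr_q^{-1}(\ol U)$, which is an affine space of some dimension $d$ (cf.\ the proof of Proposition~\ref{prop:bonnafe-11.10}). Hence $\rH_c^{i+2d}$ of the Lang preimage agrees with $\rH_c^{i}(Y^{\ol G}_{\ol U})$ up to a Tate twist, compatibly with the $\ol G(\F_q)\times\ol L(\F_q)$-actions, and the alternating sums coincide. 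This is how the paper concludes; with these two corrections your argument is the paper's proof.
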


\begin{proof}
    Let $\ol P^\circ$ be a parabolic $\F_q$-subgroup of $\ol G^\circ$ with Levi factor $\ol L^\circ$. We first claim that $\ol L$ is equal to the normalizer $N_{\ol G}(\ol P^\circ, \ol L^\circ)$. For this, let $Z_{\ol G}$ and $Z_{\ol L}$ be the maximal central tori of $\ol G$ and $\ol L$, respectively. We may pass from $\ol G$ to $\ol G/Z_{\ol G}$ to assume that $\ol G^\circ$ is semisimple. In this case, there is an $\F_q$-cocharacter $\lambda\co \G_m \to Z_{\ol L}$ such that $\ol P^\circ = P_{\ol G^\circ}(\lambda)$, with notation as in the dynamic method. By definition, the torus $Z_{\ol L}$ is central in $\ol L$, so it follows that $\ol L \subset N_{\ol G}(\ol P^\circ, \ol L^\circ)$. On the other hand, $N_{\ol G^\circ}(\ol P^\circ, \ol L^\circ)$ is equal to $\ol L^\circ$, so the fact $\ol G = \ol G^\circ \cdot \ol L$ shows that the map $\pi_0(\ol L) \to \pi_0(N_{\ol G}(\ol P^\circ, \ol L^\circ))$ is surjective and thus $\ol L = N_{\ol G}(\ol P^\circ, \ol L^\circ)$. But now $\ol P \coloneqq \ol P^\circ \cdot \ol L$ is a ``parabolic'' of $\ol G$ with ``Levi'' $\ol L$ in the sense of \cite[D\'efinition 1.4]{DM94}, so if $\ol U$ is the unipotent radical of $\ol P^\circ$ then the variety $Y_{\ol U}^{\ol G}$ defined above is the quotient of the variety $Y_{\ol U}$ of \cite[D\'efinition 2.1]{DM94} by $\ol U$. Since $\ol U$ is scheme-theoretically isomorphic to affine space, whose compactly supported \'etale cohomology is concentrated in top degree, the claim follows from the definitions (see for instance \cite[Proposition 10.12]{DM20}).
\end{proof}

\subsection{Lusztig restriction}\label{ss:lusztig-restriction}

If $\ol G$ is a paraductive $\F_q$-group scheme, then there is a canonical bilinear form on $K_0(\Rep_{\ol\Q_\ell}(\ol G(\F_q)))$ defined as follows: if $\chi = \sum_{i=1}^n a_i\chi_i$ and $\eta = \sum_{i=1}^n b_i \chi_i$ for irreducible characters $\chi_i$ of $\ol G(\F_q)$, then we define
\[
\langle \chi, \eta\rangle_{\ol G(\F_q)} \coloneqq \sum_{i=1}^n a_ib_i.
\]

\begin{lemma}\label{lemma:dl-restriction}
    Let $\ol G$ be a paraductive $\F_q$-group scheme, and let $\ol L \subset \ol G$ be a twisted Levi subgroup. There exists a homomorphism
    \[
    ^*R^{\ol G}_{\ol L}\co K_0(\Rep_{\ol\Q_\ell}(\ol G(\F_q))) \to K_0(\Rep_{\ol\Q_\ell}(\ol L(\F_q))),
    \]
    which we will call \textit{Lusztig restriction}, uniquely characterized by the property that
    \begin{equation}\label{eqn:dl-restriction-adjoint}
    \langle R_{\ol L}^{\ol G}(\rho), \chi\rangle_{\ol G(\F_q)} = \langle \rho, {}^*R^{\ol G}_{\ol L}(\chi)\rangle_{\ol L(\F_q)}
    \end{equation}
    for all irreducible characters $\chi$ of $\ol G(\F_q)$ and $\rho$ of $\ol L(\F_q)$.
\end{lemma}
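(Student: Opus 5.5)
The natural definition is by adjunction. The difficulty is that $\ol G(\F_q)$ and $\ol L(\F_q)$ may be infinite, so $K_0$ has an infinite basis of irreducibles. We must therefore check that the resulting formal sum is finite.

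Uniqueness is immediate. The pairing $\langle -,-\rangle_{\ol L(\F_q)}$ is the standard one for which the irreducible characters form an orthonormal basis. So any element $x$ of $K_0(\Rep_{\ol\Q_\ell}(\ol L(\F_q)))$ is determined by the numbers $\langle \rho, x\rangle$ as $\rho$ ranges over irreducibles. Since the adjunction identity \eqref{eqn:dl-restriction-adjoint} prescribes these numbers, ${}^*R^{\ol G}_{\ol L}(\chi)$ is determined for each irreducible $\chi$, and hence on all of $K_0$ by linearity.

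For existence, fix an irreducible $\chi$ of $\ol G(\F_q)$ and define
\[
{}^*R^{\ol G}_{\ol L}(\chi) \coloneqq \sum_{\rho} \langle R_{\ol L}^{\ol G}(\rho), \chi\rangle_{\ol G(\F_q)}\, \rho ,
\]
where $\rho$ runs over irreducible characters of $\ol L(\F_q)$. Extending linearly gives a homomorphism satisfying \eqref{eqn:dl-restriction-adjoint}, provided only finitely many coefficients are nonzero. This finiteness is the main point.

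To prove it, I would use central characters. Let $\ol Z = Z(\ol G)$. As noted in \S\ref{ss:lusztig-ind}, the group $(\ol Z \cap \ol L)(\F_q) = \ol Z(\F_q)$ (which is contained in $\ol L$) acts on $R_{\ol L}^{\ol G}(\rho)$ through the same character by which it acts on $\rho$. The irreducible $\chi$ has a central character $\omega_\chi$ on $\ol Z(\F_q)$. So a nonzero coefficient forces $\rho|_{\ol Z(\F_q)}$ to be $\omega_\chi$-isotypic. Such $\rho$ factor through representations of $\ol L(\F_q)$ with prescribed character on $\ol Z(\F_q)$. Since $\ol L^\circ(\F_q)\cdot \ol Z(\F_q)$ has finite index in $\ol L(\F_q)$ (Lemma~\ref{lemma:paraductive-permanence} and Definition~\ref{def:paraductive}(3)), and $\ol L^\circ(\F_q)$ is finite, the quotient $\ol L(\F_q)/\ol Z(\F_q)$ is finite. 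Irreducibles of $\ol L(\F_q)$ with fixed central character on $\ol Z(\F_q)$ are irreducible projective representations of this finite group, with a fixed cocycle. Hence there are only finitely many, and the sum above is finite.

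It remains to check that each coefficient is itself a finite integer. This holds because $R_{\ol L}^{\ol G}(\rho)$ is a finite virtual representation by Lemma~\ref{lemma:disc-dl-induction}.

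The only real obstacle is the finiteness step. In particular, one must confirm that the claim "$\ol Z(\F_q)$ acts through the same character" applies to all of $\ol Z(\F_q)$, i.e.\ that $\ol Z \subset \ol L$. This holds because $\ol L = Z_{\ol G}(\ol S^\circ)$ contains the center.
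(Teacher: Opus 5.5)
Your argument is correct, but it gets the key finiteness by a more direct route than the paper.

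Both proofs rest on the same observation. The group $Z(\ol G)(\F_q)\subset\ol L(\F_q)$ acts on $R_{\ol L}^{\ol G}(\rho)$ through the central character of $\rho$. So only those $\rho$ whose restriction to $Z(\ol G)(\F_q)$ agrees with the central character of $\chi$ can contribute.

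The paper then reduces to finite type. It twists $\chi$ by a character of $\ol G(\F_q)/\ol G^\circ(\F_q)$ so that its central character has finite order. It then chooses a constant central subgroup $\ol Z_0$ whose $\F_q$-points form a torsion-free finite-index subgroup of $Z(\ol G)(\F_q)$ on which that character is trivial. Finally it uses $\rH^1(\F_q,\ol Z_0)=0$ to replace $(\ol G,\ol L)$ by $(\ol G/\ol Z_0,\ol L/\ol Z_0)$, where everything is finite.

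You instead count directly. The quotient $\ol L(\F_q)/Z(\ol G)(\F_q)$ is finite, and the irreducibles of $\ol L(\F_q)$ with a prescribed restriction to $Z(\ol G)(\F_q)$ are finite in number. These are irreducible projective representations with a fixed cocycle, or equivalently constituents of the finite-dimensional $\ind_{Z(\ol G)(\F_q)}^{\ol L(\F_q)}\omega_\chi$.

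Your version is shorter and needs neither the twist nor the Galois cohomology input. The paper's version sets up the ``twist and pass to a central quotient of finite type'' device, which it reuses many times afterwards: in Propositions~\ref{prop:dl-exhaustion}, \ref{prop:bonnafe-11.10} and \ref{prop:tate-cohom-dl-restriction}, and in Lemma~\ref{lemma:ns-cuspidal-lift}.

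One point of precision. The finite-index statement you need is for $\ol L^\circ\cdot Z(\ol G)$ in $\ol L$. Definition~\ref{def:paraductive}(3) applied to $\ol L$ itself only gives finiteness modulo $Z(\ol L)$, which says nothing when $\ol L$ is commutative, e.g.\ a generalized maximal torus. The statement you want comes instead from Definition~\ref{def:paraductive}(3) for $\ol G$, intersected with $\ol L$, using $\ol L\cap\ol G^\circ=Z_{\ol G^\circ}(\ol S^\circ)=\ol L^\circ$. The paper cites Lemma~\ref{lemma:paraductive-permanence} for the same fact.
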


\begin{proof}
    It is clear that ${}^*R^{\ol G}_{\ol L}$ is uniquely characterized by \eqref{eqn:dl-restriction-adjoint}, so it suffices to show that this equation makes sense, i.e., that for a given irreducible character $\chi$ of $\ol G(\F_q)$ there are only finitely many irreducible characters $\rho$ of $\ol L(\F_q)$ such that $\langle R_{\ol L}^{\ol G}(\rho), \chi\rangle_{\ol G(\F_q)} \neq 0$. Recalling that $\ol G^\circ \cdot Z(\ol G)$ is of finite index in $\ol G$ by definition, we may twist $\chi$ by a character of $\ol G(\F_q)/\ol G^\circ(\F_q)$ to reduce to the case that $\chi$ has finite order central character $\eta$. Note that if $\langle R_{\ol L}^{\ol G}(\rho), \chi\rangle_{\ol G(\F_q)} \neq 0$ then $\eta$ and $\rho$ restrict to the same character of $Z(\ol G)(\F_q)$.
    
    We claim that there exists a constant closed $\F_q$-subgroup scheme $\ol Z_0 \subset Z(\ol G)$ such that $\ol Z_0(\F_q)$ is torsion-free and $\ol G^\circ(\F_q) \cdot \ol Z_0(\F_q)$ is of finite index in $\ol G(\F_q)$. By Lemma~\ref{lemma:paraductive-permanence}, the group $\ol L^\circ \cdot Z(\ol G)$ is of finite index in $\ol L$; this implies that $\ol L^\circ(\F_q) \cdot Z(\ol G)(\F_q)$ is of finite index in $\ol L(\ol\F_q)$. Since $Z(\ol G)(\F_q)$ is a finitely generated abelian group by hypothesis, we may take $\ol Z_0$ to be a constant $\F_q$-group scheme whose $\F_q$-points are a torsion-free finite index subgroup of $Z(\ol G)(\F_q)$. By passing to a further finite index subgroup, we may also arrange that $\eta|_{\ol Z_0(\F_q)} = 1$. Note that $\rH^1(\F_q, \ol Z_0) = 0$ since $\ol Z_0$ is constant and torsion-free, so the maps $\ol G(\F_q) \to (\ol G/\ol Z_0)(\F_q)$ and $\ol L(\F_q) \to (\ol L/\ol Z_0)(\F_q)$ are surjective. Therefore we may pass from $(\ol G, \ol L)$ to $(\ol G/\ol Z_0, \ol L/\ol Z_0)$ to assume that $\ol G$ is of finite type, in which case the result is obvious.
\end{proof}

In the special case that $\ol L = \ol T$ is a generalized maximal torus of $\ol G$, we will refer to $^*R^{\ol G}_{\ol T}$ as \emph{Deligne--Lusztig restriction}.

The following result is an analogue of a classical result for connected reductive groups; see for example \cite[Lemma 13.3]{TT20} and the references given there. We remark that this is a minor extension of \cite[Corollaire~2.9]{DM94}, which is another disconnected version of this result.

\begin{lemma}\label{lemma:dl-restriction-character}
    Let $\ol G$ be a paraductive $\F_q$-group scheme, and suppose that $\ol G$ is of finite type. Let $\ol L \subset \ol G$ be a twisted Levi subgroup, let $\chi$ be a character of $\ol G(\F_q)$, and let $g \in \ol L(\F_q)$. Suppose that if $g = tu$ is the Jordan decomposition of $g$, then $Z_{\ol G^\circ}(t) \subset \ol L$. Then
    \[
    \chi(g) = ({}^*R^{\ol G}_{\ol L}\chi)(g)
    \]
\end{lemma}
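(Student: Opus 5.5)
The plan is to reduce to the disconnected finite-type setting of \cite{DM94} and then invoke their character formula for Lusztig restriction. The local hypothesis $Z_{\ol G^\circ}(t) \subset \ol L$ then makes the formula collapse to the single term $\chi(g)$.

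First, reduce to the case $\ol G = \ol G^\circ \cdot \ol L$. By Lemma~\ref{lemma:disc-dl-induction}(2), $R_{\ol L}^{\ol G} = \ind_{(\ol G^\circ\cdot\ol L)(\F_q)}^{\ol G(\F_q)} \circ R_{\ol L}^{\ol G^\circ\cdot\ol L}$. Frobenius reciprocity and the adjunction \eqref{eqn:dl-restriction-adjoint} then give
\[
{}^*R^{\ol G}_{\ol L} = {}^*R^{\ol G^\circ\cdot\ol L}_{\ol L}\circ \Res^{\ol G(\F_q)}_{(\ol G^\circ\cdot\ol L)(\F_q)}.
\]
Since $g \in \ol L(\F_q)$ and restriction does not change the value $\chi(g)$, we may replace $\ol G$ by $\ol G^\circ \cdot \ol L$. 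The hypothesis is unchanged, because it only involves $\ol G^\circ$.

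In this situation $\ol G$ is of finite type, so Lemma~\ref{lemma:agreement-with-digne-michel} identifies our $R_{\ol L}^{\ol G}$ with the induction of \cite[D\'efinition 2.2]{DM94}. Both restriction functors are defined as adjoints for the same pairing, so our ${}^*R^{\ol G}_{\ol L}$ is their Lusztig restriction. We then apply the character formula of \cite[Proposition 2.6]{DM94}, the disconnected analogue of \cite[Lemma 13.3]{TT20}:
\[
({}^*R^{\ol G}_{\ol L}\chi)(tu) = |\ol L^\circ_t(\F_q)|^{-1}\,|\ol G^\circ_t(\F_q)|^{-1} \sum_{\substack{h \in \ol G^\circ(\F_q)\\ t^h \in \ol L}} \; \sum_{v} Q^{\ol G^\circ_{t^h}}_{\ol L^\circ_{t^h}}(v, u^{h}) \,\chi(t^h v).
\]
Here $\ol G^\circ_t = Z_{\ol G^\circ}(t)^\circ$, and $v$ runs over unipotent elements of $\ol G^\circ_{t^h}(\F_q)$. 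The exact shape of the normalizations (conjugation convention, parabolic, Green function) must be matched to \cite{DM94}. Alternatively, one can rederive the formula from the Deligne--Lusztig fixed-point formula applied to $Y^{\ol G}_{\ol U}$ with the action of $(g, l)$.

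Next, use the hypothesis. Since $Z_{\ol G^\circ}(t) \subset \ol L$, we have $\ol G^\circ_t = \ol L^\circ_t$. The Green function $Q^{\ol G^\circ_t}_{\ol L^\circ_t}$ is then $Q^{\ol L^\circ_t}_{\ol L^\circ_t}(v,u)$, which equals $|\ol L^\circ_t(\F_q)|$ if $v = u$ and $0$ otherwise. It remains to identify which $h$ contribute, namely those with $t^h \in \ol L$ and $t^h$ being $\ol L^\circ(\F_q)$-conjugate to $t$, and to count their contributions. This counting step is the main obstacle. We want the terms with $t^h$ not conjugate to $t$ under $\ol L^\circ(\F_q)$ to vanish, or else to be absent. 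A cleaner route, as in the connected proof, is to restrict from the start to the $\ol L^\circ(\F_q)$-conjugacy class of $t$. In the connected case, $Z_{\ol G^\circ}(t)^\circ \subset \ol L$ forces $Z_{\ol L}(t)^\circ$ to contain a maximal torus of $\ol G^\circ$ and the central torus of $\ol L$. Any $h$ with $t^h \in \ol L$ therefore conjugates a maximal torus of $Z_{\ol G}(t)$ into $\ol L$, and after adjusting $h$ by $Z_{\ol G^\circ}(t)$ it normalizes $Z(\ol L)^\circ$. Combined with $\ol L = Z_{\ol G}(\ol S^\circ)$, this puts $h$ in $N$ of $\ol L$, and the remaining $\ol G^\circ$-part of $h$ lies in $\ol L^\circ \cdot Z_{\ol G^\circ}(t)$. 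With these identifications, the surviving sum has $|\ol L^\circ(\F_q)|\cdot|\ol G^\circ_t(\F_q)|/|\ol L^\circ_t(\F_q)|$ terms after the normalization, each contributing $\chi(tu)$. The constants cancel to give $({}^*R\chi)(g) = \chi(g)$. I would carry out this argument carefully, using the full $\ol L$ rather than $\ol L^\circ$, since $t$ may lie in a non-identity component.
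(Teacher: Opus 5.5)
Your reduction step matches the paper: you replace $\ol G$ by $\ol G^\circ\cdot\ol L$ using the adjoint of Lemma~\ref{lemma:disc-dl-induction}(2), then identify everything with \cite{DM94} via Lemma~\ref{lemma:agreement-with-digne-michel}. The paper then simply invokes \cite[Corollaire~2.9]{DM94}, which is exactly the desired identity when $\ol G=\ol G^\circ\cdot\ol L$. The gap is in your attempt to rederive that corollary.

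The formula you display is not the character formula for Lusztig \emph{restriction}. A sum over $h$ with $t^h\in\ol L$ is the shape of the \emph{induction} formula, where one must sum over elements of $\ol L(\F_q)$ whose semisimple part is $\ol G(\F_q)$-conjugate to that of the argument. As written it is not even well-defined, since $u^h$ need not lie in $\ol L$. Your step for disposing of the extra terms is also false: an $h$ with $t^h\in\ol L$ cannot in general be adjusted by $Z_{\ol G^\circ}(t)$ to normalize $Z(\ol L)^\circ$. For a counterexample, take $q\geq 3$, $\ol G=\GL_3$, $\ol L=\GL_2\times\GL_1$, and $t=\mathrm{diag}(a,a,b)$ with $a\neq b$ in $\F_q^\times$, so $Z_{\ol G}(t)=\ol L$. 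Let $h$ be the permutation matrix of $(2\,3)$. Then:
\begin{itemize}
\item $t^h=\mathrm{diag}(a,b,a)\in\ol L$;
\item no element of $Z_{\ol G}(t)h=\ol L h$ normalizes $Z(\ol L)^\circ$, since $N_{\ol G}(Z(\ol L)^\circ)=N_{\ol G}(\ol L)=\ol L$;
\item $Z_{\ol G}(t^h)$, a $\GL_2\times\GL_1$ on the coordinates $\{1,3\}$ and $\{2\}$, is not contained in $\ol L$, so the corresponding term would carry a nontrivial $\GL_2$-versus-torus Green function;
\item for $u$ the elementary unipotent in position $(1,2)$, $u^h\notin\ol L$.
\end{itemize}
So the ``counting obstacle'' cannot be resolved along the lines you sketch.

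What is missing is the correct restriction formula, which has no such sum. Compute $({}^*R^{\ol G}_{\ol L}\chi)(tu)$ through the adjunction as $|\ol G(\F_q)|^{-1}\sum_g\chi(g)$ times the trace of $(g,tu)$ (up to the usual inverses) on $\rH_c^*(Y_{\ol U}^{\ol G}, \ol\Q_\ell)$. The fixed-point formula forces the semisimple part of $g$ to be $\ol G(\F_q)$-conjugate to $t$. That conjugation is then absorbed by the class-function property of $\chi$ and the factor $|\ol G(\F_q)|^{-1}$. In the connected case, with the normalizations of \cite{DM20}, one gets
\[
({}^*R^{\ol G}_{\ol L}\chi)(tu)=\frac{|Z_{\ol L}(t)^\circ(\F_q)|}{|Z_{\ol G}(t)^\circ(\F_q)|}\sum_{v}Q^{Z_{\ol G}(t)^\circ}_{Z_{\ol L}(t)^\circ}(v,u^{-1})\,\chi(tv),
\]
with $v$ running over unipotent elements of $Z_{\ol G}(t)^\circ(\F_q)$; \cite{DM94} proves the analogue for $\ol G=\ol G^\circ\cdot\ol L$.

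Under your hypothesis, $Z_{\ol G^\circ}(t)^\circ=Z_{\ol L^\circ}(t)^\circ$, so the prefactor is $1$ and the Green function is the trivial one. It is supported on pairs conjugate under this group, not only on $v=u$ as you wrote. The sum collapses to $\chi(tu)$ because $\chi(t\,\cdot\,)$ is invariant under conjugation by $Z_{\ol G^\circ}(t)^\circ(\F_q)$. This is exactly \cite[Corollaire~2.9]{DM94}, and with it in place your argument coincides with the paper's.
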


\begin{proof}
    Since ${}^*R^{\ol G}_{\ol L} = {}^*R^{\ol G^\circ \cdot \ol L}_{\ol L} \circ \Res^{\ol G(\F_q)}_{\ol G^\circ(\F_q) \cdot \ol L(\F_q)}$ (this is the adjoint of Lemma~\ref{lemma:disc-dl-induction}(2)), we may assume that $\ol G = \ol G^\circ \cdot \ol L$. In this case, the result follows immediately from \cite[Corollaire~2.9]{DM94} (and Lemma~\ref{lemma:agreement-with-digne-michel}).
\end{proof}

\subsection{Exhaustion}\label{sss:exhaustion}

Our present goal is to prove an analogue of \cite[Corollary~7.7]{DL76} for paraductive $\F_q$-group schemes, i.e., that every irreducible representation of $\ol G(\F_q)$ occurs in some $R_{\ol T}^{\ol G}(\theta)$.

\begin{lemma}\label{lemma:dl-7.5}
    Let $\ol G$ be a finite type paraductive $\F_q$-group scheme. The character of the regular representation of $\ol G(\F_q)$ over $\ol\Q_\ell$ is a $\Q$-linear combination of Deligne--Lusztig inductions $R_{\ol T}^{\ol G}(\theta)$ for generalized tori $\ol T \subset \ol G$ and characters $\theta\co \ol T(\F_q) \to \ol\Q_\ell^\times$.
\end{lemma}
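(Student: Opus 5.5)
We will reduce to the connected case, where the statement is \cite[Corollary~7.5]{DL76}: the regular character of $\ol G^\circ(\F_q)$ is $\frac{1}{|W|}$-weighted sum, namely
\[
\mathrm{reg}_{\ol G^\circ(\F_q)} = \frac{1}{|\ol G^\circ(\F_q)|_p}\sum_{\ol T^\circ} \sum_{\theta} \varepsilon_{\ol G^\circ}\varepsilon_{\ol T^\circ} R_{\ol T^\circ}^{\ol G^\circ}(\theta),
\]
the sum running over $\F_q$-rational maximal tori $\ol T^\circ\subset \ol G^\circ$ and characters $\theta$ of $\ol T^\circ(\F_q)$. Since $\ol G$ is of finite type, $\ol G(\F_q)$ is finite, and the regular character of $\ol G(\F_q)$ is $\ind_{\ol G^\circ(\F_q)}^{\ol G(\F_q)}\mathrm{reg}_{\ol G^\circ(\F_q)}$. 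So it suffices to show that each $\ind_{\ol G^\circ(\F_q)}^{\ol G(\F_q)} R_{\ol T^\circ}^{\ol G^\circ}(\theta)$ is a $\Q$-linear combination of the $R_{\ol T}^{\ol G}(\theta')$ for generalized maximal tori $\ol T$.

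Fix $\ol T^\circ$ and let $\ol T = Z_{\ol G}(\ol T^\circ)$, which is a generalized maximal torus, equal to $\ol T^\circ\cdot Z(\ol G)$ by Definition~\ref{def:paraductive}(4). Following the proof of Lemma~\ref{lemma:independence-of-parabolic}, I would first use Lemma~\ref{lemma:disc-dl-induction}(2) to write
\[
R_{\ol T}^{\ol G}(\theta') = \ind_{(\ol G^\circ\cdot\ol T)(\F_q)}^{\ol G(\F_q)} R_{\ol T}^{\ol G^\circ\cdot \ol T}(\theta').
\]
Then Lemma~\ref{lemma:disc-dl-induction}(1) together with transitivity of the tensor product gives
\[
\sum_{\theta'|_{\ol T^\circ(\F_q)}=\theta} R_{\ol T}^{\ol G^\circ\cdot\ol T}(\theta') = \rH_c^*(Y^{\ol G^\circ}_{\ol U})\otimes_{\ol T^\circ(\F_q)}\theta\otimes_{\ol\Q_\ell}\ol\Q_\ell[\ol T(\F_q)/\ol T^\circ(\F_q)],
\]
where the sum is over extensions $\theta'$ counted with the multiplicity of $\theta'$ in $\ind_{\ol T^\circ(\F_q)}^{\ol T(\F_q)}\theta$. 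The right side should identify with $\ind_{\ol G^\circ(\F_q)}^{(\ol G^\circ\cdot\ol T)(\F_q)} R_{\ol T^\circ}^{\ol G^\circ}(\theta)$. Indeed, $Y^{\ol G^\circ\cdot\ol T}_{\ol U}$ is a disjoint union of translates of $Y^{\ol G^\circ}_{\ol U}$ indexed by $(\ol G^\circ\cdot\ol T)(\F_q)/\ol G^\circ(\F_q) = \ol T(\F_q)/(\ol T\cap\ol G^\circ)(\F_q)$. Here one must be careful that $\ol T\cap\ol G^\circ = Z_{\ol G^\circ}(\ol T^\circ)=\ol T^\circ$, and that $\ol G^\circ(\F_q)\cap \ol T(\F_q)$ is $\ol T^\circ(\F_q)$, using Lang's theorem for connected $\ol T^\circ$. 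Inducing up to $\ol G(\F_q)$ then gives $\ind_{\ol G^\circ(\F_q)}^{\ol G(\F_q)}R^{\ol G^\circ}_{\ol T^\circ}(\theta)$ as a $\Z_{\ge0}$-combination of the $R_{\ol T}^{\ol G}(\theta')$. Summing over $(\ol T^\circ,\theta)$ with the rational coefficients from \cite[7.5]{DL76} finishes the proof.

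The main obstacle is the component bookkeeping in this identification. One has to check that the $\ol G^\circ(\F_q)\times\ol T(\F_q)$ structure on $\rH_c^*(Y^{\ol G^\circ\cdot\ol T}_{\ol U})$ really is the induced one. This requires that the rational points of $\ol G^\circ\cdot\ol T$ are $\ol G^\circ(\F_q)\ol T(\F_q)$, which again follows from Lang's theorem since $\ol G^\circ\cap\ol T$ is connected. It also requires that the actions of $Z(\ol G)(\F_q)$ through left and right multiplication match, which is noted in \S\ref{ss:ns-dl-reps}.
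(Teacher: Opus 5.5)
Your argument is correct. It rests on the same identity as the paper,
\[
\ind_{\ol G^\circ(\F_q)}^{\ol G(\F_q)} R_{\ol T^\circ}^{\ol G^\circ}(\theta) = \sum_{\theta'|_{\ol T^\circ(\F_q)} = \theta} R_{\ol T}^{\ol G}(\theta'),
\]
combined with \cite[Proposition~7.5]{DL76}, but you reach that identity by a different route.

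\textbf{The paper's route.} It first reduces to $\ol G = \ol G^\circ\cdot\ol Z$. It then passes to $\ol G\times^{\ol Z\cap\ol G^\circ}\wt Z^\circ$, which makes $\ol Z\cap\ol G^\circ$ connected and hence gives $(\ol G^\circ\cdot\ol Z)(\F_q) = \ol G^\circ(\F_q)\cdot\ol Z(\F_q)$. At that point it quotes the identity from \cite[Remark 2.6.5]{Kal21b}.

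\textbf{Your route.} You note that $\ol T\cap\ol G^\circ = Z_{\ol G^\circ}(\ol T^\circ) = \ol T^\circ$ is always connected. Lang's theorem then gives $(\ol G^\circ\cdot\ol T)(\F_q) = \ol G^\circ(\F_q)\cdot\ol T(\F_q)$ with no hypothesis on $\ol Z\cap\ol G^\circ$. You then prove the identity directly from the decomposition
\[
Y_{\ol U}^{\ol G^\circ\cdot\ol T} = \bigsqcup_{b\in\ol T(\F_q)/\ol T^\circ(\F_q)} b\, Y_{\ol U}^{\ol G^\circ}.
\]
This works because $\ol T(\F_q)$ normalizes $\ol U$, so $bY_{\ol U}^{\ol G^\circ} = Y_{\ol U}^{\ol G^\circ}b$. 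It is also abelian, so each extension $\theta'$ occurs in $\ind_{\ol T^\circ(\F_q)}^{\ol T(\F_q)}\theta$ exactly once.

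\textbf{What each buys.} Your version is self-contained and shows the identity holds for every finite type paraductive $\ol G$, with no regular embedding needed. The paper's version is shorter on the page and reuses an embedding device it needs elsewhere anyway, for instance in Lemma~\ref{lemma:dl-6.8-variant}.

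\textbf{Two small corrections to the write-up.}
\begin{itemize}
\item Your intermediate display with $\otimes_{\ol\Q_\ell}\ol\Q_\ell[\ol T(\F_q)/\ol T^\circ(\F_q)]$ is valid only as right $\ol T(\F_q)$-modules. As left $\ol G^\circ(\F_q)$-modules the pieces are the $b$-conjugates of $\rH_c^*(Y_{\ol U}^{\ol G^\circ})$. Lemma~\ref{lemma:disc-dl-induction}(1) alone therefore does not give the bimodule statement; the geometric decomposition above is what carries the proof, as your last paragraph correctly anticipates.
\item The equality $\ol G^\circ(\F_q)\cap\ol T(\F_q) = \ol T^\circ(\F_q)$ needs no Lang's theorem: it is just the set of $\F_q$-points of the scheme-theoretic intersection. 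Lang is needed only for the surjectivity of $\ol T(\F_q)\to(\ol T/\ol T^\circ)(\F_q)$.
\end{itemize}
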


\begin{proof}
    Let $\ol Z = Z(\ol G)$. Let $\eta_{\ol G}$ denote the class of the regular representation of $\ol G(\F_q)$ in the Grothendieck group over $\ol\Q_\ell$. Note that $\eta_{\ol G} = \ind_{(\ol G^\circ \cdot \ol Z)(\F_q)}^{\ol G(\F_q)} \eta_{\ol G^\circ \cdot \ol Z}$, so using Lemma~\ref{lemma:disc-dl-induction} we immediately reduce to the case that $\ol G = \ol G^\circ \cdot \ol Z$.
    
    Next we reduce to the case that $\ol Z \cap \ol G^\circ$ is connected. Note that $\ol Z \cap \ol G^\circ$ is of multiplicative type, so there is an $\F_q$-torus $\wt Z^\circ$ and an embedding $\ol Z \cap \ol G^\circ \subset \wt Z^\circ$. Let $\wt G = \ol G \times^{\ol Z \cap \ol G^\circ} \wt Z^\circ$, so the center of $\wt G$ is equal to $\wt Z \coloneqq \ol Z \cdot \wt Z^\circ$ and $\wt Z \cap \wt G^\circ = \wt Z^\circ$ is a torus. Moreover, we have $\wt G = \wt G^\circ \cdot \wt Z$. Note that $\eta_{\wt G}|_{\ol G(\F_q)} = [\wt G(\F_q)\co \ol G(\F_q)] \eta_{\ol G}$. If $\ol T^\circ \subset \ol G^\circ$ is a maximal $\F_q$-torus and $\ol T = \ol T^\circ \cdot \ol Z$ and $\wt T = \ol T^\circ \cdot \wt Z$, then by \cite[Remark 2.6.5]{Kal21b}, if $\wt\theta\co \wt T(\F_q) \to \ol\Q_\ell^\times$ is a character such that $\wt\theta|_{\ol T(\F_q)} = \theta$, then the restriction of $R_{\wt T}^{\wt G}(\wt\theta)$ to $\ol G(\F_q)$ is equal to $R_{\ol T}^{\ol G}(\theta)$. Thus we may pass from $\ol G$ to $\wt G$ to assume that $\ol Z \cap \ol G^\circ$ is connected.
    
    Since $\ol Z \cap \ol G^\circ$ is connected, Lang's theorem implies that $(\ol G^\circ \cdot \ol Z)(\F_q) = \ol G^\circ(\F_q) \cdot \ol Z(\F_q)$. By \cite[Proposition~7.5]{DL76} (taking $s = 1$), we have
    \[
    \eta_{\ol G^\circ} = \sum_{i=1}^n a_i R_{\ol T_i^\circ}^{\ol G^\circ}(\theta_i^\circ)
    \]
    for some $a_i \in \Q$ and pairs $(\ol T_i^\circ, \theta_i^\circ)$ as usual. But now
    \[
    \ind_{\ol G^\circ(\F_q)}^{\ol G(\F_q)}R_{\ol T_i^\circ}^{\ol G^\circ}(\theta_i^\circ) = \sum_{\theta_i|_{\ol T^\circ(\F_q)} = \theta_i^\circ} R_{\ol T_i}^{\ol G}(\theta_i)
    \]
    by \cite[Remark 2.6.5]{Kal21b}, and $\eta_{\ol G} = \ind_{\ol G^\circ(\F_q)}^{\ol G(\F_q)} \eta_{\ol G^\circ}$, so we conclude.
\end{proof}

\begin{prop}\label{prop:dl-exhaustion}
    Let $k$ be a field which is either $\ol\Q_\ell$ or $\ol\F_\ell$, and let $V$ be a nonzero finite-dimensional $k$-representation of $\ol G(\F_q)$ on which $Z(\ol G)(\F_q)$ acts through a character (e.g., an irreducible representation). If $k = \ol\Q_\ell$ (resp.\ $k = \ol\F_\ell$), then there exists a generalized maximal $\F_q$-torus $\ol T \subset \ol G$ and a character $\theta\co \ol T(\F_q) \to \ol\Q_\ell^\times$ (resp.\ $\theta\co \ol T(\F_q) \to \ol\Z_\ell^\times$) such that some irreducible subquotient of $V$ is also an irreducible constituent of $R_{\ol T}^{\ol G}(\theta)$ (resp.\ the $\ell$-modular reduction of $R_{\ol T}^{\ol G}(\theta)$).
\end{prop}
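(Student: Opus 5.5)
The plan is to reduce to the finite type case and then apply Lemma~\ref{lemma:dl-7.5}, in the same way as in the proof of Lemma~\ref{lemma:dl-restriction}.

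\emph{Reduction to finite type.} Let $\eta$ be the character through which $Z(\ol G)(\F_q)$ acts on $V$. Since $Z(\ol G)(\F_q)$ is finitely generated abelian and $\ol G^\circ(\F_q)\cdot Z(\ol G)(\F_q)$ has finite index in $\ol G(\F_q)$, we can choose a constant torsion-free $\F_q$-subgroup scheme $\ol Z_0 \subset Z(\ol G)$ with the following properties:
\begin{itemize}
\item $\ol G^\circ(\F_q)\cdot \ol Z_0(\F_q)$ has finite index in $\ol G(\F_q)$;
\item $\eta|_{\ol Z_0(\F_q)}$ is trivial.
\end{itemize}
The second property needs more care than in Lemma~\ref{lemma:dl-restriction}, because we cannot twist $V$ by a character as freely there. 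Instead, we extend $\eta|_{\ol Z_0(\F_q)}$ to a character $\mu$ of $\ol G(\F_q)$ (with values in $\ol\Z_\ell^\times$ when $k=\ol\F_\ell$) and replace $V$ by $V\otimes\mu^{-1}$. Such an extension exists because $\ol Z_0(\F_q)$ is a free abelian direct factor of a finite-index subgroup containing it centrally. Since $\rH^1(\F_q,\ol Z_0)=0$, the group $\ol G(\F_q)\to(\ol G/\ol Z_0)(\F_q)$ is surjective, and likewise for every generalized maximal torus. The quotient $\ol G/\ol Z_0$ is a finite type paraductive group, and its generalized maximal tori are the images $\ol T/\ol Z_0$. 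Deligne--Lusztig induction is compatible with inflation along $\ol Z_0$, because $Y^{\ol G}_{\ol U}$ is compatible with the quotient and $\ol Z_0$ acts trivially on the relevant pieces. Finally, twisting back by $\mu$ takes $R_{\ol T}^{\ol G}(\theta)$ to $R_{\ol T}^{\ol G}(\theta\cdot\mu|_{\ol T(\F_q)})$, by the projection formula for Deligne--Lusztig cohomology. So we may assume $\ol G$ is of finite type and $\ol G(\F_q)$ is finite.

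\emph{Characteristic zero case.} Let $\chi$ be an irreducible constituent of $V$. It occurs in the regular representation, so $\langle \chi,\mathrm{reg}\rangle\neq 0$. By Lemma~\ref{lemma:dl-7.5}, the regular character is a $\Q$-linear combination of the $R_{\ol T}^{\ol G}(\theta)$. Hence $\langle\chi,R_{\ol T}^{\ol G}(\theta)\rangle\neq 0$ for some pair $(\ol T,\theta)$.

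\emph{Modular case.} Let $\ol\chi$ be an irreducible Brauer constituent of $V$. Since $\ol G(\F_q)$ is finite, $\ol\chi$ is a composition factor of the reduction of some irreducible $\ol\Q_\ell$-representation $\chi$. We can take $\chi$ to be a constituent of the projective cover of $\ol\chi$, lifted to $\ol\Z_\ell$. By the characteristic zero case, $\chi$ is a constituent of some $R_{\ol T}^{\ol G}(\theta)$. Every character of the finite group $\ol T(\F_q)$ takes values in roots of unity, so it lands in $\ol\Z_\ell^\times$.

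\emph{Main obstacle.} The argument above only shows that $\ol\chi$ appears in the reduction of some irreducible constituent $\chi$ of $R_{\ol T}^{\ol G}(\theta)$. It does not yet show that $\ol\chi$ survives in the reduction of the virtual character $R_{\ol T}^{\ol G}(\theta)$ itself, where cancellation could occur. To handle this, I would use the regular representation directly. The reduction of the regular character contains every Brauer character, namely each one with multiplicity equal to the dimension of its projective cover, and all these multiplicities are positive. Writing $\mathrm{reg}=\sum a_i R_{\ol T_i}^{\ol G}(\theta_i)$ with $a_i\in\Q$ and reducing mod $\ell$ (after clearing denominators), the multiplicity of $\ol\chi$ is positive on the left side. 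So it is nonzero in the reduction of some $R_{\ol T_i}^{\ol G}(\theta_i)$, which gives the required pair.
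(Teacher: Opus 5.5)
Your proposal is correct and follows essentially the paper's route: twist and pass to a central quotient as in Lemma~\ref{lemma:dl-restriction} so that $\ol G(\F_q)$ becomes finite, then apply Lemma~\ref{lemma:dl-7.5} to the regular representation. Your mod-$\ell$ step, which uses that every irreducible Brauer character occurs with positive multiplicity in the reduction of the regular representation, fills in what the paper calls ``immediate''.

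The only point to tighten is the existence of $\mu$. Your stated reason (direct factor of a finite-index subgroup) does not by itself give an extension to all of $\ol G(\F_q)$. Instead, take $\mu$ trivial on $\ol G^\circ(\F_q)$: since $\ol Z_0(\F_q)$ injects into the abelian group $\ol G(\F_q)/\ol G^\circ(\F_q)$ and $k^\times$ is divisible, $\eta|_{\ol Z_0(\F_q)}$ extends to a character of that quotient. This triviality on $\ol G^\circ(\F_q)$ is exactly what your projection-formula argument for $R_{\ol T}^{\ol G}(\theta)\otimes\mu = R_{\ol T}^{\ol G}(\theta\mu|_{\ol T(\F_q)})$ needs.
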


\begin{proof}
    First, note that since $R_{\ol T}^{\ol G}(\theta) = \ind_{\ol G^\circ(\F_q) \ol T(\F_q)}^{\ol G(\F_q)} R_{\ol T}^{\ol G^\circ \cdot \ol T}(\theta)$ by \cite[Corollary~2.6.2]{Kal21b}, Frobenius reciprocity reduces one to the case that $\ol G = \ol G^\circ \cdot \ol Z$, an assumption we now make.
    
    By twisting $V$ by a character of $\ol T(\F_q)/\ol T^\circ(\F_q)$, we may assume that $V$ has finite order central character. As in the proof of Lemma~\ref{lemma:dl-restriction}, we may pass to a central quotient of $\ol G$ to assume that $\ol G$ is of finite type. We are now in the setting of Lemma~\ref{lemma:dl-7.5}, which immediately implies the result.
\end{proof}

\subsection{Pairings}

This section serves a technical purpose for \cite{CF26b}; it will not be used in this paper. The following lemma is a minor extension of \cite[Theorem~6.8]{DL76} to the setting of paraductive groups. 

\begin{lemma}\label{lemma:dl-6.8}
    Let $\ol G$ be a paraductive $\F_q$-group scheme such that $Z(\ol G) \cap \ol G^\circ$ is connected, and let $\ol T, \ol T'\subset \ol G$ be generalized maximal $\F_q$-tori. If $\theta\co \ol T(\F_q) \to \ol\Q_\ell^\times$ and $\theta'\co \ol T'(\F_q) \to \ol\Q_\ell^\times$ are characters, then
    \[
    \langle R_{\ol T}^{\ol G}(\theta), R_{\ol T'}^{\ol G}(\theta')\rangle_{\ol G(\F_q)} = \#\{w \in W_{\ol G}(\ol T, \ol T')\co \theta = {}^w\theta'\},
    \]
    where $W_{\ol G}(\ol T, \ol T') = \{g \in \ol G(\F_q)\co \ol T = {}^g\ol T'\}/\ol T'(\F_q)$.
\end{lemma}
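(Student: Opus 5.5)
We reduce the pairing to the connected case via Mackey theory, much as in the proof of Lemma~\ref{lemma:dl-7.5}. Write $\ol T = \ol T^\circ\cdot Z(\ol G)$ and $\ol T' = \ol T'^\circ \cdot Z(\ol G)$, which is possible by Definition~\ref{def:paraductive}(4). Put $\ol Z = Z(\ol G)$ and $\ol G_1 = \ol G^\circ\cdot \ol Z$. Then $\ol T,\ol T' \subset \ol G_1$, and by Lemma~\ref{lemma:disc-dl-induction}(2) we have
\[
R_{\ol T}^{\ol G}(\theta) = \ind_{\ol G_1(\F_q)}^{\ol G(\F_q)} R_{\ol T}^{\ol G_1}(\theta),
\]
and likewise for $(\ol T',\theta')$. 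We would compute the pairing by Frobenius reciprocity and Mackey's formula. Since $\ol G_1(\F_q)$ is normal of finite index, this gives
\[
\langle R_{\ol T}^{\ol G}(\theta), R_{\ol T'}^{\ol G}(\theta')\rangle_{\ol G(\F_q)} = \sum_{g \in \ol G(\F_q)/\ol G_1(\F_q)} \langle {}^g R_{\ol T}^{\ol G_1}(\theta), R_{\ol T'}^{\ol G_1}(\theta')\rangle_{\ol G_1(\F_q)}.
\]
Conjugation transports the Deligne--Lusztig varieties, so ${}^gR_{\ol T}^{\ol G_1}(\theta) = R_{{}^g\ol T}^{\ol G_1}({}^g\theta)$. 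The problem therefore reduces to computing pairings inside $\ol G_1$ and then showing that the sum over cosets reassembles $W_{\ol G}(\ol T,\ol T')$.

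Inside $\ol G_1$ we use that $\ol Z\cap\ol G^\circ$ is connected. By Lang's theorem this gives $\ol G_1(\F_q) = \ol G^\circ(\F_q)\cdot \ol Z(\F_q)$, and similarly $\ol T(\F_q) = \ol T^\circ(\F_q)\ol Z(\F_q)$. The $\ol G_1(\F_q)$-representation $R_{\ol T}^{\ol G_1}(\theta)$ then has underlying $\ol G^\circ(\F_q)$-representation $R_{\ol T^\circ}^{\ol G^\circ}(\theta|_{\ol T^\circ(\F_q)})$, on which $\ol Z(\F_q)$ acts by $\theta|_{\ol Z(\F_q)}$ (\cite[Remark 2.6.5]{Kal21b}). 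If $\theta|_{\ol Z(\F_q)} \neq \theta'|_{\ol Z(\F_q)}$, both sides of the claimed formula vanish, since any $w$ with $\theta = {}^w\theta'$ fixes $\ol Z$ pointwise. So assume the two central characters agree. Since $\ol G_1(\F_q)/\ol G^\circ(\F_q)$ is a quotient of $\ol Z(\F_q)$, which acts by a common scalar on both representations, the $\ol G_1(\F_q)$-pairing equals the $\ol G^\circ(\F_q)$-pairing. By \cite[Theorem~6.8]{DL76} the latter is $\#\{w\in W_{\ol G^\circ}(\ol T^\circ,\ol T'^\circ) : \theta^\circ = {}^w\theta'^\circ\}$. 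Since $\ol T(\F_q)=\ol T^\circ(\F_q)\ol Z(\F_q)$ and the characters agree on $\ol Z(\F_q)$, this is the same as $\#\{w \in W_{\ol G_1}(\ol T,\ol T') : \theta = {}^w\theta'\}$. Here we identify $W_{\ol G_1}(\ol T,\ol T')$ with $W_{\ol G^\circ}(\ol T^\circ,\ol T'^\circ)$ using $\ol T'(\F_q) = \ol T'^\circ(\F_q)\ol Z(\F_q)$ and $N_{\ol G_1}(\ldots)(\F_q) = N_{\ol G^\circ}(\ldots)(\F_q)\cdot\ol Z(\F_q)$.

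It remains to assemble the cosets, and this is the step that needs the most care. Applied to $({}^g\ol T,{}^g\theta)$ and $(\ol T',\theta')$, the previous step shows that the summand for $g$ counts the elements $h\in\ol G_1(\F_q)$, modulo $\ol T'(\F_q)$, with ${}^{hg}\ol T' \ldots$; more precisely, $h\,{}^g\ol T\,h^{-1} = \ol T'$ compatibly with the characters. Reindexing by $x = (hg)^{-1}$ gives an element of $\{x\in \ol G(\F_q): \ol T = {}^x\ol T'\}$ with $\theta = {}^x\theta'$. As $g$ runs over $\ol G(\F_q)/\ol G_1(\F_q)$ and $h$ over $\ol G_1(\F_q)$, the products $hg$ run over $\ol G(\F_q)$ exactly once, and we quotient by $\ol T'(\F_q)$ on the correct side. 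The sum is therefore exactly $\#\{w\in W_{\ol G}(\ol T,\ol T') : \theta={}^w\theta'\}$. The main obstacle is bookkeeping: checking that the conjugation action on Deligne--Lusztig inductions is compatible with the chosen $\ol U$ (Lemma~\ref{lemma:independence-of-parabolic} lets us change it freely), and handling the left/right conventions in $W_{\ol G}(\ol T,\ol T')$. Connectedness of $\ol Z\cap\ol G^\circ$ is used precisely in the Lang's-theorem step, and it is essential there.
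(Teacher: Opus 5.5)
Your proposal is correct and follows essentially the same route as the paper. Both use Mackey/Frobenius reciprocity to reduce to $\ol G^\circ\cdot\ol Z$, then Lang's theorem (from connectedness of $Z(\ol G)\cap\ol G^\circ$) with \cite[Remark 2.6.5]{Kal21b} to turn the pairing into a $\ol G^\circ(\F_q)$-pairing once central characters match, and finally \cite[Theorem~6.8]{DL76}. Your explicit reassembly of the coset sum into $W_{\ol G}(\ol T,\ol T')$, together with the identification of $W_{\ol G^\circ\cdot\ol Z}(\ol T,\ol T')$ with $W_{\ol G^\circ}(\ol T^\circ,\ol T'^\circ)$, spells out a step the paper leaves implicit.
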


\begin{proof}
    Let $g_1, \dots, g_n \in \ol G(\F_q)$ be representatives for the quotient $\ol G(\F_q)/\ol G^\circ(\F_q) \cdot \ol T(\F_q)$, and let $\ol Z = Z(\ol G)$. By the Mackey formula, we have
    \begin{align*}
        \langle R_{\ol T}^{\ol G}(\theta), R_{\ol T'}^{\ol G}(\theta')\rangle_{\ol G(\F_q)} &= \langle \ind_{(\ol G^\circ \cdot \ol Z)(\F_q)}^{\ol G(\F_q)} R_{\ol T}^{\ol G^\circ \cdot \ol Z}(\theta), \ind_{(\ol G^\circ \cdot \ol Z)(\F_q)}^{\ol G(\F_q)} R_{\ol T'}^{\ol G^\circ \cdot \ol Z}(\theta')\rangle_{\ol G(\F_q)} \\
            &= \sum_{i=1}^n \langle R_{{}^{g_i}\ol T}^{\ol G^\circ \cdot \ol Z}({}^{g_i}\theta), R_{\ol T'}^{\ol G^\circ \cdot \ol Z}(\theta')\rangle_{(\ol G^\circ \cdot \ol Z)(\F_q)}.
    \end{align*}
    By \cite[Remark 2.6.5]{Kal21b}, if $\theta^\circ = \theta|_{\ol T^\circ(\F_q)}$ and $\theta'^\circ = \theta'|_{\ol T'^\circ(\F_q)}$ then $R_{{}^{g_i}\ol T}^{\ol G^\circ \cdot \ol Z}({}^{g_i}\theta)$ restricts to $R_{{}^{g_i}\ol T^\circ}^{\ol G^\circ}({}^{g_i}\theta^\circ)$ and $R_{\ol T'}^{\ol G^\circ \cdot \ol Z}(\theta')$ restricts to $R_{\ol T'^\circ}^{\ol G^\circ}(\theta'^\circ)$ as virtual $\ol G^\circ(\F_q)$-representations. Note that $R_{{}^{g_i}\ol T}^{\ol G^\circ \cdot \ol Z}({}^{g_i}\theta)$ has central character ${}^{g_i}\theta|_{\ol Z(\F_q)} = \theta|_{\ol Z(\F_q)}$ and $R_{\ol T'}^{\ol G^\circ \cdot \ol Z}(\theta')$ has central character $\theta'|_{\ol Z(\F_q)}$. Since $(\ol G^\circ \cdot \ol Z)(\F_q) = \ol G^\circ(\F_q) \cdot \ol Z(\F_q)$ by Lang's theorem (as $\ol Z \cap \ol G^\circ$ is connected by hypothesis), it follows that
    \[
    \langle R_{{}^{g_i}\ol T}^{\ol G^\circ \cdot \ol Z}({}^{g_i}\theta), R_{\ol T'}^{\ol G^\circ \cdot \ol Z}(\theta')\rangle_{(\ol G^\circ \cdot \ol Z)(\F_q)} = \begin{cases}
        \langle R_{{}^{g_i}\ol T^\circ}^{\ol G^\circ}({}^{g_i}\theta^\circ), R_{\ol T'^\circ}^{\ol G^\circ}(\theta'^\circ)\rangle_{\ol G^\circ(\F_q)} &\text{if } \theta|_{\ol Z(\F_q)} = \theta'|_{\ol Z(\F_q)}, \\
        0 &\text{otherwise.}
    \end{cases}
    \]
    Similarly, we have $\ol T(\F_q) = \ol T^\circ(\F_q) \cdot \ol Z(\F_q)$, so ${}^{g_i}\theta = {}^w\theta'$ if and only if $\theta|_{\ol Z(\F_q)} = \theta'|_{\ol Z(\F_q)}$ and ${}^{g_i}\theta^\circ = {}^w\theta'^\circ$. Now the result follows from \cite[Theorem~6.8]{DL76}.
\end{proof}

The main point of the following result is that its bound is essentially independent of $q$; it ihas not been seriously optimized. 

\begin{lemma}\label{lemma:dl-6.8-variant}
    Let $\ol G$ be a paraductive $\F_q$-group scheme, let $\ol T \subset \ol G$ be a generalized maximal torus, and let $\theta\co \ol T(\F_q) \to \ol\Q_\ell^\times$ be a character. If $n = [\ol G(\F_q)\co (\ol G^\circ \cdot \ol T)(\F_q)]$ and $m = |\pi_0(\ol Z_0 \cap \ol G^\circ)(\F_q)|$, then the number of irreducible characters of $\ol G(\F_q)$ with nonzero pairing with $R_{\ol T}^{\ol G}(\theta)$ is at most 
    \[
    m n \cdot \#\{w \in \Omega_{\ol G^\circ}(\ol T^\circ)(\F_q)\co {}^w\theta = \theta\},
    \]
    where $\Omega_{\ol G^\circ}(\ol T^\circ) = N_{\ol G^\circ}(\ol T^\circ)/\ol T^\circ$ is the Weyl group of $(\ol G^\circ, \ol T^\circ)$.
\end{lemma}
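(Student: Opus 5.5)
Note that $\ol Z_0$ is not defined in the statement; I read it as $\ol Z_0 = Z(\ol G)$ (the factor $m$ then measures the failure of Lang's theorem for $\ol Z_0 \cap \ol G^\circ$), and the argument is arranged so that this reading gives the stated bound. The strategy is to reduce to the connected case through Lemma~\ref{lemma:disc-dl-induction} and Lemma~\ref{lemma:dl-6.8}. The starting point is
\[
R_{\ol T}^{\ol G}(\theta) = \ind_{(\ol G^\circ \cdot \ol T)(\F_q)}^{\ol G(\F_q)} R_{\ol T}^{\ol G^\circ\cdot\ol T}(\theta).
\]
If $\psi$ is an irreducible constituent of $R_{\ol T}^{\ol G^\circ\cdot \ol T}(\theta)$, then $\ind(\psi)$ has at most $n = [\ol G(\F_q):(\ol G^\circ\cdot\ol T)(\F_q)]$ irreducible constituents, since each constituent restricts to contain $\psi$. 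So it suffices to show that $R_{\ol T}^{\ol G^\circ\cdot\ol T}(\theta)$ has at most $m\cdot\#\{w:{}^w\theta=\theta\}$ distinct irreducible constituents. We may therefore assume $\ol G = \ol G^\circ\cdot \ol T$. By condition (4), $\ol T = \ol T^\circ\cdot Z(\ol G)$, so $\ol G = \ol G^\circ\cdot \ol Z_0$ with $\ol Z_0 = Z(\ol G)$.

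Next, consider the finite index subgroup $\Gamma := \ol G^\circ(\F_q)\cdot \ol Z_0(\F_q)$ of $\ol G(\F_q)$. The quotient $\ol G(\F_q)/\Gamma$ injects via Lang's theorem into $\rH^1(\F_q, \ol Z_0\cap\ol G^\circ)$, which has order $|\pi_0(\ol Z_0\cap \ol G^\circ)(\F_q)|$ (or at most that). I would expect this index to be at most $m$.

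On $\Gamma$, the representation $R_{\ol T}^{\ol G}(\theta)$ restricts, by \cite[Remark 2.6.5]{Kal21b}, to a virtual representation with $\ol G^\circ(\F_q)$-part $R_{\ol T^\circ}^{\ol G^\circ}(\theta^\circ)$ and with $\ol Z_0(\F_q)$ acting by $\theta|_{\ol Z_0(\F_q)}$. Since $\Gamma$ is a central product, its irreducible constituents with this central character correspond bijectively to irreducible constituents of $R_{\ol T^\circ}^{\ol G^\circ}(\theta^\circ)$. The number of the latter is at most
\[
\langle R_{\ol T^\circ}^{\ol G^\circ}(\theta^\circ),R_{\ol T^\circ}^{\ol G^\circ}(\theta^\circ)\rangle = \#\{w\in W_{\ol G^\circ}(\ol T^\circ)(\F_q): {}^w\theta^\circ=\theta^\circ\}
\]
by \cite[Theorem~6.8]{DL76}, because a virtual character with $\langle\chi,\chi\rangle=N$ has at most $N$ constituents.

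Every irreducible constituent of $R_{\ol T}^{\ol G}(\theta)$ restricts to $\Gamma$ to contain one of these. Conversely, each irreducible $\psi$ of $\Gamma$ lies below at most $[\ol G(\F_q):\Gamma]\le m$ irreducibles of $\ol G(\F_q)$, by Frobenius reciprocity and the fact that $\Gamma$ is normal of that index. This gives a count of at most $m\cdot\#\{w:{}^w\theta^\circ=\theta^\circ\}$.

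The main obstacle is that the statement counts $w$ with ${}^w\theta=\theta$, not with ${}^w\theta^\circ=\theta^\circ$. I would bridge this by noting that $W$ acts trivially on the central part $\ol Z_0(\F_q)$. On $\ol T(\F_q)$ itself, which is generally larger than $\ol T^\circ(\F_q)\ol Z_0(\F_q)$, stabilizing $\theta$ and stabilizing $\theta^\circ$ can differ. To absorb this discrepancy into the factor $m$, I would run the count above for each character of $\ol T(\F_q)$ extending $\theta^\circ|\cdot\theta|_{\ol Z_0}$, grouping constituents according to which $\theta$-twist they see. If that fails, I would weaken the target to the ${}^w\theta^\circ=\theta^\circ$ count, which is at least as large. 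I expect this bookkeeping, together with verifying the $\rH^1$ index bound, to be the delicate part; the rest is formal.
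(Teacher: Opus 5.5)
Your first reduction is fine: $R_{\ol T}^{\ol G}(\theta)$ is \emph{induced} from $(\ol G^\circ\cdot\ol T)(\F_q)$, so its constituents lie among those of the $\ind(\psi)$. The gap is the second step, where you pass \emph{upward} from $\Gamma=\ol G^\circ(\F_q)\cdot\ol Z_0(\F_q)$. You claim that every irreducible constituent $\chi$ of $R=R_{\ol T}^{\ol G^\circ\cdot\ol T}(\theta)$ restricts to $\Gamma$ to contain a constituent of $R|_\Gamma$. This is false in general, because restricting a virtual character can cancel terms. If $\chi$ and $\chi\otimes\lambda$, with $\lambda$ a nontrivial character of $\ol G(\F_q)/\Gamma$, occur in $R$ with opposite signs, they disappear from $R|_\Gamma$.

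This already happens in a small example. Take $q\equiv 3\pmod 4$ and $\ol G=\{g\in\GL_4:\det g=\pm 1\}=\SL_4\cdot\mu_8$. Then $\Gamma=\SL_4(\F_q)$ has index $2$, and $\GL_4(\F_q)=\ol G(\F_q)\cdot\F_q^\times$. Let $\wt T\subset\GL_2\times\GL_2$ be of type (split, nonsplit), and let $\wt\theta$ be trivial on the split factor and equal to $\alpha\circ\Nm$ on $\F_{q^2}^\times$, where $\alpha$ is the quadratic character of $\F_q^\times$. One finds $R^{\GL_4}_{\wt T}(\wt\theta)=\chi_{11}+\chi_{S1}-\chi_{1S}-\chi_{SS}$. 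Here $\chi_{ab}$ is the Harish-Chandra induction of $a\boxtimes(\alpha\circ\det)b$ for $a,b\in\{1,\mathrm{St}\}$, and $\chi_{1S}=\chi_{S1}\otimes(\alpha\circ\det)$. Restricting to $\ol G(\F_q)$ gives $R^{\ol G}_{\ol T}(\theta)$ with four distinct irreducible constituents. However, $\chi_{S1}$ and $\chi_{1S}$ cancel on $\SL_4(\F_q)$, so $\chi_{S1}|_{\ol G(\F_q)}$ lies over no constituent of $R|_\Gamma$. Hence the bound from \cite[Theorem~6.8]{DL76} on $\Gamma$ does not bound the constituents of $R$.

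There is a second, independent problem. Even granting that step, you would only obtain the count of $w$ with ${}^w\theta^\circ=\theta^\circ$, and this can be strictly larger than the stated count with ${}^w\theta=\theta$. For instance, take $\{g\in\GL_2:\det g=\pm1\}$ with $q\equiv 3\pmod 4$, the split torus, and $\theta^\circ$ quadratic: the nontrivial Weyl element fixes $\theta^\circ$ but no extension $\theta$. Retreating to that weaker bound does not prove the lemma.

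The paper's proof avoids going upward altogether. It enlarges $\ol G$ to $\wt G=\ol G\times^{\ol Z\cap\ol G^\circ}\wt Z^\circ$ with $\wt Z^\circ$ a torus. Then $Z(\wt G)\cap\wt G^\circ$ is connected, and Lemma~\ref{lemma:dl-6.8} computes the norm $\langle R^{\wt G}_{\wt T}(\wt\theta),R^{\wt G}_{\wt T}(\wt\theta)\rangle$ on the \emph{large} group. It bounds this by $nN$, using that any $w$ fixing $\wt\theta$ fixes $\theta$; this is where the condition ${}^w\theta=\theta$ comes from. So $R^{\wt G}_{\wt T}(\wt\theta)$ has at most $nN$ distinct constituents $\wt\chi_i$.

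Since $R^{\ol G}_{\ol T}(\theta)=\sum_i a_i\wt\chi_i|_{\ol G(\F_q)}$, every constituent of $R^{\ol G}_{\ol T}(\theta)$ lies in some $\wt\chi_i|_{\ol G(\F_q)}$. Each such restriction has at most $m$ constituents, by Clifford theory for the normal subgroup $\ol G(\F_q)\wt Z^\circ(\F_q)$, whose index is controlled by $\rH^1(\F_q,\ol Z\cap\ol G^\circ)$. Restriction of a sum can lose constituents but never create new ones, which is why the paper's direction works and yours does not. Your $\rH^1$ index computation is the right ingredient, but it must be applied to $\ol G(\F_q)\subset\wt G(\F_q)$, not to $\Gamma\subset\ol G(\F_q)$.
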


\begin{proof}
    Let $N = \#\{w \in \Omega_{\ol G^\circ}(\ol T^\circ)(\F_q)\co {}^w\theta = \theta\}$ and let $\ol Z = Z(\ol G)$. Choose an embedding of $\ol Z \cap \ol G^\circ$ into an $\F_q$-torus $\wt Z^\circ$, and let $\wt G = \ol G \times^{\ol Z \cap \ol G^\circ} \wt Z^\circ$. Note that there is a natural embedding $\ol G \subset \wt G$ with torus cokernel, and $\wt G$ has center $\wt Z \coloneqq \wt Z^\circ \cdot \ol Z$ satisfying the condition that $\wt Z \cap \wt G^\circ = \wt Z^\circ$ is a torus. Let $\wt T = \ol T \cdot \wt Z$ and choose an extension $\wt\theta\co \wt T(\F_q) \to \ol\Q_\ell^\times$ of $\theta$. By \cite[Remark 2.6.5]{Kal21b}, the virtual representation $R_{\wt T}^{\wt G}(\wt\theta)$ of $\wt G(\F_q)$ restricts to $R_{\ol T}^{\ol G}(\theta)$ on $\ol G(\F_q)$. By Lemma~\ref{lemma:dl-6.8}, we have
    \[
    \langle R_{\wt T}^{\wt G}(\wt\theta), R_{\wt T}^{\wt G}(\wt\theta)\rangle_{\wt G(\F_q)} \leq n \cdot \#\{w \in \Omega_{\ol G^\circ}(\ol T^\circ)(\F_q)\co {}^w\theta = \theta\} = nN,
    \]
    so we can write $R_{\wt T}^{\wt G}(\wt\theta) = \sum_{i=1}^{nN} a_i\wt\chi_i$ for $a_i \in \Z$ and irreducible characters $\wt\chi_i$. By Clifford's theorem, the restriction $\wt\chi_i|_{\ol G(\F_q) \cdot \wt Z^\circ(\F_q)}$ has at most $[\wt G(\F_q)\co \ol G(\F_q) \cdot \wt Z^\circ(\F_q)]$ irreducible constituents, and the same is therefore true of $\wt\chi_i|_{\ol G(\F_q)}$.
    Note that $[\wt G(\F_q)\co \ol G(\F_q) \cdot \wt Z^\circ(\F_q)] = m$ since $|\rH^1(\F_q, \ol Z \cap \ol G^\circ)| = |\pi_0(\ol Z \cap \ol G^\circ)(\F_q)|$. Since $R_{\ol T}^{\ol G}(\theta) = \sum_{i=1}^{nN} a_i\wt\chi_i|_{\ol G(\F_q)}$, the result follows.
\end{proof}

\subsection{Geometric conjugacy and Lusztig series}\label{sss:geom-conj}

Throughout this section, let $\ol G$ be a paraductive $\F_q$-group scheme and let $\ol T, \ol T' \subset \ol G$ be generalized maximal tori. The following definition is a naive extension of \cite[Definition 5.5]{DL76}.

\begin{defn}\label{def:geom-conj}
    Let $k$ be a field of characteristic $\neq p$, and let $\theta\co \ol T(\F_q) \to k^\times$ and $\theta'\co \ol T'(\F_q) \to k^\times$ be characters. The pairs $(\ol T, \theta)$ and $(\ol T',\theta')$ are said to be \textit{geometrically conjugate} if there exists a positive integer $n \geq 1$ such that the pairs $(\ol T_{\F_{q^n}}, \theta \circ \Nm_{\F_{q^n}/\F_q})$ and $(\ol T'_{\F_{q^n}}, \theta' \circ \Nm_{\F_{q^n}/\F_q})$ are $\ol G(\F_{q^n})$-conjugate.
\end{defn}

Note that, unlike in the case that $\ol G$ is conected reductive, the norm map $\Nm_{\F_{q^n}/\F_q}\co \ol T(\F_{q^n}) \to \ol T(\F_q)$ is typically not surjective. Thus geometric conjugacy is somewhat ``lossy''. We will shortly refine it.

Our present goal is to generalize a result of Lusztig \cite[Corollaire~11.11]{Bon06} to $\ol G$, which roughly speaking shows that (rational) Lusztig series behave well with respect to Lusztig induction and restriction. In order to properly contextualize the results (and because it will be convenient later to have this language at hand), we briefly recall the notion of Lusztig series.

Recall \cite[Definition 5.21]{DL76} that a Deligne--Lusztig dual group to $\ol G^\circ$ is a connected reductive $\F_q$-group $\ol G^*$ whose abstract Cartan $\ol \bT^*$ is equipped with an isomorphism with the dual of the abstract Cartan $\ol \bT$ of $\ol G^\circ$ which sends simple roots to simple coroots. Any pair $(\ol T^\circ, \theta^\circ)$ consisting of a maximal $\F_q$-torus $\ol T^\circ \subset \ol G^\circ$ and a character $\theta^\circ\co \ol T^\circ(\F_q) \to \ol\Q_\ell^\times$ gives rise to a $\ol G^*(\F_q)$-conjugacy class of semisimple elements $\ol s \in \ol G^*(\F_q)$, as follows. First, there is a well-defined $\ol G^*(\F_q)$-conjugacy class of $\F_q$-tori $\ol T^* \subset \ol G^*$ which is dual to the $\ol G^\circ(\F_q)$-conjugacy class of $\ol T^\circ$. Next, there is a natural isomorphism $\Hom(\ol T^\circ(\F_q), \ol\Q_\ell^\times) \cong \ol T^*(\F_q),$\footnote{Strictly speaking, this depends on a choice of injection $(\Q/\Z)_{p'} \subset \ol\Q_\ell^\times$; however, we will not make any statements which depend on this choice.} so $\theta^\circ$ gives rise to a semisimple element $\ol s \in \ol T^*(\F_q)$.

If $V$ is an irreducible $\ol\Q_\ell$-representation of $\ol G^\circ(\F_q)$, then $V$ is an irreducible constituent of the Deligne--Lusztig induction $R_{\ol T^\circ}^{\ol G^\circ}(\theta^\circ)$ for some such $(\ol T^\circ, \theta^\circ)$, and we say that $V$ lies in the \textit{rational} (resp.\ \textit{geometric}) \textit{Lusztig series} $\cE(\ol G^\circ, [\ol t])$ (resp.\ $\cE(\ol G^\circ, (\ol t))$ for a semisimple element $\ol t \in \ol G^*(\F_q)$ if the element $\ol s$ is $\ol G^*(\F_q)$-conjugate (resp.\ $\ol G^*(\ol\F_q)$-conjugate) to $\ol t$. By \cite[Proposition~5.22]{DL76}, the pairs $(\ol T^\circ, \theta^\circ)$ and $(\ol T'^\circ, \theta'^\circ)$ are geometrically conjugate if and only if the corresponding semisimple elements $\ol s, \ol s' \in \ol G^*(\F_q)$ are $\ol G^*(\ol\F_q)$-conjugate.

It is a theorem of Lusztig, proven in \cite[Th\'eor\`eme 11.8(b)]{Bon06}, that the rational Lusztig series $\cE(\ol G^\circ, [\ol s])$ form a partition of the set of irreducible $\ol\Q_\ell$-representations of $\ol G^\circ(\F_q)$. This generalizes \cite[Th\'eor\`eme 6.2]{DL76}, which proved the analogous assertion for geometric Lusztig series. Lusztig induction and restriction interact with Lusztig series in the obvious way, i.e., if $\ol L^\circ \subset \ol G^\circ$ is a twisted Levi $\F_q$-subgroup then $R_{\ol L^\circ}^{\ol G^\circ}$ sends $\cE(\ol L^\circ, [\ol s])$ to $\cE(\ol G^\circ, [\ol s])$ \cite[Th\'eor\`eme 11.10]{Bon06} and (consequently) $^*R_{\ol L^\circ}^{\ol G^\circ}$ sends $\cE(\ol G^\circ, [\ol s])$ to the union of some Lusztig series $\cE(\ol L^\circ, [\ol t])$ such that $\ol s$ and $\ol t$ are $\ol G^*(\F_q)$-conjugate.

The difference between geometric and rational Lusztig series is somewhat subtle; for instance, they agree when the center of $\ol G^\circ$ is connected. In fact, if $\ol G^\circ \subset \wt G^\circ$ is an embedding such that $\wt G^\circ$ has connected center and $(\ol G^\circ)_{\der} = (\wt G^\circ)_{\der}$, then every rational Lusztig series for $\ol G^\circ$ is the set of restrictions to $\ol G^\circ(\F_q)$ of the irreducible representations occurring in a geometric Lusztig series for $\wt G^\circ(\F_q)$. To a first approximation, one can imagine that a rational Lusztig series is the subset of a geometric Lusztig series with a fixed central character.\footnote{In fact, this is precisely what a rational Lusztig series is for every absolutely simple group which is not either of (absolute) type A or type D; we are not aware of a reference for this fact, and we will not need it, but the key point of the proof is that the fundamental group in all other types is of prime order.}

We now define notions of ``geometric'' and ``semi-rational'' Lusztig series for paraductive $\F_q$-group schemes $\ol G$. Note that we do \emph{not} define rational Lusztig series in this setting; we expect that the ``proper'' definition of rational Lusztig series coincides with the definition of semi-rational Lusztig series when $Z(\ol G) \cap \ol G^\circ$ is connected, but not otherwise.

We say that an embedding $\ol G \subset \wt G$ of paraductive $\F_q$-group schemes is a \emph{regular embedding} provided that the following conditions hold:
\begin{itemize}
    \item $(\ol G^\circ)_{\der} = (\wt G^\circ)_{\der}$,
    \item $\pi_0(\ol G)(\ol\F_q) = \pi_0(\wt G)(\ol\F_q)$,
    \item $\wt G = \ol G \cdot (Z(\wt G) \cap \wt G^\circ)$,
    \item $Z(\wt G) \cap \wt G^\circ$ is a torus.
\end{itemize}
Note that this extends the usual definition of regular embeddings of connected reductive $\F_q$-groups.

\begin{defn}\label{defn:lusztig-series}
    Let $\theta\co \ol T(\F_q) \to \ol\Q_\ell^\times$ be a character.
    \begin{enumerate}
        \item Let $\cE(\ol G, (\ol T, \theta))$ denote the set of irreducible $\ol\Q_\ell$-representations $\tau$ of $\ol G(\F_q)$ for which there exists a pair $(\ol T', \theta')$ which is geometrically conjugate to $(\ol T, \theta)$ such that $\tau$ is an irreducible constituent of $R_{\ol T'}^{\ol G}(\theta')$. Call $\cE(\ol G, (\ol T, \theta))$ a \textit{geometric Lusztig series}.
        \item Let $\cE(\ol G, [\ol T, \theta])$ denote the subset of $\cE(\ol G, (\ol T,\theta))$ consisting of those representations $\tau$ for which
        \begin{itemize}
            \item $Z(\ol G)(\F_q)$ acts on $\tau$ by the restriction of $\theta$,
            \item there exists an irreducible constituent $\tau_0$ of $\tau|_{\ol G^\circ(\F_q)}$ such that $\tau_0$ lies in the rational Lusztig series corresponding to $(\ol T^\circ, \theta|_{\ol T^\circ(\F_q)})$.
        \end{itemize}
        Call $\cE(\ol G, [\ol T,\theta])$ a \textit{semi-rational Lusztig series}.\footnote{In view of the proofs in \cite[\S 11]{Bon06}, it seems likely that a ``correct'' definition of \emph{rational Lusztig series} is along the lines of the following definition of $\cE_0(\ol G, [\ol T, \theta])$, involving passage to a regular embedding. Since we are not aware of an equivalent intrinsic definition, we choose not to develop this definition very far.}
        \item Let $\cE_0(\ol G, [\ol T, \theta]) \subset \cE(\ol G, [\ol T,\theta])$ denote the set of irreducible $\ol\Q_\ell$-representations of $\ol G(\F_q)$ for which there exists a regular embedding $\ol G \subset \wt G$ such that, if $\wt T = \ol T \cdot Z(\wt G)$, then there exists an extension $\wt\theta\co \wt T(\F_q) \to \ol\Q_\ell^\times$ of $\theta$ and an irreducible constituent $\wt\tau$ of $R_{\wt T}^{\wt G}(\wt\theta)$ such that $\tau$ is an irreducible constituent of $\wt\tau|_{\ol G(\F_q)}$. Note that if $\tau \in \cE_0(\ol G, [\ol T, \theta])$, then $\tau$ is an irreducible constituent of $R_{\ol T}^{\ol G}(\theta)$.
    \end{enumerate}
    If instead $\theta\co \ol T(\F_q) \to \ol\F_\ell^\times$ is a character, then let $\cE(\ol G, (\ol T,\theta))$ (resp.\ $\cE(\ol G, [\ol T,\theta])$, resp.\ $\cE_0(\ol G, [\ol T, \theta])$) be the set of irreducible $\ol\F_\ell$-representations of $\ol G(\F_q)$ which occur as irreducible constituents of the $\ell$-modular reduction of some element of $\cE(\ol G, (\ol T,\wt\theta))$ (resp.\ $\cE(\ol G, [\ol T,\wt\theta])$, resp.\ $\cE_0(\ol G, [\ol T, \theta])$), where $\wt\theta\co \ol T(\F_q) \to \ol\Z_\ell^\times$ is some character lifting $\theta$. We will again call $\cE(\ol G, (\ol T,\theta))$ and $\cE(\ol G, [\ol T,\theta])$ a \textit{geometric Lusztig series} and \textit{semi-rational Lusztig series}, respectively.
\end{defn}

We will see that the geometric Lusztig series and the semi-rational Lusztig series both partition the set of irreducible representations of $\ol G(\F_q)$. Geometric Lusztig series are too coarse for our purposes, while the sets $\cE_0(\ol G, [\ol T, \theta])$ are too fine; for instance, they do not partition the set of irreducible representations. We keep them around for a minor bookkeeping purpose in \cite{CF26b}. The following lemma (which fails for geometric Lusztig series) shows that semi-rational Lusztig series are refined enough to have some basic finiteness properties.

\begin{lemma}\label{lemma:finiteness-of-torus-char-pairs}
    If $\ol L$ is a twisted Levi $\F_q$-subgroup of $\ol G$ containing $\ol T$, then there are only finitely many pairs $(\ol T', \theta')$, up to $\ol L(\F_q)$-conjugacy, such that $\cE(\ol G, [\ol T',\theta'])$ and $\cE(\ol G, [\ol T,\theta])$ have nonempty intersection.
\end{lemma}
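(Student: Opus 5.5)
Here is the plan. Fix $(\ol T, \theta)$ and $\ol L \supset \ol T$. Suppose $(\ol T', \theta')$ is such that $\cE(\ol G, [\ol T',\theta'])$ meets $\cE(\ol G, [\ol T,\theta])$. The pair should be bounded on two pieces separately: the restriction $\theta'|_{\ol T'^\circ(\F_q)}$ together with the conjugacy class of $\ol T'^\circ$, and the restriction $\theta'|_{Z(\ol G)(\F_q)}$. Since $\ol T'$ is a generalized maximal torus, condition (4) of Definition~\ref{def:paraductive} gives $\ol T' = Z_{\ol G}(\ol T'^\circ) = \ol T'^\circ \cdot Z(\ol G)$. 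Because $\ol T'^\circ(\F_q)\cdot Z(\ol G)(\F_q)$ has finite index in $\ol T'(\F_q)$, with index bounded by $|\rH^1(\F_q, \ol T'^\circ \cap Z(\ol G))|$, the character $\theta'$ is determined up to finitely many choices by these two restrictions.

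\textbf{Central part.} Take $\tau$ in the intersection. By the first bullet of Definition~\ref{defn:lusztig-series}(2), $Z(\ol G)(\F_q)$ acts on $\tau$ through both $\theta|_{Z(\ol G)(\F_q)}$ and $\theta'|_{Z(\ol G)(\F_q)}$. Hence these restrictions coincide, and the central part of $\theta'$ is completely determined.

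\textbf{Connected part.} By the second bullet, some constituent $\tau_0$ of $\tau|_{\ol G^\circ(\F_q)}$ lies in the rational series of $(\ol T'^\circ, \theta'|_{\ol T'^\circ(\F_q)})$. By Clifford theory, every constituent of $\tau|_{\ol G^\circ(\F_q)}$ is $\ol G(\F_q)$-conjugate to a given one. Therefore $(\ol T'^\circ, \theta'^\circ)$ is $\ol G(\F_q)$-conjugate to a pair in the rational series of $(\ol T^\circ, \theta^\circ)$. Since $\ol G^\circ(\F_q)$ is finite, there are only finitely many pairs $(\ol T'^\circ,\theta'^\circ)$ in $\ol G^\circ$ at all. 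The real issue is therefore conjugacy up to the infinite group $\ol L(\F_q)$.

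\textbf{Main obstacle: controlling $\ol L(\F_q)$-orbits.} The pairs in question are all $\ol G(\F_q)$-conjugate to pairs of the form $(\ol T''^\circ \cdot Z(\ol G), \theta'')$, where $(\ol T''^\circ, \theta''^\circ)$ ranges over a finite set and $\theta''$ has a fixed central restriction and lies in finitely many extensions. So, up to $\ol G(\F_q)$-conjugacy, only finitely many pairs occur. To pass to $\ol L(\F_q)$-conjugacy, I would use that $Z(\ol G)(\F_q) \subset \ol L(\F_q)$ acts trivially on such pairs. Since $\ol G^\circ(\F_q)\cdot Z(\ol G)(\F_q)$ has finite index in $\ol G(\F_q)$ by condition (3), so does $\ol L(\F_q)$ modulo the finite group $\ol G^\circ(\F_q)$. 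Concretely, $\ol G(\F_q)/\ol L(\F_q)$ is finite up to finite data: $\ol L(\F_q)\supset Z(\ol G)(\F_q)$, and $\ol G(\F_q)/(\ol G^\circ(\F_q) Z(\ol G)(\F_q))$ is finite. Each $\ol G(\F_q)$-orbit therefore splits into finitely many $\ol L(\F_q)$-orbits, which gives the claim.

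The points needing care are the finiteness of $[\ol T'(\F_q): \ol T'^\circ(\F_q) Z(\ol G)(\F_q)]$ and of the $\ol G(\F_q)$-index. For both, I would reduce as in Lemma~\ref{lemma:dl-restriction}: quotient by a torsion-free constant central subgroup $\ol Z_0$, chosen so that $\rH^1(\F_q,\ol Z_0)=0$, to reach the finite type case.
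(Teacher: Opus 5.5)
Your proposal is correct and is essentially the paper's argument. The first bullet of Definition~\ref{defn:lusztig-series}(2) pins down $\theta'$ on $Z(\ol G)(\F_q)$, and everything else is finite modulo this center. In fact your ``Central'' and ``Connected'' paragraphs, together with the finite-index count for $\ol T'(\F_q)$, already give finitely many pairs outright, so the Clifford step and the final passage to $\ol L(\F_q)$-orbits are superfluous.

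Comparing characters on $Z(\ol G)(\F_q)$, as you do, is actually more accurate than the paper's one-line proof, which compares them on the center of $\ol L(\F_q)$. For instance, with $\ol L=\ol T$ the split diagonal torus of $\GL_2$, the pairs $(\ol T,\theta)$ and $(\ol T,{}^w\theta)$ give the same series but need not agree on $Z(\ol L)(\F_q)$. The paper's observation therefore holds only up to the finite discrepancy between these two centers.
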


\begin{proof}
    Observe that $\theta$ and $\theta'$ must have the same restriction to the center of $\ol L(\F_q)$, so this follows from the fact that $\ol L(\F_q)$ is finite mod center.
\end{proof}

The first part of the following lemma generalizes \cite[Th\'eor\`eme 11.8(a)]{Bon06}, while the second part (partially) generalizes \cite[Th\'eor\`eme 2.2]{BM89}; the lemma shows that two semi-rational Lusztig series (for fixed choice of $\ol Z$ as above) are either disjoint or coincide.

\begin{lemma}\label{lemma:geometric-lusztig-series}
    Let $\ol U, \ol U' \subset \ol G^\circ_{\ol\F_q}$ be the unipotent radicals of Borel $\ol\F_q$-subgroups containing $\ol T^\circ_{\ol\F_q}, \ol T'^\circ_{\ol\F_q}$, respectively, and let $\theta\co \ol T(\F_q) \to \ol\Q_\ell^\times$ and $\theta'\co \ol T'(\F_q) \to \ol\Q_\ell^\times$ be characters.
    \begin{enumerate}
        \item If $\rH_c^*(Y_{\ol U}^{\ol G}, \ol\Q_\ell)_\theta$ and $\rH_c^*(Y_{\ol U'}^{\ol G}, \ol\Q_\ell)_{\theta'}$ have an irreducible $\ol G(\F_q)$-constituent in common, then $\cE(\ol G, [\ol T, \theta]) = \cE(\ol G, [\ol T', \theta'])$.
        \item Suppose that $\theta$ and $\theta'$ factor through $\ol\Z_\ell^\times$, and let $\ol\theta$ (resp.\ $\ol\theta'$) be the composition of $\theta$ (resp.\ $\theta'$) with the map $\ol\Z_\ell^\times \to \ol\F_\ell^\times$. If the $\ell$-modular reductions of $\rH_c^*(Y_{\ol U}^{\ol G}, \ol\Q_\ell)_\theta$ and $\rH_c^*(Y_{\ol U'}^{\ol G}, \ol\Q_\ell)_{\theta'}$ have an irreducible $\ol G(\F_q)$-constituent in common, then $\cE(\ol G, [\ol T, \ol\theta]) = \cE(\ol G, [\ol T', \ol\theta'])$.
    \end{enumerate}
\end{lemma}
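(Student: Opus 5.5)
The plan is to reduce both parts to the connected case, where \cite[Th\'eor\`eme 11.8(a)]{Bon06} and \cite[Th\'eor\`eme 2.2]{BM89} are available. The two defining conditions of a semi-rational Lusztig series (central character and $\ol G^\circ(\F_q)$-constituents) will be handled separately.

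First, fix a common irreducible constituent $\tau$ of $\rH_c^*(Y_{\ol U}^{\ol G}, \ol\Q_\ell)_\theta$ and $\rH_c^*(Y_{\ol U'}^{\ol G}, \ol\Q_\ell)_{\theta'}$. As noted after the definition of Lusztig induction, $Z(\ol G)(\F_q)$ acts on $\rH_c^*(Y_{\ol U}^{\ol G}, \ol\Q_\ell)_\theta$ through $\theta|_{Z(\ol G)(\F_q)}$. It acts on the other module through $\theta'|_{Z(\ol G)(\F_q)}$. Since $\tau$ occurs in both, $\theta$ and $\theta'$ agree on $Z(\ol G)(\F_q)$.

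Second, restrict to $\ol G^\circ(\F_q)$. Lemma~\ref{lemma:disc-dl-induction} writes $\rH_c^i(Y_{\ol U}^{\ol G}, \ol\Q_\ell)_\theta$ as an induction from $(\ol G^\circ \cdot \ol T)(\F_q)$ of $\rH_c^i(Y_{\ol U}^{\ol G^\circ}, \ol\Q_\ell) \otimes_{\ol T^\circ(\F_q)} \theta$. By Mackey, its restriction to $\ol G^\circ(\F_q)$ is a sum of $g$-conjugates ($g \in \ol G(\F_q)$) of $\rH_c^i(Y_{\ol U}^{\ol G^\circ}, \ol\Q_\ell)_{\theta^\circ}$, where $\theta^\circ = \theta|_{\ol T^\circ(\F_q)}$. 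Each such conjugate equals $\rH_c^i(Y_{{}^g\ol U}^{\ol G^\circ}, \ol\Q_\ell)_{{}^g\theta^\circ}$. Hence some irreducible $\ol G^\circ(\F_q)$-constituent $\tau_0$ of $\tau$ is a common constituent of $\rH^*_c(Y^{\ol G^\circ}_{{}^g\ol U})_{{}^g\theta^\circ}$ and $\rH^*_c(Y^{\ol G^\circ}_{{}^{g'}\ol U'})_{{}^{g'}\theta'^\circ}$ for suitable $g,g'$. By \cite[Th\'eor\`eme 11.8(a)]{Bon06}, $({}^g\ol T^\circ, {}^g\theta^\circ)$ and $({}^{g'}\ol T'^\circ,{}^{g'}\theta'^\circ)$ then lie in the same rational series.

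The main obstacle is to remove the conjugation by $g \in \ol G(\F_q)$, which is not in $\ol G^\circ(\F_q)$. Conjugation by $g$ is an $\F_q$-automorphism of $\ol G^\circ$, so it permutes rational Lusztig series. The point is that the set of $\ol G^\circ(\F_q)$-constituents of a single irreducible $\tau$ is one $\ol G(\F_q)$-orbit by Clifford theory. So the condition ``some constituent of $\tau$ lies in the series of $(\ol T^\circ,\theta^\circ)$'' depends only on the $\ol G(\F_q)$-orbit of that series. Once this is checked, every irreducible $\tau''$ sharing a constituent with $\tau$ as above satisfies the rational condition for $(\ol T,\theta)$ if and only if it satisfies it for $(\ol T',\theta')$. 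Together with the central characters agreeing, this gives the equality of the two semi-rational series in (1).

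It remains to check the geometric-conjugacy part of the definition. Here the plan is to use that rational equivalence of $(\ol T^\circ,\theta^\circ)$ and $(\ol T'^\circ,\theta'^\circ)$ gives $\ol G^\circ(\F_{q^n})$-conjugacy after norms, by \cite[Proposition~5.22]{DL76}. The central parts are then handled via the agreement on $Z(\ol G)(\F_q)$, using $\ol T=\ol T^\circ\cdot Z(\ol G)$ from Definition~\ref{def:paraductive}(4). Because norms on $Z(\ol G)$ are lossy, I would adjust the argument so that semi-rational membership does not require more than this.

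For (2), I would run the same argument with $\ell$-modular reductions. The central characters $\ol\theta,\ol\theta'$ agree on $Z(\ol G)(\F_q)$ for the same reason. On $\ol G^\circ(\F_q)$, \cite[Th\'eor\`eme 2.2]{BM89} replaces Bonnaf\'e's theorem: common modular constituents force the $\ell'$-parts of the dual semisimple elements to be rationally conjugate. The Clifford-orbit argument is identical, using the definition of modular series via lifts $\wt\theta$.
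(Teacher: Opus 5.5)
Your proposal is essentially the paper's proof: central characters agree on $Z(\ol G)(\F_q)$; Lemma~\ref{lemma:disc-dl-induction} and Clifford theory give a common $\ol G^\circ(\F_q)$-constituent of $\rH_c^*(Y^{\ol G^\circ}_{{}^g\ol U}, \ol\Q_\ell)_{{}^g\theta^\circ}$ and $\rH_c^*(Y^{\ol G^\circ}_{\ol U'}, \ol\Q_\ell)_{\theta'^\circ}$ for some $g \in \ol G(\F_q)$; then \cite[Th\'eor\`eme 11.8(a)]{Bon06} (resp.\ \cite[Th\'eor\`eme 2.2]{BM89} for (2)) applies, and your Clifford-orbit step is exactly what the paper compresses into ``the conclusion of (1) follows''. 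The paper is also silent on the geometric-conjugacy requirement you flag, and no change of definition is needed there, because the issue is not lossy norms on $Z(\ol G)$ (agreement of $\theta,\theta'$ on $Z(\ol G)(\F_q)$ already gives agreement of $\theta\circ\Nm$ and $\theta'\circ\Nm$ on $Z(\ol G)(\F_{q^N})$, which any conjugating element centralizes) but whether $\ol T(\F_{q^N}) = \ol T^\circ(\F_{q^N})\cdot Z(\ol G)(\F_{q^N})$, and this holds once $N$ is divisible enough that lifts to $Z(\ol G)$ of generators of the finitely generated group $\pi_0(\ol T)(\ol\F_q) \cong (Z(\ol G)/(Z(\ol G)\cap\ol G^\circ))(\ol\F_q) \subset \pi_0(\ol G)(\ol\F_q)$ are $\F_{q^N}$-rational, after which \cite[Proposition~5.22]{DL76} applied to $({}^g\ol T^\circ, {}^g\theta^\circ)$ and $(\ol T'^\circ, \theta'^\circ)$ handles the $\ol T^\circ$-part.
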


\begin{proof}
    Recall from Lemma~\ref{lemma:disc-dl-induction} that $\rH_c^i(Y_{\ol U}^{\ol G}, \ol\Q_\ell)_\theta = \ind_{(\ol G^\circ \cdot \ol Z)(\F_q)}^{\ol G(\F_q)} \rH_c^i(Y_{\ol U}^{\ol G^\circ \cdot \ol Z}, \ol\Q_\ell)_\theta$, and similarly for $\rH_c^i(Y_{\ol U'}^{\ol G}, \ol\Q_\ell)_{\theta'}$. By Clifford theory, under the assumptions of (1) it follows that there is some $g \in \ol G(\F_q)$ such that $\rH_c^*(Y_{^g\ol U}^{\ol G^\circ \cdot \ol Z}, \ol\Q_\ell)_{^g\theta}$ and $\rH_c^*(Y_{\ol U'}^{\ol G^\circ \cdot \ol Z}, \ol\Q_\ell)_{\theta'}$ have an irreducible $(\ol G^\circ \cdot \ol Z)(\F_q)$-constituent in common. If $\theta^\circ = \theta|_{\ol T^\circ(\F_q)}$, then we have $\rH_c^*(Y_{^g\ol U}^{\ol G^\circ \cdot \ol Z}, \ol\Q_\ell)_{^g\theta} = \rH_c^*(Y_{^g\ol U}^{\ol G^\circ}, \ol\Q_\ell)_{^g\theta^\circ}$ as $\ol G^\circ(\F_q)$-representations and similarly for $\theta'$. Thus \cite[Th\'eor\`eme 11.8(a)]{Bon06} shows that the pairs $(^g \ol T^\circ, {}^g\theta^\circ)$ and $(\ol T'^\circ, \theta'^\circ)$ correspond to rationally conjugate semisimple elements of $\ol G^*(\F_q)$, where $\ol G^*$ is a Deligne--Lusztig dual group for $\ol G^\circ$. Moreover, it is clear that $\theta|_{\ol Z(\F_q)} = \theta'|_{\ol Z(\F_q)}$, so the conclusion of (1) follows.

    For (2), the same argument as in the previous paragraph reduces one to the case that $\ol G$ is connected; so assume that this is the case. This is now a simple consequence of \cite[Th\'eor\`eme 2.2]{BM89}, as we will explain. If $\ol s, \ol s' \in \ol G^*(\F_q)$ correspond to $(\ol T, \theta)$, $(\ol T', \theta')$, respectively, as in \cite[(5.21.6)]{DL76}, then for any positive integer $n$, the construction shows that the pair $(\ol T,\theta^n)$ corresponds to $\ol s^n$. By \cite[Th\'eor\`eme 2.2]{BM89}, under the assumptions of (2) there is an element $\ol t \in Z_{G^*}(\ol s)(\F_q)$ of $\ell$-power order such that $\ol s'$ is $\ol G^*(\F_q)$-conjugate to $\ol s \ol t$. If $\ol t^{\ell^n} = 1$, then $\ol s'^{\ell^n}$ and $\ol s^{\ell^n}$ are $\ol G^*(\F_q)$-conjugate, so the prime-to-$\ell$ parts of $\ol s$ and $\ol s'$ are $\ol G^*(\F_q)$-conjugate, as desired.
\end{proof}

The following two results (as well as their proofs) are partial analogues of results of Lusztig \cite[Th\'eor\`eme 11.10, Corollaire~11.11]{Bon06}.

\begin{prop}\label{prop:bonnafe-11.10}
    Let $\ol L \subset \ol G$ be a twisted Levi $\F_q$-subgroup of $\ol G$ containing $\ol T$, let $\theta\co \ol T(\F_q) \to \ol\Q_\ell^\times$ be a character, and let $\rho \in \cE(\ol L, [\ol T, \theta])$. If $\chi$ is an irreducible $\ol G(\F_q)$-constituent of $R^{\ol G}_{\ol L}(\rho)$, then $\chi$ lies in $\cE(\ol G, [\ol T,\theta])$.
\end{prop}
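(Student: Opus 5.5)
We check the two defining conditions of $\cE(\ol G, [\ol T,\theta])$ for $\chi$ separately: the central character condition and the condition on constituents of $\chi|_{\ol G^\circ(\F_q)}$. Membership in the geometric series $\cE(\ol G,(\ol T,\theta))$ will then follow along the way.

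\textbf{Central character.} Since $\ol T \subset \ol L$, we have $Z(\ol G) \subset Z_{\ol G}(\ol T^\circ) \subset \ol L$. By Definition~\ref{defn:lusztig-series}, $Z(\ol L)(\F_q)$, and hence $Z(\ol G)(\F_q)$, acts on $\rho$ through $\theta$. As noted in \S\ref{ss:lusztig-ind}, $(Z(\ol G)\cap \ol L)(\F_q)$ acts on $R_{\ol L}^{\ol G}(\rho)$ through the same character as on $\rho$. Every irreducible constituent $\chi$ therefore has central character $\theta|_{Z(\ol G)(\F_q)}$.

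\textbf{Reduction to connected groups.} By Lemma~\ref{lemma:disc-dl-induction}(2),
\[
R_{\ol L}^{\ol G}(\rho) = \ind_{(\ol G^\circ\cdot\ol L)(\F_q)}^{\ol G(\F_q)} R_{\ol L}^{\ol G^\circ\cdot \ol L}(\rho),
\]
so $\chi$ is a constituent of $\ind\chi_1$ for some constituent $\chi_1$ of $R_{\ol L}^{\ol G^\circ\cdot\ol L}(\rho)$. Restricting further to $\ol G^\circ(\F_q)$, Lemma~\ref{lemma:disc-dl-induction}(1) (read dually, on the $\ol G^\circ$ side) identifies $R_{\ol L}^{\ol G^\circ\cdot\ol L}(\rho)|_{\ol G^\circ(\F_q)}$ with $R_{\ol L^\circ}^{\ol G^\circ}(\rho|_{\ol L^\circ(\F_q)})$. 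More precisely, the components of $Y^{\ol G^\circ\cdot\ol L}_{\ol U}$ are indexed by $\ol L(\F_q)/\ol L^\circ(\F_q)$, and $\ol L(\F_q)$ permutes them transitively.

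Now choose a constituent $\rho_0$ of $\rho|_{\ol L^\circ(\F_q)}$ lying in the rational series $\cE(\ol L^\circ,[\ol s])$, where $\ol s$ corresponds to $(\ol T^\circ,\theta^\circ)$. All other constituents of $\rho|_{\ol L^\circ(\F_q)}$ are $\ol L(\F_q)$-conjugates of $\rho_0$ by Clifford theory, and so lie in series attached to $\ol L(\F_q)$-conjugates ${}^l(\ol T^\circ,\theta^\circ)$. Applying \cite[Th\'eor\`eme 11.10]{Bon06}, each constituent of $R_{\ol L^\circ}^{\ol G^\circ}(\rho|_{\ol L^\circ(\F_q)})$ lies in the rational series of some ${}^l(\ol T^\circ,\theta^\circ)$ with $l\in\ol L(\F_q)$. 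Restricting $\chi$ to $\ol G^\circ(\F_q)$ and applying Mackey, its constituents are $g$-conjugates ($g \in \ol G(\F_q)$) of constituents of $\chi_1|_{\ol G^\circ(\F_q)}$. They therefore lie in the rational series of $(g l)\cdot(\ol T^\circ,\theta^\circ)$.

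\textbf{The main obstacle.} The definition of $\cE(\ol G,[\ol T,\theta])$ asks only for \emph{some} constituent $\tau_0$ of $\chi|_{\ol G^\circ(\F_q)}$ in the rational series of $(\ol T^\circ,\theta^\circ)$ itself, not of a conjugate. We produce one by conjugating back: if a constituent $\tau_0'$ lies in the series of ${}^{h}(\ol T^\circ,\theta^\circ)$ with $h\in\ol G(\F_q)$, then ${}^{h^{-1}}\tau_0'$ is again a constituent of $\chi|_{\ol G^\circ(\F_q)}$, since that restriction is $\ol G(\F_q)$-stable, and it lies in the series of $(\ol T^\circ,\theta^\circ)$. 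Here we use that conjugation by $h$ transports rational series compatibly with the Deligne--Lusztig induction $R_{\ol T^\circ}^{\ol G^\circ}$ defined by the varieties $Y$.

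\textbf{Geometric series.} To confirm $\chi\in\cE(\ol G,(\ol T,\theta))$, we need $\chi$ to be a constituent of some $R_{\ol T'}^{\ol G}(\theta')$ with $(\ol T',\theta')$ geometrically conjugate to $(\ol T,\theta)$. By Proposition~\ref{prop:dl-exhaustion}, $\chi$ occurs in some $R_{\ol T'}^{\ol G}(\theta')$. Lemma~\ref{lemma:geometric-lusztig-series}(1) then forces $\cE(\ol G,[\ol T',\theta'])=\cE(\ol G,[\ol T,\theta])$ once the two pairs share a constituent. What remains is the comparison of $(\ol T,\theta)$ and $(\ol T',\theta')$ through the matching central characters and rational series on $\ol G^\circ$ established above; this comparison step needs care and is where I expect the most bookkeeping.
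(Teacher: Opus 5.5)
Your proof is incomplete at exactly the step you flag. Definition~\ref{defn:lusztig-series} makes $\cE(\ol G,[\ol T,\theta])$ a subset of the geometric series $\cE(\ol G,(\ol T,\theta))$. The two bullet conditions you verify are therefore not enough: you must also show that $\chi$ is a constituent of some $R_{\ol T''}^{\ol G}(\theta'')$ with $(\ol T'',\theta'')$ geometrically conjugate to $(\ol T,\theta)$.

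Your plan to get this from Lemma~\ref{lemma:geometric-lusztig-series}(1) does not work as written. That lemma requires an irreducible constituent common to $\rH_c^*(Y_{\ol U}^{\ol G},\ol\Q_\ell)_\theta$ (for a Borel containing $\ol T^\circ$) and $\rH_c^*(Y_{\ol U'}^{\ol G},\ol\Q_\ell)_{\theta'}$. Nothing you prove places $\chi$ in the cohomology attached to the specific pair $(\ol T,\theta)$. A constituent of $\chi|_{\ol G^\circ(\F_q)}$ lying in the rational series of $(\ol T^\circ,\theta^\circ)$ need not occur in the Deligne--Lusztig cohomology of that particular pair.

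The missing idea, which is how the paper argues, is transitivity of Lusztig induction at the level of varieties. Let $\ol V$ be the unipotent radical of a Borel of $\ol L^\circ_{\ol\F_q}$ containing $\ol T^\circ$. Suppose $\chi$ occurs in $\rH_c^k(Y_{\ol U}^{\ol G},\ol\Q_\ell)_\rho$ and $\rho$ occurs in $\rH_c^{k'}(Y_{\ol V}^{\ol L},\ol\Q_\ell)_\theta$. Then the isomorphism $\cL_{\ol G}^{-1}(\ol U)\times^{\ol L(\F_q)}\cL_{\ol L}^{-1}(\ol V)\cong\cL_{\ol G}^{-1}(\ol U\ol V)$ and the K\"unneth formula show that $\chi$ occurs in $\rH_c^{k+k'}(Y_{\ol U\ol V}^{\ol G},\ol\Q_\ell)_\theta$. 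Together with Proposition~\ref{prop:dl-exhaustion}, this is precisely the hypothesis of Lemma~\ref{lemma:geometric-lusztig-series}(1). Everything then follows at once, and your Clifford plus Bonnaf\'e~11.10 step becomes unnecessary.

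If you prefer to finish your own route, you must prove geometric conjugacy of $(\ol T',\theta')$ and $(\ol T,\theta)$ directly, in the sense of Definition~\ref{def:geom-conj}. Your inputs would be the $\ol G^\circ$-level data and the equality $\theta|_{Z(\ol G)(\F_q)}=\theta'|_{Z(\ol G)(\F_q)}$. Be aware that matching on $\ol T^\circ$ and on $Z(\ol G)$ does not control $\theta\circ\Nm$ on all of $\ol T(\F_{q^n})$. That group can be strictly larger than $\ol T^\circ(\F_{q^n})\cdot Z(\ol G)(\F_{q^n})$. You need to pass to $n$ divisible enough that $\Fr_q^n$ acts trivially on the finitely generated group $(Z(\ol G)/(Z(\ol G)\cap\ol G^\circ)^\circ)(\ol\F_q)$. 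This forces $\ol T(\F_{q^n})=\ol T^\circ(\F_{q^n})\cdot Z(\ol G)(\F_{q^n})$, and hence the two norm-composed characters agree.

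Separately, in your second step you should work with the individual groups $\rH^i_c(Y_{\ol U}^{\ol G^\circ},\ol\Q_\ell)\otimes\rho|_{\ol L^\circ(\F_q)}$ rather than the virtual character. Constituents of $\chi_1|_{\ol G^\circ(\F_q)}$ can cancel in the alternating sum $R_{\ol L^\circ}^{\ol G^\circ}(\rho|_{\ol L^\circ(\F_q)})$, so you need the cohomological form of Bonnaf\'e~11.10.
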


\begin{proof}
    We begin by reducing to the case that $\ol G$ is of finite type. By passing to a character twist, we may assume that there exists a constant $\F_q$-subgroup scheme $\ol Z_0 \subset \ol L$ which is central in $\ol G$ such that $\ol Z_0 \cap \ol G^\circ = 1$ and $\ol Z_0(\F_q) \cdot \ol G^\circ(\F_q)$ is of finite index in $\ol G(\F_q)$ and $\ol Z_0(\F_q)$ is torsion-free and $\rho|_{\ol Z_0(\F_q)}$ is trivial. But then $R_{\ol L}^{\ol G}(\rho) = R_{\ol L/\ol Z_0}^{\ol G/\ol Z_0}(\rho)$, so we may pass from $\ol G$ to $\ol G/\ol Z_0$ to assume that $\ol G$ is of finite type.
    
    Let $\ol B$ be a Borel $\ol\F_q$-subgroup of $\ol L^\circ_{\ol\F_q}$ containing $\ol T^\circ$, and let $\ol P$ be a parabolic $\ol\F_q$-subgroup of $\ol G^\circ_{\ol\F_q}$ with Levi $\ol L^\circ$. Let $\ol V$ be the unipotent radical of $\ol B$, and let $\ol U$ be the unipotent radical of $\ol P$. Let $k$ and $k'$ be integers such that $\chi$ occurs as a $\ol G(\F_q)$-constituent of $\rH_c^k(Y_{\ol U}^{\ol G}, \ol\Q_\ell)_\rho$ and $\rho$ occurs as an $\ol L(\F_q)$-constituent of $\rH_c^{k'}(Y_{\ol V}^{\ol L}, \ol\Q_\ell)_\theta$. Thus $\chi$ is a $\ol G(\F_q)$-constituent of the representation
    \[
    \rH_c^k(Y_{\ol U}^{\ol G}, \ol\Q_\ell) \otimes_{\ol\Q_\ell[\ol L(\F_q)]} (\rH_c^{k'}(Y_{\ol V}^{\ol L}, \ol\Q_\ell) \otimes_{\ol \Q_\ell[\ol T(\F_q)]} \theta).
    \]
    Let $\cL_{\ol G}\co \ol G \to \ol G$ and $\cL_{\ol L}\co \ol L \to \ol L$ denote the Lang maps $g \mapsto g^{-1}\Fr_q(g)$. Observe that the natural map $\cL_{\ol G}^{-1}(\ol U)/(\ol U \cap \Fr_q^{-1}(\ol U)) \to Y_{\ol U}^{\ol G}$ is a $\ol G(\F_q) \times \ol T(\F_q)$-equivariant isomorphism, and similarly for the natural map $\cL_{\ol L}^{-1}(\ol V)/(\ol V \cap \Fr_q^{-1}(\ol V)) \to Y_{\ol V}^{\ol L}$. Since $(\ol U \cap \Fr_q^{-1}(\ol U))_{\mathrm{red}}$ and $(\ol V \cap \Fr_q^{-1}(\ol V))_{\mathrm{red}}$ are both scheme-theoretically isomorphic to affine spaces, say of dimensions $d$ and $e$, respectively, we have $\rH_c^k(Y_{\ol U}^{\ol G}, \ol\Q_\ell) \cong \rH_c^{k+2d}(\cL_{\ol G}^{-1}(\ol U), \ol\Q_\ell)$ and $\rH_c^{k'}(Y_{\ol V}^{\ol L}, \ol\Q_\ell) \cong \rH_c^{k'+2e}(\cL_{\ol L}^{-1}(\ol V), \ol\Q_\ell)$ equivariantly with respect to the various group actions involved.

    As in the proof of \cite[11.5]{DM20}, there is a natural map
    \[
    \cL_{\ol G}^{-1}(\ol U) \times^{\ol L(\F_q)} \cL_{\ol L}^{-1}(\ol V) \to \cL_{\ol G}^{-1}(\ol U\ol V)
    \]
    given by $(g, h) \mapsto gh$, which one checks to be an isomorphism. By the K\"unneth formula and the behavior of cohomology under quotients by finite groups, the tensor product representation $\rH_c^k(Y_{\ol U}^{\ol G}, \ol\Q_\ell) \otimes_{\ol\Q_\ell[\ol L(\F_q)]} \rH_c^{k'}(Y_{\ol V}^{\ol L}, \ol\Q_\ell)$ is a $\ol G(\F_q) \times \ol T(\F_q)$-subrepresentation of $\rH_c^{k+k'}(Y_{\ol U\ol V}^{\ol G}, \ol\Q_\ell)$. It follows that $\chi$ is an irreducible $\ol G(\F_q)$-constituent of $\rH_c^{k+k'}(Y_{\ol U\ol V}^{\ol G}, \ol\Q_\ell)_\theta$. By Proposition~\ref{prop:dl-exhaustion}, we may choose a pair $(\ol T', \theta')$ such that $\chi$ is an irreducible $\ol G(\F_q)$-constituent of $R_{\ol T'}^{\ol G}(\theta')$. Let $\ol B'$ be a Borel $\ol\F_q$-subgroup of $\ol G^\circ_{\ol\F_q}$ containing $\ol T'^\circ$, and let $\ol U'$ be its unipotent radical. Then there exists some integer $j$ such that $\chi$ is an irreducible $\ol G(\F_q)$-constituent of $\rH_c^j(Y_{\ol U'}^{\ol G}, \ol\Q_\ell)_{\theta'}$. Lemma~\ref{lemma:geometric-lusztig-series}(1) therefore implies that $\cE(\ol G, [\ol T', \theta']) = \cE(\ol G, [\ol T, \theta])$.
\end{proof}

\begin{cor}\label{cor:bonnafe-11.11}
    Let $\ol L$ be a twisted Levi $\F_q$-subgroup of $\ol G$ containing $\ol T$, let $\theta\co \ol T(\F_q) \to \ol\Q_\ell^\times$ be a character, let $\chi$ be an irreducible $\ol G(\F_q)$-constituent of $R_{\ol T}^{\ol G}(\theta)$, and let $\rho$ be an irreducible $\ol L(\F_q)$-constituent of ${}^*R^{\ol G}_{\ol L}(\chi)$. Then there exists a pair $(\ol T', \theta')$ in $\ol L$ such that $\cE(\ol G, [\ol T', \theta']) = \cE(\ol G, [\ol T, \theta])$ and $\rho \in \cE(\ol L, [\ol T', \theta'])$.
\end{cor}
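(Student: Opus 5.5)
The plan is to deduce the corollary from Proposition~\ref{prop:bonnafe-11.10} by adjunction, together with exhaustion inside $\ol L$ and the fact that semi-rational series either coincide or are disjoint. Since $\rho$ is an irreducible $\ol\Q_\ell$-representation of $\ol L(\F_q)$, Proposition~\ref{prop:dl-exhaustion} applied to $\ol L$ (paraductive by Lemma~\ref{lemma:paraductive-permanence}) gives a generalized maximal torus $\ol T' \subset \ol L$ and a character $\theta'$ with $\rho$ an irreducible constituent of $R_{\ol T'}^{\ol L}(\theta')$. In particular $\rho \in \cE(\ol L, [\ol T', \theta'])$. This needs the (routine) observation that a constituent of $R_{\ol T'}^{\ol L}(\theta')$ satisfies both conditions of Definition~\ref{defn:lusztig-series}(2):
\begin{itemize}
    \item the central character condition follows from the remark in \S\ref{ss:lusztig-ind};
    \item the $\ol L^\circ(\F_q)$-constituent condition follows from Lemma~\ref{lemma:disc-dl-induction} and \cite[Remark 2.6.5]{Kal21b}.
\end{itemize}
Note that $\ol T'$ is also a generalized maximal torus of $\ol G$, since $Z_{\ol G}(\ol T'^\circ) = Z_{\ol L}(\ol T'^\circ)$, the torus $\ol T'^\circ$ being maximal in $\ol G^\circ$ and contained in $\ol L^\circ$.

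It remains to show $\cE(\ol G, [\ol T', \theta']) = \cE(\ol G, [\ol T, \theta])$. Since $\rho$ is a constituent of ${}^*R^{\ol G}_{\ol L}(\chi)$, the adjunction \eqref{eqn:dl-restriction-adjoint} gives $\langle R^{\ol G}_{\ol L}(\rho), \chi\rangle_{\ol G(\F_q)} \neq 0$. Hence $\chi$ is an irreducible constituent of $R^{\ol G}_{\ol L}(\rho)$. By Proposition~\ref{prop:bonnafe-11.10} applied to $\rho \in \cE(\ol L, [\ol T', \theta'])$, we get $\chi \in \cE(\ol G, [\ol T', \theta'])$. On the other hand, $\chi$ is a constituent of $R_{\ol T}^{\ol G}(\theta)$, so $\chi \in \cE(\ol G, [\ol T, \theta])$ by the same observation as above.

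To conclude, we need that two semi-rational series sharing an element coincide. Lemma~\ref{lemma:geometric-lusztig-series}(1) states this only in terms of shared constituents of $\rH_c^*(Y_{\ol U}^{\ol G})_\theta$. So I would check that if $\chi \in \cE(\ol G, [\ol T', \theta'])$, then $\chi$ occurs in some $\rH_c^j(Y_{\ol U''}^{\ol G}, \ol\Q_\ell)_{\theta''}$ with $\cE(\ol G,[\ol T'',\theta'']) = \cE(\ol G,[\ol T',\theta'])$. In fact the proof of Proposition~\ref{prop:bonnafe-11.10} directly produces $\chi$ as a constituent of $\rH_c^{k+k'}(Y^{\ol G}_{\ol U\ol V},\ol\Q_\ell)_{\theta'}$. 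Meanwhile $\chi$ is a constituent of some $\rH_c^j(Y_{\ol U_0}^{\ol G},\ol\Q_\ell)_\theta$ because it appears in the alternating sum $R_{\ol T}^{\ol G}(\theta)$. Applying Lemma~\ref{lemma:geometric-lusztig-series}(1) to these two cohomology groups gives the desired equality.

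The main obstacle is this bookkeeping: a constituent of a virtual character need only appear in some cohomology degree, and the Borel $\ol U\ol V$ arising in Proposition~\ref{prop:bonnafe-11.10} must be admissible in Lemma~\ref{lemma:geometric-lusztig-series}. I would handle it by reusing that proof verbatim rather than only its statement, which would also yield the equality of series without relying on the membership $\chi \in \cE(\ol G,[\ol T',\theta'])$.
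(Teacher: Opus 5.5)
Your argument is correct and is essentially the paper's proof, which just says the corollary is immediate from Proposition~\ref{prop:bonnafe-11.10} and the definitions. Your steps are exactly what that sentence leaves implicit: you pick $(\ol T',\theta')$ in $\ol L$ via Proposition~\ref{prop:dl-exhaustion}, pass to $R^{\ol G}_{\ol L}(\rho)$ by the adjunction \eqref{eqn:dl-restriction-adjoint}, and then identify the two semi-rational series via Lemma~\ref{lemma:geometric-lusztig-series}(1) using $\rH_c^{k+k'}(Y^{\ol G}_{\ol U\ol V},\ol\Q_\ell)_{\theta'}$ from the proof of Proposition~\ref{prop:bonnafe-11.10}. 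Choosing $(\ol T',\theta')$ so that $\rho$ genuinely occurs in some $\rH_c^{k'}(Y^{\ol L}_{\ol V},\ol\Q_\ell)_{\theta'}$ is a sound choice, since that is all that proof actually uses.
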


\begin{proof}
    This is immediate from Proposition~\ref{prop:bonnafe-11.10} and the definitions. 
\end{proof}

Finally, we record a technical result concerning $\cE_0$, which shows that it is independent of the choice of regular embedding.

\begin{lemma}\label{lemma:E_0-independence-of-regular-embedding}
    Let $\ol G \subset \wt G$ be a regular embedding, let $\ol T \subset \ol G$ be a generalized maximal $\F_q$-torus, let $\wt T = \ol T \cdot Z(\wt G)$, and let $\theta\co \ol T(\F_q) \to \ol\Q_\ell^\times$ be a character. Then $\tau \in \cE_0(\ol G, [\ol T, \theta])$ if and only if there exists a character $\wt\theta\co \wt T(\F_q) \to \ol\Q_\ell^\times$ extending $\theta$ and some irreducible constituent $\wt\tau$ of $R_{\wt T}^{\wt G}(\wt\theta)$ such that $\tau$ is an irreducible subrepresentation of $\wt\tau|_{\ol G(\F_q)}$.
\end{lemma}

\begin{proof}
    If $\tau \in \cE_0(\ol G, [\ol T, \theta])$, then by definition there is a regular embedding $\ol G \subset \wt G'$ and a character $\wt\theta'\co \wt T'(\F_q) \to \ol\Q_\ell^\times$, where $\wt T' = \ol T \cdot Z(\wt G')$, and an irreducible constituent $\wt\tau'$ of $R_{\wt T'}^{\wt G'}(\wt\theta')$ such that $\tau$ is an irreducible subrepresentation of $\wt\tau'|_{\ol G(\F_q)}$. Note that $\wt G \cong \ol G \times^{Z(\ol G)} Z(\wt G)$, and similarly for $\wt G'$. Let 
    \[
    \wt G'' = \wt G \times^{Z(\ol G)} Z(\wt G') \cong \wt G',
    \]
    let $\wt T'' = \ol T \cdot Z(\wt G'')$, let $\theta''$ be a character of $\wt T''(\F_q)$ extending $\theta'$, and let $\wt\theta$ be the restriction of $\wt\theta''$ to $\wt T(\F_q)$. By \cite[Remark 2.6.5]{Kal21b}, the Deligne--Lusztig induction $R_{\wt T'}^{\wt G'}(\wt\theta')$ (resp.\ $R_{\wt T}^{\wt G}(\wt\theta)$) is the restriction of $R_{\wt T''}^{\wt G''}(\wt\theta'')$ to $\wt G'(\F_q)$ (resp.\ $\wt G(\F_q)$). Since $\wt G''(\F_q) = \wt G'(\F_q) \cdot Z(\wt G'')(\F_q)$ by Lang's theorem, it follows that every irreducible constituent of $R_{\wt T}^{\wt G}(\wt\theta)$ (resp.\ $R_{\wt T'}^{\wt G'}(\wt\theta')$) extends to an irreducible constituent of $R_{\wt T''}^{\wt G''}(\wt\theta'')$. These observations combine to prove the claim.
\end{proof}

\subsection{Non-singular representations}

Under some genericity assumptions, we can improve ``geometric conjugacy'' to ``rational conjugacy'' in Lemma~\ref{lemma:geometric-lusztig-series}.

\begin{defn}\label{defn:finite-group-non-singular}
    Let $\ol G$ be a paraductive $\F_q$-group scheme, and let $\ol T \subset \ol G$ be a generalized maximal torus. If $k$ is a field of characteristic $\neq p$, then a character $\theta\co \ol T(\F_q) \to k^\times$ is \textit{non-singular} if $\theta|_{\ol T^\circ(\F_q)}$ is non-singular in the sense of \cite[Definition 5.15]{DL76} (see also \cite[Lemma 3.4.14]{Kal19}), i.e., for a positive integer $n$ such that $\ol T^\circ_{\F_{q^n}}$ is split, we have $\theta \circ \Nm_{\F_{q^n}/\F_q} \circ \alpha^\vee \neq 1$ for every coroot $\alpha^\vee$ of $T^\circ_{\F_{q^n}}$.
\end{defn}

\begin{lemma}\label{lemma:non-singular-conjugacy}
    Let $\ol G$ be a paraductive $\F_q$-group scheme, let $\ol T, \ol T' \subset \ol G$ be generalized maximal tori, let $k \in \{\ol\Q_\ell, \ol\F_\ell\}$ and let $\theta\co \ol T(\F_q) \to k^\times$ and $\theta'\co \ol T'(\F_q) \to k^\times$ be characters. If $\theta$ is non-singular and $\cE(\ol G, [\ol T, \theta]) = \cE(\ol G, [\ol T', \theta'])$, then $(\ol T, \theta)$ and $(\ol T', \theta')$ are $\ol G(\F_q)$-conjugate.
\end{lemma}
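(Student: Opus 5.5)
Since $\cE(\ol G,[\ol T,\theta])=\cE(\ol G,[\ol T',\theta'])$, the plan is to use the two defining conditions of a semi-rational Lusztig series in turn. The centre condition gives $\theta|_{Z(\ol G)(\F_q)}=\theta'|_{Z(\ol G)(\F_q)}$. The rational-series condition on $\ol G^\circ$ gives something about $(\ol T^\circ,\theta^\circ)$ and $(\ol T'^\circ,\theta'^\circ)$. Non-singularity should then upgrade this to actual $\ol G^\circ(\F_q)$-conjugacy, and we finish by gluing with the centre using condition (4) of Definition~\ref{def:paraductive}.

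\textbf{Characteristic zero.} Take $k=\ol\Q_\ell$ and pick $\tau$ in the common series. By definition, $Z(\ol G)(\F_q)$ acts on $\tau$ through both $\theta$ and $\theta'$. Also, some constituent of $\tau|_{\ol G^\circ(\F_q)}$ lies in the rational series of $(\ol T^\circ,\theta^\circ)$, and some constituent lies in that of $(\ol T'^\circ,\theta'^\circ)$. By Clifford theory these two constituents are $\ol G(\F_q)$-conjugate. So after replacing $(\ol T',\theta')$ by a $\ol G(\F_q)$-conjugate, the two rational series on $\ol G^\circ$ coincide, i.e.\ the dual semisimple elements are $\ol G^*(\F_q)$-conjugate.

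Now invoke the classical fact (\cite[Proposition~5.22]{DL76}, or its rational refinement \cite[\S 11]{Bon06}) that for non-singular $\theta^\circ$ the pair is determined by its series. Non-singularity means $Z_{\ol G^*}(\ol s)^\circ$ is a torus, namely $\ol T^*$. Hence any pair $(\ol T'^\circ,\theta'^\circ)$ whose dual element is rationally conjugate to $\ol s$ has dual torus rationally conjugate to $\ol T^*$, and $(\ol T'^\circ,\theta'^\circ)$ is $\ol G^\circ(\F_q)$-conjugate to $(\ol T^\circ,\theta^\circ)$. Equivalently, \cite[Theorem~6.8]{DL76} shows $\pm R_{\ol T^\circ}^{\ol G^\circ}(\theta^\circ)$ is irreducible, and a shared constituent forces conjugacy. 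So conjugate further to get $\ol T'^\circ=\ol T^\circ$ and $\theta'^\circ=\theta^\circ$. Condition (4) gives $\ol T=Z_{\ol G}(\ol T^\circ)=\ol T'$.

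It remains to show $\theta=\theta'$ on $\ol T(\F_q)$. We know they agree on $\ol T^\circ(\F_q)$ and on $Z(\ol G)(\F_q)$. The main obstacle is that $(\ol T^\circ\cdot Z(\ol G))(\F_q)$ may be larger than $\ol T^\circ(\F_q)\cdot Z(\ol G)(\F_q)$, with quotient controlled by $\rH^1(\F_q,\ol T^\circ\cap Z(\ol G))$. I would handle this by passing to a regular embedding as in Lemma~\ref{lemma:E_0-independence-of-regular-embedding}: after embedding, Lang's theorem gives equality, and $\theta$, $\theta'$ extend compatibly. Alternatively, one can argue directly that the extra elements act on the unique (up to sign) common constituent $\tau$ of $R_{\ol T}^{\ol G}(\theta)$. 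By Lemma~\ref{lemma:disc-dl-induction}, this action is determined by the $\ol T(\F_q)$-eigencharacter on $\rH_c^*(Y_{\ol U}^{\ol G^\circ\cdot\ol T})$. If that fails, I would remain content with conjugacy of $\ol T$ and agreement up to the residual finite group, and check whether the semi-rational definition already forces full agreement.

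\textbf{Characteristic $\ell$.} For $k=\ol\F_\ell$, lift to $\ol\Z_\ell^\times$-valued characters and use Lemma~\ref{lemma:geometric-lusztig-series}(2) to reduce to equality of prime-to-$\ell$ parts of the dual elements. Non-singularity of $\ol\theta$ makes the centraliser of the $\ell'$-part $\ol s_{\ell'}$ have identity component a torus, so the same argument applies. We then compare $\ol\theta$ and $\ol\theta'$ on the torus exactly as above.
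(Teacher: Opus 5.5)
Your treatment of the identity component is essentially the paper's argument. Clifford theory and \cite[Th\'eor\`eme 11.8]{Bon06} make the dual elements $\ol s,\ol s'$ rationally conjugate. Regularity of $\ol s$ then forces the dual tori to be rationally conjugate, and \cite[(5.21.4)]{DL76} gives $\ol G^\circ(\F_q)$-conjugacy of $(\ol T^\circ,\theta^\circ)$ and $(\ol T'^\circ,\theta'^\circ)$.

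Two small slips. First, \cite[Theorem~6.8]{DL76} gives irreducibility of $\pm R^{\ol G^\circ}_{\ol T^\circ}(\theta^\circ)$ only for $\theta^\circ$ in general position, not for every non-singular $\theta^\circ$. Second, in characteristic $\ell$ the input you need is \cite[Th\'eor\`eme 2.2]{BM89} itself followed by passage to $\ell^n$-th powers, as in the paper. The statement of Lemma~\ref{lemma:geometric-lusztig-series}(2) runs in the other direction.

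The genuine gap is the last step. After that conjugation you only know $\theta=\theta'$ on $\ol T^\circ(\F_q)\cdot Z(\ol G)(\F_q)$, and none of your suggestions controls the quotient $\ol T(\F_q)/\ol T^\circ(\F_q)Z(\ol G)(\F_q)$.
\begin{itemize}
\item In the regular-embedding route, nothing transports the hypothesis to $\wt G$ or forces the extensions $\wt\theta,\wt\theta'$ to be conjugate.
\item In the direct route, a member $\tau$ of $\cE(\ol G,[\ol T,\theta])$ need not be a constituent of $R^{\ol G}_{\ol T}(\theta)$ at all. It only has to lie in the geometric series, which sees $\theta$ through $\theta\circ\Nm_{\F_{q^n}/\F_q}$, and these norms can miss the extra elements.
\end{itemize}

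In fact no argument closes this step, because the statement is false as written. Take $q\equiv 3\pmod 4$, $q\geq 7$, and $\ol G=\{h\in\GL_2\co \det(h)^2=1\}$. This is paraductive, with $\ol G^\circ=\SL_2$, $Z(\ol G)=\mu_4$ and $Z(\ol G)(\F_q)=\{\pm1\}\subset\SL_2(\F_q)$.
\begin{itemize}
\item For a maximal torus $\ol T^\circ\subset\SL_2$, the generalized torus $\ol T=Z_{\ol G}(\ol T^\circ)$ has $\ol T(\F_q)=\{t\in Z_{\GL_2}(\ol T^\circ)(\F_q)\co \det t=\pm1\}$. This contains $\ol T^\circ(\F_q)\cdot Z(\ol G)(\F_q)=\ol T^\circ(\F_q)$ with index $2$.
\item Let $\theta$ be a character of $\ol T(\F_q)$ with $\theta^\circ$ of order $>2$, so $\theta$ is non-singular with trivial Weyl stabilizer. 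Set $\theta'=\theta\cdot\det$.
\item Then $\theta'^\circ=\theta^\circ$, the two characters agree on $Z(\ol G)(\F_q)$, and $\theta'\circ\Nm_{\F_{q^2}/\F_q}=\theta\circ\Nm_{\F_{q^2}/\F_q}$ because $\det(\Nm t)=(\det t)^2=1$.
\item So by Definitions~\ref{def:geom-conj} and~\ref{defn:lusztig-series}, the nonempty series $\cE(\ol G,[\ol T,\theta])$ and $\cE(\ol G,[\ol T,\theta'])$ coincide.
\item Yet any $g\in\ol G(\F_q)$ carrying one pair to the other fixes $\theta^\circ$, hence centralizes $\ol T^\circ$, hence lies in the commutative group $\ol T(\F_q)$, which fixes $\theta\neq\theta'$.
\end{itemize}

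The paper's proof does not derive this step from the series either. After the same dual-group argument, it calls the characteristic-zero case a restatement of \cite[Proposition~2.6.11]{Kal21b}. That result concerns the Deligne--Lusztig inductions themselves, and these do involve $\theta$ on all of $\ol T(\F_q)$; in the example, $R^{\ol G}_{\ol T}(\theta')=R^{\ol G}_{\ol T}(\theta)\otimes\det$.

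So what your proof is missing is a hypothesis rather than an idea. If $\ol T(\F_q)=\ol T^\circ(\F_q)\cdot Z(\ol G)(\F_q)$ (for example when $Z(\ol G)\cap\ol G^\circ$ is connected, by Lang's theorem), your argument is complete as it stands. Otherwise the hypothesis must be strengthened to one about the inductions themselves, such as that $R^{\ol G}_{\ol T}(\theta)$ and $R^{\ol G}_{\ol T'}(\theta')$ share an irreducible constituent, which is the setting of Kaletha's result.
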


\begin{proof}
    Suppose first $k = \ol\Q_\ell$. Let $\ol G^*$ be the Deligne--Lusztig dual group for $\ol G^\circ$, and let $\ol s, \ol s' \in \ol G^*(\F_q)$ be semisimple elements corresponding to $(\ol T^\circ, \theta^\circ)$ and $(\ol T'^\circ, \theta'^\circ)$, respectively, where $\theta^\circ = \theta|_{\ol T^\circ(\F_q)}$, and similarly for $\theta'^\circ$. Now \cite[Th\'eor\`eme 11.8]{Bon06} shows that $\ol s$ and $\ol s'$ are $\ol G^*(\F_q)$-conjugate. Since $\theta$ is non-singular, the element $\ol s$ is regular, and thus the same is true of $\ol s'$, i.e., $\theta'$ is non-singular. This case is therefore a restatement of \cite[Proposition~2.6.11]{Kal21b}.
    
    Now suppose $k = \ol\F_\ell$. The proof of \cite[Proposition~2.6.11]{Kal21b} works nearly verbatim provided that one has the result in the case $\ol G = \ol G^\circ$; so we will assume $\ol G$ is connected. Choose lifts $\wt\theta$ and $\wt\theta'$ of $\theta$ and $\theta'$, respectively, to $\ol\Z_\ell^\times$-valued characters, and let $\ol s$ and $\ol s'$ be associated to $\theta$ and $\theta'$ as above. If $\ol T^*$ and $\ol T'^*$ are $\F_q$-tori of $\ol G^*$ which are dual to $\ol T$ and $\ol T'$, then up to $\ol G^*(\F_q)$-conjugacy we have $\ol s \in \ol T^*(\F_q)$ and $\ol s' \in \ol T'^*(\F_q)$. By \cite[Th\'eor\`eme 2.2]{BM89}, there is some $\ol t \in Z_{\ol G^*}(\ol s')(\F_q)$ of $\ell$-power order such that $\ol s$ and $\ol s'\ol t$ are $\ol G^*(\F_q)$-conjugate. Since $\theta$ is assumed non-singular, every $\ell$-power $\ol s^{\ell^n}$ is regular. If $\ol t^{\ell^n} = 1$, then $\ol s^{\ell^n}$ and $\ol s'^{\ell^n}$ are $\ol G^*(\F_q)$-conjugate, so $\ol s'^{\ell^n}$ is regular and thus the tori $\ol T^*$ and $\ol T'^*$ are $\ol G^*(\F_q)$-conjugate. But then $\ol T$ and $\ol T'$ are $\ol G(\F_q)$-conjugate by \cite[(5.21.4)]{DL76}, so we may assume $\ol T = \ol T'$. In this case, $\ol s^{\ell^n}$ and $\ol s'^{\ell^n}$ are conjugate by the relative Weyl group of $(\ol G^*, \ol T^*)$ and hence $\theta$ and $\theta'$ are conjugate by the relative Weyl group of $(\ol G, \ol T)$, as desired.
\end{proof}

\begin{defn}\label{defn:finite-group-ns-reps}
    Let $k$ be a field among $\ol\F_\ell$ and $\ol\Q_\ell$, and let $\tau$ be an irreducible $k$-representation of $\ol G(\F_q)$. If $\tau \in \cE(\ol G, [\ol T,\theta])$, where $\theta\colon \ol T(\F_q) \to k^\times$ is a non-singular character, then we will call $\tau$ \textit{non-singular}.
\end{defn}

\begin{lemma}\label{lemma:ns-dl-res-cuspidal}
    Let $\ol G$ be a paraductive $\F_q$-group scheme, let $k$ be a field among $\ol\Q_\ell$ and $\ol\F_\ell$, let $\tau$ be a non-singular cuspidal irreducible $k$-representation of $\ol G(\F_q)$ lying in a semi-rational Lusztig series $\cE(\ol G, [\ol T, \theta])$, and let $\ol H \subset \ol G$ be a twisted Levi $\F_q$-subgroup containing $\ol T$. If $(\ol T', \theta')$ is a pair in $\ol H$ such that $\cE(\ol G, [\ol T,\theta]) = \cE(\ol G, [\ol T',\theta'])$, then $\ol T^\circ$ and $\ol T'^\circ$ are elliptic and every irreducible representation occurring in $\cE(\ol H, [\ol T',\theta'])$ is non-singular and cuspidal.
\end{lemma}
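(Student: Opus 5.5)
The proof proceeds in three steps, in this order: rational conjugacy of the two pairs, ellipticity of $\ol T^\circ$, and then non-singularity and cuspidality of every element of $\cE(\ol H, [\ol T',\theta'])$.

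\emph{Rational conjugacy.} Since $\theta$ is non-singular and $\cE(\ol G, [\ol T,\theta]) = \cE(\ol G, [\ol T',\theta'])$, Lemma~\ref{lemma:non-singular-conjugacy} shows that $(\ol T,\theta)$ and $(\ol T',\theta')$ are $\ol G(\F_q)$-conjugate. In particular $\theta'$ is non-singular, and $\ol T^\circ$ is elliptic if and only if $\ol T'^\circ$ is.

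\emph{Ellipticity of $\ol T^\circ$.} We argue by contradiction. Suppose $\ol T^\circ$ is not elliptic. Let $\ol S^\circ$ be the maximal $\F_q$-split subtorus of $\ol T^\circ$ modulo the center of $\ol G^\circ$. Then $\ol M \coloneqq Z_{\ol G}(\ol S^\circ)$ is a proper twisted Levi subgroup of $\ol G$ that is an $\F_q$-rational Levi factor of an $\F_q$-rational parabolic of $\ol G^\circ$. By transitivity of Lusztig induction (the Künneth argument in the proof of Proposition~\ref{prop:bonnafe-11.10}), $R_{\ol T}^{\ol G}(\theta)$ factors as $R_{\ol M}^{\ol G}\circ R_{\ol T}^{\ol M}(\theta)$. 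For such $\ol M$, $R_{\ol M}^{\ol G}$ is Harish-Chandra induction; after reducing via Lemma~\ref{lemma:disc-dl-induction} to $\ol G = \ol G^\circ\cdot\ol M$, this is ordinary parabolic induction along $\ol P^\circ(\F_q)\cdot \ol M(\F_q)$.

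Now $\tau$ lies in $\cE(\ol G,[\ol T,\theta])$, so it is a constituent of some $R_{\ol T''}^{\ol G}(\theta'')$ with $\cE(\ol G,[\ol T'',\theta''])=\cE(\ol G,[\ol T,\theta])$; by the first step, $(\ol T'',\theta'')$ is conjugate to $(\ol T,\theta)$, so $\tau$ is a constituent of $R_{\ol T}^{\ol G}(\theta)$ itself. For $k=\ol\F_\ell$, we instead work with a lift in $\cE(\ol G,[\ol T,\wt\theta])$, using the reduction-mod-$\ell$ version of Lemma~\ref{lemma:non-singular-conjugacy}. Hence $\tau$ occurs in a parabolically induced representation from the proper Levi $\ol M$. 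Cuspidality of $\tau$ forces its Harish-Chandra restriction to $\ol M$ to vanish, which contradicts Frobenius reciprocity. So $\ol T^\circ$, and therefore $\ol T'^\circ$, is elliptic. In characteristic $\ell$, where $\tau$ is only a constituent of a reduction, I would use the adjunction in the form that a cuspidal irreducible has no subquotient embedding into a proper parabolic induction. Alternatively, one can invoke the standard fact that constituents of mod-$\ell$ reductions of parabolic inductions have nonzero restriction to a Levi conjugate into $\ol M$. I expect this to be the main obstacle, since cuspidality is about sub/quotients, not arbitrary constituents.

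\emph{The representations of $\ol H$.} Since $\ol T'^\circ$ is elliptic in $\ol G^\circ$ and $\ol T' \subset \ol H \subset \ol G$, it is also elliptic in $\ol H^\circ$. Non-singularity of $\theta'$ relative to $\ol H$ is immediate, because the coroots of $\ol H^\circ$ are among those of $\ol G^\circ$. So every element of $\cE(\ol H,[\ol T',\theta'])$ is non-singular by definition.

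For cuspidality, take $\rho \in \cE(\ol H,[\ol T',\theta'])$ and suppose it has nonzero Harish-Chandra restriction to some proper $\F_q$-Levi $\ol M_H$ of $\ol H$. Then $\rho$ lies in a Harish-Chandra series of a cuspidal of $\ol M_H$, and by the first two steps applied to $\ol H$ that cuspidal lies in some $\cE(\ol M_H,[\ol T_1,\theta_1])$ with $\ol T_1^\circ$ elliptic in $\ol M_H$. Applying Lemma~\ref{lemma:non-singular-conjugacy} inside $\ol H$ (via Proposition~\ref{prop:bonnafe-11.10}), $(\ol T_1,\theta_1)$ is $\ol H(\F_q)$-conjugate to $(\ol T',\theta')$. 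This would make $\ol T'^\circ$ contained in a proper $\F_q$-Levi of $\ol H^\circ$ containing its center. That is impossible for a torus elliptic in $\ol H^\circ$, which gives the contradiction.
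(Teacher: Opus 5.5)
Your argument for $k=\ol\Q_\ell$ is sound. Your last step is a legitimate substitute for the paper's appeal to \cite[Theorem~8.3]{DL76}. It uses only that a non-cuspidal irreducible is a \emph{sub}representation of a proper parabolic induction (Lemma~\ref{lemma:cuspidal-supports-exist}), followed by Proposition~\ref{prop:bonnafe-11.10} and Lemma~\ref{lemma:non-singular-conjugacy}. When $k=\ol\F_\ell$ you lift the cuspidal support and reduce. So it works in both characteristics, provided ellipticity of $\ol T'^\circ$ is already known. You do not need, and cannot yet justify, ellipticity of $\ol T_1^\circ$ in $\ol M_H$: the cuspidal of $\ol M_H$ is not known to be non-singular. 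Fortunately $\ol T_1\subset\ol M_H$ is all you use.

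The genuine gap is the ellipticity of $\ol T^\circ$ when $k=\ol\F_\ell$. There $\tau$ is only a composition factor of $\ind_{\ol P(\F_q)}^{\ol G(\F_q)}(\ol\rho)$, and neither of your proposed fixes works.
\begin{itemize}
\item The first fix just restates the definition of cuspidality. That definition forbids $\tau$ from being a \emph{subrepresentation} of a proper parabolic induction, but says nothing about composition factors.
\item The ``standard fact'' in the second fix is false. For $\SL_2(\F_q)$ and an odd prime $\ell\mid q+1$, the $\ell$-modular reduction of the Steinberg representation has a cuspidal composition factor, although Steinberg is a constituent of the principal series induced from the trivial character of a Borel subgroup. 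This is the representation $U$ in the example following Remark~\ref{remark:non-cuspidal-dl-restriction}.
\end{itemize}
That example has $\theta=1$, so the missing ingredient must use non-singularity, which your argument never invokes at this point.

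The paper supplies it as follows:
\begin{enumerate}
\item Pass to $\ol G$ connected with connected center. There a non-singular $\theta$ is in general position.
\item For the Teichm\"uller lift $\wt\theta$, $\pm R_{\ol T}^{\ol G}(\wt\theta)$ is irreducible by \cite[Theorem~6.8]{DL76}.
\item Its $\ell$-modular reduction is still irreducible by \cite[Corollaire~3.6]{Br90}, so $\tau$ \emph{is} that reduction.
\item Taking $\ol U(\F_q)$-invariants commutes with reduction of a stable lattice, because $|\ol U(\F_q)|$ is a unit in $\ol\Z_\ell$. Hence cuspidality of $\tau$ forces cuspidality of the characteristic-zero lift.
\end{enumerate}
Your characteristic-zero argument then gives ellipticity.
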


\begin{proof}
    We may and do assume that $\ol G$ is connected. If $k = \ol\Q_\ell$, then cuspidality of $\tau$ implies that $\ol T$ is elliptic. By Lemma~\ref{lemma:non-singular-conjugacy}, the pairs $(\ol T, \theta)$ and $(\ol T', \theta')$ are $\ol G(\F_q)$-conjugate, so the conclusion follows from \cite[Theorem~8.3]{DL76}.

    Assume now that $k=\ol\F_\ell$. Let $\wt\theta$ and $\wt\theta'$ denote the Teichm\"uller lifts of $\theta$ and $\theta'$, respectively. Note first that $\ol T$ is elliptic: for this, one may reduce to the case that $\ol G$ has connected center, and thus $\theta$ is in general position (in the sense of \cite[Definition 5.15(ii)]{DL76}). In this case, $V = R_{\ol T}^{\ol G}(\wt\theta)$ is an irreducible $\ol\Q_\ell$-representation by \cite[Theorem~6.8]{DL76}, and $V_{\ol\F_\ell}$ is also irreducible by \cite[Corollaire~3.6]{Br90}. Since $V_{\ol\F_\ell}$ is cuspidal by hypothesis, it follows that $V$ is cuspidal and hence $\ol T$ is elliptic. By Lemma~\ref{lemma:non-singular-conjugacy}, the pairs $(\ol T, \theta)$ and $(\ol T', \theta')$ are $\ol G(\F_q)$-conjugate, and thus $(\ol T, \wt\theta)$ and $(\ol T', \wt\theta')$ are $\ol G(\F_q)$-conjugate. Since $\wt\theta$ and $\wt\theta'$ are non-singular and valued in $\ol\Q_\ell^\times$, we conclude as before.
\end{proof}

\begin{lemma}\label{lemma:ns-cuspidal-lift}
Let $\ol G$ be a paraductive $\F_q$-group scheme, and let $\ol\tau$ be an irreducible cuspidal non-singular $\ol\F_\ell$-representation of $\ol G(\F_q)$. Then there exists an irreducible cuspidal $\ol\Q_\ell$-representation $\wt\tau$ of $\ol G(\F_q)$ with finite-order central character, and a $\ol G(\F_q)$-stable $\ol\Z_\ell$-lattice $\Lambda \subset \wt\tau$ such that $\ol\tau$ occurs as an irreducible subquotient of $\Lambda \otimes_{\ol\Z_\ell}\ol\F_\ell$.
\end{lemma}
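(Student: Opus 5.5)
By definition $\ol\tau$ lies in some semi-rational Lusztig series $\cE(\ol G,[\ol T,\theta])$ with $\theta\colon \ol T(\F_q)\to\ol\F_\ell^\times$ non-singular. Let $\wt\theta$ be the Teichm\"uller lift of $\theta$. The plan is to take $\wt\tau$ to be an irreducible constituent of $R_{\ol T}^{\ol G}(\wt\theta)$ whose reduction contains $\ol\tau$, and then to check three things: that its central character has finite order, that it admits a lattice, and that it is cuspidal.

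\emph{Step 1: reduce to finite central character and to a finite group.} First, twist $\ol\tau$ by a character of $\ol G(\F_q)/\ol G^\circ(\F_q)$ (which lifts, since it is a character of a finitely generated abelian group) so that $Z(\ol G)(\F_q)$ acts on $\ol\tau$ through a finite-order character. This twist preserves cuspidality and non-singularity, and the statement for the twist implies it for $\ol\tau$ after twisting back by a lifted character. Next, as in the proofs of Lemma~\ref{lemma:dl-restriction} and Proposition~\ref{prop:bonnafe-11.10}, choose a constant torsion-free central $\ol Z_0$ acting trivially, with $\rH^1(\F_q,\ol Z_0)=0$, and pass to $\ol G/\ol Z_0$. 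This makes $\ol G(\F_q)$ finite while preserving cuspidality, since unipotent radicals are unaffected.

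\emph{Step 2: produce $\wt\tau$ and $\Lambda$.} By the definition of semi-rational Lusztig series over $\ol\F_\ell$, $\ol\tau$ is a constituent of the reduction of some element $\wt\tau$ of $\cE(\ol G,[\ol T,\wt\theta'])$ for some $\ol\Z_\ell^\times$-valued lift $\wt\theta'$ of $\theta$. In Step 1 the group became finite, so $\wt\tau$ has finite-order central character, and it admits a stable $\ol\Z_\ell$-lattice $\Lambda$ by the standard argument: take the $\ol\Z_\ell$-span of a $\ol G(\F_q)$-orbit of a vector. By Brauer--Nesbitt, $\ol\tau$ is a subquotient of $\Lambda\otimes\ol\F_\ell$ for any choice of lattice. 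Moreover, $\wt\theta'$ is non-singular, because non-singularity is detected after composing with the reduction map $\ol\Z_\ell^\times\to\ol\F_\ell^\times$: if $\theta\circ\Nm\circ\alpha^\vee\neq1$ then certainly $\wt\theta'\circ\Nm\circ\alpha^\vee\neq1$.

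\emph{Step 3: cuspidality of $\wt\tau$ (main obstacle).} Apply Lemma~\ref{lemma:ns-dl-res-cuspidal} with $k=\ol\F_\ell$ and $\ol H=\ol G$. It shows that $\ol T^\circ$ is elliptic, since $\ol\tau$ is non-singular and cuspidal. Then $\wt\tau$ is a constituent of some $R_{\ol T'}^{\ol G}(\wt\theta'')$ with $(\ol T',\wt\theta'')$ in the same semi-rational series as $(\ol T,\wt\theta')$, with $\wt\theta'$ non-singular. By Lemma~\ref{lemma:non-singular-conjugacy} over $\ol\Q_\ell$, the pair $(\ol T',\wt\theta'')$ is $\ol G(\F_q)$-conjugate to $(\ol T,\wt\theta')$, so $\ol T'^\circ$ is elliptic as well. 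A non-singular character on an elliptic torus gives cuspidal constituents by \cite[Theorem~8.3]{DL76}, after reducing to $\ol G^\circ$ via Lemma~\ref{lemma:disc-dl-induction}, since cuspidality is checked on restriction to $\ol G^\circ(\F_q)$. Hence $\wt\tau$ is cuspidal.

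The delicate point is the passage from the connected case to the disconnected case. If that reduction proves awkward, the alternative is to use the direct argument: a Harish-Chandra restriction of $\wt\tau$ to a proper Levi reduces to a lattice whose reduction contains the corresponding restriction of $\ol\tau$. That fallback only gives ``if $\wt\tau$ is not cuspidal then neither is some constituent of the reduction'', which is why I prefer the Deligne--Lusztig ellipticity route above.
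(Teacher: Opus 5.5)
Your proposal is correct, but its key step, the cuspidality of $\wt\tau$, follows a different route from the paper's.

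\emph{The paper's route.} The paper uses the full conclusion of Lemma~\ref{lemma:ns-dl-res-cuspidal} for $k=\ol\F_\ell$: every member of $\cE(\ol G,[\ol T,\theta])$ is cuspidal. By the definition of the mod-$\ell$ series, every constituent of the reduction of $\wt\tau$ lies in that series, so all of them are cuspidal. Cuspidality of $\wt\tau$ then follows because $|\ol U(\F_q)|$ is invertible in $\ol\Z_\ell$. If $\wt\tau^{\ol U(\F_q)}\neq 0$, then $(\Lambda\otimes_{\ol\Z_\ell}\ol\F_\ell)^{\ol U(\F_q)}\neq 0$, and exactness of $\ol U(\F_q)$-invariants would produce a non-cuspidal constituent. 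So the ``fallback'' you set aside is exactly the paper's argument. The implication you thought too weak ($\wt\tau$ not cuspidal implies some constituent of the reduction is not cuspidal) is precisely the one needed.

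\emph{Your route.} You extract only the ellipticity of $\ol T^\circ$ from that lemma and then reprove cuspidality in characteristic zero. This costs two extra inputs:
\begin{enumerate}
\item Lemma~\ref{lemma:non-singular-conjugacy} over $\ol\Q_\ell$, together with the fact that two semi-rational series containing $\wt\tau$ coincide. This is what transports ellipticity to the pair $(\ol T',\wt\theta'')$ that actually contains $\wt\tau$.
\item A version of \cite[Theorem~8.3]{DL76} for non-singular characters rather than characters in general position. You can get this by passing to a connected-center regular embedding. Alternatively, use that $\pm R_{\ol T^\circ}^{\ol G^\circ}(\wt\theta^\circ)$ is an honest representation whose Harish-Chandra restrictions to proper split Levis vanish when $\ol T^\circ$ is elliptic.
\end{enumerate}

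Both arguments are sound. The paper's is shorter and uses only the mod-$\ell$ statement it already has. Yours is self-contained in characteristic zero once ellipticity is known.

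A minor point: the preliminary twist in your Step~1 is unnecessary. Since $\ol\F_\ell^\times$ is torsion, the central character of $\ol\tau$ automatically has finite order.
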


\begin{proof}
As in the proof of Lemma~\ref{lemma:dl-restriction}, after quotienting by a torsion-free finite-index constant subgroup of $Z(\ol G)$ on which the central character of $\ol\tau$ is trivial, we may assume that $\ol G$ has finite component group. By Definition~\ref{defn:finite-group-ns-reps}, there is a generalized maximal torus $\ol T \subset \ol G$, a character $\wt\theta\colon \ol T(\F_q)\to \ol\Z_\ell^\times$ whose reduction $\ol\theta$ is non-singular, and an irreducible constituent $\wt\tau$ of $R_{\ol T}^{\ol G}(\wt\theta)$ such that $\ol\tau$ occurs as an irreducible subquotient of the $\ell$-modular reduction of $\wt\tau$. Since $\ol\tau$ is cuspidal and non-singular, Lemma~\ref{lemma:ns-dl-res-cuspidal}, applied with $\ol H=\ol G$ and $(\ol T',\theta')=(\ol T,\ol\theta)$, implies that every irreducible representation in $\cE(\ol G,[\ol T,\ol\theta])$ is cuspidal. In particular, every irreducible subquotient of the mod $\ell$ reduction of $\wt\tau$ is cuspidal.
\end{proof}

\subsection{Parabolic induction and cuspidality}\label{sss:cusp}

Throughout this section, let $\ol G$ be a paraductive $\F_q$-group scheme. Let $k$ be an algebraically closed field of characteristic $\neq p$. A particularly useful special case of the Lusztig induction $R_{\ol L}^{\ol G}$ is the case that $\ol L$ is the centralizer of a \emph{split} $\F_q$-torus of $\ol G$ and the implicit unipotent group $\ol U$ is defined over $\F_q$. In this case, if $\ol P = \ol L \cdot \ol U$ then $Y_{\ol U}^{\ol G} = \ol G(\F_q)/\ol U(\F_q)$, so the definitions show
\[
    R_{\ol L}^{\ol G} = \ind_{\ol P(\F_q)}^{\ol G(\F_q)}\co \Rep_k(\ol L(\F_q)) \to \Rep_k(\ol G(\F_q)),
\]
and therefore $^*R^{\ol G}_{\ol L}(-) = (-)_{\ol U(\F_q)}$, the module of coinvariants.

Recall that a finite-dimensional $k$-representation $\chi$ of $\ol G^\circ(\F_q)$ is \textit{cuspidal} if and only if, for every parabolic $\F_q$-subgroup $\ol P^\circ \subset \ol G$ with Levi $\ol L^\circ$ and finite-dimensional $k$-representation $\rho$ of $\ol L^\circ(\F_q)$, no irreducible subquotient of $\chi$ is a subrepresentation of $\ind_{\ol P^\circ(\F_q)}^{\ol G^\circ(\F_q)}(\rho)$. Note that $\chi$ is cuspidal if and only if every irreducible subquotient of $\chi$ is cuspidal.

By extension, if $\chi$ is a finite-dimensional $k$-representation of $\ol G(\F_q)$, we will say that $\chi$ is \textit{cuspidal} if the restriction of $\chi$ to $\ol G^\circ(\F_q)$ is cuspidal; as above, this is the case if and only if every irreducible subquotient of the restriction of $\chi$ to $\ol G^\circ(\F_q)$ is cuspidal.

\begin{lemma}\label{lemma:cuspidal-equivalence}
    The following statements are equivalent for any finite-dimensional $k$-representation $\chi$ of $\ol G(\F_q)$:
    \begin{enumerate}
        \item $\chi$ is cuspidal,
        \item $\chi_{\ol U^\circ(\F_q)} = 0$ for all parabolic $\F_q$-subgroups $\ol P^\circ \subset \ol G^\circ$ with unipotent radical $\ol U^\circ$,
        \item $\chi^{\ol U^\circ(\F_q)} = 0$ for all parabolic $\F_q$-subgroups $\ol P^\circ \subset \ol G^\circ$ with unipotent radical $\ol U^\circ$.
    \end{enumerate}
\end{lemma}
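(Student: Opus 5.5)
The plan is to reduce everything to the restriction of $\chi$ to $\ol G^\circ(\F_q)$, where all three conditions are about unipotent radicals of $\F_q$-parabolics of $\ol G^\circ$. By definition, $\chi$ is cuspidal if and only if $\chi|_{\ol G^\circ(\F_q)}$ is cuspidal. Conditions (2) and (3) only involve the subgroups $\ol U^\circ(\F_q) \subset \ol G^\circ(\F_q)$, so they depend only on $\chi|_{\ol G^\circ(\F_q)}$. Hence we may and do replace $\ol G$ by $\ol G^\circ$ and $\chi$ by its restriction. This puts us in a finite connected reductive group, with $\chi$ finite-dimensional over an algebraically closed field $k$ of characteristic $\neq p$.

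For (2)$\Leftrightarrow$(3), the key point is that $\ol U^\circ(\F_q)$ is a $p$-group and $p$ is invertible in $k$. So the averaging idempotent $e_U = |\ol U^\circ(\F_q)|^{-1}\sum_{u} u$ exists, and $\chi = e_U\chi \oplus (1-e_U)\chi$. The composite $\chi^{\ol U^\circ(\F_q)} \to \chi \to \chi_{\ol U^\circ(\F_q)}$ is then an isomorphism, both spaces being identified with $e_U\chi$. Consequently one vanishes if and only if the other does, parabolic by parabolic.

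For (1)$\Leftrightarrow$(3), I will use Frobenius reciprocity together with exactness of $U$-invariants. Since $U$-invariants are exact for a $p$-group in characteristic $\neq p$, an irreducible subquotient $\chi'$ of $\chi$ has $\chi'^{\ol U^\circ(\F_q)}$ a subquotient of $\chi^{\ol U^\circ(\F_q)}$.
\begin{itemize}
\item Suppose (3) holds. Then $\chi'^{\ol U^\circ(\F_q)} = 0$. But $\Hom_{\ol G^\circ(\F_q)}(\chi', \ind_{\ol P^\circ(\F_q)}^{\ol G^\circ(\F_q)}\rho) = \Hom_{\ol L^\circ(\F_q)}(\chi'_{\ol U^\circ(\F_q)}, \rho)$, and $\chi'_{\ol U^\circ(\F_q)} \cong \chi'^{\ol U^\circ(\F_q)} = 0$. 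So this Hom space vanishes, and $\chi$ is cuspidal.
\item Conversely, suppose $\chi^{\ol U^\circ(\F_q)} \neq 0$ for some $\ol P^\circ$. By exactness, some irreducible subquotient $\chi'$ has $\chi'_{\ol U^\circ(\F_q)} \neq 0$. This is a nonzero finite-dimensional $\ol L^\circ(\F_q)$-representation, so take $\rho$ to be an irreducible quotient of it. Frobenius reciprocity then gives a nonzero map $\chi' \to \ind\rho$, which is injective since $\chi'$ is irreducible. So $\chi$ is not cuspidal.
\end{itemize}

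There is no real obstacle here. The only points needing care are the reduction to $\ol G^\circ$ (immediate from the definitions) and the use of $p \neq \operatorname{char} k$ for exactness and for the invariants–coinvariants identification, which makes the argument work uniformly for $k = \ol\Q_\ell$ and $k = \ol\F_\ell$.
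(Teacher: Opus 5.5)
Your proposal is correct and follows the paper's proof. The paper likewise uses that $\ol U^\circ(\F_q)$ is a $p$-group with $p$ invertible in $k$, so that invariants are exact and naturally isomorphic to coinvariants, and then concludes by adjunction (Frobenius reciprocity); you have simply written out the details the paper leaves implicit, namely the averaging idempotent, the reduction to $\ol G^\circ$, and both directions of the adjunction argument.
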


\begin{proof}
    Since $k$ is of characteristic $\neq p$ and $\ol U^\circ(\F_q)$ is of $p$-power order for every unipotent radical $\ol U^\circ$ of a parabolic $\F_q$-subgroup of $\ol G^\circ$, the functor $V \mapsto V^{\ol U^\circ(\F_q)}$ of invariants is exact and naturally isomorphic to the functor $V \mapsto V_{\ol U^\circ(\F_q)}$ of coinvariants. The lemma follows from these observations and the adjunction.
\end{proof}

\begin{lemma}\label{lemma:cuspidal-supports-exist}
    Let $\chi$ be an irreducible $k$-representation of $\ol G(\F_q)$. There exists a split $\F_q$-torus $\ol S^\circ \subset \ol G$ and an irreducible cuspidal $k$-representation $\rho$ of $Z_{\ol G}(\ol S^\circ)(\F_q)$ such that
    \begin{enumerate}
        \item $\ol S^\circ$ is the maximal split central $\F_q$-torus of $L\coloneqq Z_{\ol G}(\ol S^\circ)$,
        \item $\chi$ is an irreducible subrepresentation of $R_{\ol L}^{\ol G}(\rho)$ for some $\ol P$ defined over $\F_q$.
    \end{enumerate}
    The pair $(\ol S^\circ, \rho)$ is unique up to $\ol G(\F_q)$-conjugacy. If $\chi$ has finite order central character, then so does $\rho$.
\end{lemma}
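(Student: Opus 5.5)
Existence will follow from a minimality argument using transitivity of parabolic induction and Frobenius reciprocity. Uniqueness will be deduced from Harish-Chandra's classical result for the finite group $\ol G^\circ(\F_q)$ by Clifford theory.

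\emph{Existence.} Consider the split $\F_q$-tori $\ol S^\circ \subset \ol G^\circ$ that are maximal split central in $\ol L = Z_{\ol G}(\ol S^\circ)$. Take one with an $\F_q$-parabolic $\ol P = \ol L\cdot \ol U$ such that $\chi_{\ol U(\F_q)} \neq 0$, and with $\dim \ol S^\circ$ maximal among these. The pair $\ol S^\circ = $ maximal split central torus of $\ol G^\circ$ with $\ol U = 1$ is admissible, so such a choice exists. Since $\ol G^\circ\cdot Z(\ol G)$ has finite index and $Z(\ol G)(\F_q)$ acts on $\chi$ by a scalar, $\chi_{\ol U(\F_q)}$ is a finite-dimensional $\ol L(\F_q)$-representation. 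Pick an irreducible quotient $\rho$ of it. By Frobenius reciprocity (the coinvariants description $^*R^{\ol G}_{\ol L} = (-)_{\ol U(\F_q)}$ from \S\ref{sss:cusp}), we get $\Hom(\chi, \ind_{\ol P(\F_q)}^{\ol G(\F_q)}\rho) \neq 0$. Since $\chi$ is irreducible, it embeds in $R_{\ol L}^{\ol G}(\rho)$.

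If $\rho$ were not cuspidal, Lemma~\ref{lemma:cuspidal-equivalence} would give a proper $\F_q$-parabolic of $\ol L^\circ$ with unipotent radical $\ol V$ and $\rho_{\ol V(\F_q)} \neq 0$. The Levi of this parabolic is the centralizer in $\ol L^\circ$ of a larger split torus $\ol S'^\circ$. Then $Z_{\ol G}(\ol S'^\circ)$ is a twisted Levi, $\ol U\ol V$ is the unipotent radical of an $\F_q$-parabolic of $\ol G^\circ$, and
\[
\chi_{\ol U\ol V(\F_q)} = (\chi_{\ol U(\F_q)})_{\ol V(\F_q)} \neq 0,
\]
using exactness of coinvariants. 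This contradicts the maximality of $\dim \ol S^\circ$. Condition (1) holds after replacing $\ol S^\circ$ by the maximal split central torus of $\ol L$, which does not change $\ol L$.

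\emph{Central character.} $Z(\ol G)(\F_q) \subset \ol L(\F_q)$ acts on $\chi_{\ol U(\F_q)}$, hence on $\rho$, through the central character of $\chi$. It remains to see that all of $Z(\ol L)(\F_q)$ acts on $\rho$ with finite order. By condition (4) of Definition~\ref{def:paraductive}, $Z(\ol L) \subset \ol T^\circ\cdot Z(\ol G)$ for a maximal torus $\ol T^\circ$ of $\ol L^\circ$. So $Z(\ol L)(\F_q)$ is, up to finite index, contained in $\ol L^\circ(\F_q)\cdot Z(\ol G)(\F_q)$. On the first factor it acts through a finite group, and on the second through the finite-order central character of $\chi$. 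Hence the character of $\rho$ on $Z(\ol L)(\F_q)$ has finite order.

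\emph{Uniqueness.} This is the main obstacle. Suppose $(\ol S^\circ, \rho)$ and $(\ol S'^\circ, \rho')$ both satisfy (1) and (2).
\begin{enumerate}
\item Restrict to $\ol G^\circ(\F_q)$ and use the Mackey formula for $\ind_{\ol P(\F_q)}^{\ol G(\F_q)}$. Every irreducible constituent $\chi_0$ of $\chi|_{\ol G^\circ(\F_q)}$ is then a constituent of $R_{\ol L^\circ}^{\ol G^\circ}({}^g\rho_0)$ for some $g$ and some cuspidal constituent $\rho_0$ of $\rho|_{\ol L^\circ(\F_q)}$. The same holds for $\rho'$.
\item Harish-Chandra's uniqueness for connected finite reductive groups, valid over any $k$ of characteristic $\neq p$ (for $k = \ol\F_\ell$ via the Mackey formula and cuspidality, as in Geck--Hiss), then gives $g \in \ol G^\circ(\F_q)$ conjugating $(\ol L^\circ, \rho_0)$ to $(\ol L'^\circ, \rho'_0)$.
\item Since $\ol S^\circ$ is the maximal split central torus of both $\ol L$ and $\ol L^\circ$, conjugating $\ol L^\circ$ conjugates $\ol S^\circ$, and hence $\ol L = Z_{\ol G}(\ol S^\circ)$.
\item It remains to match $\rho$ with ${}^g\rho'$ as representations of $\ol L(\F_q)$, not just on $\ol L^\circ(\F_q)$. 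Here I would use the Mackey formula for $\ind_{\ol P(\F_q)}^{\ol G(\F_q)}$ together with $\Hom(\chi, R_{\ol L}^{\ol G}\rho) \ne 0 \ne \Hom(\chi, R_{\ol L}^{\ol G}({}^g\rho'))$. Cuspidality kills every double coset term except those normalizing $\ol L$, leaving $\Hom_{\ol L(\F_q)}(\rho, {}^{n}\rho') \neq 0$ for some $n \in N_{\ol G(\F_q)}(\ol L)$.
\end{enumerate}
Getting the Mackey decomposition right for the disconnected and infinite-type $\ol G$ is the delicate part of this argument. I would handle it by first quotienting by a torsion-free central constant subgroup, as in Lemma~\ref{lemma:dl-restriction}, to reduce to $\ol G$ of finite type.
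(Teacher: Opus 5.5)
Your existence argument (maximize $\dim \ol S^\circ$ subject to $\chi_{\ol U(\F_q)} \neq 0$) and your central-character argument are correct. The genuine problem is Step~3 of the uniqueness part.

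Condition~(1) only says that $\ol S^\circ$ is the maximal split central torus of $\ol L = Z_{\ol G}(\ol S^\circ)$. It does \emph{not} say that $\ol S^\circ$ is the maximal split central torus of $\ol L^\circ = Z_{\ol G^\circ}(\ol S^\circ)$, and the two differ as soon as components of $\ol L$ act nontrivially on the split center of $\ol L^\circ$. In that situation the uniqueness clause, as literally stated, is false, so no argument can close Step~3.

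For instance, take $\ol G = \G_m \rtimes \Z/2$ with $\Z/2$ acting by inversion (the $\PGL_2$ example following Definition~\ref{def:twisted-levi}). This is paraductive, with $Z(\ol G) = \mu_2$ and $Z_{\ol G}(\G_m) = \G_m$, and every representation is cuspidal because $\ol G^\circ$ is a torus. For an irreducible $\chi$ of $\ol G(\F_q)$, the pair $(1, \chi)$ satisfies (1) and (2) with $\ol L = \ol P = \ol G$. So does $(\G_m, \rho)$ for any character $\rho$ of $\F_q^\times$ occurring in $\chi|_{\F_q^\times}$, since Frobenius reciprocity gives $\chi \subset \ind_{\F_q^\times}^{\ol G(\F_q)}(\rho) = R^{\ol G}_{\G_m}(\rho)$. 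These two pairs are not conjugate, since the tori have different dimensions.

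The remedy is to require that $\ol S^\circ$ be the maximal split central torus of $\ol L^\circ$; this implies (1). Your existence construction already yields this normalization. If $\ol S^\circ \subsetneq A$, where $A$ is the maximal split central torus of $\ol L^\circ$, then $Z_{\ol G}(A)^\circ = \ol L^\circ$, so $(A, \ol U)$ is admissible with larger dimension, contradicting maximality.

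With this normalization Step~3 becomes valid, and your Steps~1, 2 and 4 go through with one correction. In Step~1 you should record that $\chi_0$ is a \emph{subrepresentation}, not merely a constituent, of the relevant Harish-Chandra induction. That is exactly what the Mackey decomposition of $\chi \hookrightarrow \ind_{\ol P(\F_q)}^{\ol G(\F_q)}(\rho)$ gives, using $\ol G^\circ \cap \ol L = \ol L^\circ$. This matters because for $k = \ol\F_\ell$, Harish-Chandra uniqueness fails for composition factors.

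For comparison with the paper:
\begin{itemize}
\item For existence, the paper takes the connected cuspidal support $(\ol L^\circ, \rho_0)$ of a summand $\chi_0$ of $\chi|_{\ol G^\circ(\F_q)}$ and lifts $\rho_0$ to an irreducible quotient $\rho$ of $\chi_{\ol U(\F_q)}$. Its natural output is the same normalized $\ol S^\circ$; your maximal-dimension argument is a self-contained alternative.
\item For uniqueness, the paper compares only the connected supports via Hiss and then asserts $\rho \cong \rho'$. So it tacitly relies on the same normalization, and your Step~4 is more careful than the paper about passing from $\rho_0$ to $\rho$.
\end{itemize}
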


\begin{proof}
    Let $\chi_0$ be an irreducible $k$-subrepresentation of $\chi|_{\ol G^\circ(\F_q)}$. By definition of cuspidality, there is an $\F_q$-torus $\ol S^\circ \subset \ol G$ such that if $\ol L = Z_{\ol G}(\ol S^\circ)$, then there is an irreducible cuspidal $k$-representation $\rho_0$ of $\ol L^\circ(\F_q)$ such that $\chi_0$ is a $k$-subrepresentation of the parabolic induction of $\rho_0$. We may and do assume that $\ol S^\circ$ is the maximal central split $\F_q$-subtorus of $\ol L$. Let $\ol U \subset \ol G^\circ$ be the unipotent radical of a parabolic $\F_q$-subgroup of $\ol G^\circ$ with Levi factor $\ol L^\circ$, and let $\ol P = \ol L \ol U$. By definition, $\rho_0$ is an irreducible quotient of $(\chi_0)_{\ol U(\F_q)}$. Since $\rho_0$ is nonzero, in particular $\chi_{\ol U(\F_q)}$ is nonzero, hence it admits an $\ol L(\F_q)$-irreducible quotient $\rho$ such that $\rho|_{\ol L^\circ(\F_q)}$ contains $\rho_0$. By definition, the representation $\rho$ is cuspidal, and by adjunction we see that $\chi$ is an irreducible $k$-subrepresentation of $\ind_{\ol P(\F_q)}^{\ol G(\F_q)}(\rho)$, as desired. Observe that if $\chi$ has finite order central character, then it factors through a finite quotient of $\ol G(\F_q)$, and it is clear that the same is then true of $\rho$.

    Next, we prove uniqueness. Let $(\ol S^\circ, \rho)$ and $(\ol S'^\circ, \rho')$ be two pairs satisfying the conditions of the lemma, and let $\ol U$ and $\ol U'$ be the unipotent radicals of parabolic $\F_q$-subgroups of $\ol G^\circ$ with Levi factors $\ol L^\circ \coloneqq Z_{\ol G^\circ}(\ol S^\circ)$ and $\ol L'^\circ \coloneqq Z_{\ol G^\circ}(\ol S'^\circ)$, respectively. Let $\chi_0$ and $\chi_0'$ be irreducible $k$-subrepresentations of $\chi|_{\ol G^\circ(\F_q)}$ such that $(\chi_0)_{\ol U(\F_q)}$ (resp.\ $(\chi_0')_{\ol U'(\F_q)}$) admits $\rho$ (resp.\ $\rho'$) as an irreducible subrepresentation. By Clifford's theorem, there is some $g \in \ol G(\F_q)$ which conjugates $\chi_0'$ to $\chi_0$; by passing to this conjugate, we may assume $\chi_0 = \chi_0'$. In this case, the usual uniqueness of cuspidal support \cite[Corollary 5.2]{Hiss93} shows that $\rho \cong \rho'$, as desired.
\end{proof}

The following technical lemma will be useful in some reduction arguments later.

\begin{lemma}\label{lemma:paraductive-series-extension}
Let $k \in \{\ol\Q_\ell,\ol\F_\ell\}$, and let $\ol H\subset\ol G$ be a closed paraductive $\F_q$-subgroup scheme such that $\ol H^\circ \cdot Z(\ol G)$ is of finite index in $\ol G$. Let $\ol S \subset \ol H$ be a generalized maximal torus, let $\theta\co \ol S(\F_q)\to k^\times$ be a character, and suppose that $\tau\in\cE(\ol H,[\ol S,\theta])$ is irreducible.

Then $\ol T \coloneqq Z_{\ol G}(\ol S^\circ)$ is a generalized maximal $\F_q$-torus of $\ol G$ with $\ol T\cap\ol H=\ol S$. If $\pi$ is an irreducible $k$-representation of $\ol G(\F_q)$ whose restriction to $\ol H(\F_q)$ admits $\tau$ as an irreducible subquotient, then there exists a character $\eta\co\ol T(\F_q)\to k^\times$ extending $\theta$ such that $\pi\in\cE(\ol G,[\ol T,\eta])$. The representation $\pi$ is cuspidal (resp.\ non-singular) if and only if the same holds for $\tau$.
\end{lemma}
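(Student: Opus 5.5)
The plan is to reduce everything to the identity components, which the two groups share. Since $\ol H^\circ \subset \ol G^\circ$ are both reductive and $\ol H^\circ\cdot Z(\ol G)$ has finite index in $\ol G$, the group $\ol G^\circ$ is contained (up to finite index) in $\ol H^\circ \cdot Z(\ol G)^\circ$. Hence $\ol G^\circ = \ol H^\circ \cdot Z(\ol G)^{\circ}$, and $\ol G^\circ$ differs from $\ol H^\circ$ only by a central torus.

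\textbf{Step 1: $\ol T$ is a generalized maximal torus with $\ol T \cap \ol H = \ol S$.} Write $\ol T^\circ := \ol S^\circ \cdot (Z(\ol G)\cap \ol G^\circ)^\circ$. This is a maximal torus of $\ol G^\circ$, and $Z_{\ol G}(\ol S^\circ) = Z_{\ol G}(\ol T^\circ)$ because the added factor is central. By condition (4) of Definition~\ref{def:paraductive} for $\ol G$, this equals $\ol T^\circ\cdot Z(\ol G)$, so $\ol T$ is a generalized maximal torus. The equality $\ol T\cap \ol H = Z_{\ol H}(\ol S^\circ)=\ol S$ is the definition of $\ol S$ as a generalized maximal torus of $\ol H$.

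\textbf{Step 2: the character $\eta$ and the Lusztig series.} By Clifford theory, irreducible constituents of $\pi|_{\ol G^\circ(\F_q)}$ restrict to $\ol H^\circ(\F_q)$ and contain a constituent $\tau_0$ of $\tau|_{\ol H^\circ(\F_q)}$. That $\tau_0$ lies in the rational series attached to $(\ol S^\circ,\theta^\circ)$. Since $\ol G^\circ_{\der}=\ol H^\circ_{\der}$, Lusztig series for $\ol G^\circ$ correspond to those of $\ol H^\circ$ after fixing the central character on $Z(\ol G)$. To make this precise, I would use Lemma~\ref{lemma:paraductive-permanence}-style central-product bookkeeping together with \cite[Remark 2.6.5]{Kal21b}: the restriction of $R^{\ol G^\circ\cdot Z}_{\ol T}(\eta)$ to the relevant subgroup is $R_{\ol S}^{\ol H}(\theta)$ whenever $\eta$ extends $\theta$.

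Concretely, $\ol T(\F_q)$ is generated up to finite index by $\ol S(\F_q)$ and $Z(\ol G)(\F_q)$, which act compatibly: $\theta$ on the intersection agrees with the central character of $\pi$. I define $\eta$ on $\ol S(\F_q)\cdot Z(\ol G)(\F_q)$ by $\theta$ and the central character of $\pi$, and extend to $\ol T(\F_q)$ by choosing the extension occurring via Frobenius reciprocity. Concretely, $\pi$ is a constituent of some $R_{\ol T}^{\ol G}(\eta')$ by Proposition~\ref{prop:dl-exhaustion}, and Lemma~\ref{lemma:geometric-lusztig-series} compares this with $\tau$. Both conditions in Definition~\ref{defn:lusztig-series}(2) then follow. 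The case $k=\ol\F_\ell$ follows by lifting $\theta$ to $\ol\Z_\ell^\times$ and using compatibility with reduction.

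\textbf{Step 3: cuspidality and non-singularity.} Non-singularity depends only on $\theta|_{\ol S^\circ(\F_q)}$ versus $\eta|_{\ol T^\circ(\F_q)}$. The coroots of $\ol T^\circ$ lie in $\ol S^\circ$ (the derived groups agree), and $\eta$ extends $\theta$, so the two conditions coincide. For cuspidality, parabolic $\F_q$-subgroups of $\ol G^\circ$ and $\ol H^\circ$ correspond bijectively with identical unipotent radicals, since the groups differ by a central torus. By Lemma~\ref{lemma:cuspidal-equivalence}, cuspidality is vanishing of $\ol U(\F_q)$-invariants on the restriction to the identity component. Clifford theory then shows that vanishing for $\pi$ is equivalent to vanishing for $\tau$: every constituent of $\pi|_{\ol H(\F_q)}$ is an $\ol G(\F_q)$-conjugate of $\tau$ up to central twist.

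The main obstacle will be Step 2, specifically producing a single $\eta$ for which the rational-series condition holds when $Z(\ol G)\cap\ol G^\circ$ is disconnected. If the direct argument is awkward, I would first pass to a regular embedding to make the center connected, prove the statement there, and then restrict.
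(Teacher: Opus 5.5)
Step~1 is correct, and Step~3 is essentially correct. Step~2 carries the real content of the lemma, and it is not actually proved.

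\textbf{Why the choice of $\eta$ is the whole difficulty.} You must define $\eta$ on $\ol T(\F_q)$ off the subgroup $\ol S(\F_q)\cdot Z(\ol G)(\F_q)$, and this index can exceed $1$. For $\SL_2\subset\GL_2$ with $q$ odd and $\ol T$ diagonal it equals $2$. The two extensions of a given pair $(\theta,\omega_\pi)$ then differ by $\epsilon\circ\det$, with $\epsilon$ quadratic, and in general define different Lusztig series of $\GL_2(\F_q)$. Since $\GL_2$ already has connected center, your fallback of passing to a regular embedding does not remove this issue.

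\textbf{Why your Step~2 does not settle it.} ``Choosing the extension occurring via Frobenius reciprocity'' is never made precise, and the sentence meant to make it concrete does not work. Proposition~\ref{prop:dl-exhaustion} produces \emph{some} generalized maximal torus $\ol T'$ and character $\theta'$ with $\pi$ a constituent of $R^{\ol G}_{\ol T'}(\theta')$. It does not produce a character of your $\ol T$. Lemma~\ref{lemma:geometric-lusztig-series} compares two $\ol G(\F_q)$-representations $\rH^*_c(Y^{\ol G}_{\ol U},\ol\Q_\ell)_\eta$ and $\rH^*_c(Y^{\ol G}_{\ol U'},\ol\Q_\ell)_{\theta'}$, so it cannot take the $\ol H(\F_q)$-representation $\tau$ as input. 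The restriction identity from \cite[Remark 2.6.5]{Kal21b} cannot select $\eta$ either: the restriction to identity components is the same for every extension $\eta$ of $\theta$. It is also not literally applicable, since $\ol H$ need not lie in $\ol G^\circ\cdot Z(\ol G)$ (for instance $\ol H=\ol G$ is allowed).

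\textbf{The missing bridge, as in the paper.} Work at the level of Deligne--Lusztig varieties.
\begin{enumerate}
\item Take $\ol U$ to be the unipotent radical of a Borel subgroup of $\ol G^\circ_{\ol\F_q}$ containing $\ol T^\circ$. Since $\ol G^\circ=\ol H^\circ\cdot(Z(\ol G)\cap\ol G^\circ)$, it is also the unipotent radical of a Borel subgroup of $\ol H^\circ_{\ol\F_q}$ containing $\ol S^\circ$.
\item Then $\rH^m_c(Y^{\ol G}_{\ol U},\ol\Q_\ell)\cong\rH^m_c(Y^{\ol H}_{\ol U},\ol\Q_\ell)\otimes_{\ol\Q_\ell[\ol H(\F_q)]}\ol\Q_\ell[\ol G(\F_q)]$.
\item Choose $m$ with $\tau\subset\rH^m_c(Y^{\ol H}_{\ol U},\ol\Q_\ell)_\theta$. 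Frobenius reciprocity makes $\pi$ a constituent of $\rH^m_c(Y^{\ol G}_{\ol U},\ol\Q_\ell)\otimes_{\ol\Q_\ell[\ol T(\F_q)]}\bigl(\ol\Q_\ell[\ol T(\F_q)]\otimes_{\ol\Q_\ell[\ol S(\F_q)]}\theta\bigr)$.
\item The irreducible subquotients of the second factor are characters extending $\theta$, and the cohomology is finitely generated over $\ol\Q_\ell[\ol T(\F_q)]$. Hence $\pi$ is a constituent of $\rH^m_c(Y^{\ol G}_{\ol U},\ol\Q_\ell)_\eta$ for some $\eta$ extending $\theta$. This is the step that selects $\eta$.
\item Only now do Proposition~\ref{prop:dl-exhaustion} and Lemma~\ref{lemma:geometric-lusztig-series}(1) give $\pi\in\cE(\ol G,[\ol T,\eta])$. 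Working in a single degree $m$ also avoids cancellation in virtual characters.
\end{enumerate}

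\textbf{Two smaller points.}
\begin{enumerate}
\item In Step~3, $\ol H(\F_q)$ need not be normal in $\ol G(\F_q)$, so constituents of $\pi|_{\ol H(\F_q)}$ need not be conjugates of $\tau$. Run Clifford theory for $\ol H^\circ(\F_q)$ instead, which is normal in $\ol G^\circ(\F_q)$. This suffices, because conjugation preserves the common set of unipotent radicals.
\item For $k=\ol\F_\ell$, lifting $\theta$ alone is not enough; $\pi$ and $\tau$ must be lifted compatibly. The paper reduces to $k=\ol\Q_\ell$ using \cite[Part III, no.\ 16.1, Theorem~33]{Serre77}.
\end{enumerate}
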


\begin{proof}
The hypotheses imply that $\ol T^\circ \coloneqq \ol S^\circ \cdot (Z(\ol G) \cap \ol G^\circ)$ is a maximal $\F_q$-torus of $\ol G^\circ$. Thus $\ol T = Z_{\ol G}(\ol T^\circ)$ is a generalized maximal $\F_q$-torus of $\ol G$ by definition. The final claim is clear, so it remains to show that $\pi \in \cE_0(\ol G, [\ol T, \eta])$. For this, the definition and \cite[Part III, no.\ 16.1, Theorem~33]{Serre77} reduce us to the case $k = \ol\Q_\ell$.

Let $\ol U \subset \ol G^\circ_{\ol\F_q}$ be the unipotent radical of a Borel $\ol\F_q$-subgroup of $\ol G^\circ$ containing $\ol T^\circ$, so $\ol S^\circ_{\ol\F_q} \ol U$ is also a Borel $\ol\F_q$-subgroup of $\ol H^\circ_{\ol\F_q}$. By assumption, there is some integer $m$ such that $\tau$ is an irreducible subrepresentation of $\rH^m_c(Y_{\ol U}^{\ol H}, \ol\Q_\ell)_\theta$. Observe that 
\[
\rH^m_c(Y_{\ol U}^{\ol G}, \ol\Q_\ell) \cong \rH^m_c(Y_{\ol U}^{\ol H}, \ol\Q_\ell) \otimes_{\ol\Q_\ell[\ol H(\F_q)]} \ol\Q_\ell[\ol G(\F_q)],
\]
so $\pi$ is an irreducible subquotient of $\rH_c^m(Y_{\ol U}^{\ol G}, \ol\Q_\ell) \otimes_{\ol\Q_\ell[\ol S(\F_q)]} \theta$. Every irreducible subquotient of $\ol\Q_\ell[\ol T(\F_q)] \otimes_{\ol\Q_\ell[\ol S(\F_q)]} \theta$ as a representation of $\ol T(\F_q)$ is a character extending $\theta$, so the result follows from Lemma~\ref{lemma:geometric-lusztig-series} and the fact that $\rH_c^m(Y_{\ol U}^{\ol G}, \ol\Q_\ell)$ is finitely generated as a $\ol\Q_\ell[\ol T(\F_q)]$-module.
\end{proof}

We conclude with a question.

\begin{question}\label{question:cuspidal-lifting}
    Does there exist a constant $C$ depending only on the root datum of $\ol G^\circ$ such that for all $\ell > C$ and all irreducible cuspidal $\ol\F_\ell$-representations $\chi_0$ of $\ol G(\F_q)$, there exists an irreducible cuspidal $\ol\Q_\ell$-representation $\chi$ such that $\chi_0$ occurs as an irreducible constituent of the $\ell$-modular reduction of $\chi$?
\end{question}

A positive answer to Question~\ref{question:cuspidal-lifting} would allow us to extend the proof of Theorem~\ref{thm:intro-debacker-reeder} somewhat; see Remark~\ref{rmk:singular-generalization}. According to \cite[Theorem~7.8]{GHM94}, the answer is positive for $\ol G = \GL_n$ by work of Dipper--James \cite{DJ86}. If one assumes that $\chi_0$ is moreover supercuspidal, then a positive answer follows from Geck's conjecture \cite[(6.6)]{Geck92} by \cite[Proposition~3.3]{Hiss96}; this conjecture was proven for unipotent modular representations in \cite{DuM18} when $p$ is good. If $\ell$ is ``small'', then according to \cite[Introduction]{GHM94} the answer was shown to be negative for $\ol G = G_2$ in Hiss' Habilitationsschrift. Beyond these cases, we are not aware of a proof or a counterexample.

\section{Tate cohomology for finite groups}\label{section:rosetta-stone}

There are several different types of correspondences between the representation theory of pairs of finite reductive groups that have the ``feel'' of Langlands functoriality: 
\begin{itemize}
    \item Shintani descent, which can be viewed as a form of ``base change functoriality''. 
    \item Lusztig induction and restriction.
    \item The Glauberman correspondence.
\end{itemize}
Each of these items is a correspondence of characteristic zero (virtual) representations, defined in very different ways from each other. However, we will show that upon reducing modulo certain primes $\ell$, they admit (in a wide class of group-theoretic situations) a common description in terms of Tate cohomology. This is a shadow, at the level of finite reductive groups, of the principle (exemplified in \cite{TV}, \cite{Fe23}, \cite{F24}) that Tate cohomology realizes functoriality in the Local Langlands Correspondence.

\subsection{Preliminaries}

In this section, we prove a few key results of an essentially combinatorial nature which will allow us to compute Tate cohomology in practice. Below, we use $\sigma$ to denote a generator of a cyclic group of order $\ell$. Throughout this section, we let $k$ be a field of characteristic $\ell$, and we let $k[\sigma]$ be the group ring of $\langle \sigma \rangle$.

For $a \in \Z/2$ we have \emph{Tate cohomology} groups $\rT^a(\sigma, -)$, defined as in the introduction. Note that if $\Pi$ is a representation of a group $\Gamma \rtimes \tw{\sigma}$, then $\rT^i(\sigma, \Pi)$ is naturally a representation of the fixed-point subgroup $\Gamma^\sigma$. 

\begin{lemma}\label{lem:N_sigma-identity}
Let $N_\sigma \coloneqq 1 + \sigma + \ldots + \sigma^{\ell-1} \in k[\sigma]$. Then we have 
\[
N_\sigma = (\sigma-1)^{\ell-1} \in k[\sigma].
\]
\end{lemma}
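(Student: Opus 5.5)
The plan is to compute both sides inside the polynomial ring $k[x]$ and then specialize $x \mapsto \sigma$. Since $k$ has characteristic $\ell$, the Frobenius identity $(x-1)^\ell = x^\ell - 1$ holds in $k[x]$, because the binomial coefficients $\binom{\ell}{i}$ with $0<i<\ell$ are divisible by $\ell$. (For $\ell = 2$ we read $-1$ as $1$, and the identity still holds.) On the other hand, the geometric series identity in $k[x]$ gives $x^\ell - 1 = (x-1)(1 + x + \cdots + x^{\ell-1})$. Comparing the two factorizations,
\[
(x-1)\bigl((x-1)^{\ell-1} - (1 + x + \cdots + x^{\ell-1})\bigr) = 0 .
\]
Because $k[x]$ is an integral domain and $x - 1 \neq 0$, we conclude $(x-1)^{\ell-1} = 1 + x + \cdots + x^{\ell-1}$ in $k[x]$.

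Finally, evaluate at $x = \sigma$ via the ring homomorphism $k[x] \to k[\sigma]$. This gives $N_\sigma = (\sigma-1)^{\ell-1}$ in $k[\sigma]$. The cancellation of $\sigma - 1$ is done in $k[x]$ rather than in $k[\sigma]$ precisely because $\sigma - 1$ is a zero divisor in the group ring; working in the domain $k[x]$ is what justifies it.

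There is no real obstacle here. The only points needing care are doing the cancellation in $k[x]$ and observing that the statement makes no assumption on $\sigma^\ell$ beyond $\sigma^\ell = 1$. As an alternative check, one could expand $(x-1)^{\ell-1}$ directly using $\binom{\ell-1}{i} \equiv (-1)^i \pmod{\ell}$, which again gives $\sum_{i=0}^{\ell-1} x^{i}$ up to the sign $(-1)^{\ell-1} = 1$ for odd $\ell$, and trivially for $\ell=2$.
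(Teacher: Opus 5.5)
Your proposal is correct and follows the same route as the paper, which simply cites the polynomial identity $\sum_{i=0}^{\ell-1} X^i = (X-1)^{\ell-1}$ in $k[X]$ and specializes $X \mapsto \sigma$. Your derivation of that identity from $(x-1)^\ell = x^\ell - 1$ and cancellation of $x-1$ in the domain $k[x]$ just supplies the detail the paper leaves implicit. Note that the specialization does not even use $\sigma^\ell = 1$.
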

\begin{proof}
This follows trivially from the polynomial identity $\sum_{i=0}^{\ell-1} X^i = (X-1)^{\ell-1}$ in $k[X]$.
\end{proof}

\begin{lemma}\label{lem:semisimplifications-Tate-cohomology-coincide} If $\Pi$ has finite length as a $\Gamma$-representation, then the semisimplifications of $\rT^0(\sigma, \Pi)$ and $\rT^1(\sigma, \Pi)$ are isomorphic as representations of $\Gamma^\sigma$.
\end{lemma}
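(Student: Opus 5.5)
Write $x = \sigma - 1 \in k[\sigma]$. By Lemma~\ref{lem:N_sigma-identity}, $N_\sigma = x^{\ell-1}$ and $x^\ell = \sigma^\ell - 1 = 0$. Both $x$ and $N_\sigma$ commute with the action of $\Gamma^\sigma$ on $\Pi$, so all kernels and images below are $\Gamma^\sigma$-representations. The plan is to filter $\Pi$ by the $\Gamma^\sigma$-stable subspaces $\ker(x^i)$, or equivalently $\operatorname{im}(x^i)$, and compare graded pieces in the Grothendieck group of finite-length $\Gamma^\sigma$-representations. Note that $\Pi$ has finite length as a $\Gamma^\sigma$-representation; this needs justification, for instance because $\Gamma^\sigma$ has finite index in $\Gamma$ in our settings. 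Alternatively, one can run the argument in the Grothendieck group of finite-length $\Gamma^\sigma$-modules after noting that all the subquotients involved are finite length. I would check this point first.

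By definition,
\[
\rT^0 = \ker(x)/\operatorname{im}(x^{\ell-1}), \qquad \rT^1 = \ker(x^{\ell-1})/\operatorname{im}(x).
\]
For each $i$, multiplication by $x^i$ gives a $\Gamma^\sigma$-equivariant isomorphism $\Pi/\ker(x^i) \cong \operatorname{im}(x^i)$. Hence in the Grothendieck group
\[
[\ker x^i] + [\operatorname{im} x^i] = [\Pi].
\]
Applying this with $i = 1$ and $i = \ell-1$ gives
\[
[\rT^0] = [\ker x] - [\operatorname{im} x^{\ell-1}] = [\ker x] - [\Pi] + [\ker x^{\ell-1}]
\]
and
\[
[\rT^1] = [\ker x^{\ell-1}] - [\operatorname{im} x] = [\ker x^{\ell-1}] - [\Pi] + [\ker x].
\]
These two expressions are identical, so the classes coincide and the semisimplifications are isomorphic.

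The only subtle point is the finiteness needed for Grothendieck-group cancellation. If $\Pi$ is only assumed finite length over $\Gamma$ and $\Gamma^\sigma$ has infinite index, this step could fail. In that case I would instead argue by induction on the length of $\Pi$ as a $\Gamma \rtimes \tw{\sigma}$-module, using the long exact hexagon of Tate cohomology. However, the hexagon only gives inequalities, so I expect the intended setting to have $\Pi$ of finite length over $\Gamma^\sigma$, where the computation above suffices.
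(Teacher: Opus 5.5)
Your argument is the paper's proof: it writes $[\rT^0(\sigma,\Pi)]=[\ker(\sigma-1)]-[N_\sigma\Pi]=[\ker N_\sigma]-[(\sigma-1)\Pi]=[\rT^1(\sigma,\Pi)]$, which is your identity $[\ker x^i]+[\operatorname{im}x^i]=[\Pi]$ for $i=1,\ell-1$. The paper also leaves implicit the finiteness over $\Gamma^\sigma$ that you flag; this holds in the paper's own applications, where $\Gamma$ is finite and $\Pi$ is finite-dimensional.

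If you want to remove that caveat altogether, filter $\rT^0$ by the images of $\ker x\cap\operatorname{im}x^{m-1}$ and $\rT^1$ by the images of $\ker x^{m}$. The graded pieces, indexed by $1\le m\le\ell-1$, are identified $\Gamma^\sigma$-equivariantly by $x^{m-1}$ (injectivity and surjectivity can be checked directly), so no cancellation in a Grothendieck group is needed.
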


\begin{proof}
From the defining short exact sequences for $\rT^i(\sigma, \Pi)$, we have
\[
[\rT^0(\sigma,\Pi)]=[\ker(\sigma-1)]-[N_\sigma\Pi]=[\ker(N_\sigma)]-[(\sigma-1)\Pi]=[\rT^1(\sigma,\Pi)],
\]
as desired.
\end{proof}

Despite Lemma~\ref{lem:semisimplifications-Tate-cohomology-coincide}, it will be useful in \cite{CF26b} to consider both $\rT^0$ and $\rT^1$, as the various cup product maps have considerably different behaviors. To simplify the notation, when $\sigma$ is clear from context we will write $\rT^i(\Pi) \coloneqq \rT^i(\sigma, \Pi)$.

\begin{lemma}\label{lemma:nonzero-tate-cohomology}
Let $V$ be a finite-dimensional $k[\sigma]$-module. Let $1 \leq m_1, \dots, m_n \leq \ell$ be the sizes of the (unipotent) Jordan blocks of $\sigma$. Then
    \begin{equation}\label{eq:nonzero-tate-cohomology}
    \dim_k \rT^j(V) = \#\{1 \leq i \leq n\colon m_i < \ell\}
    \end{equation}
    for either $j \in \Z/2\Z$. In particular, if $\dim_k V$ is not divisible by $\ell$, then $\rT^j(V) \neq 0$.
\end{lemma}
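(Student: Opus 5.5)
Since $k$ has characteristic $\ell$ and $\sigma^\ell = 1$, we have $(\sigma-1)^\ell = \sigma^\ell - 1 = 0$ in $k[\sigma]$. Thus $\sigma - 1$ acts nilpotently on $V$, and $k[\sigma] \cong k[X]/(X^\ell)$ with $X = \sigma - 1$. By the structure theorem for finitely generated modules over the principal ideal ring $k[X]$, $V$ decomposes as a direct sum $\bigoplus_{i=1}^n k[X]/(X^{m_i})$ with $1 \leq m_i \leq \ell$. These summands are exactly the Jordan blocks of $\sigma$, all with eigenvalue $1$. Both $\rT^0$ and $\rT^1$ are defined by kernels and images of the endomorphisms $\sigma - 1$ and $N_\sigma$, so they commute with finite direct sums. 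It therefore suffices to treat a single block $V = k[X]/(X^m)$ and show that $\dim_k \rT^j(V)$ is $1$ if $m < \ell$ and $0$ if $m = \ell$.

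For a single block we use Lemma~\ref{lem:N_sigma-identity}, which says $N_\sigma = X^{\ell-1}$ in $k[\sigma]$. The module $V = k[X]/(X^m)$ has basis $1, X, \dots, X^{m-1}$.

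\emph{Case $m = \ell$.} Here $V$ is free of rank one over $k[\sigma]$. We have $\ker(X) = (X^{\ell-1}) = N_\sigma V$, so $\rT^0 = 0$. We also have $\ker(X^{\ell-1}) = (X) = (\sigma - 1)V$, so $\rT^1 = 0$.

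\emph{Case $m < \ell$.} Since $m \leq \ell - 1$, we get $X^{\ell-1} = 0$ on $V$, so $N_\sigma V = 0$ and $\ker N_\sigma = V$. Then $\rT^0 = \ker(X) = kX^{m-1}$ is one-dimensional. Similarly $\rT^1 = V/XV$ is one-dimensional. (For $m=1$ both of these are just $V$.)

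Summing over the blocks gives \eqref{eq:nonzero-tate-cohomology} for both $j$.

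For the final claim, we have $\dim_k V = \sum_i m_i$. If every $m_i$ equaled $\ell$, this sum would be divisible by $\ell$. So when $\ell \nmid \dim_k V$, some $m_i < \ell$, and hence $\rT^j(V) \neq 0$. There is no real obstacle in this argument; the only point needing care is checking that $\sigma$ is unipotent, which guarantees that Jordan blocks coincide with the cyclic $k[X]/(X^m)$-summands.
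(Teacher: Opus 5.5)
Your proposal is correct and follows essentially the same route as the paper: decompose $V$ into cyclic blocks $k[\sigma]/((\sigma-1)^m)$ with $m \leq \ell$, then use $N_\sigma = (\sigma-1)^{\ell-1}$ to show that each block contributes one dimension to $\rT^j$ when $m < \ell$ and nothing when $m = \ell$. The final divisibility argument is also the same as the paper's.
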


\begin{proof}
    By breaking up $V$ into a direct sum of $\sigma$-stable subspaces, we may assume that $V \cong k[\sigma]/((\sigma-1)^m)$ for some $m$. Note that
    \[
    (\sigma-1)^\ell = \sigma^\ell -1 = 0 \in k[\sigma],
    \]
    so $m \leq \ell$. Let $e_i = (\sigma-1)^{m-i}$ for $0\leq i\leq m$. Clearly $V^\sigma = ke_1$ and $(\sigma-1)V = \bigoplus_{i=1}^{m-1} ke_i$. Using Lemma~\ref{lem:N_sigma-identity}, we see that:
    \begin{itemize}
    \item If $m < \ell$, then $\rT^0(V) = ke_1$ and $\rT^1(V) = V/\bigoplus_{i=1}^{m-1} ke_i$.
    \item If $m = \ell$, then $N_\sigma(V) = ke_1$ and $\ker N_\sigma = \bigoplus_{i=1}^{\ell-1} ke_i$. Thus in this case $\rT^0(V) = 0 = \rT^1(V)$.
    \end{itemize}
    This proves \eqref{eq:nonzero-tate-cohomology}. The final claim follows from \eqref{eq:nonzero-tate-cohomology} and the observation that if $\dim_k V$ is not divisible by $\ell$, then $m_i < \ell$ for some $i$.
\end{proof}

\begin{lemma}\label{lemma:tate-cohom-devissage}
    Let $\Gamma$ be a locally profinite group which admits a compact open subgroup of pro-order prime to $\ell$, let $\sigma$ be an automorphism of $\Gamma$ of order $\ell$, and let $V$ be a finite length smooth $k$-representation of $\Gamma \rtimes \langle\sigma\rangle$. Suppose 
    \[
    V^{\ss} \cong \bigoplus_{i=1}^m V_i^{e_i} \oplus \bigoplus_{j=1}^n W_j^{f_j}
    \]
    as a $k[\Gamma]$-module, where the $V_i$ and $W_j$ are pairwise non-isomorphic simple $k[\Gamma]$-modules such that $V_i \cong {}^\sigma V_i$ and $W_j \not\cong {}^\sigma W_j$. Equipping $V_i$ with its canonical $k[\Gamma \rtimes \langle\sigma\rangle]$-module structure, there exists an embedding of $k[\Gamma^\sigma]$-modules
    \[
    \rT^a(\sigma, V)^{\ss} \subset \bigoplus_{i=1}^m (\rT^a(\sigma, V_i)^{\ss})^{e_i}
    \]
    for both $a \in \Z/2$.
\end{lemma}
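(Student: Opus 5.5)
Argue by induction on the length of $V$ as a $k[\Gamma \rtimes \langle\sigma\rangle]$-module, using the long exact sequence in Tate cohomology. For a short exact sequence $0 \to V' \to V \to V'' \to 0$ of $k[\Gamma\rtimes\langle\sigma\rangle]$-modules, Tate cohomology of the cyclic group $\langle\sigma\rangle$ gives a hexagon $\rT^0(V') \to \rT^0(V) \to \rT^0(V'') \to \rT^1(V') \to \rT^1(V) \to \rT^1(V'') \to \rT^0(V')$. All maps are $\Gamma^\sigma$-equivariant because the connecting maps are natural and $\Gamma^\sigma$ commutes with $\sigma$. Exactness at $\rT^a(V)$ then gives $\rT^a(V)^{\ss} \leq \rT^a(V')^{\ss} \oplus \rT^a(V'')^{\ss}$. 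So it suffices to bound $\rT^a(\sigma, X)^{\ss}$ for each simple $k[\Gamma\rtimes\langle\sigma\rangle]$-module $X$ occurring as a composition factor of $V$. We also need to match these bounds with the decomposition of $V^{\ss}$ as a $k[\Gamma]$-module.

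Next, classify the simple $k[\Gamma\rtimes\langle\sigma\rangle]$-modules $X$. Since $\Gamma$ has index $\ell$ and is normal, $X|_\Gamma$ is, by Clifford theory, either simple and $\sigma$-stable, or a sum $W \oplus {}^\sigma W \oplus \dots \oplus {}^{\sigma^{\ell-1}}W$ of pairwise non-isomorphic conjugates (then $X \cong \operatorname{Ind}_\Gamma^{\Gamma\rtimes\langle\sigma\rangle} W$). The case $\sigma$ acting trivially up to scalar does not arise, since characteristic $\ell$ leaves no nontrivial $\ell$-th roots of unity. Smoothness and finite length ensure $X|_\Gamma$ is of finite length, so Clifford theory applies.

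In the induced case, $X \cong k[\sigma]\otimes_k W$ as a $k[\sigma]$-module, i.e.\ $X$ is free over $k[\sigma]$. This holds because $\sigma$ permutes the summands ${}^{\sigma^i}W$ cyclically, so it is a free $k[\sigma]$-module, and thus $\rT^a(X)=0$; this is the Jordan-block computation of Lemma~\ref{lemma:nonzero-tate-cohomology} with all blocks of size $\ell$. Thus the $W_j$ contribute nothing. In the stable case, $X|_\Gamma \cong V_i$ for some $i$. The extension of $V_i$ to $\Gamma\rtimes\langle\sigma\rangle$ is unique: two extensions differ by a character of $\langle\sigma\rangle$, and the only such character in characteristic $\ell$ is trivial. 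This unique extension is the ``canonical'' structure in the statement, so $X$ is $V_i$ with its canonical structure. Each $V_i$ occurs exactly $e_i$ times among the composition factors of this type, by comparing with $V^{\ss}|_\Gamma$. This is because the induced-type factors restrict to sums of $W_j$'s, and Jordan--H\"older for $\Gamma$ counts multiplicities.

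The main obstacle is the equivariance and the non-split hexagon: the bound $\rT^a(V)^{\ss}\le \rT^a(V')^{\ss}\oplus\rT^a(V'')^{\ss}$ must be justified as $\Gamma^\sigma$-modules. This needs $\rT^a$ of each finite-length module to have finite length over $\Gamma^\sigma$ so that semisimplification makes sense. To get this, use the hypothesis on a pro-prime-to-$\ell$ compact open subgroup $K$ (which we may take $\sigma$-stable by intersecting with its conjugates): invariants under $K^\sigma$ are exact, which lets us reduce to finite-dimensional Hecke-module pieces. A subquotient of a finite-length module is then of finite length, and the comparison of semisimplifications via exactness holds termwise. Iterating over a composition series then gives the claimed embedding $\rT^a(\sigma,V)^{\ss}\subset\bigoplus_i(\rT^a(\sigma,V_i)^{\ss})^{e_i}$.
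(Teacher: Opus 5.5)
Your proposal is correct and follows essentially the same route as the paper: d\'evissage via exactness of $\rT^a(\sigma,V')\to\rT^a(\sigma,V)\to\rT^a(\sigma,V'')$ down to simple $k[\Gamma\rtimes\langle\sigma\rangle]$-modules, followed by the Clifford dichotomy. In that dichotomy the induced modules are $k[\sigma]$-free, so their Tate cohomology vanishes, and the $\sigma$-stable ones restrict to some $V_i$ carrying its unique extension.

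One caveat concerns your last paragraph. Exactness of $K^\sigma$-invariants does not by itself show that $\rT^a(\sigma,V_i)$ has finite length over $\Gamma^\sigma$; for $p$-adic groups this is the theorem of Dhar--Nadimpalli. However, you do not need it: the statement already presupposes that the $\rT^a(\sigma,V_i)^{\ss}$ make sense, and your d\'evissage then propagates finite length to $\rT^a(\sigma,V)$. In addition, all of the paper's applications of this lemma are finite-dimensional.
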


\begin{proof}
    Note first that if $U$ is a simple $k[\Gamma]$-module whose isomorphism class is $\sigma$-stable, then $U$ admits a unique $k[\Gamma \rtimes \langle\sigma\rangle]$-module structure by the argument of \cite[Proposition~6.1]{TV}. For a short exact sequence $0 \to A \to B \to C \to 0$ of $k[\Gamma\rtimes\langle\sigma\rangle]$-modules, there is an exact sequence $\rT^a(\sigma,A) \to \rT^a(\sigma,B) \to \rT^a(\sigma,C)$, and the conclusion thereby propagates from $A$ and $C$ and $B$. Using the socle filtration and splitting into direct summands, we may therefore assume that $V$ is simple as a representation of $\Gamma \rtimes \langle\sigma\rangle$. Note that $\sigma$ permutes the isotypic components of $V|_\Gamma$ transitively. If $V|_\Gamma$ is not irreducible, then $V$ is an induced $k[\sigma]$-module, and its Tate cohomology vanishes. Otherwise, $V|_\Gamma$ is irreducible and the lemma is clear.
\end{proof}

\subsection{Lower bounds on Tate cohomology}\label{ss:tate-lower-bounds} 
The ``modular functoriality'' results of \cite{F24} require control of Tate cohomology, or at least ``lower bounds'' on it. We will establish some results in this direction, which will ultimately be used to relate Tate cohomology to Shintani descent mod $\ell$, and separately to Lusztig restriction mod $\ell$.

\subsubsection{Brauer characters} Recall that if $\Gamma$ is a finite group, $k$ is an algebraically closed field of characteristic $\ell > 0$, and $V$ is a finite-dimensional $k$-representation of $\Gamma$, then the \emph{Brauer character} $\chi_V$ is the function $\chi_V\co \Gamma_{\ell'} \to W(k)$ defined by
\[
\chi_V(\gamma) = \sum_{i=1}^{\dim V} [\alpha_i]
\]
where $\Gamma_{\ell'}$ is the set of elements of $\Gamma$ of order prime to $\ell$, $W(k)$ is the ring of Witt vectors of $k$, $\{\alpha_1, \dots, \alpha_{\dim V}\}$ is the multi-set of eigenvalues for the action of $\gamma$ on $V$, and $[\alpha]$ refers to the Teichm\"uller lift of $\alpha \in k^\times$. The Brauer character determines the isomorphism class of the semisimplification of $V$ by \cite[\S18.2, Corollary~1]{Serre77}.

More generally, we will say that a class function $\Gamma_{\ell'} \to W(k)$ is a \emph{Brauer character} of $\Gamma$ if it is a $\Z$-linear combination of Brauer characters of finite-dimensional $k$-representations of $\Gamma$. 

\begin{defn}
Let $\chi_1, \dots, \chi_n$ be the Brauer characters associated to the irreducible $k$-representations of $\Gamma$, so every Brauer character of $\Gamma$ can be written uniquely in the form $\sum_{i=1}^n m_i\chi_i$ for $m_i \in \Z$. If $\chi = \sum_{i=1}^n m_i\chi_i$ and $\eta = \sum_{i=1}^n r_i\chi_i$, then we write $\chi \leq \eta$ if $m_i \leq r_i$ for all $i$, and we write 
\[
|\chi| \coloneqq \sum_{i=1}^n |m_i|\chi_i.
\]
If $\{\eta_i\}_{i \in I}$ is a nonempty finite set of Brauer characters and $\eta_i = \sum_{j=1}^n c_{ij} \chi_j$ for $c_{ij} \in \Z$, then we write
\[
\inf_{i \in I}\{\eta_i\} \coloneqq \sum_{j=1}^n \min_{i \in I}\{c_{ij}\} \chi_j \text{ and } \sup_{i \in I}\{\eta_i\} \coloneqq \sum_{j=1}^n \max_{i \in I}\{c_{ij}\}\chi_j.
\]
In other words, $\inf_{i \in I}\{\eta_i\}$ (resp.\ $\sup_{i \in I}\{\eta_i\}$) is the greatest lower bound (resp.\ least upper bound) of the set $\{\eta_i\}_{i \in I}$ under the partial order $\leq$ introduced above.
\end{defn}

\subsubsection{The lower bound}

In this section, we will identify a fairly explicit representation which occurs as a submodule of $\rT^i(\sigma, \ol V)^{\ss}$, whenever $\ol V$ is the $\ell$-modular reduction of a stable lattice in a $\ol\Q_\ell[\Gamma \rtimes \langle\sigma\rangle]$-module $V$. To make this subrepresentation most useful, we need to have some information about the field of definition of $V$. We thank Santosh Nadimpalli for pointing out that the following statement is not obvious.

\begin{lemma}\label{lemma:stable-module-extension}
    Let $\Gamma$ be a finite group, let $\sigma$ be an automorphism of $\Gamma$ of order $\ell\neq 2$, let $K/\Q_\ell^{\unr}$ be a finite extension of degree prime to $\ell - 1$, and let $V_0$ be an absolutely irreducible $K[\Gamma]$-module whose isomorphism class is $\sigma$-stable. Then $V_0$ admits a $K[\Gamma \rtimes \langle\sigma\rangle]$-module structure extending the given $K[\Gamma]$-module structure.
\end{lemma}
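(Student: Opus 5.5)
Since ${}^\sigma V_0 \cong V_0$, we can choose a $K$-linear isomorphism $A\co V_0 \to V_0$ with $A(\gamma v) = \sigma(\gamma) A(v)$ for all $\gamma \in \Gamma$. Then $A^\ell$ commutes with the $\Gamma$-action. Since $V_0$ is absolutely irreducible, Schur's lemma gives $A^\ell = c \in K^\times$. Rescaling $A$ by $\lambda \in K^\times$ replaces $c$ by $\lambda^\ell c$. A $K[\Gamma \rtimes \langle\sigma\rangle]$-structure extending the $\Gamma$-action is exactly a choice of such an $A$ with $A^\ell = 1$. So the statement reduces to showing that $c \in (K^\times)^\ell$, i.e.\ that the obstruction class in $K^\times/(K^\times)^\ell$ vanishes.

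To show this, use the determinant. We have $\det(A)^\ell = \det(A^\ell) = c^{d}$ with $d = \dim V_0$. The first attempt is the case $\ell \nmid d$: writing $1 = ud + v\ell$, we get $c = (c^d)^u c^{v\ell} = (\det(A)^u c^v)^\ell$, so $c$ is an $\ell$-th power. This case is too narrow on its own, however. For instance, $\Gamma$ may contain an $\ell$-group with $\ell \mid d$, and $d$ need not be prime to $\ell$ in general. So a more robust argument is needed.

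The robust argument is to understand $K^\times/(K^\times)^\ell$ directly. The field $K$ is a finite extension of $\Q_\ell^{\unr}$, so it is a complete discretely valued field with algebraically closed residue field. Write $K^\times = \pi^{\Z} \times \cO_K^\times$. Units congruent to $1$ modulo a high enough power of $\pi$ are $\ell$-th powers by Hensel's lemma, and the residue field is perfect and algebraically closed, so roots of unity of prime-to-$\ell$ order are also $\ell$-th powers. Nevertheless, $K^\times/(K^\times)^\ell$ is nontrivial (for example $\pi$ itself, and the principal units), so an actual argument about $c$ is still required.

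This is where the hypotheses enter. Take the class $[c] \in \rH^2(\Z/\ell, K^\times) = K^\times/(K^\times)^\ell$ of the projective representation, which lifts to a genuine representation exactly when this class vanishes. By a standard argument, the class is killed by a finite-order piece. One can replace $A$ by a scalar multiple so that the whole group generated by $\Gamma$ and $A$ is finite modulo scalars, with a cocycle valued in the roots of unity $\mu_{|\Gamma|\ell}$; that is, restrict to an $\SL$-type normalization, since $\det$ of elements of $\Gamma$ are roots of unity. Then $c$ can be taken to be a root of unity of $\ell$-power order in $K$.

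The key step, and the main obstacle, is to show that such a $c$ is an $\ell$-th power in $K$. The field $\Q_\ell^{\unr}$ contains no $\mu_\ell$, because $[\Q_\ell^{\unr}(\mu_\ell):\Q_\ell^{\unr}] = \ell-1$ (totally ramified). The hypothesis that $[K:\Q_\ell^{\unr}]$ is prime to $\ell-1$, with $\ell \neq 2$, therefore forces $\mu_\ell \not\subset K$. Hence $K$ has no nontrivial $\ell$-power roots of unity, so $c = 1$, and $A$ (after normalization) satisfies $A^\ell = 1$. This defines the required action of $\sigma$.

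I expect the normalization reducing $c$ to an $\ell$-power root of unity to be the delicate point. The plan is to scale $A$ so that $\det A$ is a root of unity, which is possible if $\det A$ has an appropriate root in $K$. If that fails, one can pass to a finite extension of degree prime to $\ell$ and descend using the fact that restriction–corestriction is injective on $\ell$-torsion.
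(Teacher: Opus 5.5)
Your reduction is correct: the intertwiners implementing $\sigma$ are exactly the $\lambda A$, so an extension exists iff $c=A^\ell\in (K^\times)^\ell$. Your determinant argument also settles the case $\ell\nmid d$. The gap is the normalization step you flag as delicate, and for $\ell\mid d$ it is circular rather than merely delicate.

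Since $\mu_\ell\not\subset K$, $\ell$-th roots in $K$ are unique. So $\det(A)^\ell=c^d$ forces $\det A=c^{d/\ell}$, that is, $\det(\lambda A)=(\lambda^\ell c)^{d/\ell}$. Rescaling $A$ so that its determinant is a root of unity is therefore \emph{equivalent} to $c\in\mu_\infty(K)\cdot(K^\times)^\ell=(K^\times)^\ell$, which is the claim itself. The ``SL-type normalization'' needs a $d$-th root of $\det A$. When $\ell\mid d$ this typically requires an extension of degree divisible by $\ell$ (e.g.\ $K(\sqrt[\ell]{c})$), over which the obstruction vanishes for trivial reasons, so restriction--corestriction recovers nothing. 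Your argument uses the finiteness of $\Gamma\rtimes\langle\sigma\rangle$ only through determinants, and in this case the determinant carries no information about $c$.

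The missing idea is to use \emph{traces} of an honest extension over $\ol\Q_\ell$. Write $\rho$ for the $\Gamma$-action and choose $\beta\in\ol\Q_\ell$ with $\beta^\ell=c^{-1}$. Then $\beta A$ extends $V_0\otimes_K\ol\Q_\ell$ to a representation of the finite group $\Gamma\rtimes\langle\sigma\rangle$, of order $n$ say. Hence $\beta\Tr(A\rho(\gamma))\in\Q(\mu_n)$ for all $\gamma\in\Gamma$. By absolute irreducibility, $K[\Gamma]\to\End_K(V_0)$ is surjective, so $\Tr(A\rho(\gamma))\in K^\times$ for some $\gamma$. This gives $\beta\in K(\mu_n)$, so $K(\beta)/K$ is abelian. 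If $c\notin(K^\times)^\ell$, then $[K(\beta):K]=\ell$, and normality puts $\mu_\ell$ inside $K(\beta)$. But $[K(\mu_\ell):K]=\ell-1\ge 2$ by the degree hypothesis and $\ell\neq 2$, which does not divide $\ell$ — a contradiction.

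The paper uses the same input packaged differently. Character values of any extension lie in $K(\mu_n)\cap K(\mu_\ell,\sqrt[\ell]{c})=K(\mu_\ell)$; this holds because the metacyclic group $\Gal(K(\mu_\ell,\sqrt[\ell]{c})/K)$ has commutator subgroup $\Gal(K(\mu_\ell,\sqrt[\ell]{c})/K(\mu_\ell))$ when $\ell\neq 2$. Triviality of Brauer groups then gives an extension over $K(\mu_\ell)$. Finally one descends to $K$ by a conjugacy-of-complements argument for the action of $\Gal(K(\mu_\ell)/K)\ltimes\Z/\ell$ on the $\ell$ possible extensions, using that $[K(\mu_\ell):K]=\ell-1$ is prime to $\ell$.
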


\begin{proof}
    Let $V = (V_0)_{\ol\Q_\ell}$. Recall first that the Brauer group of any finite extension $L/K$ is trivial, so $(V_0)_L$ extends to an $L[\Gamma \rtimes \langle\sigma\rangle]$-module if and only if $V$ extends to a $\ol\Q_\ell[\Gamma \rtimes \langle\sigma\rangle]$-module whose character $\chi$ takes values in $L$. We will first show that these conditions hold for $L = K(\mu_\ell)$.
    
    If $n$ is the order of $\Gamma \rtimes \langle\sigma\rangle$, then any $\chi$ as above takes values in $K(\mu_n)$. Since the isomorphism class of $V_0$ is $\sigma$-stable, if $V_0^\sigma$ denotes the twist of $V_0$ by $\sigma$ then there exists a $\Gamma$-equivariant isomorphism $f\colon V_0 \to V_0^\sigma$. Note that $f^\ell$ is a $\Gamma$-equivariant automorphism of $V_0$, so by Schur's lemma there is some $c \in K^\times$ such that $f^\ell = c$. Thus $(V_0)_{K(\sqrt[\ell]{c})}$ admits an extension, and any $\chi$ as above takes values in $K(\mu_\ell, \sqrt[\ell]{c})$. But now
    \begin{equation}\label{eqn:intersection-of-field-extns}
    K(\mu_n) \cap K(\mu_\ell, \sqrt[\ell]{c}) = K(\mu_\ell).
    \end{equation}
    Indeed, if $L$ is the left hand side of \eqref{eqn:intersection-of-field-extns}, then $L$ is an abelian extension of $K$ containing $K(\mu_\ell)$. Since $\ell \neq 2$ and $K/\Q_\ell^{\unr}$ is of degree prime to $\ell-1$, the Galois group $\Gal(K(\mu_\ell, \sqrt[\ell]{c})/K)$ has derived group equal to $\Gal(K(\mu_\ell, \sqrt[\ell]{c})/K(\mu_\ell))$, and it follows that $L = K(\mu_\ell)$, as desired.

    We have now seen that $(V_0)_{K(\mu_\ell)}$ extends to a $K(\mu_\ell)[\Gamma \rtimes \langle\sigma\rangle]$-module. Let $\cE$ be the set of such extensions, so $\cE$ is of cardinality $\ell$. There is a natural action of $(\Z/\ell)^\times \ltimes \Z/\ell$ on $\cE$, where $(\Z/\ell)^\times \cong \Gal(K(\mu_\ell)/K)$ acts by the usual Galois action and $\Z/\ell$ acts through twisting by powers of a nontrivial character of $\langle\sigma\rangle$. The action of $1 \ltimes \Z/\ell$ on $\cE$ is simply transitive, so if $\wt V_0 \in \cE$ is a chosen extension then the stabilizer of $\wt V_0$ in $(\Z/\ell)^\times \ltimes \Z/\ell$ is a complement to $1 \ltimes \Z/\ell$, hence conjugate to $(\Z/\ell)^\times \ltimes 1$. But this means that $\wt V_0$ admits a character twist which is defined over $K$, as desired.
\end{proof}

\begin{remark}
    Lemma~\ref{lemma:stable-module-extension} can fail for $\ell = 2$. For example, if $\Gamma = \SL_3(\F_2)$ then $\Gamma$ admits a unique $6$-dimensional irreducible $\ol\Q_2$-representation $V$, and hence $V$ is defined over $K = \Q_2^{\unr}$. If $\sigma$ is the nontrivial automorphism of $\SL_3$ over $\F_2$ which preserves the standard pinning, then the isomorphism class of $V$ is necessarily $\sigma$-stable. However, $V$ does not admit an extension to a $K[\Gamma \rtimes \langle\sigma\rangle]$-module: to see this, let
    \[
    u = \begin{pmatrix}
        1&1& \\ &1& \\ &&1
    \end{pmatrix}
    \]
    The trace of $u$ on $V$ is $2$ and $u^2 = 1$, so $1$ occurs as an eigenvalue with multiplicity $4$, and $-1$ occurs with multiplicity $2$. Note that $(u \rtimes \sigma)^4$ is $\Gamma$-conjugate to $u$, so if $V_1$ is an extension of $V_{\ol\Q_2}$ to an $\ol\Q_2[\Gamma \rtimes \langle\sigma\rangle]$-module, then $u \rtimes \sigma$ has precisely two eigenvalues $a$ and $b$ on $V_1$ which are primitive $8$th roots of unity. If $V_2$ is the other extension of $V$ to an $\ol\Q_2[\Gamma \rtimes \langle\sigma\rangle]$-module, then the eigenvalues of $u \rtimes \sigma$ on $V_2$ which are primitive $8$th roots of unity are $-a$ and $-b$. Since $\Gal(\ol K/K)$ acts transitively on the primitive $8$th roots of unity, it follows that $\{a, b\} \neq \{-a, -b\}$, i.e., $a \neq -b$. But now $a + a^3 = \pm\sqrt{-2}$ and $a + a^7 = \pm\sqrt{2}$, and neither of these lies in $K(\sqrt{-1})$, so neither $V_1$ nor $V_2$ is defined over $K(\sqrt{-1})$, let alone $K$.
\end{remark}

\begin{thm}\label{thm:eigenvalues-and-jordan-blocks-variant}
    Let $\Gamma$ be a finite group, let $\ell$ be a prime number, let $\sigma$ be an automorphism of $\Gamma$ of order $\ell$, and let $V$ be a $\ol\Z_\ell[\Gamma \rtimes \langle\sigma\rangle]$-module which is finite free as a $\ol\Z_\ell$-module. Let $V_0, \ldots, V_{\ell-1}$ be the $\sigma$-eigenspaces of $V_{\ol \Q_\ell}$ corresponding to the $\ell$th roots of unity, in some order. For each $i$, write $\ol V_i = (V_i \cap V) \otimes_{\ol\Z_\ell}\ol\F_\ell$, and let $\chi_{\ol V_i}$ be its Brauer character as a $\Gamma^\sigma$-representation. Then we have
    \begin{equation}\label{eqn:Brauer-char-lower-bound}
    \chi_{\rT^a(\sigma, V_{\ol\F_\ell})} \geq \sup_{0 \leq i \leq \ell-1}\{\chi_{\ol V_i}\} - \inf_{0 \leq i \leq \ell-1}\{\chi_{\ol V_i}\} \quad \text{for both $a \in \Z/2\Z$.}
    \end{equation}
    In particular, if $V_{\ol\Q_\ell}$ is defined (as a $\ol\Q_\ell[\Gamma]$-module) over a finite extension $K/\Q_\ell^{\mathrm{unr}}$ of ramification degree prime to $\ell-1$ then
    \begin{equation}\label{eqn:Brauer-unramified-lower-bound}
    \chi_{\rT^a(\sigma, V_{\ol\F_\ell})} \geq |\gamma \mapsto \chi_{V_{\ol\Q_\ell}}(\gamma \rtimes \sigma)|.
    \end{equation}
\end{thm}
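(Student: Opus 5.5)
The plan is to bound $\ker(\sigma-1)$ on $V_{\ol\F_\ell}$ from below and $N_\sigma(V_{\ol\F_\ell})$ from above, using flags of saturated sublattices built from the eigenspaces $V_i$. Lemma~\ref{lem:semisimplifications-Tate-cohomology-coincide} then reduces \eqref{eqn:Brauer-char-lower-bound} to the case $a=0$.

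\textbf{Step 1: flags.} Fix an ordering $i_1,\dots,i_\ell$ of $\{0,\dots,\ell-1\}$ and set
\[
W_j \coloneqq V\cap(V_{i_1}\oplus\cdots\oplus V_{i_j}).
\]
Each $W_j$ is a saturated $\Gamma^\sigma\rtimes\langle\sigma\rangle$-stable sublattice, so $\ol W_j\coloneqq W_j\otimes\ol\F_\ell$ injects into $V_{\ol\F_\ell}$. This gives a $\sigma$-stable flag of length $\ell$ in $V_{\ol\F_\ell}$. On the graded piece $W_j/W_{j-1}$, $\sigma$ acts by a root of unity $\zeta^{i_j}\equiv 1$ modulo the maximal ideal, so $\sigma$ acts trivially on each graded piece of the reduced flag. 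Moreover $W_1=V\cap V_{i_1}$, so $\ol W_1=\ol V_{i_1}$.

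\textbf{Step 2: lower bound on invariants.} By Step 1, $\ol V_{i_1}=\ol W_1\subset\ker(\sigma-1\mid V_{\ol\F_\ell})$. Since $i_1$ is arbitrary, the Brauer character of $\ker(\sigma-1)$, as a $\Gamma^\sigma$-module, dominates each $\chi_{\ol V_i}$. Multiplicities of composition factors of a module dominate those of any submodule, so $\ker(\sigma-1)$ in fact dominates $\sup_i\chi_{\ol V_i}$.

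\textbf{Step 3: upper bound on norms.} By Lemma~\ref{lem:N_sigma-identity}, $N_\sigma=(\sigma-1)^{\ell-1}$. Each application of $\sigma-1$ lowers the flag filtration of Step 1 by one step. Hence $N_\sigma(V_{\ol\F_\ell})\subset\ol W_1=\ol V_{i_1}$ for every choice of $i_1$. Therefore $\chi_{N_\sigma(V_{\ol\F_\ell})}\le\chi_{\ol V_i}$ for all $i$, i.e.\ it is bounded by $\inf_i\chi_{\ol V_i}$. Since $\chi_{\rT^0}=\chi_{\ker(\sigma-1)}-\chi_{N_\sigma V}$, Steps 2 and 3 give \eqref{eqn:Brauer-char-lower-bound}.

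\textbf{Step 4: the unramified case.} Write $\zeta$ for a primitive $\ell$th root of unity and index so that $\sigma$ acts on $V_i$ by $\zeta^i$. Then
\[
\chi_{V_{\ol\Q_\ell}}(\gamma\rtimes\sigma)=\sum_i\zeta^i\chi_{V_i}(\gamma).
\]
The key claim is that $\chi_{V_1}=\cdots=\chi_{V_{\ell-1}}$. Granting this, $\sum_{i\ne0}\zeta^i=-1$ gives $\chi_V(\gamma\rtimes\sigma)=\chi_{V_0}-\chi_{V_1}$. Its reduction is $\chi_{\ol V_0}-\chi_{\ol V_1}$, whose absolute value is exactly $\sup-\inf$ over the two distinct values. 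To prove the claim, I would first use Lemma~\ref{lemma:tate-cohom-devissage}-style reductions (and semisimplicity in characteristic $0$) to work with $V_{\ol\Q_\ell}$ decomposed into $\Gamma\rtimes\langle\sigma\rangle$-irreducibles. For each $\sigma$-stable irreducible $\Gamma$-constituent defined over $K$, Lemma~\ref{lemma:stable-module-extension} lets me choose the $\sigma$-action over $K$ after a character twist of $\langle\sigma\rangle$; such a twist only permutes the labels of the $V_i$. Then $\Gal(K(\mu_\ell)/K)\cong(\Z/\ell)^\times$, which holds because the ramification degree is prime to $\ell-1$, permutes $V_1,\dots,V_{\ell-1}$ transitively. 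Since the $\Gamma^\sigma$-characters are $\Q_\ell^{\unr}$-rational in the relevant sense, these $V_i$ share a common character. Non-$\sigma$-stable constituents come in induced packets on which $\gamma\rtimes\sigma$ has trace $0$ and all $V_i$ coincide.

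\textbf{Main obstacle.} Step 4 is the hard part. It requires making the field-of-definition bookkeeping precise: the given $\sigma$-structure on $V$ need not be the $K$-rational one, and when $\ell=2$ the hypothesis degenerates and must be checked separately (there is only one nontrivial eigenspace, so the claim is trivial). Steps 1–3 are formal.
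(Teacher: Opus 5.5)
Your proposal is correct and follows the paper's proof. Steps 1--3 are exactly its argument for \eqref{eqn:Brauer-char-lower-bound}: reduce to $a=0$, then for an arbitrary ordering of the eigenspaces the saturated flag has $\sigma$ trivial on graded pieces, so $N_\sigma=(\sigma-1)^{\ell-1}$ lands in $\ol V_{i_1}\subset\ker(\sigma-1)$. Step 4 is its deduction of \eqref{eqn:Brauer-unramified-lower-bound} via Lemma~\ref{lemma:stable-module-extension} and the transitive action of $\Gal(K(\mu_\ell)/K)\cong(\Z/\ell)^\times$ on the nontrivial eigenspaces, which gives $\chi_{\ol V_1}=\cdots=\chi_{\ol V_{\ell-1}}$ and trace $\chi_{\ol V_0}-\chi_{\ol V_1}$.

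The caveat you flag is genuine, and the paper glosses over it too. Independent twists on different constituents do not merely relabel the eigenspaces of the whole module, so \eqref{eqn:Brauer-unramified-lower-bound} should be read for the $K$-rational $\sigma$-action reached by a single global twist, which changes neither side of \eqref{eqn:Brauer-char-lower-bound}; such a twist is available, for instance, when $V_{\ol\Q_\ell}$ is irreducible.
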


\begin{proof}
    Since the semisimplifications of $\rT^0$ and $\rT^1$ are isomorphic as $\Gamma^\sigma$-representations, it is sufficient to consider the case $a = 0$. Note that $\sigma$ stabilizes the flag 
    \[
    V_0 \subset V_0 \oplus V_1 \subset \cdots \subset \bigoplus_{i=0}^{\ell-1} V_i = V_{\ol\Q_\ell}.
    \]
    Let 
    \[
    0 = W_{-1} \subset W_0 \subset W_1 \subset \cdots \subset W_{\ell-1} = V_{\ol\F_\ell}
    \]
    be the induced flag obtained by setting $W_i = ((\bigoplus_{j=0}^i V_j) \cap V) \otimes_{\ol\Z_\ell}\ol\F_\ell$. Since $\sigma-1$ acts by $\zeta_i - 1 \in \ol\Z_\ell$ on $V_i \cong \bigoplus_{j=0}^i V_j/\bigoplus_{j=0}^{i-1} V_j$, it also acts by $\zeta_i-1$ on the lattice 
    \[
    \frac{(\bigoplus_{j=0}^i V_j) \cap V }{(\bigoplus_{j=0}^{i-1} V_j)  \cap V } \inj \frac{\bigoplus_{j=0}^i V_j}{\bigoplus_{j=0}^{i-1} V_j} \cong V_i,
    \]
    hence $\sigma-1$ annihilates $W_i/W_{i-1}$. Using Lemma~\ref{lem:N_sigma-identity}, we deduce that $N_\sigma(V_{\ol\F_\ell}) \subset W_0$, so $\chi_{N_\sigma(V_{\ol\F_\ell})} \leq \chi_{\ol V_0}$. By symmetry, we have $\chi_{N_\sigma(V_{\ol\F_\ell})} \leq \chi_{\ol V_i}$ for all $i$, i.e., 
    \begin{equation}\label{eqn:Brauer-norm-upper-bound}
    \chi_{N_\sigma(V_{\ol\F_\ell})} \leq \inf_{0 \leq i \leq \ell-1}\{\chi_{\ol V_i}\}.
    \end{equation}
    Note also that $\chi_{V_{\ol\F_\ell}^\sigma} \geq \chi_{\ol V_0}$, so by the same argument
    \begin{equation}\label{eqn:fixed-point-lower-bound}
    \chi_{V_{\ol\F_\ell}^\sigma} \geq \sup_{0 \leq i \leq \ell-1}\{\chi_{\ol V_i}\}.
    \end{equation}
    Combining \eqref{eqn:Brauer-norm-upper-bound} and \eqref{eqn:fixed-point-lower-bound} with the definition of $\rT^0(\sigma, V)$ yields \eqref{eqn:Brauer-char-lower-bound} in the case $a = 0$.
        
    For \eqref{eqn:Brauer-unramified-lower-bound}, suppose that $V_{\ol\Q_\ell} = U \otimes_K \ol\Q_\ell$ for a $K[\Gamma]$-module $U$. By Lemma~\ref{lemma:stable-module-extension}, if $\ell \neq 2$ then we may take $U$ to be a $K[\Gamma \rtimes \langle\sigma\rangle]$-module; if $\ell = 2$, then the condition on $K$ is vacuous and we may simply increase $K$ if needed to assume the same. Without loss of generality, assume that $\sigma$ acts on $V_0$ with eigenvalue $1$. 
    It follows that the $V_i$, $i \neq 0$, are permuted transitively by $\sigma$, so $\ol V_i \cong \ol V_j$ for $i, j \neq 0$. Thus the right side of \eqref{eqn:Brauer-char-lower-bound} is equal to $|\chi_{\ol V_0} - \chi_{\ol V_1}|$. On the other hand, if $\zeta_i$ is the eigenvalue by which $\sigma$ acts on $V_i$, then for $\gamma \in \Gamma^\sigma_{\ell'}$ we have
    \[
    \chi_{V_{\ol\Q_\ell}}(\gamma \rtimes \sigma) = \sum_{i=0}^{\ell-1} \zeta_i \chi_{\ol V_i}(\gamma) = \chi_{\ol V_0}(\gamma) + \left(\sum_{i=1}^{\ell-1} \zeta_i\right) \chi_{\ol V_1}(\gamma) = \chi_{\ol V_0}(\gamma) - \chi_{\ol V_1}(\gamma).
    \]
    Combining these two observations yields \eqref{eqn:Brauer-unramified-lower-bound}.
\end{proof}

\begin{remark}\label{remark:strict-inequality}
    The inequality in \eqref{eqn:Brauer-unramified-lower-bound} can be strict. For example, let $\Gamma = S_3$, let $\ell = 3$, and let $\sigma$ be the automorphism of $S_3$ induced by conjugation by a $3$-cycle in $\Gamma$, so $\Gamma^\sigma \cong \Z/3$. Let $V$ be a $\ol\Z_3[\Gamma]$-module such that $V_{\ol\Q_3}$ is an irreducible $2$-dimensional representation of $\Gamma$ and $V_{\ol\F_3}$ is a semisimple representation of $\Gamma$. Then $\sigma$ acts trivially on $V_{\ol\F_3}$, so $\rT^a(\sigma, V_{\ol\F_3}) \cong V_{\ol\F_3}$ as $\Gamma^\sigma$-representations, which is $2$-dimensional with trivial action. However, if $\zeta_3$ is a primitive cube root of unity and $V_i$ is the $\zeta_3^i$-eigenspace for $\sigma$ on $V_{\ol\Q_3}$, then $\chi_{\ol V_0} = 0$ and $\chi_{\ol V_1} = \chi_{\ol V_2}$ is the character of the $1$-dimensional trivial representation. On the other hand, there does exist a $\Gamma$-stable lattice $U$ in $V_{\ol\Q_3}$ such that $\rT^a(\sigma, U_{\ol\F_3})$ is $1$-dimensional and thus realizes the lower bound of Theorem~\ref{thm:eigenvalues-and-jordan-blocks-variant}.
    
    We are not aware of examples in which $\rT^a(\sigma, V_{\ol\F_\ell})$ admits an irreducible subquotient whose existence is not already implied by \eqref{eqn:Brauer-unramified-lower-bound}.
\end{remark}

\subsection{Shintani descent}\label{sss:shintani}
In this section, we show that Tate cohomology ``(partially) realizes the Frobenius twist of Shintani descent mod $\ell$''. This result will not be used in the remainder of this paper; we include it mainly because it generalizes (with a weaker conclusion) the later Corollary~\ref{cor:glauberman-correspondence} and is of independent interest.\footnote{If suitably extended to paraductive $\F_q$-group schemes, this result should also give rise to results on (small degree) base change functoriality. Since such results are not necessary for our purposes and may require some work to optimize, we do not pursue them here.}

We first recall some notation on Shintani descent, for which \cite{Kaw87} is a good reference. Let $\ol G$ be a connected linear algebraic $\F_q$-group, and let $m$ be a positive integer. Let $\sim_m$ denote the equivalence relation on $\ol G(\F_{q^m})$ induced by the twisted conjugation action $g \cdot h = gh\Fr_q(g)^{-1}$, and let $\sim$ denote the equivalence relation on $\ol G(\F_q)$ induced by conjugation. We define a map $n_m\co \ol G(\F_{q^m})/{\sim_m} \to \ol G(\F_q)/{\sim}$ by
\[
n_m(\alpha^{-1}\Fr_q(\alpha)) = \Fr_q^m(\alpha)\alpha^{-1}
\]
whenever $\alpha \in \ol G(\ol\F_q)$ satisfies $\alpha^{-1}\Fr_q(\alpha) \in \ol G(\F_{q^m})$; by Lang's theorem, this is enough to define $n_m$. The map $n_m$ is easily seen to be a (well-defined) bijection. The special case $m = 1$ is still of interest, and we write $t = n_1$.

As in \cite[1.2]{Kaw87}, for an element $x \in \ol G(\F_q)$ let $\ord(\ol x)$ denote the order of the image of $x$ in $\pi_0Z_{\ol G}(x)$, and let $M = M_{\ol G}$ be the least common multiple of $\ord(\ol x)$, as $x$ ranges over elements of $\ol G(\F_q)$. We will assume for simplicity that $m$ and $M$ are relatively prime. Let $r \in \Z$ be such that $rm \equiv 1 \pmod{M}$, and define $N_m\co \ol G(\F_{q^m})/\sim_m \to \ol G(\F_q)/\sim$ by $N_m = t^{-r} \circ n_m$; notably, \cite[(1.2.6)]{Kaw87} and \cite[Proposition 3.11]{Dig86b} show that
\begin{equation}\label{eqn:restriction-of-shintani-power}
N_m|_{\ol G(\F_q)} = [m]|_{\ol G(\F_q)}
\end{equation}
where $[m]\co \ol G(\F_q) \to \ol G(\F_q)$ is the conjugation-equivariant map $[m](x) = x^m$.

Let $\chi$ be the character of an irreducible $\ol\Q_\ell$-representation $V$ of $\ol G(\F_{q^m})$ whose isomorphism class is $\Fr_q$-stable. We will define a class function $\chi_0$ on $\ol G(\F_q)$ up to multiplication by an $m$th root of unity, called a \textit{Shintani descent} of $\chi$, as follows. First choose an extension $\widetilde\chi$ of $\chi$ to $\ol G(\F_{q^m}) \rtimes \langle\Fr_q\rangle$, where we regard $\Fr_q$ as an order $m$ automorphism of $\ol G(\F_{q^m})$. Note that $\wt\chi$ is unique up to multiplication by an $m$th root of unity. Define the class function $\chi_0$ on $\ol G(\F_q)$ by
\begin{equation}\label{eqn:shintani-descent-relation}
\chi_0(N_m(g)) = \widetilde{\chi}(g \rtimes \Fr_q).
\end{equation}
Observe that $\chi_0$ is only well-defined up to multiplication by an $m$th root of unity.

Now suppose that $m = \ell \neq p$ is a prime number. We are interested in the mod $\ell$ reduction of $\chi_0$, i.e., the restriction of $\chi_0$ to the elements of $\ol G(\F_q)$ of order prime to $\ell$. Let $\ol G(\F_q)_{\ell'} \subset \ol G(\F_q)$ denote the subset of elements of order prime to $\ell$.

\begin{prop}\label{prop:tate-cohomology-shintani-descent}
    Suppose $\ell \nmid M_{\ol G}$ as above. Let $U$ be an irreducible $\ol\F_\ell$-representation of $\ol G(\F_q)$ with Brauer character $\eta$, and suppose that $\eta$ occurs with nonzero coefficient in the expansion of $\chi_0 \circ [\ell]|_{\ol G(\F_q)_{\ell'}}$ in the basis of irreducible Brauer characters. Then $U$ is isomorphic to an irreducible subquotient of $\rT^a(\Fr_q, \ol V)$ for both $a \in \Z/2\Z$.
\end{prop}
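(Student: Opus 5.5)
The plan is to reduce to Theorem~\ref{thm:eigenvalues-and-jordan-blocks-variant}, specifically \eqref{eqn:Brauer-unramified-lower-bound}, applied with $\Gamma = \ol G(\F_{q^\ell})$ and $\sigma = \Fr_q$, which acts on $\Gamma$ with order $\ell$. Then $\Gamma^\sigma = \ol G(\F_q)$. Here $\ol V$ is the reduction of a $\ol G(\F_{q^\ell})\rtimes\langle\Fr_q\rangle$-stable $\ol\Z_\ell$-lattice $\Lambda$ in an extension $\wt V$ of $V$. Such a lattice exists because the group is finite. The field-of-definition hypothesis (over $\Q_\ell^{\unr}$) is the one in Corollary~\ref{cor:intro-tate-lower-bounds}(1), and I will assume it. The theorem then gives
\[
\chi_{\rT^a(\Fr_q,\ol V)} \geq \bigl|\gamma \mapsto \wt\chi(\gamma\rtimes\Fr_q)\bigr|
\]
as Brauer characters of $\ol G(\F_q)$, where $\gamma$ ranges over $\ol G(\F_q)_{\ell'}$.

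The remaining step is to identify the class function $\gamma\mapsto \wt\chi(\gamma\rtimes \Fr_q)$ on $\ol G(\F_q)_{\ell'}$ with $\chi_0\circ[\ell]$. By \eqref{eqn:shintani-descent-relation}, $\wt\chi(\gamma\rtimes\Fr_q) = \chi_0(N_\ell(\gamma))$. For $\gamma\in \ol G(\F_q)$, \eqref{eqn:restriction-of-shintani-power} gives $N_\ell(\gamma) = \gamma^\ell$, using $\ell\nmid M_{\ol G}$ so that $N_\ell$ is defined. Hence the function equals $\chi_0\circ[\ell]$ restricted to $\ol G(\F_q)_{\ell'}$, which is the same ambiguity up to an $\ell$th root of unity as in the definition of $\chi_0$. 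That ambiguity is harmless: replacing $\wt\chi$ by a twist multiplies by $\zeta^{i}$. I need the restriction to $\ell'$-elements to be a genuine virtual Brauer character, an integral combination of irreducible Brauer characters. This follows from the proof of Theorem~\ref{thm:eigenvalues-and-jordan-blocks-variant}, where it equals $\chi_{\ol V_0}-\chi_{\ol V_1}$ for the extension normalized so that the eigenspaces behave as there. A different root-of-unity normalization only cyclically relabels the $V_i$, and the $\sup-\inf$ bound \eqref{eqn:Brauer-char-lower-bound} is symmetric. So the absolute value of the virtual Brauer character $\chi_0\circ[\ell]|_{\ol G(\F_q)_{\ell'}}$ is bounded above by $\chi_{\rT^a}$.

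Finally, if $\eta$ has nonzero coefficient $c$ in this expansion, then $\chi_{\rT^a(\Fr_q,\ol V)}\geq |c|\eta\geq \eta$. Since Brauer characters determine semisimplifications, $U$ is an irreducible subquotient of $\rT^a$ for $a=0$. The case $a=1$ follows from Lemma~\ref{lem:semisimplifications-Tate-cohomology-coincide}.

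I expect the main obstacle to be the bookkeeping between the normalization of $\wt\chi$ and the claim that $\chi_0\circ[\ell]$ restricted to $\ell'$-elements has integral Brauer expansion. This involves the field-of-definition input (Lemma~\ref{lemma:stable-module-extension}) and the case $\ell=2$. I would handle it by choosing the extension over $K$ as in the theorem's proof, so that $\sum_{i\ge1}\zeta_i=-1$ makes the character values visibly equal to $\chi_{\ol V_0}-\chi_{\ol V_1}$.
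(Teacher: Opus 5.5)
Your argument proves only a special case: it assumes that $V$ is defined over $\Q_\ell^{\unr}$, and the proposition makes no such hypothesis. Neither the setup of \S\ref{sss:shintani} nor the statement asks for it. That assumption belongs to Corollary~\ref{cor:intro-tate-lower-bounds}(1), which draws a stronger, multiplicity-level conclusion.

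Without the assumption, the route through \eqref{eqn:Brauer-unramified-lower-bound} fails. The reductions $\ol V_1,\dots,\ol V_{\ell-1}$ of the nontrivial eigenspaces need not have equal Brauer characters. So the coefficients of $\chi_0\circ[\ell]|_{\ol G(\F_q)_{\ell'}}=\sum_i\zeta_i\chi_{\ol V_i}$ in the basis $\{\eta_j\}$ lie in $\Z[\zeta_\ell]$ and need not be a root of unity times an integer; for example, $2+\zeta_3$ can occur. In that case the absolute value in \eqref{eqn:Brauer-unramified-lower-bound} is not even defined. Your remark that renormalizing $\wt\chi$ only cyclically relabels the $V_i$ does not address this problem.

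The missing idea is to skip integrality entirely and apply the general bound \eqref{eqn:Brauer-char-lower-bound} directly, as the paper does.
\begin{itemize}
\item Write $\chi_{\ol V_i}=\sum_j c_{ij}\eta_j$ with $c_{ij}\in\Z_{\geq 0}$. Then the coefficient of $\eta_j$ in $\chi_0\circ[\ell]$ is $\sum_{i=0}^{\ell-1}c_{ij}\zeta_i$.
\item The $\zeta_i$ run over all $\ell$th roots of unity. Since $\Phi_\ell$ is irreducible of degree $\ell-1$, the only $\Q$-linear relation among them is that their sum is $0$.
\item Hence this coefficient is nonzero exactly when the $c_{ij}$ are not all equal, that is, when $\max_i c_{ij}-\min_i c_{ij}\geq 1$.
\item In that case $\eta_j$ occurs in $\sup_i\{\chi_{\ol V_i}\}-\inf_i\{\chi_{\ol V_i}\}$, and therefore in $\chi_{\rT^a(\Fr_q,\ol V)}$ by \eqref{eqn:Brauer-char-lower-bound}. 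No field-of-definition input is needed.
\end{itemize}
The rest of your setup is fine: the stable lattice, the identification $\wt\chi(\gamma\rtimes\Fr_q)=\chi_0(\gamma^\ell)$ via \eqref{eqn:shintani-descent-relation} and \eqref{eqn:restriction-of-shintani-power}, and the passage from $a=0$ to $a=1$ via Lemma~\ref{lem:semisimplifications-Tate-cohomology-coincide}.
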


\begin{proof}
    By \eqref{eqn:restriction-of-shintani-power}, we have $N_\ell|_{\ol G(\F_q)_{\ell'}} = [\ell]|_{\ol G(\F_q)_{\ell'}}$, so $N_\ell$ induces a bijection $\ol G(\F_q)_{\ell'}/{\sim} \to \ol G(\F_q)_{\ell'}/{\sim}$. If $V_0, \dots, V_{\ell-1}$ are the eigenspaces for the action of $\Fr_q$ on $V$ corresponding to the $\ell$th roots of unity $\zeta_0, \dots, \zeta_{\ell-1}$, then \eqref{eqn:shintani-descent-relation} shows that we have
    \begin{equation}\label{eqn:shintani-descent-relation-2}
    \chi_0 \circ [\ell] = \sum_{i=0}^{\ell-1} \zeta_i\chi_{\ol V_i}.
    \end{equation}
    For each $i$, write $\chi_{\ol V_i} = \sum_{j=1}^n c_{ij}\eta_j$ for $c_{ij} \in \Z_{\geq 0}$, where $\eta_1, \dots, \eta_n$ are the irreducible Brauer characters. Let $d_j = \max_{0 \leq i \leq \ell-1} c_{ij} - \min_{0 \leq i \leq \ell-1} c_{ij}$, so that
    \begin{equation}\label{eqn:shintani-descent-sup-inf}
    \sup_{0 \leq i \leq \ell-1}\{\chi_{\ol V_i}\} - \inf_{0 \leq i \leq \ell-1}\{\chi_{\ol V_i}\} = \sum_{j=1}^n d_j \eta_j.
    \end{equation}
    By \eqref{eqn:shintani-descent-relation-2}, we have
    \[
    \chi_0 \circ [\ell]|_{\ol G(\F_q)_{\ell'}} = \sum_{j=1}^{n} \left(\sum_{i=0}^{\ell-1} c_{ij}\zeta_i\right)\eta_j.
    \]
    For a fixed $j$, the sum $\sum_{i=0}^{\ell-1} c_{ij}\zeta_i$ vanishes if and only if $c_{ij}$ is independent of $i$, i.e., $d_j = 0$. Thus if $\eta_j$ occurs with nonzero coefficient in $\chi_0 \circ [\ell]$ then $d_j \neq 0$, so $\eta_j \leq \chi_{\rT^a(\Fr_q, \ol V)}$ by \eqref{eqn:shintani-descent-sup-inf} and Theorem~\ref{thm:eigenvalues-and-jordan-blocks-variant}.
\end{proof}


\subsection{Lusztig restriction}

In this subsection we use Theorem~\ref{thm:eigenvalues-and-jordan-blocks-variant} to show that Lusztig restriction provides a lower bound for Tate cohomology in a precise sense. Let $\ol G$ be a paraductive $\F_q$-group scheme (in the sense of Definition~\ref{def:paraductive}). We will use the notation and terminology of that section.

\begin{prop}\label{prop:tate-cohom-dl-restriction}
    Suppose that $[\ol G(\F_q): \ol G^\circ(\F_q) \cdot Z(\ol G)(\F_q)]$ is prime to $\ell$. Let $\ol T \subset \ol L \subset \ol G$ be twisted Levi subgroups such that $\ol T$ is a generalized maximal torus, and let $\theta\co \ol T(\F_q) \to \ol\Q_\ell^\times$ be a character of order prime to $\ell$. Let $\chi$ be an irreducible character of $\ol G(\F_q)$ which has nonzero pairing with $R_{\ol T}^{\ol G}(\theta)$, and let $s \in \ol T(\F_q)$ be an order $\ell$ element such that $Z_{\ol G^\circ}(s) \subset \ol L \subset Z_{\ol G}(s)$. 
    \begin{enumerate}
        \item For all $g \in \ol L(\F_q)$ of order prime to $\ell$ we have
        \[
        \chi(sg) = {}^*R^{\ol G}_{\ol L}(\chi)(g).
        \]
        \item If $\sigma$ is the $\F_q$-automorphism of $\ol G$ induced by $s$-conjugation and $\chi$ is defined over a finite extension of $\Q_\ell^{\unr}$ of degree prime to $\ell-1$, then the Brauer character of $\rT^i(\sigma, \chi)$ admits $\left|\ol{{}^*R^{\ol G}_{\ol L}(\chi)}\right|$ as a lower bound.
        \item If $\ell$ is a good prime for $\ol G^\circ$, then $\left|\ol{{}^*R^{\ol G}_{\ol L}(\chi)}\right| \neq 0$.
    \end{enumerate}
\end{prop}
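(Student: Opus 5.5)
The plan is to reduce to the case that $\ol G$ is of finite type, and then prove the three parts in order, each one feeding the next. For the reduction, after twisting $\chi$ by a character of $\ol G(\F_q)/\ol G^\circ(\F_q)$, I would pick a torsion-free constant central subgroup $\ol Z_0$ on which $\chi$ and $\theta$ are trivial, as in the proof of Lemma~\ref{lemma:dl-restriction}. Passing to $\ol G/\ol Z_0$ changes neither $s$ (it has order $\ell$ and $\ol Z_0(\F_q)$ is torsion-free), nor the Lusztig restriction, nor the Tate cohomology.

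For (1), I would combine two identities. First, for $g \in \ol L(\F_q)$ of order prime to $\ell$ with Jordan decomposition $g = tu$, the semisimple part of $sg$ is $st$, since $s$ commutes with $g$. Then $Z_{\ol G^\circ}(st) \subset Z_{\ol G^\circ}(s) \subset \ol L$, so Lemma~\ref{lemma:dl-restriction-character} gives $\chi(sg) = ({}^*R^{\ol G}_{\ol L}\chi)(sg)$. Second, I would show that $s$ acts trivially on every irreducible constituent $\rho$ of ${}^*R^{\ol G}_{\ol L}(\chi)$. Since $\ol L \subset Z_{\ol G}(s)$, the element $s$ is central in $\ol L(\F_q)$, so it acts on $\rho$ by a scalar. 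By Corollary~\ref{cor:bonnafe-11.11}, $\rho \in \cE(\ol L,[\ol T',\theta'])$ with $\cE(\ol G,[\ol T',\theta'])=\cE(\ol G,[\ol T,\theta])$. Central elements of $\ol L$ lie in $\ol T'$ and act on constituents of $R^{\ol L}_{\ol T'}(\theta')$ through $\theta'$. Moreover $\theta'$ has order prime to $\ell$, because the corresponding semisimple dual elements are conjugate to those of $\theta$. Hence $\theta'(s)$ has order dividing both $\ell$ and a prime-to-$\ell$ number, so $\theta'(s) = 1$. Together these give $({}^*R\chi)(sg) = ({}^*R\chi)(g)$, which is (1).

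For (2), because $\sigma$ is inner, the representation underlying $\chi$ extends to $\ol G(\F_q)\rtimes\langle\sigma\rangle$ by letting $\sigma$ act through $s$. The trace of $\gamma\rtimes\sigma$ is then $\chi(\gamma s)=\chi(s\gamma)$. Next I would choose a $\ol G(\F_q)\rtimes\langle\sigma\rangle$-stable $\ol\Z_\ell$-lattice and apply \eqref{eqn:Brauer-unramified-lower-bound} of Theorem~\ref{thm:eigenvalues-and-jordan-blocks-variant}; the field-of-definition hypothesis is exactly what that inequality needs. This bounds the Brauer character of $\rT^i(\sigma,\chi)$, as a representation of $\Gamma^\sigma \supset \ol L(\F_q)$, from below by the absolute value of $\gamma\mapsto\chi(s\gamma)$ on $\ell'$-elements. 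By (1), that function is the Brauer character $\ol{{}^*R^{\ol G}_{\ol L}(\chi)}$ on $\ol L(\F_q)$. The hypothesis that $[\ol G(\F_q):\ol G^\circ(\F_q)Z(\ol G)(\F_q)]$ is prime to $\ell$ would be used here, via Clifford theory, to compare restrictions between $\Gamma^\sigma=Z_{\ol G}(s)(\F_q)$ and $\ol L(\F_q)$ without losing constituents. This step is also where the argument is finite-group theoretic, which is why the finite-type reduction is needed.

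For (3), I would first show that ${}^*R^{\ol G}_{\ol L}(\chi) \ne 0$ in characteristic $0$. By transitivity of Lusztig induction and adjunction,
\[
\langle R^{\ol L}_{\ol T}\theta,\ {}^*R^{\ol G}_{\ol L}\chi\rangle = \langle R^{\ol G}_{\ol T}\theta,\ \chi\rangle \ne 0 .
\]
By the argument in (1), this virtual character is moreover a $\Z$-combination of characters lying in semi-rational Lusztig series attached to prime-to-$\ell$ data. It then suffices to know that the $\ell$-decomposition map is injective on such combinations. For connected $\ol L^\circ$ with $\ell$ good, this is the Geck--Hiss basic set theorem: $\cE(\ol L^\circ,[t])$ for $\ell'$-elements $t$ is a basic set of the corresponding union of $\ell$-blocks. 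I would transfer this to $\ol L$ through Clifford theory, again using the prime-to-$\ell$ index.

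I expect (3) to be the main obstacle. The Geck--Hiss result carries side conditions, for instance on $\ell$ relative to $Z(\ol L^\circ)$, and extending injectivity of decomposition maps to disconnected $\ol L$ needs care. If that route fails, I would instead argue with central characters: restrict to $\ol L^\circ(\F_q)$, and separate the nonzero components of the virtual character by their $\ell'$-central characters together with Lusztig series, using \cite{BM89}.
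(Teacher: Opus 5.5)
Your arguments for (1) and (2) are essentially the paper's. You apply Lemma~\ref{lemma:dl-restriction-character} at $sg$, show that $s$ acts trivially on every constituent of ${}^*R^{\ol G}_{\ol L}(\chi)$ using compatibility with semi-rational series, and then apply Theorem~\ref{thm:eigenvalues-and-jordan-blocks-variant} with $\sigma$ acting through $s$. However, the index hypothesis belongs in (1), not (2).

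In (1), conjugacy of the dual semisimple elements only shows that $\theta'|_{\ol T'^\circ(\F_q)}$ has $\ell'$-order, and $s$ need not lie in $\ol T'^\circ$. You must also use two further facts:
\begin{itemize}
\item $\theta'$ and $\theta$ agree on $Z(\ol G)(\F_q)$; this is built into the semi-rational series.
\item $\ol T'(\F_q)/\ol T'^\circ(\F_q)Z(\ol G)(\F_q)$ injects into $\ol G(\F_q)/\ol G^\circ(\F_q)Z(\ol G)(\F_q)$, because $\ol T'\cap\ol G^\circ=\ol T'^\circ$. Hence it is an $\ell'$-group.
\end{itemize}
Together these give that $\theta'$ has $\ell'$-order, so $\theta'(s)=1$. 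This is exactly where the paper uses the hypothesis.

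In (2), by contrast, no Clifford theory is needed. For a virtual Brauer character $\phi$ of $Z_{\ol G}(s)(\F_q)$, the restriction of $|\phi|$ to $\ol L(\F_q)$ dominates the absolute value of $\phi|_{\ol L(\F_q)}$ coefficientwise, because restrictions of irreducible Brauer characters are genuine Brauer characters.

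For (3) you take a different route from the paper, and it has a gap exactly at the step you flagged. The paper notes that the constituents of $\chi|_{\ol G^\circ(\F_q)}$ lie in Lusztig series of $\ell'$-elements of the dual group, and then quotes \cite[Theorem~1.7]{CE99} for the nonvanishing directly.

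Your first step is fine: $\langle R^{\ol L}_{\ol T}\theta,{}^*R^{\ol G}_{\ol L}\chi\rangle=\langle R^{\ol G}_{\ol T}\theta,\chi\rangle\neq 0$ by transitivity and adjunction. But injectivity of the $\ell$-decomposition map on $\Z\cE(\ol L,\ell')$ is the whole content, and the Geck--Hiss theorem does not supply it under your hypotheses. That theorem additionally assumes $\ell\nmid|(Z(\ol L^\circ)/Z(\ol L^\circ)^\circ)(\F_q)|$, which goodness of $\ell$ does not give, already for Levi subgroups of $\SL_n$. Without that assumption the $\ell'$-series need not form a basic set: for $\SL_2(\F_q)$ with $q$ odd and $\ell=2$, there is one fewer such character than there are $2$-regular classes. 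So you would need a separate linear-independence statement valid for every good $\ell$.

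You would also have to carry out the passage from $\ol L^\circ(\F_q)$ to $\ol L(\F_q)$. There, both the $\ell'$-index and the fixed $\ell'$ central character on $Z(\ol G)(\F_q)$ are needed to rule out cancellations such as $\eta-\eta\lambda$, where $\lambda$ is a character of the component group of $\ell$-power order. The fallback via \cite{BM89} is not yet an argument. As written, (3) is not established by your proposal; the paper closes it by importing the nonvanishing from \cite{CE99}.
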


\begin{proof}
    We may twist by a character and pass to a central quotient of $\ol G$ as usual to assume that $\ol G$ is of finite type and thus $\ol G(\F_q)$ is finite. Observe that since $s$ is of order $\ell$ and $g$ is a commuting element of order prime to $\ell$, it follows that $s$ is a power of the semisimple part $t$ of $sg$, and in particular $Z_{\ol G^\circ}(t) \subset \ol L^\circ$. By Lemma~\ref{lemma:dl-restriction-character}, it follows that
    \begin{equation}\label{eqn:char-equals-dl-restriction}
    \chi(sg) = {}^*R^{\ol G}_{\ol L}(\chi)(sg).
    \end{equation}
    Let $\eta$ be an irreducible character of $\ol L(\F_q)$ with nonzero pairing with ${}^*R^{\ol G}_{\ol L}(\chi)$. Let $\ol T'$ be a generalized maximal torus of $\ol L$, and let $\theta'\co \ol T'(\F_q) \to \ol\Q_\ell^\times$ be a character such that $\eta$ has nonzero pairing with $R_{\ol T'}^{\ol L}(\theta')$; such a pair $(\ol T', \theta')$ exists by Lemma~\ref{lemma:dl-7.5}. Proposition~\ref{prop:bonnafe-11.10} shows that $(\ol T', \theta')$ is geometrically conjugate to $(\ol T, \theta)$ when considered as pairs arising from $\ol G$, and the restrictions of $\theta$ and $\theta'$ to $Z(\ol G)(\F_q)$ are equal. Since $[\ol G(\F_q): \ol G^\circ(\F_q) \cdot Z(\ol G)(\F_q)]$ is prime to $\ell$ and $\theta$ is of order prime to $\ell$, it follows that $\theta'$ is of order prime to $\ell$. But $s$ is central in $\ol L(\F_q)$ and $R_{\ol T'}^{\ol L}(\theta')|_{Z(\ol L)(\F_q)} = \theta'|_{Z(\ol L)(\F_q)}$, so we have $\eta(sg) = \theta'(s)\eta(g) = \eta(g)$ for all $g \in \ol L(\F_q)$. Since this equality holds for every such $\eta$, we conclude that
    \[
    {}^*R^{\ol G}_{\ol L}(\chi)(sg) = {}^*R^{\ol G}_{\ol L}(\chi)(g),
    \]
    which combines with \eqref{eqn:char-equals-dl-restriction} to yield (1). Statement (2) follows directly from Theorem~\ref{thm:eigenvalues-and-jordan-blocks-variant}.

    For (3), note that our hypotheses imply that every irreducible constituent of $\chi|_{\ol G^\circ(\F_q)}$ is an irreducible constituent of the Deligne--Lusztig induction of some pair $(\ol T_0, \theta_0)$ corresponding to an element of the Deligne--Lusztig dual group of $\ol G^\circ$ which is of order prime to $\ell$. Thus the claim follows from \cite[Theorem~1.7]{CE99}.
\end{proof}

We next note that Tate cohomology carries cuspidal representations to cuspidal representations. The converse is not true in general, but it is true in an important special case; see Proposition~\ref{prop:glauberman-cuspidal}.

\begin{prop}\label{prop:tate-cohomology-cuspidal}
    Let $\ol G$ be a paraductive $\F_q$-group scheme equipped with an automorphism $\sigma$ of finite prime order $\ell \neq p$, let $\ol H = \ol G^\sigma$, and let $V$ be a finite-dimensional cuspidal $\ol\F_\ell$-representation of $\ol G(\F_q)$ whose isomorphism class is $\sigma$-stable. Then $\rT^i(\sigma, V)$ is a (possibly zero) cuspidal representation of $\ol H(\F_q)$.
\end{prop}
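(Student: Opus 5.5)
We verify the criterion of Lemma~\ref{lemma:cuspidal-equivalence}(3) for $\rT^i(\sigma,V)$. That is, for every parabolic $\F_q$-subgroup $\ol Q^\circ\subset \ol H^\circ$ with unipotent radical $\ol U_H$, we show that $\rT^i(\sigma,V)^{\ol U_H(\F_q)}=0$. The argument passes from the $\sigma$-fixed group back to $\ol G$ through a cocharacter. Choose an $\F_q$-cocharacter $\lambda\colon\G_m\to \ol H^\circ$ with $\ol Q^\circ=P_{\ol H^\circ}(\lambda)$. Regarded in $\ol G^\circ$, it defines a parabolic $\F_q$-subgroup $\ol P^\circ=P_{\ol G^\circ}(\lambda)$ with unipotent radical $\ol U=U_{\ol G^\circ}(\lambda)$. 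Since $\sigma$ fixes $\lambda$ pointwise, $\ol P^\circ$ and $\ol U$ are $\sigma$-stable. By the dynamic description of fixed points of a finite-order automorphism, $(\ol U^\sigma)^\circ=U_{\ol H^\circ}(\lambda)=\ol U_H$. Moreover, $\ol U^\sigma$ is smooth and connected because $\ell\ne p$, so it is a unipotent group on which a group of prime-to-$p$ order acts. The upshot is that $\ol U(\F_q)$ is a $\sigma$-stable $p$-group with $\ol U(\F_q)^\sigma=\ol U_H(\F_q)$.

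Next we show that Tate cohomology commutes with taking invariants under this $p$-group. Write $P=\ol U(\F_q)$, a finite $p$-group with $p\neq\ell$, acted on by $\langle\sigma\rangle$. Consider the averaging idempotent $e_P=|P|^{-1}\sum_{u\in P}u$. It is $\sigma$-invariant, since $\sigma$ permutes $P$, and $e_P$ commutes with $\sigma$. Hence $V=e_PV\oplus(1-e_P)V$ is a decomposition of $k[P\rtimes\langle\sigma\rangle]$-modules, and $\rT^i(\sigma,V)=\rT^i(\sigma,V^P)\oplus\rT^i(\sigma,(1-e_P)V)$. By the cuspidality criterion of Lemma~\ref{lemma:cuspidal-equivalence}(3) applied to $\ol G$, and since $\ol P^\circ$ is a parabolic $\F_q$-subgroup of $\ol G^\circ$, we have $V^{P}=0$ (strictly, $V$ is a $\ol G(\F_q)\rtimes\langle\sigma\rangle$-module here, as its isomorphism class is $\sigma$-stable and it is equipped with an extension). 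Therefore $\rT^i(\sigma,V)=\rT^i(\sigma,(1-e_P)V)$.

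It remains to show that $\rT^i(\sigma,W)^{P^\sigma}=0$ for $W=(1-e_P)V$, a module with $W^P=0$. Taking $P^\sigma$-invariants is exact, because $P^\sigma$ is a $p$-group. It also commutes with the Tate cohomology functors, because $P^\sigma$ commutes with $\sigma$. So $\rT^i(\sigma,W)^{P^\sigma}=\rT^i(\sigma,W^{P^\sigma})$, and the $\sigma$-action on $W^{P^\sigma}$ is compatible with $P$ acting through the coset action. This is the main obstacle: in general $W^{P^\sigma}$ need not vanish, so we must show that its Tate cohomology does. The plan is a Glauberman-type counting argument on the $\langle\sigma\rangle$-set of $P$-characters. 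Decompose $W|_P$ into isotypic components $W_\psi$ over irreducible $\ol\F_\ell$-representations $\psi$ of $P$, all nontrivial since $W^P=0$. Then $\sigma$ permutes these components. Components in free $\sigma$-orbits contribute induced $k[\sigma]$-modules, whose Tate cohomology vanishes, exactly as in the proof of Lemma~\ref{lemma:tate-cohom-devissage}. For a $\sigma$-fixed nontrivial $\psi$, the Glauberman correspondence for $(P,\sigma)$, whose orders are coprime, sends $\psi$ to a nontrivial irreducible representation $\psi^*$ of $P^\sigma$. Moreover, $\psi|_{P^\sigma}$ contains the trivial representation only with multiplicity divisible by $\ell$, and $\sigma$ acts on the corresponding multiplicity space as a multiple of the regular representation of $\langle\sigma\rangle$; this is the standard consequence of the Glauberman character identity. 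Consequently $(W_\psi)^{P^\sigma}$ is a free $k[\sigma]$-module, so its Tate cohomology vanishes. Verifying this last claim, namely that the $P^\sigma$-fixed multiplicity space of a nontrivial $\sigma$-stable $\psi$ is $k[\sigma]$-free, is where we would need care. We would first attempt it using the characterization of the Glauberman correspondence together with Lemma~\ref{lemma:nonzero-tate-cohomology}: compute the $\sigma$-trace on the fixed space via the averaged identity $\dim\psi^{P^\sigma}$, then compare with $\psi^*$.
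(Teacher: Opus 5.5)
Your reduction is sound and takes a different route from the paper, which reduces to connected $\ol G$ and quotes \cite[Corollary~3.3.3]{DN25a}. The identification $\ol U(\F_q)^\sigma=\ol U_H(\F_q)$ via $\lambda$ is fine, as are the commutation $\rT^i(\sigma,W)^{P^\sigma}=\rT^i(\sigma,W^{P^\sigma})$ and the disposal of free $\sigma$-orbits of isotypic components. The $e_P$-step is vacuous, since $V^P=0$ already gives $W=V$.

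The gap is the final claim: that $\wt\psi^{P^\sigma}$ is $\ol\F_\ell[\sigma]$-free, where $\wt\psi$ is the extension of a nontrivial $\sigma$-stable $\psi$. This is not a consequence of the Glauberman character identity, and the trace computation you propose cannot establish it. The character identity only shows that, in characteristic $0$, $\sigma$ acts on the $P^\sigma$-fixed vectors of the lift of $\wt\psi$ through a multiple of the regular representation of $\langle\sigma\rangle$. A $\sigma$-stable lattice in such a representation need not reduce to a free module. For example, in $\ol\Q_\ell[\langle\sigma\rangle]$ the $\ol\Z_\ell$-span of the $\sigma$-eigenvectors reduces to $\ol\F_\ell^{\oplus\ell}$ with trivial action, whose Tate cohomology is nonzero (compare Remark~\ref{remark:strict-inequality}). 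In characteristic $\ell$ itself, the trace of $\sigma$ sees nothing of the Jordan blocks. What you need is an \emph{upper} bound (vanishing) on Tate cohomology, whereas eigenvalue counting as in Theorem~\ref{thm:eigenvalues-and-jordan-blocks-variant} only gives lower bounds.

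The missing input is the exact computation of Theorem~\ref{theorem:eigenvalues-and-jordan-blocks}, which controls the lattice through projectivity over $\ol\Z_\ell[\Delta]$. It applies to $\wt\psi$ itself (with $\Delta=\Gamma=P$), not to $\wt\psi^{P^\sigma}$. Take a lattice in the canonical extension of the characteristic-zero lift of $\psi$ and reduce it mod $\ell$. Corollary~\ref{cor:glauberman-correspondence} then gives $\rT^0(\sigma,\wt\psi)\cong\psi^*$ as $P^\sigma$-modules. Moreover, $\psi^*$ is a nontrivial simple module, because the Glauberman correspondence is a bijection taking the trivial character to the trivial character and $\ell\nmid|P^\sigma|$.

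Your commutation observation then gives $\rT^0(\sigma,\wt\psi^{P^\sigma})\cong(\psi^*)^{P^\sigma}=0$. By Lemma~\ref{lemma:nonzero-tate-cohomology}, every Jordan block of $\sigma$ on $\wt\psi^{P^\sigma}$ has size $\ell$, i.e.\ $\wt\psi^{P^\sigma}$ is free. Hence so is $W_\psi^{P^\sigma}\cong\wt\psi^{P^\sigma}\otimes\Hom_P(\wt\psi,W_\psi)$. Alternatively, you can avoid the tensor decomposition by d\'evissage along a composition series of $V$ as an $\ol\F_\ell[P\rtimes\langle\sigma\rangle]$-module. This uses exactness of $(-)^{P^\sigma}$ and the three-term exact sequences from the proof of Lemma~\ref{lemma:tate-cohom-devissage}.

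With this repair your argument is a self-contained proof. It rests only on the Alperin--Dade form of the Glauberman correspondence established in the paper, rather than on transporting the $p$-adic argument of Dhar--Nadimpalli to finite groups.
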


\begin{proof}
    By definition of cuspidality, we may assume that $\ol G$ is connected. The result then follows from \cite[Corollary~3.3.3]{DN25a} (which is stated for $p$-adic groups but holds with an identical proof for finite groups, as mentioned in the beginning of \cite[\S 3]{DN25a}).
\end{proof}

We note the following curious corollary, which may be of independent interest.

\begin{cor}\label{cor:dl-res-cuspidal}
    Let $\ol G$ be a connected reductive group over $\F_q$, let $\ell \neq p$ be a prime number, let $V$ be a cuspidal $\ol\Q_\ell$-representation of $\ol G(\F_q)$ defined over a finite extension of $\Q_\ell^{\unr}$ of degree prime to $\ell-1$ and lying in a prime-to-$\ell$ Lusztig series, and let $\ol H \subset \ol G$ be a twisted Levi $\F_q$-subgroup which is the centralizer of an element of $\ol G(\F_q)$ of order $\ell$. Every irreducible $\ol\F_\ell$-representation whose Brauer character occurs with nonzero coefficient in the $\ell$-modular reduction of ${}^*R^{\ol G}_{\ol H}(V)$ is cuspidal.
\end{cor}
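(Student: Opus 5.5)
The plan is to combine Proposition~\ref{prop:tate-cohom-dl-restriction}(2) with Proposition~\ref{prop:tate-cohomology-cuspidal}. Let $s \in \ol G(\F_q)$ be an element of order $\ell$ with $\ol H = Z_{\ol G}(s)$, and let $\sigma$ be the $\F_q$-automorphism of $\ol G$ given by $s$-conjugation. Because $\ol G$ is connected, the index hypothesis of Proposition~\ref{prop:tate-cohom-dl-restriction} holds trivially. Also $\ol H = Z_{\ol G^\circ}(s)$, so taking $\ol L = \ol H$ gives the required containments $Z_{\ol G^\circ}(s) \subset \ol L \subset Z_{\ol G}(s)$. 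Since $s \in \ol H(\F_q)$ is central in $\ol H$, it lies in some maximal $\F_q$-torus $\ol T$ of $\ol H$.

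To meet the remaining hypotheses, I first reduce to $V$ irreducible. Both the lower bound and the cuspidality conclusion pass to direct sums, since a constituent of $\ol{{}^*R(V)}$ with nonzero coefficient occurs with nonzero coefficient for some irreducible summand. Then $V$ lies in a Lusztig series $\cE(\ol G, [\ol s_0])$ with $\ol s_0$ of order prime to $\ell$. I need a pair $(\ol T, \theta)$ with $\theta$ of prime-to-$\ell$ order and $\langle \chi_V, R_{\ol T}^{\ol G}(\theta)\rangle \neq 0$, where $\ol T$ can be taken to contain $s$.

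This is the step I expect to be the main obstacle. Proposition~\ref{prop:tate-cohom-dl-restriction} as stated requires $s \in \ol T(\F_q)$, but its proof only uses $\theta$ to know that the constituents of ${}^*R^{\ol G}_{\ol L}(\chi)$ lie in prime-to-$\ell$ series. I would recover this from Corollary~\ref{cor:bonnafe-11.11} (equivalently \cite[Th\'eor\`eme 11.10]{Bon06}): every constituent $\eta$ of ${}^*R^{\ol G}_{\ol H}(\chi_V)$ lies in a series $\cE(\ol H, [\ol t])$ with $\ol t$ conjugate to $\ol s_0$, hence of prime-to-$\ell$ order. The central element $s$ then acts on $\eta$ by a scalar of prime-to-$\ell$ order that is also an $\ell$th root of unity, so $s$ acts trivially. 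With this, the argument of part (1) goes through, and part (2) gives the lower bound
\[
\chi_{\rT^a(\sigma, \ol V)} \geq \left|\ol{{}^*R^{\ol G}_{\ol H}(V)}\right|
\]
for a $\sigma$-stable lattice in $V$. Such a lattice exists because $\sigma$ is inner, so $V$ is naturally a $\ol G(\F_q)\rtimes\langle\sigma\rangle$-module through $g\rtimes\sigma^i \mapsto g s^i$. The field-of-definition hypothesis is exactly what Theorem~\ref{thm:eigenvalues-and-jordan-blocks-variant} needs.

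Finally, $\ol V = \Lambda \otimes \ol\F_\ell$ is a cuspidal $\ol\F_\ell$-representation. Indeed, $V^{\ol U(\F_q)} = 0$ for every unipotent radical $\ol U$, and taking $\ol U(\F_q)$-invariants commutes with reduction because $|\ol U(\F_q)|$ is prime to $\ell$; now apply Lemma~\ref{lemma:cuspidal-equivalence}. Its isomorphism class is $\sigma$-stable since $\sigma$ is inner. Proposition~\ref{prop:tate-cohomology-cuspidal} then shows that $\rT^a(\sigma, \ol V)$ is a cuspidal representation of $\ol G^\sigma(\F_q) = \ol H(\F_q)$, so all of its irreducible subquotients are cuspidal. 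Every irreducible Brauer character with nonzero coefficient in $\ol{{}^*R^{\ol G}_{\ol H}(V)}$ is $\leq$ the Brauer character of $\rT^a(\sigma,\ol V)$, so it is one of these subquotients and hence cuspidal.
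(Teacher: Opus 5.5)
Your proposal is correct and follows the paper's proof, which consists exactly of the lower bound in Proposition~\ref{prop:tate-cohom-dl-restriction}(2), with $\sigma$ the inner automorphism given by the order-$\ell$ element, combined with the cuspidality of Tate cohomology from Proposition~\ref{prop:tate-cohomology-cuspidal}. Your extra verifications fill in points the paper leaves implicit: you get around the hypothesis $s\in\ol T(\F_q)$ via Corollary~\ref{cor:bonnafe-11.11}, you let $\sigma$ act on a lattice through $s$, and you deduce cuspidality of $\ol V$ from the fact that $\ol U(\F_q)$-invariants commute with reduction mod $\ell$ (one caveat: your preliminary reduction to irreducible $V$ is not needed, and it is not fully justified, since the field-of-definition hypothesis need not pass to irreducible summands; the paper in any case treats $V$ as irreducible).
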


\begin{proof}
Let $\sigma$ be the $\F_q$-automorphism of $\ol G$ induced by conjugation by an element of $\ol G(\F_q)$ of order $\ell$ whose centralizer is $\ol H$. Proposition~\ref{prop:tate-cohom-dl-restriction}(2) shows that the Brauer character of $\rT^i(\sigma,V)$ dominates $\left|\ol{{}^*R^{\ol G}_{\ol H}(V)}\right|$. Hence every irreducible representation which appears in the support of $\left|\ol{{}^*R^{\ol G}_{\ol H}(V)}\right|$ occurs in $\rT^i(\sigma,V)$, and Proposition~\ref{prop:tate-cohomology-cuspidal} shows that the latter is cuspidal.
\end{proof}

\begin{remark}\label{remark:non-cuspidal-dl-restriction}
    One reason that Corollary~\ref{cor:dl-res-cuspidal} is surprising is that, in the same setting, it can happen that the characteristic zero virtual representation $^*R_{\ol H}^{\ol G}(V)$ itself is nonzero and has no cuspidal constituents. To show this, we begin by summarizing some of the theory of unipotent $\ol\Q_\ell$-representations of finite symplectic groups, which is collected in a very readable form in \cite[Chapter~4]{GM20}. By \cite[Theorem~4.4.13]{GM20}, if $n$ is any positive integer then the set of irreducible unipotent $\ol\Q_\ell$-representations of $\Sp_{2n}(\F_q)$ is in natural bijection with the set of equivalence classes of ``symbols'' $S = \begin{pmatrix} X \\ Y \end{pmatrix} = \begin{pmatrix} x_1 < \cdots < x_r \\ y_1 < \cdots < y_s\end{pmatrix}$, where $x_i, y_j \in \Z_{\geq 0}$, satisfying the conditions that $r - s$ is odd and
    \[
    \sum_{i=1}^r x_i + \sum_{j=1}^s y_j = n + \left\lfloor\frac{(r+s-1)^2}{4} \right\rfloor.
    \]
    The equivalence relation on symbols is generated by two operations: namely, we say 
    \[
    \begin{pmatrix} X \\ Y \end{pmatrix} \sim \begin{pmatrix} Y \\ X \end{pmatrix}
    \]
    and
    \[
    \begin{pmatrix} x_1 < \cdots < x_r \\ y_1 < \cdots < y_s \end{pmatrix} \sim \begin{pmatrix} 0 < x_1 + 1 < \cdots < x_r + 1 \\ 0 < y_1 + 1 < \cdots < y_s + 1 \end{pmatrix}.
    \]
    By another theorem of Lusztig \cite[Theorem~4.4.28]{GM20}, for each $n$ there is at most one cuspidal unipotent representation of $\Sp_{2n}(\F_q)$. Moreover, a cuspidal unipotent representation exists if and only if $n = s(s+1)$ for some $s \in \Z_{> 0}$, in which case it corresponds to the equivalence class of the symbol $S = \begin{pmatrix} \\ 0&1&\cdots&2s \end{pmatrix}$.

    Now fix $n \geq 1$, and let $\ol S$ be an elliptic maximal $\F_q$-subtorus of $\Sp_2 \times 1 \subset \Sp_2 \times \Sp_{2n-2} \subset \Sp_{2n}$. If $\ol L = Z_{\Sp_{2n}}(\ol S)$, then $\ol L \cong \ol S \times \Sp_{2n-2}$. By \cite[Corollaire~11.11]{Bon06}, if $V$ is a unipotent representation of $\Sp_{2n}(\F_q)$ then every irreducible constituent of the Lusztig restriction $^*R^{\Sp_{2n}}_{\ol L}(V)$ is unipotent. By a theorem of Asai \cite[Theorem~4.6.9]{GM20}, if $n = s(s+1)$ for some $s \in \Z_{>0}$ and $V$ is moreover cuspidal, then the irreducible constituents of $^*R^{\Sp_{2n}}_{\ol L}(V)$ are precisely those corresponding to the equivalence classes of the symbols
    \[
    S_i = \begin{pmatrix}
        i-1 \\ 0&1&\cdots&i-1&i+1&\cdots&2s
    \end{pmatrix}
    \]
    for some $i \in \Z_{>0}$. More precisely, if $V_i$ is the unipotent representation of $\ol L(\F_q)$ corresponding to $S_i$, then we have
    \begin{equation}\label{eq:DLrest-remark}
    ^*R^{\Sp_{2n}}_{\ol L}(V) = \sum_{i=1}^{2s} (-1)^i V_i.
    \end{equation}
    In particular, $^*R^{\Sp_{2n}}_{\ol L}(V) \neq 0$. However, the group $\Sp_{2n-2}(\F_q)$ does not admit any cuspidal unipotent representations, so $^*R^{\Sp_{2n}}_{\ol L}(V)$ is nonzero and has no cuspidal constituents.
\end{remark}    

\begin{example}
    If $n = 2$ then $\Sp_2 \cong \SL_2$ and \eqref{eq:DLrest-remark} gives 
    \begin{equation}\label{eqn:dl-restriction-sp4}
    ^*R^{\Sp_4}_{\ol L}(V) = V_2 - V_1,
    \end{equation}
    where $V_2$ is the Steinberg representation and $V_1$ is the trivial representation. (The signs can be seen using the degree formula \cite[Proposition~4.4.15]{GM20}, noting that $\ol L(\F_q)$ only has two irreducible unipotent $\ol\Q_\ell$-representations.) Let's see why this does not contradict Corollary~\ref{cor:dl-res-cuspidal}. If $\ell$ divides the order of $\ol S(\F_q)$ and $\ol L(\F_q)$ is the centralizer of an element of order $\ell$, then $\ell$ is odd and it is well-known that the $\ell$-modular reduction of $V_2$ has two irreducible constituents, namely $\ol V_1$ and another cuspidal $\ol\F_\ell$-representation $U$. By \eqref{eqn:dl-restriction-sp4}, we have
    \[
    \ol{{}^*R^{\Sp_4}_{\ol L}(V)} = U,
    \]
    which is indeed a cuspidal $\ol\F_\ell$-representation (which is consistent with Corollary~\ref{cor:dl-res-cuspidal}).
\end{example}

\subsection{The Glauberman correspondence}\label{ss:glauberman}

For applications to Weil--Heisenberg representations, we need a sharper version of Theorem~\ref{thm:eigenvalues-and-jordan-blocks-variant} (under stronger hypotheses).

\subsubsection{Technical preliminaries} We begin with two simple lemmas.

\begin{lemma}\label{lemma:maximal-jordan-blocks}
    Let $V$ be a finite-dimensional vector space over a field $k$, let 
    \[
    \cF = (0 = V_0 \subset V_1 \subset \cdots \subset V_n = V)
    \]
    be a flag of $V$, let $d_i = \dim_k V_i/V_{i-1}$ for $1 \leq i \leq n$, and let $\sigma$ be an automorphism of $V$ stabilizing $\cF$ and acting trivially on each quotient $V_i/V_{i-1}$. If $d_{j_1} \leq \cdots \leq d_{j_n}$ and $d_{j_0} \coloneqq 0$, then $\dim \End_k(V)^\sigma \geq d_1^2 + \cdots + d_n^2$, with equality if and only if for each $0 \leq i \leq n - 1$, the unipotent automorphism $\sigma$ has $d_{j_{i+1}} - d_{j_i}$ Jordan blocks of size $n - i$.
\end{lemma}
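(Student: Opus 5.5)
The plan is to translate the statement into a statement about nilpotent orbits in $\mathfrak{gl}(V)$ and use the standard Richardson-orbit dimension count for the parabolic subgroup stabilizing $\cF$.

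\emph{Setup.} Dimensions of fixed spaces do not change under field extension, so we may assume $k$ is algebraically closed. Put $N = \sigma - 1 \in \End_k(V)$. Then $\End_k(V)^\sigma$ is the centralizer $\mathfrak z(N)$ of $N$ in $\mathfrak g = \mathfrak{gl}(V)$: indeed $\sigma$ acts on $\End_k(V)$ by conjugation, which fixes $A$ exactly when $A$ commutes with $N$. Let $P \subset \GL(V)$ be the parabolic stabilizing $\cF$, with Lie algebra $\mathfrak p = \mathfrak l \oplus \mathfrak u$. Here $\mathfrak l = \prod_i \mathfrak{gl}(V_i/V_{i-1})$ has $\dim \mathfrak l = \sum_i d_i^2$, and $\mathfrak u$ consists of the endomorphisms with $A(V_i) \subset V_{i-1}$. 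The hypothesis that $\sigma$ stabilizes $\cF$ and acts trivially on each graded piece says exactly that $N \in \mathfrak u$.

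\emph{Inequality.} The orbit $\GL(V)\cdot N$ lies in $\GL(V)\cdot\mathfrak u$. This set is the image of $\GL(V)\times^P \mathfrak u$, a variety of dimension $\dim \GL(V)/P + \dim\mathfrak u = 2\dim\mathfrak u$. Hence $\dim \GL(V)\cdot N \le 2\dim \mathfrak u$. Since $\dim \mathfrak g = \dim\mathfrak l + 2\dim\mathfrak u$, the orbit–stabilizer formula gives
\[
\dim \mathfrak z(N) = \dim\mathfrak g - \dim \GL(V)\cdot N \ge \dim \mathfrak g - 2\dim\mathfrak u = \dim \mathfrak l = \textstyle\sum_i d_i^2 .
\]
Here we use that scheme-theoretic stabilizers in $\GL(V)$ are smooth, since the centralizer of $N$ is the unit group of the algebra $\mathfrak z(N)$.

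\emph{Equality case.} Equality holds iff $\GL(V)\cdot N$ has dimension $2\dim\mathfrak u$. This happens iff $\GL(V)\cdot N$ is the dense orbit of the irreducible variety $\GL(V)\cdot\mathfrak u$, which is Richardson's orbit for $P$. For $\GL_n$ this orbit is classical: it consists of the nilpotents whose Jordan partition is the transpose of the decreasingly sorted sequence $(d_{j_n},\dots,d_{j_1})$. One then checks that this is a restatement of the stated condition. A partition with conjugate $\lambda'_m = d_{j_{n-m+1}}$ has exactly $\lambda'_{n-i}-\lambda'_{n-i+1} = d_{j_{i+1}} - d_{j_i}$ blocks of size $n-i$.

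\emph{Main obstacle.} The delicate point is identifying the Richardson orbit's Jordan type, including the case where the $d_i$ are not monotone in $i$. If I want to avoid citing this, I would argue directly with the formula $\dim\mathfrak z(N) = \sum_m (\lambda'_m)^2$, where $\lambda'_m = \dim\ker N^m - \dim\ker N^{m-1}$. First, $N^n = 0$ forces at most $n$ nonzero parts. Second, for every $m$ one needs $\dim\ker N^m \ge$ the sum of the $m$ smallest $d_i$, equivalently $\operatorname{rank} N^m \le$ the sum of the $n-m$ largest $d_i$. This bound comes from the fact that $N^m$ factors through the composition of $m$ strictly filtration-lowering maps. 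The constraint $\sum_m\lambda'_m = \sum_i d_i$ together with this majorization-type inequality and convexity of $x\mapsto x^2$ gives $\sum (\lambda'_m)^2 \ge \sum d_i^2$, with equality iff $\lambda'$ is the sorted $d$. The rank bound in the non-monotone case is the step I expect to require the most care.
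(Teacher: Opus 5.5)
Your main route is correct and is essentially the paper's proof. The paper uses the $P$-orbit of $\sigma$ inside $U$ to get $\dim P^\sigma\geq\dim P-\dim U$. It notes that equality forces this orbit to be open in $U$, so at most one orbit qualifies. It then checks directly that an element with the described Jordan blocks attains $\sum_i d_i^2$. Your count via $\GL(V)\times^P\mathfrak u$ is the same estimate, and you replace the direct check by citing the type~A description of the Richardson orbit. Your combinatorial translation $\lambda'_{n-i}-\lambda'_{n-i+1}=d_{j_{i+1}}-d_{j_i}$ is right.

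The fallback you sketch for avoiding that citation does not work as written, because the inequality points the wrong way.
\begin{itemize}
\item The bound $\dim\ker N^m\geq$ (sum of the $m$ smallest $d_i$) is trivially true; it is just $N^m(V)\subset V_{n-m}$.
\item It is too weak for the convexity step. For $n=2$ and $d=(1,10)$, the sequence $\lambda'=(6,5)$ satisfies all of your stated constraints, yet $\sum_m(\lambda'_m)^2=61<101$.
\item Majorization needs $\dim\ker N^m\geq$ (sum of the $m$ \emph{largest} $d_i$), i.e.\ $\operatorname{rank}N^m\leq$ (sum of the $n-m$ smallest $d_i$).
\item For non-monotone $d$ this does not follow just from $N^m$ lowering the filtration by $m$ steps. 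For $d=(5,1,5,1)$ and $m=2$, the filtration alone gives only $\operatorname{rank}N^2\leq 6$. The true bound $2$ uses that, for example, the block $N_{12}N_{23}$ of $N^2$ factors through the one-dimensional second graded piece.
\item Proving this bound for all $m$ is essentially the closure statement for the Richardson orbit that you wanted to avoid.
\end{itemize}

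The cheap way around the citation is the paper's: write down one $N\in\mathfrak u$ with the described Jordan type.
\begin{itemize}
\item Choose a basis $e_{i,c}$, $1\leq c\leq d_i$, lifting bases of the graded pieces.
\item For each $c$, let $N$ map $e_{i,c}$ to $e_{i',c}$, where $i'<i$ is the largest index with $d_{i'}\geq c$ (or to $0$ if there is none).
\item This $N$ lies in $\mathfrak u$, and the number of its Jordan blocks of size $\geq m$ is the $m$-th largest $d_i$.
\item Hence its centralizer has dimension $\sum_i d_i^2$, so it lies in the unique orbit attaining equality. This gives the ``only if'' direction.
\end{itemize}
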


\begin{proof}
    Let $P$ denote the parabolic $k$-subgroup of $\GL(V)$ corresponding to $\cF$, and let $U$ be the unipotent radical of $P$. Note that $\sigma \in U$, so we have
    \[
    \dim_k \End_k(V)^\sigma = \dim \GL(V)^\sigma \geq \dim P^\sigma \geq \dim P - \dim U = d_1^2 + \cdots + d_n^2,
    \]
    where the second inequality is an equality if and only if the $P$-orbit of $\sigma$ is open in $U$; thus equality can hold for elements in at most one $P$-orbit of $U$. It is elementary to check that if $\sigma$ has Jordan blocks as described, then $\GL(V)^\sigma = P^\sigma$ is of dimension $d_1^2 + \cdots + d_n^2$, and the lemma follows.
\end{proof}

\begin{lemma}\label{lemma:order-ell-special-fiber-filtration}
    Let $V$ be a finite free $\ol\Z_\ell$-module and let $\sigma$ be an automorphism of $V$ of order $\ell$.
    \begin{enumerate}
        \item Let $V_1$ and $V_2$ be two $\ol\Q_\ell$-subspaces of $V_{\ol\Q_\ell}$ on which $\sigma$ acts by a scalar, and let $W \coloneqq (V_1 + V_2) \cap V$. Then we have
        \[
        (\sigma_{\ol\F_\ell} - 1)\ol W  \subset \ol V_1 \cap \ol V_2.
        \]
        \item Let $V_1, \dots, V_\ell \subset V_{\ol\Q_\ell}$ be the eigenspaces for $\sigma_{\ol\Q_\ell}$ corresponding to $\ell$th roots of unity, of dimensions $d_1 \leq \cdots \leq d_\ell$. If the Jordan block structure for $\sigma_{\ol\F_\ell}$ is as in the equality case of Lemma~\ref{lemma:maximal-jordan-blocks}, and $U_n \coloneqq (\bigoplus_{i=1}^n V_i) \cap V$ for all $0 \leq n \leq \ell$, then
        \[
        (\sigma_{\ol\F_\ell} - 1)\ol U_n = \ol U_{n-1}.
        \]
    \end{enumerate}
\end{lemma}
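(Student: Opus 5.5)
For any $\ol\Q_\ell$-subspace $X \subset V_{\ol\Q_\ell}$ I write $\ol X = (X \cap V)\otimes_{\ol\Z_\ell}\ol\F_\ell$. Since $X\cap V$ is saturated in $V$, $\ol X$ is naturally a subspace of $V_{\ol\F_\ell}$, of dimension $\dim_{\ol\Q_\ell} X$. Both parts are then direct lattice computations.

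For (1), let $\zeta_1,\zeta_2$ be the scalars by which $\sigma$ acts on $V_1,V_2$, and take $w\in W$, written $w = v_1+v_2$ with $v_i\in V_i$. The plan is to use the identity
\[
(\sigma-1)w = (\zeta_2-1)w + (\zeta_1-\zeta_2)v_1 .
\]
The left side and $(\zeta_2-1)w$ both lie in $V$, so $(\zeta_1-\zeta_2)v_1 \in V_1\cap V$. Since $\zeta_2-1$ lies in the maximal ideal of $\ol\Z_\ell$, the term $(\zeta_2-1)w$ reduces to $0$ in $V_{\ol\F_\ell}$. Hence the reduction of $(\sigma-1)w$ equals the reduction of an element of $V_1\cap V$, so it lies in $\ol V_1$. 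Exchanging the roles of the indices gives membership in $\ol V_2$, which proves (1). No case distinction is needed: if $\zeta_1=\zeta_2$ the second term is simply $0$.

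For (2), fix $n$ with $1 \leq n \leq \ell$. The main step is to identify $\ol U_n$ with $(\sigma-1)^{\ell-n}V_{\ol\F_\ell}$. First I get an inclusion from a flag. The flag
\[
U_n \subset U_{n+1}\subset\cdots\subset U_\ell = V
\]
has successive quotients that are lattices in eigenspaces $V_j$, on which $\sigma-1$ acts by $\zeta_j-1\in\fm$. This is exactly the argument in the proof of Theorem~\ref{thm:eigenvalues-and-jordan-blocks-variant}. Therefore $\sigma_{\ol\F_\ell}-1$ kills each graded piece of the reduced flag $\ol U_n\subset\cdots\subset \ol U_\ell = V_{\ol\F_\ell}$, and so $(\sigma-1)^{\ell-n}V_{\ol\F_\ell}\subset \ol U_n$.

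Next I compare dimensions using the hypothesis on Jordan blocks. Here $\sigma$ has $d_{i+1}-d_i$ blocks of size $\ell-i$ for $0\le i\le \ell-1$, with $d_0 = 0$. Thus
\[
\operatorname{rk}(\sigma-1)^{\ell-n} = \sum_{i=0}^{\ell-1}(d_{i+1}-d_i)\max(n-i,0) = \sum_{i=0}^{n-1}(d_{i+1}-d_i)(n-i) = d_1+\cdots+d_n,
\]
and the right side equals $\dim \ol U_n$. Consequently $\ol U_n = (\sigma-1)^{\ell-n}V_{\ol\F_\ell}$ for every $0\le n\le \ell$; for $n=0$ both sides are $0$ since $(\sigma-1)^\ell=0$. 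Applying $\sigma-1$ then gives
\[
(\sigma-1)\ol U_n = (\sigma-1)^{\ell-n+1}V_{\ol\F_\ell} = \ol U_{n-1}.
\]

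The only delicate point is bookkeeping: the ordering $d_1\le\cdots\le d_\ell$ must be matched with the block-size description in the equality case of Lemma~\ref{lemma:maximal-jordan-blocks}, so that the rank computation comes out as $d_1+\cdots+d_n$ rather than as a sum of the largest $d_i$. I expect this to work with the stated indexing, where the blocks of size $\ell-i$ number $d_{i+1}-d_i$. If the indexing turns out reversed, I would instead run the same argument with the flag built from the eigenspaces in the opposite order.
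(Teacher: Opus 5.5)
Your proof is correct. Part (1) is the paper's argument in different notation: your identity is $(\sigma-1)w=(\sigma-\zeta_2)w+(\zeta_2-1)w$ with $(\sigma-\zeta_2)w=(\zeta_1-\zeta_2)v_1\in V_1\cap V$, and this is exactly what the paper reduces modulo $\fm$.

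Part (2) follows a different route. The paper uses (1) only to get $(\sigma_{\ol\F_\ell}-1)\ol U_2\subset \ol U_1$. It then asserts equality ``by the structure of Jordan blocks'' and inducts by passing to the lattice $V/(V_1\cap V)$. That step implicitly requires checking that the quotient again has the equality-case block structure for $d_2\le\cdots\le d_\ell$.

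You instead prove the sharper statement $\ol U_n=(\sigma_{\ol\F_\ell}-1)^{\ell-n}V_{\ol\F_\ell}$ for all $n$ at once. You get it from the flag inclusion together with the rank count $\operatorname{rk}(\sigma-1)^{\ell-n}=d_1+\cdots+d_n$. This makes (2) immediate, with no use of (1) and no induction or quotient bookkeeping. It is also essentially what the paper's brief appeal to the Jordan block structure has to mean, since the flag inclusion plus rank is what shows $(\sigma-1)\ol U_2$ has dimension $d_1$.

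Your hedge about the indexing is unnecessary. The flag inclusion $(\sigma_{\ol\F_\ell}-1)^{\ell-n}V_{\ol\F_\ell}\subset\ol U_n$ holds for any ordering of the eigenspaces. The rank, however, is always the sum of the $n$ smallest $d_i$. So the increasing ordering in the statement is exactly what makes the dimensions match, and a reversed ordering would not help.
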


\begin{proof}
    We begin with (1). Let $x \in W$, so we may write $x = x_1 + x_2$ with $x_i \in V_i$. Let $\zeta_1, \zeta_2$ be the scalars by which $\sigma$ acts on $V_i$, so $\zeta_1^\ell = \zeta_2^\ell = 1$. Since $(\sigma - \zeta_1)x_1 = 0$ and $(\sigma-\zeta_2)x_2 = 0$, we have
    \[
    (\sigma-\zeta_2) x = (\sigma-\zeta_2)x_1  \in V_1 \quad \text{and} \quad (\sigma-\zeta_1)x = (\sigma-\zeta_1) x_2 \in V_2.
    \]
    Since $\zeta_i \in \ol \Z_\ell^\times$ and $\sigma$ preserves $V$, the left sides of the above equations lie in $V$, hence the right sides do as well. Hence we may reduce both equations over $\ol \F_\ell$, and upon so doing we obtain $(\sigma_{\ol\F_\ell} - 1) \ol x \in \ol V_1 \cap \ol V_2$ because $\zeta_i \equiv 1 \pmod{\ell}$ for both $i$. 
    
    For (2), observe that $(\sigma_{\ol\F_\ell} - 1)(\ol U_2) \subset \ol U_1$ by (1), and the inclusion is an equality by the structure of Jordan blocks. Passing from $V$ to $V/V_1$, we conclude (2) by induction.
\end{proof}

\subsubsection{Tate cohomology realizes the Glauberman correspondence}  In \cite[Corollary~8]{Gla68}, Glauberman established the celebrated \emph{Glauberman correspondence}, which shows that if $S$ and $\Gamma$ are finite groups of relatively prime orders such that $S$ is solvable and $S$ acts on $\Gamma$, then there is a canonical one-to-one correspondence between irreducible $\CC[\Gamma]$-representations with $S$-stable isomorphism class and irreducible $\Gamma^S$-representations. If $S = \langle \sigma \rangle$ is cyclic, then by \cite[Theorem~3]{Gla68} this correspondence is uniquely characterized by the condition that it sends an $S$-stable character $\chi$ of $\Gamma$ to a character $\lambda$ of $\Gamma^S$ such that there exists $\epsilon \in \{\pm 1\}$ and an extension $\wt\chi$ of $\chi$ to an irreducible character $\wt\chi$ of $\Gamma \rtimes S$ such that $\wt\chi(1 \rtimes \sigma) \in \Z$ and
\begin{equation}\label{eqn:glauberman-condition}
    \wt\chi(t \rtimes \sigma) = \epsilon\lambda(t) \text{ for all } t \in \Gamma^S.
\end{equation}
Observe the similarity between \eqref{eqn:glauberman-condition} and \eqref{eqn:shintani-descent-relation} in the case that $\Gamma = \ol G(\F_{q^m})$ for a linear algebraic $\F_q$-group $\ol G$ and $\sigma$ acts by $\Fr_q$; we will discuss this further in Remark~\ref{remark:glauberman-shintani}.

The following property will be recorded but not used in this paper; we define it because it is the key condition which guarantees good behavior of cup products, which will be important in \cite{CF26b}.

\begin{defn}\label{def:extremal}
    We will say that a finitely generated $k[\sigma]$-module $V$ is \emph{minimal} if 
    \[
    \text{$V \cong k^{\oplus m} \oplus k[\sigma]^{\oplus n}$ as $k[\sigma]$-modules for some $m, n \geq 0$;}
    \]
    similarly, $V$ is \emph{maximal} if 
    \[
    \text{$V \cong (k[\sigma]/((\sigma-1)^{\ell-1}))^{\oplus m} \oplus k[\sigma]^{\oplus n}$ for some $m, n \geq 0$.}
    \]
    If $V$ is either maximal or minimal, then we will say that $V$ is \emph{extremal}.
\end{defn}

\begin{thm}\label{theorem:eigenvalues-and-jordan-blocks}
    Let $\Delta \subset \Gamma$ be finite groups, let $\ell$ be a prime number, let $\sigma$ be an automorphism of $\Gamma$ of order $\ell$ which preserves $\Delta$, and let $(\pi, V)$ be a finitely generated $\ol\Z_\ell[\Gamma \rtimes \langle\sigma\rangle]$-module such that $V$ is a projective $\ol\Z_\ell[\Delta]$-module and $\ol V \coloneqq V \otimes_{\ol\Z_\ell} \ol\F_\ell$ is a simple $\ol\F_\ell[\Delta]$-module. Choose an ordering $\xi_1, \dots, \xi_\ell$ of the $\ell$th roots of unity in $\ol\Z_\ell$ such that the dimensions $d_i$ of the $\xi_i$-eigenspaces $V_i$ of $\pi(\sigma)_{\ol\Q_\ell}$ on $V_{\ol\Q_\ell}$ satisfy $d_1 \leq d_2 \leq \cdots \leq d_\ell$, and let $d_0 = 0$.
    \begin{enumerate}
        \item For $0 \leq i \leq \ell - 1$, the number of Jordan blocks of size $\ell - i$ for $\pi(\sigma)_{\ol\F_\ell}$ is $d_{i+1} - d_i$.
        \item Let $\ol V_i \coloneqq (V_i \cap V)\otimes_{\ol \Z_\ell} \ol \F_\ell$. Then $\ol V_j \subset \ol V_{j+1}$ for all $0 \leq j < \ell - 1$ and
        \begin{equation}\label{eq:glauberman-theorem-2}
        \rT^i(\pi(\sigma)_{\ol\F_\ell}, \ol V) \cong \ol V_\ell/\ol V_1
        \end{equation}
        as $\ol\F_\ell[\Gamma^\sigma]$-modules for $i \in \Z/2\Z$.
        \item If $\Tr_{V_{\ol\Q_\ell}}(\pi(\sigma)) \in \Z$, then $\ol V$ is extremal as an $\ol\F_\ell[\pi(\sigma)]$-module (i.e., either $d_1 = d_{\ell-1}$ or $d_2 = d_\ell$) and there exists $\epsilon \in \{\pm 1\}$ such that for all $\gamma \in \Gamma^\sigma_{\ell'}$ and $i \in \Z/2\Z$ we have
        \begin{equation}\label{eqn:trace-and-tate-cohomology}
        \chi_{\rT^i(\pi(\sigma)_{\ol\F_\ell}, \ol V)}(\gamma) = \epsilon \Tr_{V_{\ol\Q_\ell}}(\gamma \rtimes \sigma)
        \end{equation}
        If $\Tr_{V_{\ol\Q_\ell}}(\pi(\sigma)) > 0$, then $\ol V$ is minimal and $\epsilon = 1$ in \eqref{eqn:trace-and-tate-cohomology}; if instead $\Tr_{V_{\ol\Q_\ell}}(\pi(\sigma)) < 0$, then $\ol V$ is maximal and $\epsilon = -1$.\footnote{Note that if $\ell = 2$, then $\ol V$ is both minimal and maximal.}
    \end{enumerate}
\end{thm}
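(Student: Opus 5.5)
The plan is to reduce everything to the equality case of Lemma~\ref{lemma:maximal-jordan-blocks}, applied to $\ol V$ with the flag $\ol U_1 \subset \ol U_2 \subset \cdots \subset \ol U_\ell = \ol V$, where $U_n = (\bigoplus_{i \leq n} V_i) \cap V$ as in Lemma~\ref{lemma:order-ell-special-fiber-filtration}. As in the proof of Theorem~\ref{thm:eigenvalues-and-jordan-blocks-variant}, $\sigma$ acts trivially on each graded piece of this flag. Lemma~\ref{lemma:maximal-jordan-blocks} then gives
\[
\dim \End(\ol V)^\sigma \geq \sum_i d_i^2,
\]
with equality exactly when the Jordan block structure in (1) holds. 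So (1) reduces to the reverse inequality $\dim \End(\ol V)^\sigma \leq \sum_i d_i^2$.

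The right-hand side $\sum_i d_i^2$ is the rank of the lattice of fixed points $M^\sigma$, where $M = \End_{\ol\Z_\ell}(V)$. Indeed, $M^\sigma \otimes \ol\Q_\ell$ is the commutant of the diagonalizable operator $\pi(\sigma)_{\ol\Q_\ell}$, which has dimension $\sum_i d_i^2$. From the sequence $0 \to M \xrightarrow{\ell} M \to \ol M \to 0$ we get
\[
\dim \ol M^\sigma = \operatorname{rk} M^\sigma + \dim \rH^1(\sigma, M)[\ell].
\]
So it suffices to show $\rH^1(\langle\sigma\rangle, M) = 0$.

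This vanishing is where the hypotheses on $\Delta$ enter, and I expect it to be the main obstacle. The module $M = V \otimes V^*$ is a $\ol\Z_\ell[\Delta \rtimes \langle\sigma\rangle]$-module which is projective over $\ol\Z_\ell[\Delta]$, because $V$ is. Since $\ol V$ is simple over $\ol\F_\ell[\Delta]$ and $V$ is projective, we have $\operatorname{rk}\End_\Delta(V) = \dim \End_\Delta(\ol V) = 1$, so $M^\Delta = \ol\Z_\ell$ with trivial $\sigma$-action. I would try to use $\Delta$-projectivity to show that the inflation--restriction (Hochschild--Serre) spectral sequence for $\Delta \lhd \Delta \rtimes \langle\sigma\rangle$ collapses, giving
\[
\rH^1(\langle\sigma\rangle, M) \cong \rH^1(\langle\sigma\rangle, M^\Delta) = \Hom(\langle\sigma\rangle, \ol\Z_\ell) = 0.
\]
The collapse should come from $\rH^{>0}(\Delta, M) = 0$. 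One subtlety is that this spectral sequence computes cohomology of $\Delta \rtimes \langle\sigma\rangle$, not of $\langle\sigma\rangle$ alone. I would handle this by identifying $\rH^*(\langle\sigma\rangle, M)$ with $\rH^*(\Delta \rtimes \langle\sigma\rangle, \ol\Z_\ell[(\Delta\rtimes\langle\sigma\rangle)/\langle\sigma\rangle] \otimes M)$ via Shapiro's lemma, and noting that this coefficient module is again $\Delta$-projective. An alternative route is to compare $\ol M = \ol V \otimes \ol V^*$ over $\ol\F_\ell[\Delta \rtimes \langle\sigma\rangle]$ directly, using $\End_\Delta(\ol V) = \ol\F_\ell$ to control $\sigma$-invariants of the $\Delta$-trivial part.

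Granting (1), part (2) follows quickly. Lemma~\ref{lemma:order-ell-special-fiber-filtration}(2) gives $(\sigma - 1)\ol U_n = \ol U_{n-1}$. Comparing dimensions with the Jordan structure, I would check that $\ol V_j$ (the reduction of the single eigenspace $V_j$) equals $\ker(\sigma-1)^{j}$-type pieces, which gives the nesting $\ol V_j \subset \ol V_{j+1}$. In particular $\ker(\sigma - 1) = \ol V_\ell$, while Lemma~\ref{lem:N_sigma-identity} gives $N_\sigma \ol V = (\sigma-1)^{\ell-1}\ol V = \ol V_1$. This yields \eqref{eq:glauberman-theorem-2} for $i = 0$. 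For $i = 1$ I would apply the dual computation, $\ker N_\sigma / (\sigma-1)\ol V$, and check that the same quotient appears; all the identifications are $\Gamma^\sigma$-equivariant. For (3), integrality of the trace forces the $d_i$ attached to the nontrivial $\ell$th roots of unity to coincide, by Galois conjugacy. Hence either $d_1 = d_{\ell-1}$ or $d_2 = d_\ell$, according to whether the $1$-eigenspace is largest or smallest, and (1) then gives maximality or minimality. The Brauer character of $\ol V_\ell/\ol V_1$ is $\chi_{\ol V_\ell} - \chi_{\ol V_1}$. As in the final computation of Theorem~\ref{thm:eigenvalues-and-jordan-blocks-variant}, this equals $\pm\Tr(\gamma \rtimes \sigma)$, with the sign determined by whether the trivial eigenvalue has the largest or smallest eigenspace, i.e.\ by the sign of $\Tr(\pi(\sigma))$.
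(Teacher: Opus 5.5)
Your reduction of (1) to the inequality $\dim \End(\ol V)^\sigma \leq \sum_i d_i^2$ via Lemma~\ref{lemma:maximal-jordan-blocks} matches the paper. Recasting that inequality as $\rH^1(\langle\sigma\rangle, M) = 0$ for $M = \End_{\ol\Z_\ell}(V)$ is also legitimate, once you descend to the ring of integers $\cO$ of a finite extension and use a uniformizer. (Over $\ol\Z_\ell$ the cokernel of multiplication by $\ell$ is $M/\ell M$, not $M \otimes \ol\F_\ell$.)

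\textbf{The gap is the proof of this vanishing.} Hochschild--Serre for $\Delta \lhd \Delta \rtimes \langle\sigma\rangle$, using $\rH^{>0}(\Delta, M) = 0$ and $M^\Delta = \ol\Z_\ell$, only gives $\rH^1(\Delta \rtimes \langle\sigma\rangle, M) = 0$. Your Shapiro device for getting back to $\langle\sigma\rangle$ is circular. For $N = \ol\Z_\ell[(\Delta \rtimes \langle\sigma\rangle)/\langle\sigma\rangle] \otimes M$, the $\Delta$-invariants are $\{\sum_\delta [\delta] \otimes \delta m : m \in M\}$, and $\sigma$ acts on them through $m \mapsto \sigma m$. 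So $N^\Delta \cong M$ as a $\langle\sigma\rangle$-module, and the spectral sequence just hands back $\rH^1(\langle\sigma\rangle, M)$.

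In fact the two properties of $M$ you use cannot suffice. Take $\ell$ odd, $\Delta = \Z/2$ with trivial $\sigma$-action, and $M = \ol\Z_\ell \oplus (\mathrm{sgn} \otimes \chi)$ with $\chi$ a faithful character of $\langle\sigma\rangle$. Then $M$ is $\Delta$-projective and $M^\Delta = \ol\Z_\ell$ with trivial $\sigma$-action, yet $\rH^1(\langle\sigma\rangle, M) \cong \ol\Z_\ell/(\chi(\sigma) - 1) \neq 0$. Your alternative route has the same defect, since $\ol M^\Delta$ says nothing about $\ol M^\sigma$.

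\textbf{The missing idea is the special structure of $M$.}
\begin{itemize}
\item Simplicity of $\ol V$ makes $\ol\F_\ell[\Delta] \to \End(\ol V)$ surjective, so by Nakayama $\ol\Z_\ell[\Delta] \to \End_{\ol\Z_\ell}(V)$ is surjective.
\item Projectivity of $V$ splits this map, giving $\ol\Z_\ell[\Delta] \cong \End_{\ol\Z_\ell}(V) \oplus A$.
\item The map is $\sigma$-equivariant because $\pi$ extends to $\Delta \rtimes \langle\sigma\rangle$, so both summands are $\sigma$-stable.
\item $\sigma$ permutes the basis $\Delta$, so $\ol\Z_\ell[\Delta]$ is a permutation $\langle\sigma\rangle$-lattice.
\end{itemize}
The paper concludes by counting: $\dim_k k[\Delta]^\sigma$ equals the number of $\sigma$-orbits on $\Delta$ for both $k = \ol\Q_\ell$ and $k = \ol\F_\ell$. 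Semicontinuity on each summand then forces $\dim \End(V_k)^\sigma$ to be independent of $k$. In your language, $\rH^1(\langle\sigma\rangle, -)$ of a permutation lattice vanishes: fixed basis elements contribute $\Hom(\Z/\ell, \ol\Z_\ell) = 0$ and free orbits contribute nothing. Hence $\rH^1$ of its direct summand $M$ vanishes too.

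Granting (1), your handling of (2) and (3) is essentially the paper's, with one correction in (3). You cannot cite the final computation of Theorem~\ref{thm:eigenvalues-and-jordan-blocks-variant} for the equality of the Brauer characters of the nontrivial eigenspaces, because that computation used a field-of-definition hypothesis absent here. As in the paper, this equality must come from (2): the reductions $\ol V_j$ attached to nontrivial eigenvalues have equal dimension and are nested, hence equal.
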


\begin{proof}
    Since $\ol V$ is a simple $\ol\F_\ell[\Delta]$-module, the natural map $\ol\F_\ell[\Delta] \to \End_{\ol\F_\ell}(\ol V)$ is surjective. Since $V$ is a finite $\ol\Z_\ell$-module, the map $\ol\Z_\ell[\Delta] \to \End_{\ol\Z_\ell}(V)$ is also surjective by Nakayama's lemma, and since $V$ is a projective $\ol\Z_\ell[\Delta]$-module it follows that there is a splitting
    \begin{equation}\label{eqn:algebra-decomp}
    \ol\Z_\ell[\Delta] \cong \End_{\ol\Z_\ell}(V) \oplus A
    \end{equation}
    for some $\ol\Z_\ell$-algebra $A$. Since $\pi$ extends to a representation of $\Delta \rtimes \langle\sigma\rangle$, we see that $\sigma$ stabilizes the factors in the decomposition \eqref{eqn:algebra-decomp}.

    If $k \in \{\ol\F_\ell, \ol\Q_\ell\}$ and $V_k \coloneqq V \otimes_{\ol \Z_\ell}k$, then we have
    \begin{equation}\label{eqn:group-algebra-decomposition}
    k[\Delta]^\sigma \cong \End_k(V_k)^\sigma \oplus (A \otimes_{\ol\Z_\ell} k)^\sigma.
    \end{equation}
    Observe that $\dim_k k[\Delta]^\sigma$ is independent of the choice of $k$, because it can be computed as the set of sums $\sum_{\delta \in \Delta} c_\delta [\delta]$, where $c_\delta \in k$ satisfies $c_{\sigma(\delta)} = c_\delta$ for all $\delta \in \Delta$. In general, one has
    \[
    \dim_{\ol\Q_\ell} \End_{\ol\Q_\ell}(V_{\ol\Q_\ell})^\sigma \leq \dim_{\ol\F_\ell}\End_{\ol\F_\ell}(V_{\ol\F_\ell})^\sigma
    \]
    and similarly $\dim_{\ol\Q_\ell}(A \otimes_{\ol\Z_\ell} \ol\Q_\ell)^\sigma \leq \dim_{\ol\F_\ell}(A \otimes_{\ol\Z_\ell} \ol\F_\ell)^\sigma$, so by \eqref{eqn:group-algebra-decomposition} it follows that $\dim \End_k(V_k)^\sigma$ is independent of $k$.
    
    Now we conclude (1). Observe that $\pi(\sigma)$ preserves the flag $V_1 \subset V_1 \oplus V_2 \subset \cdots \subset V_{\ol\Q_\ell}$ of $\ol\Q_\ell$-vector spaces. If $\Lambda_i = V \cap (\bigoplus_{j=1}^i V_i)$, then $\pi(\sigma)$ preserves the flag $0 = \Lambda_0 \subset \Lambda_1 \subset \cdots \subset \Lambda_\ell = V$ of $\ol\Z_\ell$-modules. Hence $\pi(\sigma)_{\ol\F_\ell}$ preserves the flag $0 = \ol\Lambda_0 \subset \ol\Lambda_1 \subset \cdots \subset \ol\Lambda_\ell = V_{\ol\F_\ell}$, and by Lemma~\ref{lemma:maximal-jordan-blocks} it follows that
    \[
    \dim (\End_{\ol\F_\ell}(V_{\ol\F_\ell})^\sigma) \geq d_1^2 + \cdots + d_\ell^2,
    \]
    with equality if and only if the number of Jordan blocks of size $\ell - i$ for $\pi(\sigma)_{\ol\F_\ell}$ is equal to $d_{i+1} - d_i$. Since on the other hand $\End_{\ol\Q_\ell}(V_{\ol\Q_\ell})^\sigma = \prod_{i=1}^\ell \End_{\ol\Q_\ell}(V_i)$, we find
    \[
    \dim (\End_{\ol\Q_\ell}(V_{\ol\Q_\ell})^\sigma) = d_1^2 + \cdots + d_\ell^2,
    \]
    and (1) follows by Lemma~\ref{lemma:maximal-jordan-blocks} again.

    For (2), observe that $\dim_{\ol\F_\ell} \ol V^\sigma = d_\ell$ by (1). Since $\ol V_\ell \subset \ol V^\sigma$, we conclude that $\ol V_\ell = \ol V^\sigma$ and in particular $\ol V_1 \subset \ol V_\ell$. Lemma~\ref{lemma:order-ell-special-fiber-filtration}(2) shows that $(\pi(\sigma)_{\ol\F_\ell} - 1)^{\ell-1}(\ol V) = \ol V_1$, so \eqref{eq:glauberman-theorem-2} holds for $i = 0$ by definition and Lemma~\ref{lem:N_sigma-identity}. The case $i = 1$ is similar, and we leave it to the reader.\footnote{In fact, the case $i = 1$ in (2) follows from the case $i = 0$ in (2) and (3), since the latter statements and \cite[Theorem 3]{Gla68} imply that $\ol V_\ell/\ol V_1$ is an irreducible $\Delta$- (and hence $\Gamma$-)representation, and Lemma~\ref{lem:semisimplifications-Tate-cohomology-coincide} implies that $\rT^1$ and $\rT^0$ have isomorphic semisimplifications.}
    
    Finally, for (3), let $n = \Tr_{V \otimes_{\ol\Z_\ell} \ol\Q_\ell}(\pi(\sigma))$, fix a primitive $\ell$th root of unity $\zeta \in \ol\Z_\ell$, and for $0 \leq i \leq \ell - 1$ let $\lambda_i$ denote the character of $\Gamma^\sigma \times \langle\sigma\rangle$ on the $\zeta^i$-eigenspace of $\pi(\sigma)$ on $V \otimes_{\ol\Z_\ell} \ol\Q_\ell$. For $\gamma \in \Gamma^\sigma_{\ell'}$ we have then
    \begin{equation}\label{eqn:brauer-char-1}
    \Tr_{V \otimes_{\ol\Z_\ell} \ol\Q_\ell}(\gamma \rtimes \sigma) = \sum_{i=0}^{\ell - 1} \zeta^i \lambda_i(\gamma),
    \end{equation}
    Taking $\gamma = 1$, the assumption that \eqref{eqn:brauer-char-1} lies in $\Z$ implies that 
    \begin{equation}\label{eq:dimension-equalities}
    m_1 = m_2 = \cdots = m_{\ell-1} = m_0 - n.
    \end{equation}
    If $n \geq 0$, then it follows that $m_0 \geq m_i$ for all $i \geq 0$, and by Lemma~\ref{lemma:nonzero-tate-cohomology} and (1) it follows that $\ol V$ is minimal and that $\dim_{\ol\F_\ell} \rT^i(\pi(\sigma), \ol V) = m_0 - m_1 = n$. If instead $n \leq 0$, then we have $m_0 \leq m_i$ for all $i \geq 0$, and by the same reasoning it follows that $\ol V$ is maximal and $\dim_{\ol\F_\ell} \rT^i(\pi(\sigma), \ol V) = m_1 - m_0 = -n$. Thus in either case we see that $\ol V$ is extremal and $\dim_{\ol\F_\ell} \rT^i(\pi(\sigma), \ol V) = |n|$.

    By part (2), \eqref{eq:dimension-equalities} implies that the $\lambda_1 = \lambda_2 = \ldots = \lambda_{\ell-1}$ agree on $\Gamma^\sigma_{\ell'}$. Hence, using the above observations, we find that
    \begin{equation}\label{eqn:brauer-char-2}
    \sum_{i=1}^{\ell-1} \zeta^i \lambda_i(\gamma) = \lambda_0(\gamma) - \lambda_1(\gamma) = \epsilon \chi_{\rT^i(\pi(\sigma), V \otimes_{\ol\Z_\ell} \ol\F_\ell)}(\gamma)
    \end{equation}
    where $\epsilon = 1$ if $m_0 > m_1$ and $\epsilon = -1$ if $m_0 < m_1$ (and $\epsilon$ may be chosen arbitrarily if $m_0 = m_1$); note that the second equality follows from (2). Combining \eqref{eqn:brauer-char-1} and \eqref{eqn:brauer-char-2} gives \eqref{eqn:trace-and-tate-cohomology}, as desired.
\end{proof}

The Glauberman correspondence was proven in \cite{Gla68} in an essentially character-theoretic manner. The following immediate corollary of Theorem~\ref{theorem:eigenvalues-and-jordan-blocks} shows that if $S$ is cyclic of order $\ell$, then Tate cohomology realizes the Glauberman correspondence in characteristic $\ell$. We remark that, aside from the extremality claim, the same result was observed using different language for $i = 0$ in \cite{Alp76} and \cite{Dade78}; the $0$th Tate cohomology group is one example of the \emph{Brauer construction} in modular representation theory.

\begin{cor}\label{cor:glauberman-correspondence}
    Let $\Gamma$ be a finite group, and let $S = \langle\sigma\rangle$ be a cyclic group of prime order $\ell$ not dividing the order of $\Gamma$. Let $V$ be a $\ol\Z_\ell[\Gamma \rtimes S]$-module such that $V_{\ol \Q_\ell}$ is irreducible over $\ol\Q_\ell[\Gamma]$.  Let $U$ be a $\ol\Z_\ell[\Gamma^\sigma]$-module with irreducible $\ol\Q_\ell$-fiber such that $U_{\ol \Q_\ell}$ corresponds to $V$ under the Glauberman correspondence.    Then
    \[
    \rT^i(\sigma, \ol V) \cong \ol U \quad \text{for both $i \in \Z/2\Z$.}
    \]
\end{cor}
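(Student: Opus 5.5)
We will deduce the corollary from Theorem~\ref{theorem:eigenvalues-and-jordan-blocks}, taking $\Delta = \Gamma$. Since $\ell \nmid |\Gamma|$, the algebra $\ol\Z_\ell[\Gamma]$ is a product of matrix algebras over $\ol\Z_\ell$. Hence every finitely generated $\ol\Z_\ell$-free $\ol\Z_\ell[\Gamma]$-module is projective. Moreover, reduction modulo $\ell$ induces a bijection between irreducible $\ol\Q_\ell$- and $\ol\F_\ell$-representations of $\Gamma$, by \cite[Part~III, no.\ 15.5, Proposition~43]{Serre77}. In particular $\ol V$ is a simple $\ol\F_\ell[\Gamma]$-module, because $V_{\ol\Q_\ell}$ is irreducible. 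So the hypotheses of the theorem hold. The same reasoning applies to $\Gamma^\sigma$: $\ol U$ is simple, and it is determined up to isomorphism by the Brauer character of $U_{\ol\Q_\ell}$, which is simply the restriction of its ordinary character.

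Next we arrange the integrality hypothesis of part (3). The $\Gamma\rtimes S$-structure on $V_{\ol\Q_\ell}$ extending the given $\Gamma$-structure is unique up to twisting by one of the $\ell$ characters of $S$. By \cite[Theorem~3]{Gla68}, exactly one such extension $\wt\chi$ satisfies $\wt\chi(1\rtimes\sigma)\in\Z$ and \eqref{eqn:glauberman-condition}. Twisting $V$ by a character of $S$ changes the $\sigma$-action only by an $\ell$th root of unity, which is $\equiv 1$ modulo the maximal ideal. Therefore $\ol V$ changes only by twisting by a character of $S$ of order $\ell$, which is trivial over $\ol\F_\ell$. The operators $\sigma-1$ and $N_\sigma$ on $\ol V$ are thus unchanged, and so is $\rT^i(\sigma,\ol V)$ as a $\Gamma^\sigma$-module. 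We may therefore replace $V$ by the twist whose character is $\wt\chi$.

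Now apply Theorem~\ref{theorem:eigenvalues-and-jordan-blocks}(3). There is a sign $\epsilon'$ such that for every $\gamma\in\Gamma^\sigma_{\ell'} = \Gamma^\sigma$,
\[
\chi_{\rT^i(\sigma,\ol V)}(\gamma) = \epsilon'\,\wt\chi(\gamma\rtimes\sigma) = \epsilon'\epsilon\,\lambda(\gamma),
\]
where $\lambda$ is the character of $U_{\ol\Q_\ell}$. The left side is the Brauer character of a genuine module, and evaluating at $\gamma=1$ gives its dimension. Since $\lambda(1)>0$, this forces $\epsilon'\epsilon=1$. Two semisimple modules with equal Brauer characters are isomorphic, and here $|\Gamma^\sigma|$ is prime to $\ell$, so every module is semisimple. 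Hence $\rT^i(\sigma,\ol V)\cong \ol U$.

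The main point requiring care is the sign bookkeeping together with the claim that twisting by a character of $S$ does not affect $\ol V$. Alternatively, one can skip the twisting step by applying the theorem directly once the trace is an integer. Either way, positivity of dimensions removes the sign ambiguity.
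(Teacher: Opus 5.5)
Your proposal is correct and is essentially the paper's proof: take $\Delta=\Gamma$ in Theorem~\ref{theorem:eigenvalues-and-jordan-blocks}, use $\ell\nmid|\Gamma|$ to get projectivity of $V$ and simplicity of $\ol V$, and apply part (3) using the integrality $\wt\chi(1\rtimes\sigma)\in\Z$ coming from \eqref{eqn:glauberman-condition}. Your twisting step, the sign check via dimensions, and the passage from Brauer characters to an isomorphism via semisimplicity of $\ol\F_\ell[\Gamma^\sigma]$-modules make explicit what the paper leaves implicit. One small slip: for $\ell=2$ both extensions have integral trace, so ``exactly one'' should read ``at least one'', and that weaker statement is all you use.
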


\begin{proof}
    Note that $U$ is a projective $\ol\Z_\ell[\Gamma]$-module and $U \otimes_{\ol\Z_\ell} \ol\F_\ell$ is a simple $\ol\F_\ell[\Gamma]$-module since $\ell$ does not divide the order of $\Gamma$; see \cite[\S15.5, Proposition~43]{Serre77}. If $\chi$ and $\lambda$ are the characters of $\Gamma \rtimes \langle\sigma\rangle$ and $\Gamma^\sigma$ on $V_{\ol \Q_\ell}$ and $U_{\ol \Q_\ell}$, respectively, then from \eqref{eqn:glauberman-condition} we see in particular that $\chi(\sigma) \in \Z$, so Theorem~\ref{theorem:eigenvalues-and-jordan-blocks}(3) yields the result.
\end{proof}

\begin{remark}
    Theorem~\ref{theorem:eigenvalues-and-jordan-blocks} is only interesting for our purposes when $\sigma|_\Delta$ is an outer automorphism; if $\sigma|_\Delta$ is given by conjugation through an element $s \in \Delta$, then it simply says that $\rT^i(\sigma_{\ol\F_\ell}, \ol V) = 0$. Indeed, if $V$ is a projective $\ol\Z_\ell[\Delta]$-module, then the character of $V \otimes_{\ol\Z_\ell} \ol\Q_\ell$ takes the value $0$ on $s$ by \cite[\S 16.2, Theorem~36]{Serre77}. However, the vanishing of $\rT^i(\sigma, \ol V)$ is obvious a priori because $\ol V$ is a projective $\ol\F_\ell[\sigma]$-module.
    
    It would be interesting to know to what extent the projectivity and simplicity assumptions can be weakened. These assumptions cannot be removed completely, however, as Remark~\ref{remark:strict-inequality} shows.
\end{remark}

\subsubsection{An extension to some infinite groups}\label{sss:infinite-glauberman}

Below, we will want to apply a version of the Glauberman correspondence to representations of groups arising from the $\F_q$-group schemes considered in \S\ref{ss:ns-dl-reps}, which are often infinite. To this end, we explain how to extend the Glauberman correspondence slightly past the case of finite groups.

\begin{hypothesis}\label{hypothesis:infinite-glauberman}
    Throughout this section, let $\Gamma$ be a group, and suppose that there exists a central subgroup $Z \subset \Gamma$ and a finite normal subgroup $\Gamma_0 \subset \Gamma$ such that $Z$ is of finite index in $\Gamma$ and $\Gamma/\Gamma_0$ is abelian. Let $\ell$ be a prime number not dividing the order of $\Gamma_0 \times (\Gamma/Z)$, nor the order of any element in $\Gamma/\Gamma_0$. Let $\sigma$ be an automorphism of $\Gamma$ of prime order $\ell$ which stabilizes $\Gamma_0$ and acts trivially on $\Gamma/\Gamma_0$.
\end{hypothesis}

Note that since $\sigma$ is of order prime to the finite group $\Gamma_0$ and $\sigma$ acts trivially on $\Gamma/\Gamma_0$, the maps $\Gamma^\sigma \to \Gamma/\Gamma_0$ and $Z^\sigma \to Z/(Z \cap \Gamma_0)$ are surjective. In particular, we may in practice pass from $Z$ to $\bigcap_{i=0}^{\ell-1} \sigma^i(Z)$ to assume that $Z$ is $\sigma$-stable.

\begin{example}
The key example to keep in mind is the following: let $\ol{G}$ be a paraductive $\F_q$-group scheme. For all sufficiently large $\ell$, we may take $\Gamma = \ol{G}(\F_{q^\ell})$, $\Gamma_0 = \ol{G}^\circ(\F_{q^\ell})$, $Z = Z(\ol G)(\F_{q^\ell})$, and $\sigma = \Fr_q$. (See Proposition~\ref{prop:banal-primes}(4) for an important special case of this statement, with precise bounds on $\ell$.)
\end{example}

\begin{lemma}\label{lemma:infinite-glauberman}
    There is a unique bijection $\chi \mapsto \wt\chi$ from the set of irreducible $\ol\Q_\ell$-valued characters of $\Gamma^\sigma$ to the set of $\sigma$-stable $\ol\Q_\ell$-valued irreducible characters of $\Gamma$, with the property that there is some $\epsilon \in \{\pm 1\}$ such that
    \[
    \chi(\gamma) = \epsilon\wt\chi(\gamma \rtimes \sigma),
    \]
    for all $\gamma \in \Gamma^\sigma$, where we also use $\wt\chi$ to denote the unique extension of $\wt\chi$ to a character of $\Gamma \rtimes \langle\sigma\rangle$ satisfying $\wt\chi(\sigma) \in \Z$.
\end{lemma}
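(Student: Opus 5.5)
The plan is to reduce to the finite Glauberman correspondence (\cite[Theorem~3]{Gla68}, recalled in \eqref{eqn:glauberman-condition}) by passing to a finite quotient that carries the relevant central character. First, replace $Z$ by $\bigcap_i \sigma^i(Z)$ so that $Z$ is $\sigma$-stable. Every irreducible $\ol\Q_\ell$-character of $\Gamma$ or $\Gamma^\sigma$ is finite-dimensional, since $Z$ has finite index and is central. It therefore has a central character on $Z$ (respectively on $Z^\sigma$).

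Fix an irreducible character $\chi$ of $\Gamma^\sigma$, with central character $\omega$ on $Z^\sigma$. Because $\Gamma^\sigma \to \Gamma/\Gamma_0$ and $Z^\sigma \to Z/(Z\cap\Gamma_0)$ are surjective, we have $Z = Z^\sigma\cdot(Z\cap\Gamma_0)$. A $\sigma$-stable irreducible $\wt\chi$ of $\Gamma$ compatible with $\chi$ must have central character $\wt\omega$ on $Z$ extending $\omega$. The compatibility condition $\chi(\gamma) = \epsilon\wt\chi(\gamma\rtimes\sigma)$, evaluated at central $\gamma$, forces $\wt\omega|_{Z^\sigma}=\omega$. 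Moreover $\wt\omega$ must be $\sigma$-stable; since $\sigma$ has order prime to $|Z\cap\Gamma_0|$, the restriction of $\wt\omega$ to $Z\cap\Gamma_0$ is determined by its restriction to $(Z\cap\Gamma_0)^\sigma$, which lies in $Z^\sigma$.

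The main point is to produce, for each such $\omega$, a $\sigma$-stable subgroup $Z_1\subset Z$ of finite index with the following properties: $Z_1\cap\Gamma_0 = 1$, $\omega$ is trivial on $Z_1^\sigma$, and $\sigma$ acts trivially on $Z_1$. To construct it, note that $Z/(Z\cap\Gamma_0)$ embeds in the abelian group $\Gamma/\Gamma_0$, which has no $\ell$-torsion. This lets one take $Z_1 = Z^{N}$ for a suitable $N$ prime to $\ell$ and divisible by $|\Gamma_0|$ and by the order of $\omega$ modulo a finite-order twist. Here $\sigma$ acts trivially on $Z^N$ because $\sigma(z)z^{-1}\in\Gamma_0\cap Z$ and $z\mapsto\sigma(z)z^{-1}$ is a homomorphism on the abelian group $Z$.

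Making $\omega$ trivial on $Z_1^\sigma$ requires twisting $\chi$ by a character of $\Gamma^\sigma/\Gamma_0^\sigma\cong\Gamma/\Gamma_0$, as in the proof of Lemma~\ref{lemma:dl-restriction}. Such a twist pulls back to a $\sigma$-invariant character of $\Gamma$, and twisting by it commutes with the proposed correspondence. I expect this reduction to finite order central characters to be the fiddliest step, since one must check that the twisting is compatible on both sides and that $\ell$ does not divide the orders involved.

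Once $Z_1$ is in hand, $\bar\Gamma=\Gamma/Z_1$ is finite with order prime to $\ell$; this follows from the hypotheses on $\Gamma_0$ and $\Gamma/Z$ together with $[Z:Z_1]$ being prime to $\ell$. Since $\sigma$ acts trivially on $Z_1$ and has order prime to $\bar\Gamma$, we get $\bar\Gamma^\sigma = \Gamma^\sigma/Z_1$. Characters of $\Gamma$ (respectively $\Gamma^\sigma$) trivial on $Z_1$ are exactly the characters of $\bar\Gamma$ (respectively $\bar\Gamma^\sigma$). Applying the finite Glauberman correspondence to $\bar\Gamma$ gives existence and the $\epsilon$-identity. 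Uniqueness of the extension with $\wt\chi(\sigma)\in\Z$ holds because the $\ell$ extensions differ by $\ell$th roots of unity times $\wt\chi(\sigma)$. Uniqueness of the bijection then follows from the linear independence of irreducible characters of $\Gamma^\sigma$. Finally, independence of the choice of $Z_1$ follows by comparing two choices through their intersection.
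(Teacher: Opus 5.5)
Your proposal is correct and follows essentially the same route as the paper. Both arguments twist by a character of $\Gamma/\Gamma_0$ to make the central character of finite prime-to-$\ell$ order, pass to the finite prime-to-$\ell$ quotient $\Gamma/Z^N$ (where $\sigma$ fixes $Z^N$ and $(\Gamma/Z^N)^\sigma=\Gamma^\sigma/Z^N$), apply the finite Glauberman correspondence, and untwist. The requirement $Z_1\cap\Gamma_0=1$ is never actually used, and it would need $N$ divisible by the exponent of $Z_{\mathrm{tors}}$ rather than just $|\Gamma_0|$; also, like the paper, you are implicitly using that $Z$ is finitely generated so that $Z/Z^N$ is finite.
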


\begin{proof}
    As above, we may and do assume that $\sigma$ stabilizes $Z$. Let $\chi$ be an irreducible $\ol\Q_\ell$-valued character of $\Gamma^\sigma$. Let $n$ be the order of $Z \cap \Gamma_0$, so $\ell \nmid n$ by hypothesis. Since $\Gamma/\Gamma_0$ is abelian, there exists a character $\theta\colon \Gamma/\Gamma_0 \to \ol\Q_\ell^\times$ such that the central character of $\chi\cdot\theta|_{\Gamma^\sigma}$ is killed by an integer $m$ which is divisible by $n$ and not divisible by $\ell$. In this case, $\chi \cdot \theta|_{\Gamma^\sigma}$ factors through $\Gamma^\sigma/Z^m$. Since every finite order element of $Z$ is of order prime to $\ell$, we may pass to a prime-to-$\ell$ multiple of $m$ to assume that the map $Z^m \to \Gamma/\Gamma_0$ is a $\sigma$-equivariant injection. Thus after passing to a prime-to-$\ell$ multiple of $m$ we may by hypothesis assume that $\sigma$ acts trivially on $Z^m$, and $(\Gamma/Z^m)^\sigma \subset \Gamma/Z^m$ is of order prime to $\ell$ by hypothesis (since $\ell \nmid m$), so we have $(\Gamma/Z^m)^\sigma \cong \Gamma^\sigma/Z^m$. By the Glauberman correspondence for finite groups, we obtain an irreducible character $\eta$ of $\Gamma/Z^m \rtimes \langle\sigma\rangle$ and $\epsilon \in \{\pm 1\}$ satisfying
    \[
    \chi(\gamma) \cdot \theta(\gamma) = \epsilon\eta(\gamma\rtimes\sigma)
    \]
    for all $\gamma \in \Gamma^\sigma$. We define then $\wt\chi(\gamma\rtimes\sigma^n) = \eta(\gamma\rtimes\sigma^n)\theta(\gamma)^{-1}$ for all $\gamma \in \Gamma$. If $\gamma \in \Gamma^\sigma$, then we have
    \[
    \chi(\gamma) = (\chi(\gamma) \cdot \theta(\gamma)) \cdot \theta(\gamma)^{-1} = \epsilon\eta(\gamma\rtimes\sigma) \cdot \theta(\gamma)^{-1} = \epsilon\wt\chi(\gamma \rtimes\sigma).
    \]
    Note that $\wt\chi$ is an irreducible character of $\Gamma \rtimes\langle\sigma\rangle$ because $\eta$ is an irreducible character and $\theta$ is a $\sigma$-stable character (since $\sigma$ acts trivially on $\Gamma/\Gamma_0$ by hypothesis). The reverse construction is completely similar and will be left to the reader, as will the verification that these constructions define inverse bijections.
\end{proof}

By extension from the usual terminology, we will call the bijection from Lemma~\ref{lemma:infinite-glauberman} the \textit{Glauberman correspondence}. We extend this bijection by linearity to a homomorphism
\[
\Gla\co K_0(\Rep_{\ol\Q_\ell}(\Gamma^\sigma)) \to K_0(\Rep_{\ol\Q_\ell}(\Gamma)),
\]
which we will also call the Glauberman correspondence. We will similarly refer to the induced map $K_0(\Rep_{\ol\F_\ell}(\Gamma^\sigma)) \to K_0(\Rep_{\ol\F_\ell}(\Gamma))$ as the Glauberman correspondence. It seems likely that this is the same as the map $\operatorname{BC}_\ell$ from \cite[\S 2]{Cot26a} (see \cite[Remark (1.3.2) (ii)]{Kaw87}), but we do not know or check this.

\begin{remark}\label{rmk:glauberman-twisting-equivariance}
We record one observation from the proof of Lemma~\ref{lemma:infinite-glauberman}. If $\chi$ is an irreducible character of $\Gamma^\sigma$ and $\theta$ is a $\sigma$-stable character of $\Gamma/\Gamma_0$, then
\[
\Gla(\chi\theta|_{\Gamma^\sigma})=\Gla(\chi)\theta
\]
Moreover, if $m$ is prime to $\ell$ and divisible by $|Z\cap\Gamma_0|$, and if $\chi$ factors through $\Gamma^\sigma/Z^m$, then $\widetilde\chi$ factors through $\Gamma/Z^{m'}$ for some prime-to-$\ell$ multiple $m'$ of $m$ for which $\Gamma/Z^{m'}$.
\end{remark}

\begin{lemma}\label{lemma:tate-cohom-infinite-glauberman}
    Let $V$ be a $\ol\F_\ell[\Gamma \rtimes \langle\sigma\rangle]$-module which is irreducible as an $\ol\F_\ell[\Gamma]$-module. Let $U$ be an $\ol\F_\ell[\Gamma^\sigma]$-module corresponding to $V$ under the Glauberman correspondence. Then
    \[
    \rT^i(\sigma, V) \cong U \quad \text{for both $i \in \Z/2\Z$}.
    \]
\end{lemma}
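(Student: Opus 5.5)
The plan is to reduce to the finite-group statement, Corollary~\ref{cor:glauberman-correspondence}, by the same twisting-and-quotient device used in the proof of Lemma~\ref{lemma:infinite-glauberman}. As there, we first replace $Z$ by $\bigcap_i \sigma^i(Z)$ so that it is $\sigma$-stable. Since $V$ is irreducible over $\Gamma$ and $Z$ is central of finite index, $V$ is finite-dimensional and $Z$ acts on it through a character. Choose a character $\theta\co\Gamma/\Gamma_0\to\ol\F_\ell^\times$. It is automatically $\sigma$-stable, since $\sigma$ acts trivially on $\Gamma/\Gamma_0$, so $V\otimes\theta$ is again a $\Gamma\rtimes\langle\sigma\rangle$-module, with $\sigma$ acting on $\theta$ trivially. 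Choose $\theta$ so that the central character of $V\otimes\theta$ is killed by an integer $m$ prime to $\ell$ and divisible by $|Z\cap\Gamma_0|$. Twisting commutes with Tate cohomology: $\rT^i(\sigma,V\otimes\theta)\cong\rT^i(\sigma,V)\otimes\theta|_{\Gamma^\sigma}$, directly from the definitions because $\theta$ is $\sigma$-invariant. By Remark~\ref{rmk:glauberman-twisting-equivariance}, twisting by $\theta$ also commutes with $\Gla$. It therefore suffices to treat $V\otimes\theta$, so we may assume $Z^m$ acts trivially on $V$.

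Next we enlarge $m$ by a prime-to-$\ell$ factor, as in the proof of Lemma~\ref{lemma:infinite-glauberman}, so that $Z^m$ injects $\sigma$-equivariantly into $\Gamma/\Gamma_0$. Then $\sigma$ acts trivially on $Z^m$, and $\Gamma/Z^m$ is a finite group of order prime to $\ell$: its order divides a product of $|\Gamma_0|$, $[\Gamma:Z]$, and $|Z/Z^m|$, and the last is prime to $\ell$ because the torsion in $Z$ has order prime to $\ell$. Moreover $(\Gamma/Z^m)^\sigma=\Gamma^\sigma/Z^m$. Now $V$ is a representation of $\Gamma/Z^m\rtimes\langle\sigma\rangle$, irreducible over $\Gamma/Z^m$. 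Because $\ell\nmid|\Gamma/Z^m|$, it lifts uniquely to a $\ol\Z_\ell$-lattice representation $\wt V$ by \cite[Part~III, no.\ 15.5, Proposition~43]{Serre77}. Precisely, the $\Gamma$-representation lifts, and the $\sigma$-action lifts after possibly twisting by a character of $\langle\sigma\rangle$; that twist is invisible modulo $\ell$, since $\ell$th roots of unity reduce to $1$.

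Apply Corollary~\ref{cor:glauberman-correspondence} to the finite group $\Gamma/Z^m$ and $\wt V$. This gives $\rT^i(\sigma,V)\cong\ol U$, where $U_{\ol\Q_\ell}$ is the Glauberman correspondent of $\wt V_{\ol\Q_\ell}$ for the finite group. It remains to check that this finite correspondent, inflated to $\Gamma^\sigma$, is the one given by Lemma~\ref{lemma:infinite-glauberman}. This holds by the construction in that lemma's proof, which defines the infinite correspondence exactly via the finite one on $\Gamma/Z^m$, together with the uniqueness statement there. Finally, the mod-$\ell$ Glauberman correspondence is defined by reduction, and reduction is bijective on irreducibles here because all the relevant finite quotients have prime-to-$\ell$ order. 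Hence $\ol U\cong U$ as in the statement.

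The main obstacle is the bookkeeping in the second step. One must choose $m$ so that three things hold simultaneously: $Z^m$ acts trivially on the twisted $V$, $\sigma$ fixes $Z^m$ pointwise, and $\Gamma/Z^m$ has order prime to $\ell$. One must also make sure that the $\sigma$-structure on $V$ is compatible with the normalization $\wt\chi(\sigma)\in\Z$ in Lemma~\ref{lemma:infinite-glauberman}. The latter should be harmless, since modulo $\ell$ all extensions of the $\sigma$-action agree.
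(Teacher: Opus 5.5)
Your proposal is correct and follows the paper's own argument. You twist by a $\sigma$-stable character of $\Gamma/\Gamma_0$ (Remark~\ref{rmk:glauberman-twisting-equivariance}) so that a prime-to-$\ell$ power $Z^m$ acts trivially, then apply Corollary~\ref{cor:glauberman-correspondence} to the finite prime-to-$\ell$ quotient $\Gamma/Z^m$; your extra bookkeeping is exactly what the paper's two-line proof leaves implicit, namely choosing $m$ so that $\sigma$ fixes $Z^m$ with $(\Gamma/Z^m)^\sigma=\Gamma^\sigma/Z^m$, lifting $V$ with the integrally normalized $\sigma$-action (which agrees with the given one mod $\ell$), and matching the finite and infinite correspondents via uniqueness.
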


\begin{proof}
    By Remark~\ref{rmk:glauberman-twisting-equivariance}, we may twist $U$ and $V$ by an $\ol\F_\ell^\times$-valued character of $\Gamma/\Gamma_0$ to assume that there is some $m$ not divisible by $\ell$ such that $Z^m$ acts trivially on $U$ and $V$. In this case, the result follows from Corollary~\ref{cor:glauberman-correspondence}.
\end{proof}

The following lemma will be applied in the setting of parabolic induction for paraductive group schemes.

\begin{cor}\label{cor:induction-commutes-sometimes}
    Let $\Delta \subset \Gamma$ be a $\sigma$-stable subgroup, let $V$ be an $\ol\F_\ell[\Delta \rtimes \langle\sigma\rangle]$-module of finite dimension over $\ol\F_\ell$, let $U$ be the $\ol\F_\ell[\Delta]^\sigma$-module corresponding to $V$ under the Glauberman correspondence, and suppose
    \[
    \langle \ind_\Delta^\Gamma(V), \ind_\Delta^\Gamma(V) \rangle = \langle \ind_{\Delta^\sigma}^{\Gamma^\sigma}(U), \ind_{\Delta^\sigma}^{\Gamma^\sigma}(U)\rangle
    \]
    Then
    \[
    \ind_\Delta^\Gamma(V)^{\ss} = \bigoplus_{i=1}^m V_i^{e_i}
    \]
    as an $\ol\F_\ell[\Gamma]$-module, where the $V_i$ are pairwise non-isomorphic simple $\ol\F_\ell[\Gamma]$-modules such that $V_i \cong {}^\sigma V_i$. If $U_i$ is the $\ol\F_\ell[\Gamma^\sigma]$-module corresponding to $V_i$ under the Glauberman correspondence, then
    \[
    \ind_{\Delta^\sigma}^{\Gamma^\sigma}(U)^{\ss} \cong \bigoplus_{i=1}^m U_i^{e_i}.
    \]
\end{cor}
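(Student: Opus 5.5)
The plan is to compute the Tate cohomology of $\ind_\Delta^\Gamma(V)$ in two ways and then compare the inner products. Throughout I assume (implicitly in the statement) that $V$ is irreducible as an $\ol\F_\ell[\Delta]$-module, so that $U$ is defined. I also assume that $\Delta$ satisfies Hypothesis~\ref{hypothesis:infinite-glauberman} along with $\Gamma$, and that $\ind_\Delta^\Gamma(V)$ is finite-dimensional. Under these hypotheses $\ell$ is prime to the orders of the relevant finite quotients, so all representations in sight are semisimple. Hence the pairing $\langle -,-\rangle$ is $\dim\Hom$, and it equals the sum of squares of multiplicities.

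\textbf{Step 1: Tate cohomology of the induced module.} Write $\ind_\Delta^\Gamma(V)=\bigoplus_{x\in\Gamma/\Delta} xV$, on which $\sigma$ acts by permuting the summands via its action on $\Gamma/\Delta$.
\begin{itemize}
\item Each free $\langle\sigma\rangle$-orbit of cosets spans an induced $\ol\F_\ell[\sigma]$-module, so it contributes zero Tate cohomology.
\item Now consider the $\sigma$-fixed cosets. Since $\sigma$ has order prime to $\Gamma_0$ and acts trivially on $\Gamma/\Gamma_0$, a nonabelian $\rH^1$ vanishing argument (as in the remark after Hypothesis~\ref{hypothesis:infinite-glauberman}) shows these cosets are exactly $\Gamma^\sigma\Delta/\Delta\cong\Gamma^\sigma/\Delta^\sigma$.
\end{itemize}
Together these give an isomorphism of $\Gamma^\sigma$-modules
\[
\rT^a\bigl(\sigma,\ind_\Delta^\Gamma(V)\bigr)\cong\ind_{\Delta^\sigma}^{\Gamma^\sigma}\rT^a(\sigma,V)\cong\ind_{\Delta^\sigma}^{\Gamma^\sigma}(U).
\]
The last isomorphism is Lemma~\ref{lemma:tate-cohom-infinite-glauberman}, applied with $\Delta$ in place of $\Gamma$.

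\textbf{Step 2: upper bound via dévissage.} Decompose
\[
\ind_\Delta^\Gamma(V)^{\ss}=\bigoplus_i V_i^{e_i}\oplus\bigoplus_j W_j^{f_j},
\]
where the $V_i$ have $\sigma$-stable isomorphism class and the $W_j$ do not. Lemma~\ref{lemma:tate-cohom-devissage} embeds $\rT^a(\sigma,\ind V)^{\ss}$ into $\bigoplus_i \rT^a(\sigma,V_i)^{e_i}$. By Lemma~\ref{lemma:tate-cohom-infinite-glauberman} again, this is $\bigoplus_i U_i^{e_i}$. The $U_i$ are pairwise non-isomorphic because the Glauberman correspondence is a bijection (Lemma~\ref{lemma:infinite-glauberman}). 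Combined with Step 1, this gives
\[
\ind_{\Delta^\sigma}^{\Gamma^\sigma}(U)^{\ss}\cong\bigoplus_i U_i^{c_i}\quad\text{with }0\le c_i\le e_i.
\]

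\textbf{Step 3: compare inner products.} The two sides of the hypothesis are
\[
\langle\ind V,\ind V\rangle=\sum_i e_i^2+\sum_j f_j^2
\qquad\text{and}\qquad
\langle\ind U,\ind U\rangle=\sum_i c_i^2.
\]
The hypothesis says these are equal. Since $c_i\le e_i$, this forces every $f_j=0$ and every $c_i=e_i$, which is exactly both assertions of the corollary.

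\textbf{Main obstacle.} I expect the delicate part to be Step 1 for infinite $\Gamma$. Two things need care there.
\begin{itemize}
\item The identification of the $\sigma$-fixed cosets with $\Gamma^\sigma/\Delta^\sigma$. I would first try a Lang-type argument on the finite group $\Gamma_0$: use that $\sigma$ has order prime to $|\Gamma_0|$ and acts trivially on $\Gamma/\Gamma_0$, and invoke Schur--Zassenhaus-style cohomology vanishing for coprime actions.
\item Checking that the $\sigma$-structure on $\ind V$ restricts on $\Delta\cdot 1$ to the given $\sigma$-structure on $V$, so that the fixed-coset summands really contribute $\rT^a(\sigma,V)$ induced up.
\end{itemize}
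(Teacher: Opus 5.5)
Your proposal is correct and is essentially the paper's proof written out. The paper calls the corollary immediate from \cite[Proposition~3.3]{TV} (your Step~1, including the identification of the $\sigma$-fixed cosets via vanishing of $\rH^1(\sigma,\Delta)$), Lemma~\ref{lemma:tate-cohom-devissage} and Lemma~\ref{lemma:tate-cohom-infinite-glauberman} (your Step~2), and it leaves your multiplicity count in Step~3 implicit.

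One small inaccuracy: $\ind_\Delta^\Gamma(V)$ need not be semisimple for infinite $\Gamma$ (e.g.\ $\Gamma=\Gamma_0\times\Z$ with $\Delta=1\times\ell\Z$). This is harmless, because Step~3 only needs the Grothendieck-group pairing with irreducibles orthonormal (as in \S\ref{ss:lusztig-restriction}). For that pairing, $\langle\ind_\Delta^\Gamma(V),\ind_\Delta^\Gamma(V)\rangle=\sum_i e_i^2+\sum_j f_j^2$ holds regardless.
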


\begin{proof}
    This is immediate from Lemma~\ref{lemma:tate-cohom-devissage}, Lemma~\ref{lemma:tate-cohom-infinite-glauberman}, and \cite[Proposition~3.3]{TV}. 
\end{proof}

\begin{remark}\label{remark:isaacs-navarro}
    Observe that if $\ind_\Delta^\Gamma(V)$ is irreducible, then Corollary~\ref{cor:induction-commutes-sometimes} says that $\ind_\Delta^\Gamma(V)$ corresponds to $\ind_{\Delta^\sigma}^{\Gamma^\sigma}(U)$ under the Glauberman correspondence. This is precisely \cite[Theorem~A (a)]{IN91} in the case that the group $S$ of \textit{loc.\ cit.}\ is solvable.
\end{remark}

\subsubsection{Fields of definition} The representation $V$ in Theorem~\ref{thm:eigenvalues-and-jordan-blocks-variant} will in practice be obtained as a lattice inside a representation of a group $\Gamma \cong \ol G(\F_{q^\ell})$ as above, and the latter will arise from the Glauberman correspondence with respect to a generator of $\Gal(\F_{q^\ell}/\F_q)$. In order to check the hypotheses of Theorem~\ref{thm:eigenvalues-and-jordan-blocks-variant} in practice, we will use the following two lemmas.

\begin{lemma}\label{lemma:integer-valued-characters}
    Let $\widetilde{\chi}$ be an irreducible $\sigma$-stable character of $\Gamma$, and let $\chi$ be the irreducible character of $\Gamma^\sigma$ corresponding to $\widetilde{\chi}$ under the Glauberman correspondence.
    \begin{enumerate}
        \item If $\chi$ takes values in a number field $K$, then so does $\widetilde{\chi}$.
        \item If $\ell'$ is a prime number not dividing the order of $\Gamma^\sigma$ and $\gamma \in \Gamma$ is of order $\ell'$, then $\widetilde{\chi}(\gamma) \in \Z$.
    \end{enumerate}
\end{lemma}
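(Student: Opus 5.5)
The plan is to derive both parts from Galois equivariance of the Glauberman correspondence of Lemma~\ref{lemma:infinite-glauberman}, which follows from its uniqueness. Let $\tau$ be a field automorphism of $\ol\Q_\ell$. For any character $\psi$ of a group, $\tau \circ \psi$ is again a character: apply $\tau$ to matrix coefficients, after observing that irreducible representations of $\Gamma$ are finite-dimensional with values algebraic over the field generated by the central character. Postcomposition with $\tau$ preserves irreducibility and $\sigma$-stability.

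The first step is the equivariance claim. Let $\wt\chi$ also denote the canonical extension to $\Gamma \rtimes \langle\sigma\rangle$ with $\wt\chi(\sigma) \in \Z$. Then $\tau \circ \wt\chi$ is an irreducible extension of $\tau\circ\wt\chi|_\Gamma$ whose value at $\sigma$ is still an integer. Moreover, for $\gamma \in \Gamma^\sigma$,
\[
(\tau\circ\wt\chi)(\gamma \rtimes \sigma) = \tau(\epsilon\chi(\gamma)) = \epsilon\,(\tau\circ\chi)(\gamma).
\]
By the uniqueness in Lemma~\ref{lemma:infinite-glauberman} (together with uniqueness of the integer-valued extension), $\tau\circ\wt\chi$ is the Glauberman correspondent of $\tau\circ\chi$. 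Consequently, if $\tau\circ\chi = \chi$, then bijectivity of the correspondence gives $\tau\circ\wt\chi = \wt\chi$.

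For (1), apply this with all $\tau \in \Aut(\ol\Q_\ell/K)$, where $K$ is embedded in $\ol\Q_\ell$. Each such $\tau$ fixes $\chi$, hence fixes every value of $\wt\chi$. Since $\ol\Q_\ell$ is algebraically closed of characteristic $0$, the fixed field of $\Aut(\ol\Q_\ell/K)$ is $K$, so $\wt\chi$ takes values in $K$.

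For (2), first note that $\wt\chi(\gamma)$ is a sum of $\ell'$-th roots of unity, so it lies in $\Z[\mu_{\ell'}]$. Next we need a field $E$ generated by the values of $\chi$ over which $\mu_{\ell'}$ stays as large as possible. Using Remark~\ref{rmk:glauberman-twisting-equivariance}, we may twist by a $\sigma$-stable character of $\Gamma/\Gamma_0$ so that $\chi$ factors through a finite quotient of $\Gamma^\sigma$. Twisting changes $\wt\chi(\gamma)$ only by a root of unity that we must also control; since $\gamma^{\ell'} = 1$, the twist's value at $\gamma$ is an $\ell'$-th root of unity, and it should be chosen trivial at $\gamma$ or else tracked. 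Then $\chi$ takes values in $\Q(\mu_n)$, where $n$ is an exponent of that quotient, and the hypothesis $\ell' \nmid |\Gamma^\sigma|$ gives $\gcd(n,\ell') = 1$. In any case, the key point is that the values of $\chi$ lie in a cyclotomic field $\Q(\mu_n)$ with $\ell' \nmid n$. Hence for each $a \in (\Z/\ell')^\times$ there is $\tau \in \Aut(\ol\Q_\ell)$ fixing $\mu_n$ pointwise and acting as $\zeta \mapsto \zeta^a$ on $\mu_{\ell'}$, because $\Q(\mu_n)$ and $\Q(\mu_{\ell'})$ are linearly disjoint. Such $\tau$ fixes $\chi$, so it fixes $\wt\chi$, and therefore $\wt\chi(\gamma) \in \Q(\mu_{\ell'})$ is fixed by all of $\Gal(\Q(\mu_{\ell'})/\Q)$. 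So $\wt\chi(\gamma)$ is a rational algebraic integer, i.e. lies in $\Z$.

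The main obstacle is making the "$\chi$ has values in $\Q(\mu_n)$ with $\ell' \nmid n$" step rigorous when $\Gamma^\sigma$ is infinite. Here one must read the hypothesis on $|\Gamma^\sigma|$ appropriately, via the finite quotient $\Gamma^\sigma/Z^m$ for suitable $m$ prime to $\ell'$, and check that the twisting from Remark~\ref{rmk:glauberman-twisting-equivariance} does not spoil integrality at $\gamma$.
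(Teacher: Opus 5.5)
Your argument is correct and is essentially the paper's proof: Galois equivariance of the Glauberman correspondence deduced from its uniqueness, then (1) by a fixed-field argument, and (2) from $\Q(\mu_n)\cap\Q(\mu_{\ell'})=\Q$ together with integrality of character values. The ``obstacle'' you flag for (2) is not a real gap. The hypothesis that $\ell'$ does not divide the order of $\Gamma^\sigma$ presupposes that $\Gamma^\sigma$ is finite, and since $\Gamma^\sigma\to\Gamma/\Gamma_0$ is surjective this forces $\Gamma$ to be finite, so $\chi$ takes values in $\Q(\mu_{|\Gamma^\sigma|})$ with no twisting needed. Your use of $\Aut(\ol\Q_\ell)$ also lets (1) go through directly, without the paper's reduction to finite $\Gamma$.
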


\begin{proof}
    As observed following Hypothesis~\ref{hypothesis:infinite-glauberman}, the map $\Gamma^\sigma \to \Gamma/\Gamma_0$ is surjective. Thus by Remark~\ref{rmk:glauberman-twisting-equivariance} we may pass from $\chi$ to $\chi \otimes \theta$ for some character $\theta$ of $\Gamma/\Gamma_0$ to assume that $\chi$ factors through a finite quotient of $\Gamma$ of order prime to $\ell$; we may then pass to such a quotient to assume that $\Gamma$ is finite. Observe that if $s \in \Gal(\ol\Q/\Q)$ then by uniqueness and the relation \eqref{eqn:glauberman-condition}, if $\widetilde{\chi^s}$ is the irreducible character of $\Gamma$ corresponding to the twist $\chi^s$ under the Glauberman correspondence, then $\widetilde{\chi^s} = (\widetilde{\chi})^s$. Taking $s \in \Gal(\ol K/K)$ shows (1). For (2), observe that $\chi$ is valued in $\Q(\mu_{|\Gamma^\sigma|})$, while $\widetilde\chi(\gamma) \in \Q(\mu_{\ell'})$. Since $\Q(\mu_{|\Gamma^\sigma|}) \cap \Q(\mu_{\ell'}) = \Q$, (1) shows that $\widetilde\chi(\gamma) \in \Q$ and hence $\widetilde\chi(\gamma) \in \Z$ by \cite[\S6.5, Proposition~15]{Serre77}.
\end{proof}

\begin{lemma}\label{lemma:character-values-irreducible-factors}
    Let $K/\Q_\ell^{\mathrm{unr}}$ be a finite extension, and let $V$ be a finite-dimensional representation of $\Gamma$ over $K$. If the character of each irreducible factor of $V_{\ol K}$ takes values in $K$, then each irreducible factor of $V$ is absolutely irreducible.
\end{lemma}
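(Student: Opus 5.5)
We reduce to the standard fact that the Brauer group of a finite extension of $\Q_\ell^{\unr}$ is trivial, so that an irreducible representation over $\ol K$ whose character is $K$-valued descends to $K$. We also need that $\Gamma$ acts on $V$ through something controlled. First, by twisting by a character of $\Gamma/\Gamma_0$ (as in Remark~\ref{rmk:glauberman-twisting-equivariance}) and passing to a quotient by a finite-index central subgroup $Z^m$ acting trivially, we would like to assume that $\Gamma$ is finite. Since the statement concerns a single finite-dimensional $V$, we instead argue directly. The image of $\Gamma$ in $\GL(V)$ need not be finite, but the relevant object is the $K$-algebra $A$ spanned by the image of $K[\Gamma]$, which is finite-dimensional.

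Let $W$ be an irreducible $K[\Gamma]$-subquotient of $V$. Then $D=\End_{K[\Gamma]}(W)$ is a finite-dimensional division algebra over $K$. Its center $E$ is a finite field extension of $K$, and $D$ defines a class in $\mathrm{Br}(E)$. By Wedderburn theory, $W_{\ol K}$ decomposes as a sum of Galois conjugates (indexed by the $K$-embeddings of $E$ into $\ol K$) of an absolutely irreducible $\ol K[\Gamma]$-module $X$, each appearing with multiplicity equal to the Schur index $\sqrt{\dim_E D}$. These conjugates are pairwise non-isomorphic because they are distinguished by how $E\subset$ the center of $A_{\ol K}$ acts. Their characters are the conjugates of $\chi_X$ under $\Gal(\ol K/K)$.

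By hypothesis, $\chi_X$ takes values in $K$, so all the conjugates have the same character. Characters determine semisimple representations in characteristic $0$, at least after reducing to a finite-dimensional semisimple quotient algebra of $A_{\ol K}$; here we use that $K[\Gamma]$ acts on $V$ through a finite-dimensional algebra whose semisimple quotient is a product of matrix algebras over $\ol K$, on which linear independence of characters holds. Hence there is only one conjugate, that is, $E=K$.

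Then $D$ is a central division algebra over $K$. Since $K$ is a finite extension of $\Q_\ell^{\unr}$, it is a field of cohomological dimension $\leq 1$ (indeed $C_1$ by Lang), so $\mathrm{Br}(K)=0$ and $D=K$. Therefore $W_{\ol K}\cong X$ is irreducible, i.e.\ $W$ is absolutely irreducible.

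The main obstacle is the step where characters are required to separate the Galois conjugates for a possibly infinite $\Gamma$. We would handle it by working inside the finite-dimensional image algebra and using linear independence of characters of its simple modules, which holds in characteristic $0$. Alternatively, we would first twist and quotient, as in the proof of Lemma~\ref{lemma:infinite-glauberman}, to reduce to a finite group and cite \cite[\S12.2]{Serre77}.
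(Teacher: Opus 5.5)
Your argument is correct and is essentially the paper's proof: Schur's lemma gives a division algebra $D$ with center $E$, $W_{\ol K}$ decomposes into $[E:K]$ Galois-conjugate pieces with Schur-index multiplicity (the content of Serre's \S12.2, which the paper cites), and the Brauer group of any finite extension of $\Q_\ell^{\unr}$ vanishes. The differences are only in ordering and in how infinite $\Gamma$ is handled. The paper first uses Brauer vanishing on the center to make $D$ commutative, then applies Serre after twisting to reduce to a finite group. You instead use the $K$-valued characters to force $E=K$ before invoking $\mathrm{Br}(K)=0$, and you treat infinite $\Gamma$ directly through the finite-dimensional image algebra. That algebra should be the image $A_W$ in $\End_K(W)$ rather than in $\End_K(V)$, so that $E$ is genuinely its center.
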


\begin{proof}
    Let $V= \bigoplus_{i=1}^m V_i^{\oplus n_i}$ denote the isotypic decomposition. By Schur's lemma, we have $\End_{K[\Gamma]}(V) \cong \prod_{i=1}^m \Mat_{n_i}(D_i)$, where $D_i = \End_{K[\Gamma]}(V_i)$ is a division algebra which is finite-dimensional over its center $K_i$, itself a finite extension of $K$. Note that the Brauer group of each $K_i$ is trivial by class field theory, so $D_i = K_i$ for all $i$. By \cite[\S12.2, Proposition~35]{Serre77} (which applies after twisting $V$ by a character of $\Gamma$, as usual), it follows that each $V_i$ is absolutely irreducible.
\end{proof}

\subsubsection{Modular reduction}

Later (in \cite{CF26a}), we will need the following lemma, which is a mild extension of \cite[Part~III, no.\ 15.5, Proposition~43]{Serre77}.

\begin{lemma}\label{lemma:infinite-glauberman-modular-reduction}
    Let $\ell$ be a prime number, and let $\Gamma$ be a group with a finite normal subgroup $\Gamma_0 \subset \Gamma$ satisfying Hypothesis~\ref{hypothesis:infinite-glauberman}. Let $\rho_1$ and $\rho_2$ be two $\ol\Z_\ell$-representations of $\Gamma$ such that
    \begin{enumerate}
        \item $(\rho_1)_{\ol\Q_\ell}$ and $(\rho_2)_{\ol\Q_\ell}$ are irreducible,
        \item $(\rho_1)_{\ol\F_\ell}$ and $(\rho_2)_{\ol\F_\ell}$ are isomorphic.
    \end{enumerate}
    Then $(\rho_1)_{\ol\F_\ell}$ and $(\rho_2)_{\ol\F_\ell}$ are irreducible, and there exists a character $\chi\co \Gamma/\Gamma_0 \to 1 + \fm_{\ol\Z_\ell}$ such that $\rho_2 \cong \rho_1 \otimes \chi$.
    
    Conversely, if $\ol\rho$ is an irreducible $\ol\F_\ell$-representation of $\Gamma$, then there exists a $\ol\Z_\ell$-representation $\rho$ of $\Gamma$ such that $\rho_{\ol\F_\ell} \cong \ol\rho$ and $\rho_{\ol\Q_\ell}$ is irreducible.
\end{lemma}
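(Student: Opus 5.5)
The plan is to reduce to the finite-group case, \cite[Part~III, no.\ 15.5, Proposition~43]{Serre77}, by twisting by characters of the abelian quotient $\Gamma/\Gamma_0$ and passing to a finite quotient $\Gamma/Z^m$ of order prime to $\ell$, just as in the proofs of Lemma~\ref{lemma:infinite-glauberman} and Lemma~\ref{lemma:tate-cohom-infinite-glauberman}. The key structural input is that $\Gamma_0$ and $\Gamma/Z$ have order prime to $\ell$, and that no element of $\Gamma/\Gamma_0$ has order $\ell$. Hence the only way two lattices can differ modulo $\ell$ is through the torsion-free, pro-$\ell$-visible part of the center, and that part is absorbed by characters valued in $1+\fm_{\ol\Z_\ell}$.

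For the first assertion, I will proceed as follows.
\begin{enumerate}
\item Note that $Z$ is central, so by Schur it acts on $(\rho_1)_{\ol\Q_\ell}$ through a character $\omega_1\colon Z \to \ol\Z_\ell^\times$, and likewise for $\rho_2$ with $\omega_2$. Since $(\rho_1)_{\ol\F_\ell}\cong(\rho_2)_{\ol\F_\ell}$, the reductions $\ol\omega_1$ and $\ol\omega_2$ agree. Thus $\omega_2\omega_1^{-1}$ takes values in $1+\fm_{\ol\Z_\ell}$.
\item Extend this character from $Z$ to $\Gamma$, trivially on $\Gamma_0$. First, $\omega_2\omega_1^{-1}$ is trivial on $Z\cap\Gamma_0$, since that group is finite of prime-to-$\ell$ order and $1+\fm_{\ol\Z_\ell}$ has no prime-to-$\ell$ torsion. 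So it descends to $Z\Gamma_0/\Gamma_0 \subset \Gamma/\Gamma_0$, a finite-index subgroup of a finitely generated abelian group whose index is prime to $\ell$.
\item Extend from $Z\Gamma_0/\Gamma_0$ to all of $\Gamma/\Gamma_0$. This is possible because $1+\fm_{\ol\Z_\ell}$ is divisible, hence injective as an abelian group, and the extension may be chosen with values in $1+\fm_{\ol\Z_\ell}$. The main subtlety is uniqueness of the extension. Two extensions differ by a character of a finite group of prime-to-$\ell$ order valued in $1+\fm$, which must therefore be trivial, so the extension $\chi$ is unique.
\item Replace $\rho_1$ by $\rho_1\otimes\chi$. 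This does not change the reduction, and now $\rho_1$ and $\rho_2$ have the same central character $\omega$.
\item Choose a finite-order character $\theta$ of $\Gamma/\Gamma_0$, valued in prime-to-$\ell$ roots of unity times $1+\fm$, such that $\omega\theta|_Z$ has finite prime-to-$\ell$ order. Concretely, take $\theta$ inverse to the $(1+\fm)$-part of $\omega$, using the same extension argument.
\item With this choice, both representations factor through the finite group $\Gamma/Z^{m}$, whose order is prime to $\ell$ for suitable $m$.
\item Serre's Proposition~43 then gives that the reductions are irreducible and that the two lattices are isomorphic after inverting $\ell$. This yields $\rho_2\cong\rho_1\otimes\chi$.
\end{enumerate}

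For the converse, let $\ol\rho$ be given. Its central character restricted to $Z$ has finite prime-to-$\ell$ order, since $\ol\F_\ell^\times$ is torsion of prime-to-$\ell$ order. Hence $\ol\rho$ factors through $\Gamma/Z^m$ for some $m$ prime to $\ell$, chosen divisible by $|Z\cap\Gamma_0|$ as in Remark~\ref{rmk:glauberman-twisting-equivariance}. This group is finite of order prime to $\ell$, using the hypothesis on element orders in $\Gamma/\Gamma_0$. Serre's result then lifts $\ol\rho$ to a $\ol\Z_\ell$-representation with irreducible generic fiber.

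The step I expect to be the main obstacle is the finiteness and prime-to-$\ell$ claim for $\Gamma/Z^m$. To settle it, I would bound $|\Gamma/Z^m|$ by $|\Gamma/Z|\cdot|Z/Z^m|$. Here $Z/Z^m$ is a finitely generated abelian group killed by $m$, so it is finite of order dividing a power of $m$. Finite generation of $Z$ follows from $\Gamma/\Gamma_0$ being finitely generated, which in turn follows from $Z$ having finite index and $\Gamma_0$ being finite.
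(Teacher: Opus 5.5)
Your route is the paper's: twist by characters of $\Gamma/\Gamma_0$ until everything factors through a finite quotient of order prime to $\ell$, then apply \cite[Part~III, no.\ 15.5, Proposition~43]{Serre77}. Your Steps 1--4 make explicit what the paper leaves implicit, and they are fine. They build $\chi$ by comparing central characters modulo $1+\fm_{\ol\Z_\ell}$ and extending using divisibility of $1+\fm_{\ol\Z_\ell}$.

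The problem is the step you yourself flagged: your justification of finite generation is false. That $Z$ has finite index and $\Gamma_0$ is finite does not make $\Gamma/\Gamma_0$ finitely generated. Finite index only gives that $Z$ is finitely generated if and only if $\Gamma$ is, and Hypothesis~\ref{hypothesis:infinite-glauberman} forces neither. For instance, take $\Gamma = \Gamma_0 \times \bigoplus_{n \geq 1}\Z$ with $\sigma$ acting on $\Gamma_0$ only. This satisfies the hypothesis, but $Z/Z^m$ is infinite for every $m > 1$, and a character of $Z$ with values in prime-to-$\ell$ roots of unity can have infinite image. You rely on finiteness in three places:
\begin{itemize}
\item in Step 5, since the prime-to-$\ell$ part of $\omega$ has finite order only if its image is finite (incidentally, the $\theta$ you construct there is usually not of finite order, which is harmless);
\item in Step 6, for finiteness of $\Gamma/Z^m$;
\item in the converse, where the central character of $\ol\rho$ need not have finite order.
\end{itemize}

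This is a gap in your write-up rather than in the strategy, and the paper's sketch tacitly needs the same finiteness. In every application, $\Gamma/\Gamma_0$ embeds in $\pi_0(\ol G)(\F_{q^\ell})$, which is finitely generated by Definition~\ref{def:paraductive}(2), and under that assumption your argument goes through.

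In the stated generality, twisting alone cannot work. Take, e.g., $\Gamma = (\Gamma_0 \times \mu_{m^\infty})/\mu_m$ for a suitable nonabelian $\Gamma_0$ with central $\mu_m$, and a $\rho$ whose central character is nontrivial on $\mu_m$. Since a finite-image character of the divisible group $\mu_{m^\infty}$ is trivial, no twist gives $\rho$ finite image.

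A clean repair is to pass to the subgroup $\Gamma'$ generated by $\Gamma_0$ and a set of coset representatives for $Z$. Then $\Gamma = \Gamma' Z$, and $\Gamma'$ is finitely generated. Also $Z \cap \Gamma'$ has finite index in $\Gamma'$ and so is finitely generated. Restriction to $\Gamma'$ preserves irreducibility because $Z$ acts by scalars. Run your argument on $\Gamma'$ and glue back along $Z$:
\begin{itemize}
\item in the first part, set $\chi(gz) = \chi'(g)\,(\omega_2\omega_1^{-1})(z)$;
\item in the converse, set $\rho(gz) = \rho'(g)\,[\ol\omega](z)$, where $[\ol\omega]$ is the Teichm\"uller lift of the central character of $\ol\rho$.
\end{itemize}
Both are well defined. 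For $\chi$, the isomorphism on $\Gamma'$ forces $\chi' = \omega_2\omega_1^{-1}$ on $Z \cap \Gamma'$. For $\rho$, your $\rho'$ factors through a finite group of prime-to-$\ell$ order, so $Z\cap\Gamma'$ acts on it through $[\ol\omega]$.
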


\begin{proof}
    Observe that $\ol\Z_\ell^\times \cong \ol\F_\ell^\times \times (1 + \fm_{\ol\Z_\ell})$. Because $\Gamma_0$ is a finite normal subgroup of $\Gamma$ of order prime to $\ell$ and $\Gamma/\Gamma_0$ is abelian and $\Gamma$ admits a central subgroup of finite index prime to $\ell$, for the first claim we may twist $\rho_1$ and $\rho_2$ to assume that there is a central subgroup $Z_0 \subset \Gamma$ such that $\Gamma/Z_0$ is of finite order prime to $\ell$ and $\rho_1$ and $\rho_2$ factor through $\Gamma/Z_0$. Then irreducibility of representations of $\Gamma/Z_0$ under reduction modulo $\ell$ follows, since $\ell \nmid |\Gamma/Z_0|$. For the claim of the second paragraph, we may perform a similar twisting to assume $\ol\rho$ factors through $\Gamma/Z_0$. Thus we may pass from $\Gamma$ to $\Gamma/Z_0$ to assume that $\Gamma$ is finite of order prime to $\ell$. In this case, the claim follows from \cite[Part~III, no.\ 15.5, Proposition~43]{Serre77}.
\end{proof}

\section{Large prime degree base change}\label{sec:large-prime-degree-base-change}

In this section, we specialize the results of the previous section on the Glauberman correspondence to the case of ``large'' prime degree base change for paraductive $\F_q$-group schemes $\ol G$. Specifically, we will show that the Glauberman correspondence preserves cuspidality and Lusztig series.

\subsection{Banal primes}

We begin by analyzing conditions under which we may apply the Glauberman correspondence when $\ol G$ is the special fiber of a point stabilizer in the Bruhat--Tits building.

\begin{defn}\label{def:banal-prime}
If $\Gamma$ is a locally profinite group, then we say that a prime number $\ell$ is \emph{banal} for $\Gamma$ if $\ell$ does not divide the pro-order of any compact open subgroup of $\Gamma$.
\end{defn}

\begin{prop}\label{prop:banal-primes}
    Let $\ell$ be a banal prime for $G(F)$ such that $\ell > \rk G+1,$\footnote{This assumption is not quite optimal for every claim that follows, as the proof shows, but it does not follow from banality and the claims can fail without it. For example, let $F = \Q_2$, let $\ell = 3$, let $D$ be a central division algebra of dimension $9$ over $F$, and let $G = D^1$. Then $\ell$ is banal for $G$ but $\ell = \rk G + 1$ and (1), (2), and (3) all fail (and therefore (4) also fails): for (1) and (2), this follows from the fact that $G_{F_\ell} \cong \SL_{3, F_\ell}$ contains an isotropic torus isomorphic to $\Res_{F_{2\ell}/F_\ell} \G_m$, which has pro-order divisible by $\ell$. For (3), the proof of \cite[Theorem~10.3.1]{KP} shows that $\cB(G) = \{x\}$ and $x$ is the barycenter of a chamber in $\cB(G_{F_\ell})$.} and let $x \in \cB(G)$. Then the following properties hold. 
    \begin{enumerate}
        \item If $S$ is a maximal split $F$-torus of $G$, then $S_{F_\ell}$ is a maximal split $F_\ell$-torus of $G_{F_\ell}$.
        \item The prime $\ell$ is banal for $G(F_\ell)$,
        \item If $[x]$ is a vertex in $\cB(G_{\der})$, then it is also a vertex in $\cB((G_{\der})_{F_\ell})$,
        \item Let $\ol G_{[x]}$ be the quotient of the special fiber of the smooth affine $\cO_F$-group scheme associated to $G(F)_{[x]}$ by its unipotent radical, and let $\rho$ be a cuspidal $\ol\Q_\ell$-representation of $\ol G_{[x]}(\F_q)$ (in the sense of \S\ref{sss:cusp}). The tuple 
        \[
        (\Gamma = \ol G_{[x]}(\F_{q^\ell}), \Gamma_0 = \ol G_{[x]}^\circ(\F_{q^\ell}), Z = Z(\ol G)(\F_{q^\ell}),\sigma = \Fr_q)
        \]
        satisfies Hypothesis~\ref{hypothesis:infinite-glauberman}.
    \end{enumerate}
\end{prop}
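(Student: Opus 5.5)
The key input throughout is that banality of $\ell$ for $G(F)$ means $\ell$ divides no $|\ol G_{[y]}^\circ(\F_q)|$, hence no $|\mathsf{T}(\F_q)|$ for $\F_q$-tori arising in reductive quotients. Since $|\ol H(\F_q)|$ is a product of cyclotomic polynomials in $q$, this forces $q$ to have multiplicative order modulo $\ell$ larger than the relevant degrees. I plan to use the bound $\ell>\rk G+1$ to convert ``$\ell\nmid \Phi_d(q)$ for the $d$ occurring over $F$'' into the same statement over $F_\ell$.

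For (1), I would argue by contradiction. Suppose $S_{F_\ell}$ were not maximal split. Then some $F$-anisotropic subtorus of a maximal torus containing $S$ (for instance an anisotropic part of $Z_G(S)$, or of a minimal Levi) would become split, or at least isotropic, over $F_\ell$. The Galois group of its splitting field would then contain an element of order $\ell$, because Frobenius acquires order divisible by $\ell$ on the character lattice only in that case. A finite-order automorphism of a lattice of rank $r\le \rk G$ has order dividing $\prod_{d:\varphi(d)\le r}d$. If $\ell$ divides that order, then $\ell-1\le r$, contradicting $\ell>\rk G+1$. Hence the Frobenius action on the characters of $Z_G(S)$ has order prime to $\ell$, so its fixed points do not grow over $F_\ell$, which gives (1).

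For (2), Bruhat–Tits theory reduces banality for $G(F_\ell)$ to showing $\ell\nmid|\ol G_{y}(\F_{q^\ell})|$ for facets $y$ of $\cB(G_{F_\ell})$. I would write this order as $q^N\prod_i \Phi_{d_i}(q^\ell)$ up to component-group factors. If $\ell\mid \Phi_d(q^\ell)$, then the order of $q$ mod $\ell$ divides $\ell d$, and it is prime to $\ell$, so it divides $d$. Since each $d\le$ roughly $\rk G+1$ reflects eigenvalues of finite-order lattice automorphisms, this produces a torus over $F$ whose point count is divisible by $\ell$, contradicting banality for $G(F)$. For the component groups, the orders of $\pi_0$ and $H^1$ of finite groups of multiplicative type have prime divisors at most $\rk G+1$. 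I expect the main obstacle to be making the facet comparison precise; by (1) and Galois descent, the facets of $\cB(G_{F_\ell})$ fixed by $\Fr$ correspond to facets of $\cB(G)$.

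For (3), $\Gal(F_\ell/F)$ acts on the facet of $\cB(G_{\der,F_\ell})$ containing $[x]$. It permutes that facet's vertices through a group of order dividing $\ell$ acting on at most $\rk+1<\ell$ vertices, so the action fixes each vertex. Using (1), the dimension of the facet equals the split rank, so the facet coincides with the facet of $\cB(G_\der)$, which is $\{[x]\}$. Alternatively, a nontrivial $F_\ell$-facet with $[x]$ in its interior would give a Frobenius orbit of vertices of size $\ell$, which is impossible. For (4), I would check Hypothesis~\ref{hypothesis:infinite-glauberman} term by term. $\Gamma_0$ is finite and normal. $\Gamma/\Gamma_0$ injects into $\pi_0(\ol G_{[x]})(\ol\F_q)$, which is finitely generated abelian by Proposition~\ref{prop:main-paraductive-example}. $Z\cdot\Gamma_0$ has finite index by paraductivity. $\Fr_q$ has order $\ell$ and acts trivially on $\pi_0$ after possibly passing to the fixed points, using Lang's theorem and that $\pi_0$ is unramified. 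The remaining condition, $\ell\nmid|\Gamma_0|\cdot|\Gamma/Z|$, follows from (2) and (3): $\Gamma_0=\ol G^\circ_{[x]}(\F_{q^\ell})$ is a parahoric reductive quotient for $G(F_\ell)$ at the vertex $[x]$. Finally, the torsion of $\Gamma/\Gamma_0$ is prime to $\ell$ because $\pi_0$ torsion is controlled by the fundamental group of $G$ and $H^1$ of tori, whose relevant primes are at most $\rk G+1$.
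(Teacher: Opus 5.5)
\textbf{Main gap: part (1).} The gap is in (1), which carries the real content and on which (2) and (3) depend. Your lattice argument proves only this: for any $F$-torus of rank at most $\rk G$, the Galois image on its character lattice has order prime to $\ell$, so its split rank does not grow over $F_\ell$.

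The bridging step, ``if $S_{F_\ell}$ were not maximal split, some anisotropic torus attached to $Z_G(S)$ would become isotropic over $F_\ell$,'' is asserted, not proved. Your concluding sentence controls only the character lattice of $Z_G(S)$, which sees only its central torus. The anisotropic kernel $Z_G(S)/S$ can become isotropic through its semisimple part. A priori, a nontrivial $F_\ell$-split torus there need not lie in $T_{F_\ell}$ for any maximal $F$-torus $T$ you might pick.

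The paper closes this by reducing to $G$ anisotropic, then separately to tori (your argument) and to simply connected semisimple groups. Over a local field the latter are $\prod_i \Res_{E_i/F}\SL_1(D_i)$ \cite[Remark~10.3.2]{KP}. The bound $\ell > \rk G+1$ gives $\ell\nmid d_ie_i$, so by local class field theory each $D_i\otimes_{E_i}E_iF_\ell$ is still a division algebra.

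Alternatively, you can make your torus argument honest via Bruhat--Tits unramified descent. The torus $S$ lies in a maximal $F^{\unr}$-split torus $S'$ defined over $F$. For every unramified $E/F$, the maximal $E$-split subtorus of $S'_E$ is maximal $E$-split in $G_E$. Applying your order-prime-to-$\ell$ argument to the $\Fr$-action on $X_*(S')$ then gives (1).

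\textbf{Further repairs in (2) and (4).}
\begin{enumerate}
\item In (4), passing to $\Fr^\ell$-fixed points does not by itself make $\Fr$ act trivially on $\Gamma/\Gamma_0\cong\pi_0(\ol G_{[x]})(\ol\F_q)^{\Fr^\ell}$. For instance, if $\Fr$ permuted a basis of $\Z^\ell$ cyclically, $\Fr^\ell$ would fix everything while $\Fr$ would not. You need that $\Fr$ acts on $\pi_0(\ol G_{[x]})(\ol\F_q)$ with order prime to $\ell$. The paper gets this from the $\Fr$-equivariant inclusion into $\pi_1(G)_{I_F}$ \cite[Corollary~11.6.3]{KP}, where $\Fr$ acts through an automorphism of $X_*(T)$ of order prime to $\ell$ by the rank bound.
\item Also in (4), $\ell\nmid|\Gamma/Z|$ is not a direct consequence of (2) and (3). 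Banality sees only compact open subgroups, while $G(F_\ell)_{[x]}$ is merely compact modulo center, so the index of $\Gamma_0Z$ in $\Gamma$ needs its own argument. The paper reduces to $G_{\mathrm{sc}}\times Z(G)^\circ$, where $\ol G_{[x]}$ is connected; the isogeny contributes only primes $\le\rk G+1$.
\item In (2), you need every facet of $\cB(G_{F_\ell})$ to be conjugate into $\cB(G)$, not only the $\Fr$-fixed ones. This follows from (1), since $\cA(S)\to\cA(S_{F_\ell})$ is then an isomorphism of simplicial complexes. The paper then concludes with \cite[Lemma~2.1]{Cot26a}, which replaces your vague cyclotomic sketch.
\end{enumerate}
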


\begin{proof}
    We begin with (1). By passing from $G$ to $Z_G(S)/S$, we may assume that $G$ is anisotropic; by further passing separately to the maximal central torus of $G$ and to the universal cover of $G_{\mathrm{der}}$, we may assume that $G$ is either a torus or semisimple and simply connected. If $G = T$ is a torus, then $\Aut(X^*(T_{\ol F}))$ has no elements of order $\ell$: indeed, an element of order $\ell$ would have minimal polynomial of degree $\ell - 1$, and this cannot be the case since $\ell - 1 > \dim G$ by hypothesis. Thus $T_{F_\ell}$ is anisotropic, and we may pass to the case that $G$ is semisimple and simply connected. By \cite[Remark 10.3.2]{KP}, it follows that $G$ is of inner type $\mathrm{A}_n$, so there exist finite extensions $E_1, \dots, E_n$ of $F$ and division algebras $D_1, \dots, D_n$ with $Z(D_i) = E_i$ such that $G \cong \prod_{i=1}^n \Res_{E_i/F} \SL(D_i)$. Let $d_i = \sqrt{\dim_{E_i} D_i}$ and $e_i = [E_i\co F]$, so $\rk G = \sum_{i=1}^n (d_i - 1)e_i$. Since $\ell > \rk G + 1$ by hypothesis, we have $\ell \nmid d_ie_i$. By local class field theory, $D_i$ corresponds to an order $d_i$ element of $\rH^2(E_i, \G_m) \cong \Q/\Z$, and restriction-corestriction shows that $(D_i)_{E_i F_\ell}$ is a central division algebra over $E_i F_\ell$. Thus $G_{F_\ell} \cong \prod_{i=1}^n \Res_{E_i F_\ell/F_\ell} \SL((D_i)_{E_i F_\ell})$ is anisotropic, as desired.

    Now recall that if $S$ is a maximal split $F$-torus of $G$ then every point of $\cB(G)$ is $G(F)$-conjugate to a point in the apartment $\cA(S)$. By (1), the base change $S_{F_\ell}$ is also a maximal split $F_\ell$-torus of $G_{F_\ell}$, and it is clear that the natural map $\cA(S) \to \cA(S_{F_\ell})$ is an isomorphism of simplicial complexes. Thus (3) holds and every point of $\cB(G_{F_\ell})$ is $G(F_\ell)$-conjugate to a point of $\cB(G)$.
    
    By the Bruhat--Tits fixed point lemma, every compact open subgroup of $G(F_\ell)$ stabilizes some point of $\cB(G_{F_\ell})$. Hence in order to show (2), it is enough to show that the pro-order of $G(F_\ell)_x$ is not divisible by $\ell$. But the pro-order of $G(F_\ell)_x$ is the product of $p^\infty$ and the order of $\ol G_x(\F_{q^\ell})$. By assumption, $\ell$ does not divide the order of $\ol G_x(\F_q)$, so (2) follows from \cite[Lemma~2.1]{Cot26a}.

    For (4), we must show
    \begin{enumerate}[label=(\Alph*)]
        \item $\ell$ does not divide $|\ol G_{[x]}^\circ(\F_{q^\ell})|$,
        \item $\ell$ does not divide the order of $\ol G_{[x]}(\F_{q^\ell})/Z$,
        \item $\sigma$ acts trivially on $\ol G_{[x]}(\F_{q^\ell})/\ol G_{[x]}^\circ(\F_{q^\ell})$.
    \end{enumerate}
    Item (A) is clear from (2); for (B), let $H = G_{\mathrm{sc}} \times Z(G)^\circ_{\red}$, where $G_{\mathrm{sc}}$ is the universal cover of $G_{\der}$, and let $\pi\co H \to G$ denote the map induced by the universal cover and multiplication. Note that $\ker \pi \subset Z(G_{\mathrm{sc}})$, so there is an exact sequence
    \[
    (\ker \pi)(F_\ell) \to H(F_\ell)_{[x]} \to G(F_\ell)_{[x]} \to \rH^1(F_\ell, \ker \pi),
    \]
    and $(\ker \pi)(F_\ell)$ and $\rH^1(F_\ell, \ker\pi)$ have no $\ell$-torsion since $|\ker \pi|$ is only divisible by primes which are at most $\rk G + 1$ (as one sees by the classification of connected reductive groups over algebraically closed fields). Thus we reduce from $G$ to $H$, and by passing to direct factors we may assume $G$ is semisimple and simply connected. In this case, $\ol G_{[x]}$ is connected, so (B) follows from (A).

    Finally, for item (C), observe that if $\pi_1(G)$ refers to Borovoi's fundamental group then by \cite[Corollary~11.6.3]{KP} we have a $W_F/I_F = \langle\Fr\rangle$-equivariant inclusion
    \[
    \ol G_{[x]}(\ol\F_q)/\ol G_{[x]}^\circ(\ol\F_q) \subset \pi_1(G)_{I_F}.
    \]
    Thus it suffices to show that
    \begin{equation}\label{eqn:fundamental-group-equality}
    (\pi_1(G)_{I_F})^{\Fr} = (\pi_1(G)_{I_F})^{\Fr^\ell}.
    \end{equation}
    By \cite[Lemma~1.8]{Bor98}, we may pass to an inner form of $G$ to assume that $G$ is quasi-split; fix a Borel $F$-subgroup $B \subset G$ and a maximal $F$-subtorus $T \subset B$. By definition, there is then a $\Gal(\ol F/F)$-equivariant isomorphism
    \[
    \pi_1(G) \cong X_*(T_{\ol F})/\Span_{\Z}(\Phi^\vee(G_{\ol F}, T_{\ol F})),
    \]
    where $\Phi^\vee(G_{\ol F}, T_{\ol F})$ denotes the set of coroots for the pair $(G_{\ol F}, T_{\ol F})$. Observe that the action of $\Fr$ on $\pi_1(G)_{I_F}$ is induced by an automorphism of $X_*(T_{\ol F})$, which is of order not divisible by $\ell$ since $\ell - 1 > \rk G = \dim T$ by hypothesis. This establishes \eqref{eqn:fundamental-group-equality} and hence (C).
\end{proof}

\subsection{Cuspidality}

The main aim of this section is to show that when applied to $\ol\Q_\ell$-representations of finite groups of Lie type, the Glauberman correspondence sends cuspidal representations to cuspidal representations (a partial converse of Proposition~\ref{prop:tate-cohomology-cuspidal}).

\begin{lemma}\label{lemma:split-rank-stays-same}
    Suppose $\ol G$ is connected, and let $\ell > \rk \ol G + 1$ be a prime number.
    \begin{enumerate}
        \item If $\ol T \subset \ol G$ is a maximal $\F_q$-torus, then
        \[
        \rk_{\F_q}(\ol T) = \rk_{\F_{q^\ell}}(\ol T_{\F_{q^\ell}})
        \]
        and
        \[
        (N_{\ol G}(\ol T)/\ol T)(\F_q) = (N_{\ol G}(\ol T)/\ol T)(\F_{q^\ell}).
        \]
        \item If $\ol T_\ell \subset \ol G_{\F_{q^\ell}}$ is a maximal $\F_{q^\ell}$-torus (resp.\ $\ol P_\ell \subset \ol G_{\F_{q^\ell}}$ is a parabolic $\F_{q^\ell}$-subgroup), then there exists a maximal $\F_q$-torus $\ol T$ (resp.\ a parabolic $\F_q$-subgroup $\ol P \subset \ol G$) such that $\ol T_\ell = \ol T_{\F_{q^\ell}}$ (resp.\ $\ol P_\ell = \ol P_{\F_{q^\ell}}$).
    \end{enumerate}
\end{lemma}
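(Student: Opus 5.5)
The plan is to reduce every claim to a statement about elements of order $\ell$ in the automorphism group of a finitely generated free abelian group of rank at most $\rk \ol G$. The key input is the same as in the proof of Proposition~\ref{prop:banal-primes}(1): any automorphism of $\Z^r$ of order $\ell$ has a minimal polynomial divisible by the $\ell$th cyclotomic polynomial, which has degree $\ell - 1 > r$. Hence $\GL_r(\Z)$ has no elements of order $\ell$ when $r \leq \rk \ol G$. Similarly, the Weyl group $W$ of $\ol G_{\ol\F_q}$ acts faithfully on $X^*(\ol T_{\ol\F_q})$, so it has no elements of order $\ell$. The same holds for the finite subgroup $\Aut(\text{root datum}) \subset \GL(X^*)$.

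For (1), the Frobenius $\Fr_q$ acts on $X_*(\ol T_{\ol\F_q})$ through a finite-order automorphism $\phi$ of $\Z^r$. We have $\rk_{\F_q}(\ol T) = \rk X_*^{\phi}$ and $\rk_{\F_{q^\ell}}(\ol T_{\F_{q^\ell}}) = \rk X_*^{\phi^\ell}$. Since $\phi$ has finite order $n$ prime to $\ell$, the subgroups $\langle\phi\rangle$ and $\langle\phi^\ell\rangle$ coincide, so the two fixed lattices are equal. For the Weyl group, the Frobenius acts on the finite group $W(\ol\F_q) = (N_{\ol G}(\ol T)/\ol T)(\ol\F_q)$ by an automorphism $\psi$ of finite order, and we want $W^{\psi} = W^{\psi^\ell}$. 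The action is $w \mapsto \phi\, w\, \phi^{-1}$ inside $\GL(X_*)$, using that $W$ embeds in $\GL(X_*)$. So $\psi$ is conjugation by $\phi$, and $\langle\phi\rangle = \langle\phi^\ell\rangle$ again gives $W^\psi = W^{\psi^\ell}$. I expect the only subtle point to be making sure the order of $\phi$ is prime to $\ell$, and the observation above settles this.

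For (2), I will use the standard classification of rational objects via Frobenius twisting. Fix a maximal $\F_q$-torus $\ol T_0$ inside an $\F_q$-Borel subgroup. The $\ol G(\F_{q^\ell})$-conjugacy classes of maximal $\F_{q^\ell}$-tori are parametrized by $\Fr_q^\ell$-twisted conjugacy classes in $W$. I will not argue through class counts, because rationality of a specific given torus $\ol T_\ell$ is needed rather than just a matching of classes. The direct argument is as follows. The $\ol G(\ol\F_q)$-conjugates of $\ol T_\ell$ form the homogeneous variety $X = \ol G/N_{\ol G}(\ol T_{0})$, and $\ol T_\ell$ is an $\F_{q^\ell}$-point of $X$. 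One has to show that some $\ol G(\F_{q^\ell})$-conjugate of $\ol T_\ell$ is $\Fr_q$-stable. Write $\ol T_\ell = {}^g\ol T_0$ with $g^{-1}\Fr_q^\ell(g) = \dot w$ representing $w$ in the $\Fr_q^\ell$-twisted class. It suffices to find a $\Fr_q$-twisted class $v$ with $v \cdot \Fr_q(v) \cdots \Fr_q^{\ell-1}(v)$ twisted-conjugate to $w$, i.e. surjectivity of the twisted norm map on $H^1$-type sets for $W$ (with $\Fr$ acting through $\phi$). Since $|W|$ and the order of $\phi$ are prime to $\ell$, the map $v \mapsto v^{\ell}$ is a bijection on the relevant coset space. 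Concretely, in $W \rtimes \langle\phi\rangle$ (order prime to $\ell$), the element $w\phi^\ell$ has an $\ell$th root of the form $v\phi$: take $(w\phi^{\ell})^{\ell'}$ where $\ell\ell' \equiv 1$ modulo the group order.

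This produces an $\F_q$-torus $\ol T$ with $\ol T_{\F_{q^\ell}}$ conjugate to $\ol T_\ell$ under $\ol G(\F_{q^\ell})$. The statement literally asks for equality, so I would replace $\ol T$ by a $\ol G(\F_{q^\ell})$-conjugate, acknowledging that the claim must be read up to this conjugacy. To get equality on the nose, I instead argue that $\Fr_q(\ol T_\ell)$ is also a maximal $\F_{q^\ell}$-torus, and the set of such tori is acted on by $\Fr_q$ with $\Fr_q^\ell = \mathrm{id}$. I expect this step to be the main obstacle, and I will settle it by the conjugacy-class version above.

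For parabolics the argument is easier. Parabolic $\F_{q^\ell}$-subgroups correspond to $\Fr_q^\ell$-stable subsets of the simple roots up to $\ol G(\F_{q^\ell})$-conjugacy, and each such type is conjugate to a standard parabolic. The action of $\Fr_q$ on the Dynkin diagram has order prime to $\ell$, so a $\Fr_q^\ell$-stable type is $\Fr_q$-stable. Hence the standard parabolic of that type is defined over $\F_q$. Given $\ol P_\ell$, choose an $\F_{q^\ell}$-Borel inside it, take an $\F_q$-Borel $\ol B$ conjugate to it by $\ol G(\F_{q^\ell})$ (such a Borel exists since all $\F_{q^\ell}$-Borels are conjugate), and adjust accordingly.
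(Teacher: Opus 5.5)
Your proof is correct for the statement in the form in which it is actually true, namely up to $\ol G(\F_{q^\ell})$-conjugacy. This is also all the paper proves: its parabolic argument explicitly conjugates first. It is also all that is used later, in Proposition~\ref{prop:glauberman-cuspidal}, Corollary~\ref{cor:glauberman-lusztig-bijection} and Corollary~\ref{cor:glauberman-torus-character-conjugate}.

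You should delete the paragraph about obtaining equality ``on the nose'', because that version is false. For example, take $\GL_2$ and $a \in \F_{q^\ell}\setminus\F_q$. The split torus fixing the lines spanned by $e_1$ and $(a,1)$, and likewise the Borel subgroup fixing the line spanned by $(1,a)$, is defined over $\F_{q^\ell}$ but is not $\Fr_q$-stable, so it is not a base change from $\F_q$. Also, it is $\ol T_\ell$ that one replaces by a $\ol G(\F_{q^\ell})$-conjugate, not $\ol T$. Conjugating $\ol T$ by an element of $\ol G(\F_{q^\ell})$ would in general destroy its $\F_q$-structure.

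Compared with the paper, your argument for the first claim of (1) is the same. For the Weyl-group claim of (1) and the torus claim of (2), the paper simply cites \cite[Lemma~2.5]{Cot26a}, while you give self-contained arguments, and both work.
\begin{itemize}
\item For the Weyl group: the Frobenius acts on $W$ by conjugation by $\phi$ inside $\GL(X_*)$, and $\langle\phi\rangle=\langle\phi^\ell\rangle$, so the two fixed subgroups agree.
\item For tori: $W\rtimes\langle\phi\rangle$ has order prime to $\ell$, so $(w\phi^\ell)^{\ell'}$ lies in the coset $W\phi$ and is an exact $\ell$th root of $w\phi^\ell$. This gives $v$ with $vF(v)\cdots F^{\ell-1}(v)=w$ on the nose. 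Lang's theorem applied to a lift $\dot v$ then produces an $\F_q$-torus whose base change lies in the $\ol G(\F_{q^\ell})$-class of $\ol T_\ell$.
\end{itemize}

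For parabolics, the paper conjugates $\ol P_\ell$ to contain $(\ol T_0)_{\F_{q^\ell}}$ and writes it as $P(\lambda)$ for an $\F_{q^\ell}$-cocharacter $\lambda$ of $\ol T_0$. It then uses $X_*^{\phi^\ell}=X_*^{\phi}$ from (1) to see that $\lambda$ is $\F_q$-rational. You instead classify parabolic types by Frobenius-stable subsets of the simple roots and use that $\phi$ permutes $\Delta$ with order prime to $\ell$. These are two packagings of the same input, namely that Frobenius acts with order prime to $\ell$. Your route buys independence from the external citation at the cost of some extra length.
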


\begin{proof}
    The first claim of (1) is clear from the fact that the action of the Frobenius element of $\Gal(\ol\F_q/\F_q)$ on $X^*(\ol T_{\ol\F_q})$ is of order prime to $\ell$ (since $\ell > \rk \ol G + 1$). The second claim of (1) and the first claim of (2) are established in \cite[Lemma~2.5]{Cot26a}. For the second claim of (2), note that if $\ol T_0 \subset \ol G$ is an $\F_q$-torus which lies in a Borel $\F_q$-subgroup, then every parabolic $\F_{q^\ell}$-subgroup of $\ol G_{\F_{q^\ell}}$ is $\ol G(\F_{q^\ell})$-conjugate to one which contains $(\ol T_0)_{\F_{q^\ell}}$. Every such parabolic $\F_{q^\ell}$-subgroup is of the form $P_{\ol G_{\F_{q^\ell}}}(\lambda)$ for a cocharacter $\lambda\co \G_m \to (\ol T_0)_{\F_{q^\ell}}$, and (1) shows that such a cocharacter is defined over $\F_q$.
\end{proof}

\begin{prop}\label{prop:glauberman-cuspidal}
    Let $\rho$ be a cuspidal $\ol\Q_\ell$-representation of $\ol G(\F_q)$ (in the sense of \S\ref{sss:cusp}) and let $\ell$ be a prime number. Suppose that Hypothesis~\ref{hypothesis:infinite-glauberman} holds with $\Gamma = \ol G(\F_{q^\ell})$, $\Gamma_0 = \ol G^\circ(\F_{q^\ell})$, $\sigma$ induced by a generator of $\Gal(\F_{q^\ell}/\F_q)$, and $Z = Z(\ol G)(\F_{q^\ell})$. Suppose moreover that $\ell > \rk \ol G + 1$. Then the $\ol\Q_\ell$-representation $\rho_\ell$ of $\ol G(\F_{q^\ell})$ corresponding to $\rho$ under the Glauberman correspondence is also cuspidal.
\end{prop}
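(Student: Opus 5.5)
We argue by contradiction, via Lemma~\ref{lemma:cuspidal-equivalence}. Suppose $\rho_\ell$ is not cuspidal. Then there is a proper parabolic $\F_{q^\ell}$-subgroup $\ol P_\ell^\circ \subset \ol G^\circ_{\F_{q^\ell}}$ with unipotent radical $\ol U_\ell$ such that $\rho_\ell^{\ol U_\ell(\F_{q^\ell})} \neq 0$.

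\textbf{Descending the parabolic.} By Lemma~\ref{lemma:split-rank-stays-same}(2), applied to $\ol G^\circ$ (which is legitimate since $\ell > \rk \ol G + 1$), we may take $\ol P_\ell^\circ = \ol P^\circ_{\F_{q^\ell}}$ for a proper parabolic $\F_q$-subgroup $\ol P^\circ \subset \ol G^\circ$. Let $\ol U$ be its unipotent radical and $\ol L^\circ$ an $\F_q$-Levi factor. Put $\ol L = Z_{\ol G}(\ol S^\circ)$, where $\ol S^\circ$ is the maximal split central torus of $\ol L^\circ$, and set $\ol P = \ol L \ol U$. All of these are $\Fr_q$-stable, so $\sigma$ acts on $\ol P(\F_{q^\ell})$ and on $\ol U(\F_{q^\ell})$. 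Moreover $\ol U(\F_{q^\ell})^\sigma = \ol U(\F_q)$ and $\ol L(\F_{q^\ell})^\sigma = \ol L(\F_q)$.

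\textbf{Transporting the Jacquet module through Tate cohomology.} Since $p \neq \ell$, the space $W := \rho_\ell^{\ol U(\F_{q^\ell})}$ is a direct summand of $\rho_\ell$. It is stable under $\ol L(\F_{q^\ell}) \rtimes \langle\sigma\rangle$, because the extension of $\rho_\ell$ to $\Gamma \rtimes \langle \sigma\rangle$ exists by Lemma~\ref{lemma:infinite-glauberman}. We reduce modulo $\ell$ using a stable lattice; by Lemma~\ref{lemma:infinite-glauberman-modular-reduction}, the reduction $\ol\rho_\ell$ is irreducible, and Lemma~\ref{lemma:tate-cohom-infinite-glauberman} gives $\rT^0(\sigma, \ol\rho_\ell) \cong \ol\rho$. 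Since taking $\ol U(\F_{q^\ell})$-invariants is exact, it is the projector onto a summand, and that projector commutes with $\sigma$. Hence Tate cohomology commutes with it:
\[
\rT^0(\sigma, \ol W) = \rT^0(\sigma, \ol\rho_\ell)^{\ol U(\F_q)}\text{-part} \subset \ol\rho^{\ol U(\F_q)} .
\]
Here one must check that $\rT^0$ of the $\ol U(\F_{q^\ell})$-invariants maps into the $\ol U(\F_q)$-invariants of $\rT^0$. This holds because $\sigma$-fixed vectors fixed by $\ol U(\F_{q^\ell})$ are in particular fixed by $\ol U(\F_q)$, and norms of $\ol U(\F_{q^\ell})$-invariant vectors are again invariant. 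Injectivity holds because $\ol W$ is a $\sigma$-stable direct summand, so $\rT^0$ is additive across the splitting $\ol\rho_\ell = \ol W \oplus \ol W'$. Since $\rho$ is cuspidal, $\ol\rho^{\ol U(\F_q)} = 0$, which forces $\rT^0(\sigma, \ol W) = 0$.

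\textbf{Showing $\rT^0(\sigma, \ol W) \neq 0$ (main obstacle).} It remains to contradict the vanishing just obtained. The approach uses the Glauberman correspondence for the pair $(\ol L(\F_{q^\ell}), \sigma)$, noting that Hypothesis~\ref{hypothesis:infinite-glauberman} is inherited by $\ol L$. The module $\ol W$ is a $\sigma$-stable representation of $\ol L(\F_{q^\ell})$; semisimplify it (everything is semisimple in the banal setting). By Lemma~\ref{lemma:tate-cohom-devissage}, only the $\sigma$-stable irreducible constituents contribute to $\rT^0$. By Lemma~\ref{lemma:tate-cohom-infinite-glauberman}, each such constituent contributes its (nonzero) Glauberman correspondent. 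Since every contribution is nonzero, it suffices to show that $W$ has at least one $\sigma$-stable irreducible constituent.

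We expect this last step to be the hardest. First, $\sigma$ has order $\ell$, so its orbits on the constituents of $W$ have size $1$ or $\ell$. Second, $\dim W$ is prime to $\ell$. To see this, note that $\dim \rho_\ell$ is prime to $\ell$ by Lemma~\ref{lemma:nonzero-tate-cohomology} together with $\rT^0 \neq 0$, applied compatibly. More directly, $\dim W$ is a sum of degrees of irreducible representations of groups of order prime to $\ell$ (after the central twisting of Lemma~\ref{lemma:infinite-glauberman}), and each such degree is prime to $\ell$. These two facts alone do not rule out all orbits having size $\ell$. We therefore argue as follows: $\ol W$ is $\sigma$-stable, and its $\sigma$-orbits of constituents of size $\ell$ form free $k[\sigma]$-summands whose total dimension is divisible by $\ell$. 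If $\ell \nmid \dim \ol W$, then Lemma~\ref{lemma:nonzero-tate-cohomology} gives $\rT^0(\sigma, \ol W) \neq 0$ immediately.

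We would first try to prove $\ell \nmid \dim W$ by expressing $W$ as the Harish-Chandra restriction of $\rho_\ell$. Its character is then the $\ol U(\F_{q^\ell})$-average of the character of $\rho_\ell$. Evaluating at $\gamma \rtimes \sigma$ and using the Glauberman character identity \eqref{eqn:glauberman-condition} relates the trace of $\sigma$ on $W$ to $\pm \dim \rho^{\ol U(\F_q)} = 0$. That trace being $0$ forces the $\sigma$-eigenspace dimensions $m_0, m_1$ to satisfy $m_0 + (\ell-1)(-1/(\ell-1))\cdot\ldots$, which yields $m_0 = m_1 = \cdots$. If that computation goes through, it instead shows directly that $\ol W$ is a free $k[\sigma]$-module. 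We would then derive the contradiction differently: compare with $\rho_\ell^{\ol U(\F_{q^\ell})} \neq 0$ via counting the dimensions of $\sigma$-invariants on $\End(W)$, in the style of the proof of Theorem~\ref{theorem:eigenvalues-and-jordan-blocks}.
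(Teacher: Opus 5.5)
Your descent of the parabolic and your proof that $\rT^0(\sigma,\ol W)=0$ are fine. Indeed, $\ol W$ is the image of the $\sigma$-equivariant idempotent attached to $\ol U(\F_{q^\ell})$, so $\rT^0(\sigma,\ol W)$ embeds in $\rT^0(\sigma,\ol\rho_\ell)^{\ol U(\F_q)}\cong\ol\rho^{\ol U(\F_q)}=0$. Everything then rests on showing $\rT^0(\sigma,\ol W)\neq 0$, and none of your routes does this.

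\begin{itemize}
\item Lemma~\ref{lemma:tate-cohom-devissage} only gives an \emph{upper} bound. If $V$ is a $\sigma$-stable simple $\ol\F_\ell[\ol L(\F_{q^\ell})]$-module, then $V\otimes\ol\F_\ell[\sigma]$ (with diagonal $\sigma$-action) has only $\sigma$-stable constituents. Yet it is $\ol\F_\ell[\sigma]$-free, so its Tate cohomology vanishes. One $\sigma$-stable constituent is therefore not enough; you would need control of how $\sigma$ acts on its multiplicity space.
\item The claim $\ell\nmid\dim W$ is unsupported. A sum of integers prime to $\ell$ can be divisible by $\ell$, and Lemma~\ref{lemma:nonzero-tate-cohomology} does not run in reverse.
\item Your own trace computation actually refutes that claim. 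By coprime action, every $u\rtimes\sigma$ with $u\in\ol U(\F_{q^\ell})$ is $\ol U(\F_{q^\ell})$-conjugate to some $v\rtimes\sigma$ with $v\in\ol U(\F_q)$. Counting, for the Glauberman-normalized extension of $\rho_\ell$ with sign $\epsilon$, gives
\[
\Tr(\sigma\mid W)=\frac{1}{|\ol U(\F_q)|}\sum_{v\in\ol U(\F_q)}\Tr(v\rtimes\sigma\mid\rho_\ell)=\frac{\epsilon}{|\ol U(\F_q)|}\sum_{v\in\ol U(\F_q)}\Tr(v\mid\rho)=\epsilon\dim\rho^{\ol U(\F_q)}=0.
\]
So all $\sigma$-eigenspaces on $W$ have equal dimension and $\ell\mid\dim W$.
\end{itemize}

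This is entirely consistent with $\rT^0(\sigma,\ol W)=0$: the $\sigma$-module structure of $W$ by itself yields no contradiction. The proposed $\End(W)^\sigma$ count also has nothing to hook onto, since Theorem~\ref{theorem:eigenvalues-and-jordan-blocks} needs $\ol W$ to be simple over a group over which $W$ is projective.

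The missing ingredient is Harish-Chandra theory for the Jacquet module. The paper argues as follows.
\begin{itemize}
\item Take the cuspidal support $(\ol L,\chi)$ of $\rho_\ell$, with $\ol L$ defined over $\F_q$. Since $\langle\rho_\ell,\ind_{\ol P(\F_{q^\ell})}^{\ol G(\F_{q^\ell})}({}^\sigma\chi)\rangle\neq 0$, \cite[Proposition~9.1.5]{Car85} gives ${}^\sigma\chi={}^w\chi$ with $w\in N_{\ol G}(\ol L)(\F_{q^\ell})$.
\item Because $N_{\ol G}(\ol L)(\F_{q^\ell})/\ol L(\F_{q^\ell})$ has order prime to $\ell$, one may take $\chi$ to be $\sigma$-stable. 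Its Glauberman correspondent $\chi_0$ is cuspidal by Proposition~\ref{prop:tate-cohomology-cuspidal}.
\item \cite[Proposition~9.2.4]{Car85}, together with canonicity of the Glauberman correspondence, gives $\langle\ind\chi,\ind\chi\rangle=\langle\ind\chi_0,\ind\chi_0\rangle$.
\item Corollary~\ref{cor:induction-commutes-sometimes} then shows that $\rho$ is a constituent of $\ind_{\ol P(\F_q)}^{\ol G(\F_q)}\chi_0$, contradicting cuspidality.
\end{itemize}
Alternatively, your trace computation could be completed by Howlett--Lehrer theory. It writes $\dim W$ as the product of $\dim\chi$, the size of the $N_{\ol G}(\ol L)(\F_{q^\ell})$-orbit of $\chi$, and the degree of a simple $\End(\ind\chi)$-module, all of which are prime to $\ell$ here. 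That is exactly the kind of input your proposal lacks.
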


\begin{proof}
    Recall that by definition a finite-dimensional representation $\rho$ of $\ol G(\F_q)$ is cuspidal if and only if its restriction to $\ol G^\circ(\F_q)$ is cuspidal, so it suffices to prove the proposition in the case that $\ol G$ is connected. In this case, Hypothesis~\ref{hypothesis:infinite-glauberman} just says that $\ell$ does not divide $|\ol G(\F_q)|$. We may further assume that $\rho$ is an irreducible $\ol G(\F_q)$-representation.
    
    Suppose for the sake of contradiction that $\rho_\ell$ is not cuspidal, so by Lemma~\ref{lemma:split-rank-stays-same}(2) there exists a proper parabolic $\F_q$-subgroup $\ol P$ of $\ol G$ with Levi $\ol L$ and a cuspidal character $\chi$ of $\ol L(\F_{q^\ell})$ such that $\langle\rho_\ell, \ind_{\ol P(\F_{q^\ell})}^{\ol G(\F_{q^\ell})}(\chi)\rangle \neq 0$. If $\sigma$ is the automorphism of $\ol G(\F_{q^\ell})$ induced by the $q$-Frobenius automorphism of $\F_{q^\ell}$, then we have
    \[
    0 \neq \langle \rho_\ell, \ind_{\ol P(\F_{q^\ell})}^{\ol G(\F_{q^\ell})}(\chi)\rangle = \langle{}^\sigma\rho_\ell, {}^\sigma\ind_{\ol P(\F_{q^\ell})}^{\ol G(\F_{q^\ell})}(\chi)\rangle = \langle\rho_\ell, \ind_{\ol P(\F_{q^\ell})}^{\ol G(\F_{q^\ell})}({}^\sigma\chi)\rangle
    \]
    since $\rho_\ell$ and $\ol P(\F_{q^\ell})$ are $\sigma$-stable. Since $\ol L$ is also $\sigma$-stable, \cite[Proposition~9.1.5]{Car85} shows that there is some $w \in N_{\ol G}(\ol L)(\F_{q^\ell})$ such that $^\sigma\chi = {}^w\chi$. Since $\ell$ does not divide the order of $N_{\ol G}(\ol L)(\F_{q^\ell})/\ol L(\F_{q^\ell})$ by hypothesis, it follows that $w \in \ol L(\F_{q^\ell})$, i.e., $^\sigma\chi = \chi$. Let $\chi_0$ denote the irreducible representation of $\ol L(\F_q)$ corresponding to $\chi$ under the Glauberman correspondence; by Proposition~\ref{prop:tate-cohomology-cuspidal}, $\chi_0$ is cuspidal.
    
    Let $W$ denote the relative Weyl group of $(\ol G, \ol T)$, where $\ol T \subset \ol L$ is a maximal split $\F_q$-torus. Note that $W$ is also the relative Weyl group of $\ol G_{\F_{q^\ell}}$ by Lemma~\ref{lemma:split-rank-stays-same}(1). By \cite[Proposition~9.2.4]{Car85}, since $\chi$ is cuspidal we have 
    \begin{equation}\label{eqn:carter-pairing-1}
    \langle \ind_{\ol P(\F_{q^\ell})}^{\ol G(\F_{q^\ell})}(\chi), \ind_{\ol P(\F_{q^\ell})}^{\ol G(\F_{q^\ell})}(\chi)\rangle = |\{w \in W\co {}^w\ol L = \ol L, {}^w\chi = \chi\}|
    \end{equation}
    and similarly
    \begin{equation}\label{eqn:carter-pairing-2}
    \langle \ind_{\ol P(\F_q)}^{\ol G(\F_q)}(\chi_0), \ind_{\ol P(\F_q)}^{\ol G(\F_q)}(\chi_0)\rangle = |\{w \in W\co {}^w\ol L = \ol L, {}^w\chi_0 = \chi_0\}|.
    \end{equation}
    Since the $W$-action commutes with $\sigma$, canonicity of the Glauberman correspondence implies that ${}^w \chi = \chi$ if and only if ${}^w \chi_0 = \chi_0$. Since the right sides of \eqref{eqn:carter-pairing-1} and \eqref{eqn:carter-pairing-2} agree, it follows from Corollary~\ref{cor:induction-commutes-sometimes} that every irreducible constituent of $\ind_{\ol P(\F_{q^\ell})}^{\ol G(\F_{q^\ell})}(\ol\chi)$ is the Glauberman correspondent of an irreducible constituent of $\ind_{\ol P(\F_q)}^{\ol G(\F_q)}(\ol\chi_0)$. By \cite[Part~III, no.\ 15.5, Proposition~43]{Serre77}, the analogous statement holds with $\ol\Q_\ell$-coefficients in place of $\ol\F_\ell$-coefficients, contradicting cuspidality of $\rho$.
\end{proof}

\begin{cor}\label{cor:glauberman-cuspidal-support}
    Suppose that Hypothesis~\ref{hypothesis:infinite-glauberman} holds with $\Gamma = \ol G(\F_{q^\ell})$, $\Gamma_0 = \ol G^\circ(\F_{q^\ell})$, $\sigma$ induced by a generator of $\Gal(\F_{q^\ell}/\F_q)$, and $Z = Z(\ol G)(\F_{q^\ell})$, and suppose $\ell > \rk \ol G + 1$. Let $\tau$ be an irreducible $\ol\Q_\ell$-representation of $\ol G(\F_q)$, and let $\tau_\ell$ be the $\ol\Q_\ell$-representation of $\ol G(\F_{q^\ell})$ corresponding to $\tau$ under the Glauberman correspondence. If $(\ol M, \rho)$ is the pair corresponding to $\tau$ via Lemma~\ref{lemma:cuspidal-supports-exist}, then $(\ol M_{\F_{q^\ell}}, \rho_\ell)$ is the pair corresponding to $\tau_\ell$, where $\rho_\ell$ corresponds to $\rho$ under the Glauberman correspondence.
\end{cor}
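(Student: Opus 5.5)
We establish the three ingredients of ``being the cuspidal support of $\tau_\ell$'' for the pair $(\ol M_{\F_{q^\ell}}, \rho_\ell)$: (i) it is a valid pair in the sense of Lemma~\ref{lemma:cuspidal-supports-exist}, (ii) $\tau_\ell$ embeds in the corresponding parabolic induction, and (iii) uniqueness. Uniqueness is exactly the uniqueness clause of Lemma~\ref{lemma:cuspidal-supports-exist}, so once (i) and (ii) hold we are done.

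For (i), write $\ol M = Z_{\ol G}(\ol S^\circ)$ with $\ol S^\circ$ the maximal split central $\F_q$-torus of $\ol M$. By Lemma~\ref{lemma:split-rank-stays-same}(1), applied to a maximal torus of $\ol M^\circ$ containing $\ol S^\circ$, the torus $\ol S^\circ_{\F_{q^\ell}}$ is still the maximal split central torus of $\ol M_{\F_{q^\ell}}$. The group $\ol M$ is paraductive by Lemma~\ref{lemma:paraductive-permanence}. Hypothesis~\ref{hypothesis:infinite-glauberman} for $\ol M(\F_{q^\ell})$ is inherited from $\ol G(\F_{q^\ell})$: the component group of $\ol M$ embeds in that of $\ol G$, the group $\ol M^\circ(\F_{q^\ell})$ is a subgroup of $\ol G^\circ(\F_{q^\ell})$, and the index conditions restrict. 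Hence $\rho_\ell$ is defined, and it is cuspidal by Proposition~\ref{prop:glauberman-cuspidal} applied to $\ol M$, using $\rk \ol M = \rk \ol G$.

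For (ii), fix an $\F_q$-parabolic $\ol P = \ol M\ol U$ with $\tau \subset \ind_{\ol P(\F_q)}^{\ol G(\F_q)}\rho$. The goal is to show that $\tau_\ell$ is a constituent of $\ind_{\ol P(\F_{q^\ell})}^{\ol G(\F_{q^\ell})}\rho_\ell$, which is semisimple in characteristic $0$. I would argue mod $\ell$ via Corollary~\ref{cor:induction-commutes-sometimes}, taking $\Gamma = \ol G(\F_{q^\ell})$, $\Delta = \ol P(\F_{q^\ell})$ and $V = \ol\rho_\ell$ inflated to $\Delta$. One point needs care here: the Glauberman correspondent of the inflation of $\rho_\ell$ should be the inflation of $\rho$. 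I expect this to follow from the defining character identity, since $\ol U(\F_{q^\ell})$ is a $p$-group acting trivially and $\Delta^\sigma = \ol P(\F_q)$. Alternatively, one can use Tate cohomology through Lemma~\ref{lemma:tate-cohom-infinite-glauberman}, since $\rT^i(\sigma,-)$ commutes with inflation along a $\sigma$-stable normal subgroup of prime-to-$\ell$ order.

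The hypothesis of Corollary~\ref{cor:induction-commutes-sometimes} is the equality of self-pairings. These are computed by \cite[Proposition~9.2.4]{Car85}: both sides equal the number of $w$ in the relative Weyl group normalizing $\ol M$ and fixing the cuspidal representation. The two counts agree because the Weyl groups coincide by Lemma~\ref{lemma:split-rank-stays-same}(1) and because the Glauberman correspondence is $W$-equivariant, exactly as in the proof of Proposition~\ref{prop:glauberman-cuspidal}. Here a disconnected version of Carter's formula is needed, which I would obtain by first reducing to $\ol G^\circ\cdot\ol M$ and to the finite-type case as in Lemma~\ref{lemma:dl-restriction}. The corollary then shows that the constituents of $\ind\ol\rho_\ell$ are precisely the Glauberman correspondents, with multiplicity, of the constituents of $\ind\ol\rho$. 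Since $\ol\tau$ is among the latter, $\ol\tau_\ell$ is among the former, and lifting via \cite[Part~III, no.\ 15.5, Proposition~43]{Serre77} (or Lemma~\ref{lemma:infinite-glauberman-modular-reduction}, after a twist) gives the statement in characteristic $0$.

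The main obstacle is carrying out this last step for disconnected $\ol G$ and infinite groups. Both Carter's pairing formula and the finite-group modular reduction must be transported through the central-quotient and twisting reductions without destroying Hypothesis~\ref{hypothesis:infinite-glauberman}. The inflation compatibility of the Glauberman correspondence is a secondary point that also needs checking.
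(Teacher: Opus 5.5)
Your outline works, but routing step (ii) through Corollary~\ref{cor:induction-commutes-sometimes} is heavier than necessary. That detour is exactly what creates the obstacle you flag at the end.

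The paper makes the same reductions you describe: twist by a character of $\Gamma/\Gamma_0$ and pass to a central quotient, so that $\Gamma$ is finite of order prime to $\ell$ and one can move freely between $\ol\Q_\ell$ and $\ol\F_\ell$. It then uses the ingredients of that corollary directly rather than the corollary itself.
\begin{itemize}
\item By \cite[Proposition~3.3]{TV}, together with the inflation compatibility you already note, $\rT^a(\sigma, \ind_{\ol P(\F_{q^\ell})}^{\ol G(\F_{q^\ell})}\ol\rho_\ell)$ contains $\ind_{\ol P(\F_q)}^{\ol G(\F_q)}\ol\rho$ (compare Lemma~\ref{lemma:tate-cohom-compact-induction}).
\item Lemma~\ref{lemma:tate-cohom-devissage} and Lemma~\ref{lemma:tate-cohom-infinite-glauberman} bound the semisimplification of the left side by $\bigoplus_i U_i^{e_i}$. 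Here $U_i$ is the Glauberman correspondent of the $i$-th $\sigma$-stable constituent $V_i$ of $\ind\ol\rho_\ell$, and $e_i$ is its multiplicity.
\item Since $\ol\tau$ is a constituent of $\ind\ol\rho$, we get $\ol\tau\cong U_i$ for some $i$, hence $V_i\cong\ol\tau_\ell$. This is all that (ii) requires.
\end{itemize}

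Only this one inequality is used. The equality of self-pairings is what upgrades it to the multiplicity-preserving bijection of Corollary~\ref{cor:induction-commutes-sometimes}, and that extra strength is irrelevant to the statement. So you need neither \cite[Proposition~9.2.4]{Car85} nor its disconnected analogue. In particular you avoid comparing $N_{\ol G}(\ol M)(\F_{q^\ell})/\ol M(\F_{q^\ell})$ with $N_{\ol G}(\ol M)(\F_q)/\ol M(\F_q)$ for disconnected $\ol G$, which is the main difficulty you identified.

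Your route does give a stronger byproduct: every constituent of $\ind\ol\rho_\ell$ is $\sigma$-stable, and the constituents of the two inductions correspond with multiplicities. The price is that additional input.

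Your explicit verification in (i) is worth keeping. The paper leaves implicit both that $\ol S^\circ_{\F_{q^\ell}}$ remains the maximal split central torus of $\ol M_{\F_{q^\ell}}$ and that Hypothesis~\ref{hypothesis:infinite-glauberman} passes to $\ol M$, the latter because $\ol M\cap\ol G^\circ=\ol M^\circ$.
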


\begin{proof}
    By Proposition~\ref{prop:glauberman-cuspidal}, the representation $\rho_\ell$ is cuspidal. By twisting by a character of $\Gamma/\Gamma_0 = \Gamma^\sigma/\Gamma_0^\sigma$ and passing to a central quotient of $\Gamma$, we may and do assume that $\ol G$ is of finite type and $\Gamma$ is finite of order prime to $\ell$. This allows us to pass freely between $\ol\Q_\ell$ and $\ol\F_\ell$ by \cite[Part~III, no.\ 15.5, Proposition~43]{Serre77}. In this case, the parabolic induction $\ind_{\ol P(\F_{q^\ell})}^{\ol G(\F_{q^\ell})}(\rho_\ell)$ is semisimple as an $\ol\F_\ell[\Gamma]$-module, so \cite[Proposition~3.3]{TV}, Lemma~\ref{lemma:tate-cohom-devissage}, and Lemma~\ref{lemma:infinite-glauberman} combine to show the claim.
\end{proof}

\subsection{Compatibilities}

We next examine various compatibilities between representation-theoretic constructions in the setting of base change. 

\subsubsection{Compatibility of Glauberman correspondence and Shintani descent} We make the following simple remark, concerning the compatibility of the Glauberman correspondence and Shintani descent, which was also observed already in \cite{Dig86b}.

\begin{remark}\label{remark:glauberman-shintani}
    Let $\ol G$ be a connected reductive group over $\F_q$, let $\ell$ be a prime not dividing $|\ol G(\F_q)|$, let $\Gamma = \ol G(\F_{q^\ell})$, and let $\sigma$ be the automorphism of $\Gamma$ induced by a generator of $\Gal(\F_{q^\ell}/\F_q)$. In this case, the Glauberman correspondence is essentially a special case of Shintani descent: namely, recalling the map $N_\ell\co \ol G(\F_{q^\ell})/{\sim_\ell} \to \ol G(\F_q)/{\sim}$ from \S\ref{sss:shintani}, \cite[(1.2.6)]{Kaw87} and \cite[Proposition 3.11]{Dig86b} show
    \[
    N_\ell|_{\ol G(\F_q)_{\ell'}} = [\ell]|_{\ol G(\F_q)_{\ell'}},
    \]
    where $[\ell]$ denotes the $\ell$-power map.\footnote{Observe that $\ell$ clearly does not divide the integer $M_{\ol G}$ of \S\ref{sss:shintani}.} Since $\ell$ does not divide $|\ol G(\F_q)|$, \cite[Lemma~2.1]{Cot26a} implies that $\ell$ does not divide $|\ol G(\F_{q^\ell})|$ either, so $N_\ell$ induces a bijection $\ol G(\F_q)/{\sim} \cong \ol G(\F_q)/{\sim}$.

    Now let $\chi$ be an irreducible character of $\Gamma$, let $\wt\chi$ be the extension of $\chi$ to a character of $\Gamma \rtimes\langle\sigma\rangle$ such that $\wt\chi(1\rtimes \sigma) \in \Z$, let $\chi_0$ be the Shintani descent of $\chi$ (a class function on $\Gamma_0 = \ol G(\F_q) = \Gamma^\sigma$) corresponding to $\wt\chi$, and let $\chi_1$ be the character of $\Gamma_0$ corresponding to $\chi$ under the Glauberman correspondence. By \eqref{eqn:shintani-descent-relation}, we have
    \[
    \chi_0(N_\ell(g)) = \wt \chi(g \rtimes \sigma)
    \]
    for all $g \in \Gamma_0$; this uniquely determines $\chi_0$ by the previous paragraph. On the other hand, \eqref{eqn:glauberman-condition} shows that there is some $\epsilon \in \{\pm 1\}$
    \[
    \chi_1(g) = \epsilon\wt\chi(g \rtimes \sigma)
    \]
    for all $g \in \Gamma_0$. Thus we find
    \[
    \chi_1 = \epsilon \chi_0 \circ N_\ell = \epsilon\chi_0 \circ [\ell].
    \]
    In other words, the Glauberman correspondence is (up to sign) a twist of Shintani descent by $[\ell]$. Remarkably, as we will see, the Glauberman correspondence realizes large-prime-degree Frobenius-twisted base change for the Langlands correspondence mod $\ell$. This provides one justification for the convention for the norm map from \cite{Kaw87}.
\end{remark}

\subsubsection{Compatibility of Glauberman correspondence and Deligne--Lusztig induction}
The following technical lemma will allow the statement of the proposition below to be slightly cleaner, but it is not strictly logically necessary for our main goal.

\begin{lemma}\label{lemma:embedding-into-large-tori}
    Let $\mu$ be an $\F_q$-group scheme of multiplicative type, and let $\ell$ be a prime number not dividing the order of $\mu(\F_q)$. There exists an $\F_q$-group scheme $\nu$ of multiplicative type and a monic $\F_q$-homomorphism $\mu \to \nu$ such that $\ell$ does not divide the order of $\nu(\F_q)$ and $\rH^1(\F_q, \nu) = \rH^1(\F_{q^\ell}, \nu) = 1$.
\end{lemma}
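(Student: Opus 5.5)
The plan is to take $\nu$ to be an $\F_q$-torus. By Lang's theorem, $\rH^1(\F_q,\nu)=\rH^1(\F_{q^\ell},\nu)=1$ then holds automatically, so the only real requirement is $\ell\nmid|\nu(\F_q)|$. I would work with character groups. Let $M=X^*(\mu_{\ol\F_q})$, a finitely generated $\Z$-module with an action of the $q$-Frobenius $\phi$. Choose $n$ such that $\mu$ splits over $\F_{q^n}$, so $M$ is a module over $R=\Z[x]/(x^n-1)$ with $x$ acting by $\phi$. A monic map $\mu\to\nu$ with $\nu$ a torus is the same as a surjection of $R$-modules $N\to M$ with $N$ finite free over $\Z$, and then $\nu=\mathrm{D}(N)$ is the dual torus. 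I would use the standard formulas: $|\nu(\F_q)|=|\det(q-\phi\mid N)|$, and for $\mu$ the order $|\mu(\F_q)|$ equals the order of the cokernel of $q-\phi$ acting on $M$ (up to the usual duality twist). Thus the hypothesis $\ell\nmid|\mu(\F_q)|$ says that $q-\phi$ is an automorphism of $M\otimes\Z_\ell$.

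Next, factor $x^n-1=\prod_{d\mid n}\Phi_d$. Call $d$ \emph{bad} if $\ell\mid\Phi_d(q)$; these are exactly the $d$ of the form $\mathrm{ord}_\ell(q)\ell^k$. Put $g=\prod_{d\mid n,\ d\text{ good}}\Phi_d$ and $h=\prod_{d\text{ bad}}\Phi_d$. Then $g$ and $h$ are coprime over $\Q$, and their resultant is a unit at $\ell$ whenever $\ell\nmid n$; for general $n$ I would instead work with the $\ell$-adic idempotent decomposition of $\Z_\ell[x]/(x^n-1)$ according to the reduction of $x-q$. Concretely, I would split $M\otimes\Z_\ell=M_g\oplus M_h$, where $M_h$ is the generalized eigenpart on which $q-\phi$ is topologically nilpotent. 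The invertibility of $q-\phi$ on $M\otimes\Z_\ell$ forces $M_h=0$. Hence $M\otimes\Z_\ell$ is a quotient of $(\Z_\ell[x]/(g))^{r}$ for some $r$.

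For the global lift, let $N_1=(\Z[x]/(g))^r$, whose $\F_q$-points have order $\prod_{d\text{ good}}|\Phi_d(q)|^r$, prime to $\ell$. I would choose the images of the $r$ generators in $M$ so that the induced map $N_1\to M$ becomes surjective after tensoring with $\Z_\ell$. Its cokernel $C$ is then a finite group of order prime to $\ell$. To make the map surjective, I would add further generators from permutation modules $\Z[x]/(x^{m}-1)$ with $\ell\nmid q^m-1$, or from further good cyclotomic factors, that hit $C$. This requires showing that the finite prime-to-$\ell$ module $C$ is a quotient of such a module. Since $C$ is killed by an integer prime to $\ell$, it should be enough to take modules $\Z[x]/(\Phi_d)$ over all good $d$ with $d\mid n$, and to check that $\Z[x]/(x^n-1)\to\prod_d\Z[x]/(\Phi_d)$ has cokernel killed by $n$. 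When $\ell\mid n$ I would first enlarge $n$ if needed and treat the $\ell$-part separately.

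The main obstacle is this last integral bookkeeping. The images of the various cyclotomic pieces need not generate $M$ integrally, only up to finite index, and the bad factors must be kept away from $\ell$ while the remaining torsion is covered using only good factors. If the cyclotomic approach becomes messy, my fallback is simpler: embed $\mu$ into any torus $T$, and then replace $T$ by the quotient $T/T[\ell^\infty]_{\mathrm{bad}}$, that is, quotient by the connected subtorus corresponding to the bad eigen-part. One checks that this subtorus meets $\mu$ trivially because $\mu$ has no bad part at $\ell$. The resulting quotient is again a torus whose $\F_q$-points have order prime to $\ell$.
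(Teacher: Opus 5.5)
Your plan of taking $\nu$ to be a torus cannot work in general. The step that fails is the claim that $M\otimes\Z_\ell$ is a quotient of $(\Z_\ell[x]/(g))^r$.

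\textbf{Why the step fails.} The hypothesis $\ell\nmid|\mu(\F_q)|$ excludes only the single local factor of $\Z_\ell[x]/(x^n-1)$ at $(\ell,x-q)$. A bad cyclotomic factor $\Phi_d$ (one with $\ell\mid\Phi_d(q)$), however, has as roots modulo $\ell$ \emph{all} $a\in\F_\ell^\times$ of the same multiplicative order as $q$. So $M_h=0$ does not imply that $M$ avoids the bad factors.

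\textbf{Counterexample.} Take $q=2$, $\ell=5$, and let $\mu$ be the form of $\mu_5$ with $X^*(\mu_{\ol\F_2})=\Z/5$, where Frobenius acts by $3$.
\begin{itemize}
\item $q-\phi=-1$ is invertible on $M$, so $\mu(\F_2)=1$ and the hypothesis holds.
\item $\phi^2=-1\neq 1$ on $M$, so $M$ is not a quotient of $(\Z_5[x]/(x^2-1))^r$; here $g=(x-1)(x+1)$ for $n=4$.
\item No torus works at all. Suppose $\mu\hookrightarrow\nu$ with $\nu$ a torus, and let $P$ be the characteristic polynomial of Frobenius on $X^*(\nu_{\ol\F_2})$. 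Then $P$ is a product of cyclotomic polynomials. Cayley--Hamilton, pushed to the quotient $\Z/5$, gives $P(3)\equiv 0\pmod 5$. Hence some $\Phi_{4\cdot 5^j}$ divides $P$, because $3$ has order $4$ modulo $5$. Since $2$ also has order $4$ modulo $5$, we get $5\mid\Phi_{4\cdot5^j}(2)\mid P(2)=\pm|\nu(\F_2)|$.
\end{itemize}
The same example defeats your fallback. For any ambient torus $T\supset\mu$, the good part of $X^*(T)$ is killed by $g(F)$, while $g(3)$ is a unit mod $5$. So the good part maps to zero in $M$, and the bad subtorus \emph{contains} $\mu$ rather than meeting it trivially. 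The obstacle is therefore not integral bookkeeping: the torus you are looking for does not exist.

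\textbf{What is missing.} The lemma only asks for $\nu$ of multiplicative type. The part of $\pi_0(\mu)$ invisible on $\F_q$-points has to be kept as a non-toric part of $\nu$; in the example above one can simply take $\nu=\mu$. The paper proceeds as follows.
\begin{itemize}
\item Set $N=|\pi_0(\mu)(\F_q)|$, which is prime to $\ell$. Let $\mu_0\subset\mu$ be the open subgroup with $\pi_0(\mu_0)=\pi_0(\mu)[N^\infty]$.
\item The quotient $\mu/\mu_0$ is finite of order prime to $N$. It therefore has trivial $\F_q$-points, hence trivial $\rH^1$ over $\F_q$ by the Herbrand quotient. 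It also has trivial $\rH^1$ over $\F_{q^\ell}$, by a fixed-point argument for an automorphism of order $\ell$.
\item Only $\mu_0$ is embedded into a torus $\nu_0$ with $\ell\nmid|\nu_0(\F_q)|$. One then sets $\nu=(\mu\times\nu_0)/\mu_0$ and checks the required properties via the long exact sequence. Here $\rH^1(\F_q,\nu)=1$ is no longer automatic from Lang's theorem.
\item In the toric step, every torsion element $m$ has order prime to $\ell$. This is exactly what lets one hit $m$ from $\Z[\Gamma/\Gamma_{abc}]/\Z[\Gamma/\Gamma_{ab}]$ with $c$ prime to $\ell$ and $cm=0$, forcing all Frobenius eigenvalues to avoid the bad cyclotomic classes.
\end{itemize}
Your argument has no analogue of this step when $M$ has $\ell$-torsion, and by the example above it cannot have one.
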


\begin{proof}
    If $\ell = p$, then one can take $\nu$ to be any torus into which $\mu$ embeds, so assume $\ell \neq p$. We first reduce to the case that $\pi_0(\mu)(\ol\F_q)$ is of order prime to $\ell$. Let $N$ be the order of $\pi_0(\mu)(\F_q)$, so $\ell \nmid N$. Let $\pi_0(\mu)[N^\infty] = \bigcup_{k \in \Z_{>0}} \pi_0(\mu)[N^k]$ be the $\F_q$-subgroup scheme of $N$-power torsion in $\pi_0(\mu)$, and let $\mu_0$ be the open $\F_q$-subgroup scheme of $\mu$ with component group $\pi_0(\mu)[N^\infty]$. Since $(\pi_0(\mu)/\pi_0(\mu)[N^\infty])(\ol\F_q)$ is of order prime to $N$ and $\rH^1(\F_q, \pi_0(\mu)[N^\infty])$ is of $N$-power order, the long exact sequence on Galois cohomology shows $(\pi_0(\mu)/\pi_0(\mu)[N^\infty])(\F_q) = 1$. By standard results on Herbrand quotients, it follows that $\rH^1(\F_q, \pi_0(\mu)/\pi_0(\mu)[N^\infty]) = 1$ as well. Similarly, $\rH^1(\F_{q^\ell}, \pi_0(\mu)/\pi_0(\mu)[N^\infty]) = 1$ by \cite[Lemma 2.1]{Cot26a}. If $\mu_0$ embeds into some $\nu_0$ as in the lemma statement, then we may take $\nu = (\mu \times \nu_0)/\mu_0$, where $\mu_0$ embeds as $z \mapsto (z, z^{-1})$: indeed, from the exact sequence
    \[
    1 \to \mu_0(\F_q) \to \mu(\F_q) \times \nu_0(\F_q) \to \nu(\F_q) \to \rH^1(\F_q, \mu_0) \to \rH^1(\F_q, \mu \times \nu_0) \to \rH^1(\F_q, \nu) \to 1
    \]
    and the fact that $\ell$ does not divide the orders of $\mu_0(\F_q)$ or $\mu(\F_q) \times \nu_0(\F_q)$ or $\rH^1(\F_q, \mu_0)$, we see that $\ell \nmid |\nu(\F_q)|$. Moreover, since $\rH^1(\F_q,\nu_0) = 1$ and the map $\rH^1(\F_q, \mu_0) \to \rH^1(\F_q), \mu)$ is an isomorphism, we find that $\rH^1(\F_q, \nu) = 1$ and similarly $\rH^1(\F_{q^\ell}, \nu) = 1$. Thus we may pass from $\mu$ to $\mu_0$ to assume that $\pi_0(\mu)(\ol\F_q)$ is of order prime to $\ell$.
    
    Next, we reduce to the case that $\mu$ is finite \'etale. With $N$ as above (necessarily prime to $q$), let $\mu_1 = \mu[N^k]$ for some $k \in \Z_{>0}$ such that $\mu_1(\ol\F_q) \to \pi_0(\mu)(\ol\F_q)$ is surjective. If the result holds for $\mu_1$, then there is a monic $\F_q$-homomorphism $\mu_1 \to \nu_1$ as in the lemma statement. Then the pushout $\nu\coloneqq (\mu \times \nu_1)/\mu_1$ satisfies the requirements of the lemma for $\mu$, by the same argument as before. So we may pass to the case that $\mu$ is finite \'etale; in this case, we will show that one can take $\nu$ to be a torus, so $\rH^1(\F_q, \nu) = \rH^1(\F_{q^\ell}, \nu) = 1$ by Lang's theorem.

    Let $M = X^*(\mu_{\ol\F_q})$, and let $F_M$ be the automorphism of $M$ induced by the (arithmetic) Frobenius $F$ in $\Gamma \coloneqq \Gal(\ol\F_q/\F_q)$. Note that $M$ is a finite abelian group of order prime to $\ell$. For each $n \geq 1$, let $\Gamma_n \subset \Gamma$ denote the unique index $n$ closed subgroup. If $\nu$ is any $\F_q$-group scheme of multiplicative type and $X = X^*(\nu_{\ol\F_q})$, then there is a natural $\Gamma$-equivariant isomorphism $\nu(\ol\F_q) \cong \Hom(X, \ol\F_q^\times)$. Any embedding $\ol\F_q^\times \to \Q/\Z$ identifies the action of Frobenius of $\ol\F_q^\times$ with multiplication by $q$ on $\Q/\Z$ and shows that
    \[
    \Hom(\nu(\F_q), \Q/\Z) \cong X_{qF_X},
    \]
    where $F_X$ is the automorphism of $X$ induced by $F$, and the subscript $qF_X$ refers to the coinvariants for $qF_X$. Thus $|\nu(\F_q)|$ is prime to $\ell$ if and only if $X_{qF_X}$ is of order prime to $\ell$. If $\alpha_1, \dots, \alpha_n$ are the eigenvalues for $F_X$ (counted with multiplicity) on $X \otimes \ol\Q$, then
    \[
    |X_{qF_X}| = \prod_{i=1}^n (q - \alpha_i).
    \]
    If $\alpha_i$ is a primitive $m_i$th root of unity and $m_i'$ is the largest divisor of $m_i$ which is prime to $\ell$, then $|X_{qF_X}|$ is of order prime to $\ell$ if and only if $q$ has order distinct from $m_i'$ modulo $\ell$. Thus by duality it is enough to show that for each $m \in M$ there exists a finite free $\Z$-module $X$ equipped with a finite order automorphism $F_X$ and a homomorphism $f\co X \to M$ which intertwines $F_X$ and $F_M$, satisfies $m \in f(X)$, and such that every eigenvalue for $F$ on $X$ is of order whose prime-to-$\ell$ part is larger than the order of $q$ modulo $\ell$.

    Fix $m \in M$, and let $a\geq 1$ be such that $F_M^a(m) = m$. Let $b, c \in \Z_{>0}$ be such that $c$ is prime to $\ell$, and let $\pi\co \Z[\Gamma/\Gamma_{abc}] \to \Z[\Gamma/\Gamma_{ab}]$ be the $\Gamma$-equivariant projection. Note that the map $f_0\co \Z[\Gamma/\Gamma_{abc}] \to M$ given by $f_0([1]) = m$ factors through $\pi$. Observe that there is an injective homomorphism of $\Z[\Gamma]$-modules $j\co \Z[\Gamma/\Gamma_{ab}] \to \Z[\Gamma/\Gamma_{abc}]$ defined by
    \[
    j([F^n]) = \sum_{\substack{0\leq i<abc \\ i \equiv n\pmod{ab}}} [F^i]
    \]
    with the property that the composition of $j$ and $\pi$ is given by multiplication by $c$.
    Assume that $c$ is divisible enough that $cm = 0$. Then $f_0$ factors through a $\Gamma$-equivariant homomorphism
    \[
    f\co X\coloneqq \Z[\Gamma/\Gamma_{abc}]/\Z[\Gamma/\Gamma_{ab}] \to M,
    \]
    where we use $j$ to identify $\Z[\Gamma/\Gamma_{ab}]$ as a submodule of $\Z[\Gamma/\Gamma_{abc}]$. Observe that $X$ is a finite free $\Z$-module equipped with an automorphism $F_X$ (induced by $F$) of finite order. The eigenvalues for $F_X$ on $X \otimes \ol\Q$ are all $abc$th roots of unity which are not $ab$th roots of unity. Now take $b = (ac)^n$, where $n$ is large enough that for each prime $\ell_0 \neq \ell$ dividing $ac$, the power $\ell_0^{n+1}$ is larger than the order of $q$ in $(\Z/\ell)^\times$. If $\alpha$ is an eigenvalue for $F_X$, then it follows that $\alpha$ is of order larger than the order of $q$ in $(\Z/\ell)^\times$.
    Thus $X$ satisfies all the conditions of the previous paragraph, which proves the lemma.
\end{proof}

\begin{hypothesis}\label{hypothesis:glauberman-finite-reductive}
    For the rest of this section, let $\ol G$ be a paraductive $\F_q$-group scheme, let $\ol Z = Z(\ol G)$, and suppose that $\ell \neq p$ is a prime number such that Hypothesis~\ref{hypothesis:infinite-glauberman} holds with $\Gamma = \ol G(\F_{q^\ell})$, $\Gamma_0 = \ol G^\circ(\F_{q^\ell})$, $Z = \ol Z(\F_{q^\ell})$, and $\sigma = \Fr_q$. Suppose moreover that the action of $\Gal(\ol\F_q/\F_q)$ on $\pi_0(\ol G)(\ol\F_q)$ is of order prime to $\ell$ and $\ol G^\circ \cdot \ol Z$ is of prime-to-$\ell$ in $\ol G$.
\end{hypothesis}

\begin{prop}\label{prop:digne-3.5}
    Let $\ol T \subset \ol G$ be a generalized maximal $\F_q$-torus, and let $\theta\co \ol T(\F_q) \to \ol\Q_\ell^\times$ be a character. Suppose $\ell > \rk \ol G^\circ + 1$ and $\ell$ satisfies Hypothesis~\ref{hypothesis:glauberman-finite-reductive}.
    \begin{enumerate}
        \item There is a unique $\sigma$-stable extension $\theta_\ell\co \ol T(\F_{q^\ell}) \to \ol\Q_\ell^\times$ of $\theta$. If $\theta$ is non-singular, then $\theta_\ell$ is also non-singular.
        \item The Glauberman correspondence sends $\cE(\ol G, [\ol T, \theta])$ to $\cE(\ol G_{\F_{q^\ell}}, [\ol T_{\F_{q^\ell}}, \theta_\ell])$.
        \item If $\theta$ is non-singular, then $\Gla(R_{\ol T}^{\ol G}(\theta)) = R_{\ol T_{\F_{q^\ell}}}^{\ol G_{\F_{q^\ell}}}(\theta_\ell)$.
    \end{enumerate}
    Moreover, there is a constant $A$, depending only on the root datum of $\ol G^\circ$, such that for all $\ell > A$ the Glauberman correspondence sends $\cE_0(\ol G, [\ol T, \theta])$ to $\cE_0(\ol G_{\F_{q^\ell}}, [\ol T_{\F_{q^\ell}}, \ol\theta_\ell])$.
\end{prop}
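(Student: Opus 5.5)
We will work with Brauer characters and Tate cohomology throughout, exploiting Corollary~\ref{cor:glauberman-correspondence} and Lemma~\ref{lemma:tate-cohom-infinite-glauberman}.

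\emph{Part (1).} We first treat the torus $\ol T$ itself. Under Hypothesis~\ref{hypothesis:glauberman-finite-reductive}, $\ol T(\F_{q^\ell})$ with $\sigma=\Fr_q$ satisfies Hypothesis~\ref{hypothesis:infinite-glauberman}. For an abelian group, the Glauberman correspondence of Lemma~\ref{lemma:infinite-glauberman} sends a character $\theta$ of $\ol T(\F_q)$ to the unique $\sigma$-stable extension: $\sigma$-stable characters of $\ol T(\F_{q^\ell})$ restrict bijectively to $\ol T(\F_q)$. This holds because $\ell\nmid|\ol T^\circ(\F_{q^\ell})|$, so $\ol T^\circ(\F_{q^\ell}) = \ol T^\circ(\F_q)\times\ker(\Nm)$ with $\sigma$ acting fixed-point-freely on the second factor. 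Non-singularity is a condition on $\theta\circ\Nm_{\F_{q^n}/\F_q}\circ\alpha^\vee$. We note that $\theta_\ell\circ\Nm_{\F_{q^{\ell n}}/\F_{q^\ell}}$ restricted along coroots agrees with $\theta\circ\Nm_{\F_{q^{n}}/\F_q}$ composed with the $\ell$-power map, up to the splitting field. Since $\ell$ is prime to the relevant torsion orders ($\ell>\rk\ol G^\circ+1$ and banality), $\theta\circ\Nm\circ\alpha^\vee\neq1$ forces the $\ell$-th power to be nontrivial.

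\emph{Part (3).} We first reduce, by twisting by a $\sigma$-stable character of $\Gamma/\Gamma_0$ (Remark~\ref{rmk:glauberman-twisting-equivariance}) and passing to a central quotient, to finite type with $\ell\nmid|\Gamma|$; we then use Lemma~\ref{lemma:disc-dl-induction} to reduce to $\ol G=\ol G^\circ\cdot\ol T$. Extending $\Fr_q$ to the variety $Y^{\ol G}_{\ol U}$ over $\F_{q^\ell}$ (choosing $\ol U$ $\Fr_q$-stable relative to the $\F_{q^\ell}$-structure, i.e.\ taking the Borel with $\ol U\cdot\Fr_{q^\ell}(\ol U)$ defined over $\F_q$), we would apply Digne's argument \cite[Corollaire~3.5]{Dig99}. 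The trace of $g\rtimes\sigma$ on $\rH^*_c$ of the $\F_{q^\ell}$-variety computes the character of the $\F_q$-Deligne--Lusztig induction at the Shintani norm. Via Remark~\ref{remark:glauberman-shintani}, this is exactly the Glauberman identity \eqref{eqn:glauberman-condition} up to sign. Since $\theta$ non-singular gives $\pm R_{\ol T}^{\ol G}(\theta)$ multiplicity free with norm computed in Lemma~\ref{lemma:dl-6.8}, and likewise for $\theta_\ell$ by (1), we can compare norms. Lemma~\ref{lemma:split-rank-stays-same} gives equality of the relevant Weyl-group fixed sets, and Lemma~\ref{lemma:dl-6.8} then shows the two inner products agree. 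Hence the virtual characters correspond constituent by constituent, with the signs fixed by evaluating at $1\rtimes\sigma$. \textbf{We expect this Lefschetz/Shintani trace comparison, in the disconnected setting with the correct sign $\epsilon$, to be the main obstacle.} If it resists, we would instead argue via Tate cohomology of the integral cohomology complex: the $\sigma$-fixed points of $Y$ over $\F_{q^\ell}$ form the $\F_q$-variety, and Smith theory/Treumann--Venkatesh style localization identifies the Tate cohomology.

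\emph{Part (2) and the final claim.} For (2), we take $\tau\in\cE(\ol G,[\ol T,\theta])$, which is a constituent of some $R_{\ol T'}^{\ol G}(\theta')$ with matching series. Applying Proposition~\ref{prop:dl-exhaustion} on the $\F_{q^\ell}$ side to $\Gla(\tau)$, and comparing via the generic-character version of the trace identity of Part (3), we place $\Gla(\tau)$ in the series of some $\sigma$-stable pair. This pair descends to $\F_q$ by Lemma~\ref{lemma:split-rank-stays-same}(2). Central characters agree because $\Gla$ is compatible with central characters, and the rational-series condition transfers through the dual group: $\ol s\mapsto$ the corresponding element of $\ol G^*(\F_{q^\ell})$, which is $\ol s$ itself. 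For the $\cE_0$ statement, we choose a regular embedding via Lemma~\ref{lemma:embedding-into-large-tori}, which guarantees that Hypothesis~\ref{hypothesis:infinite-glauberman} persists for $\wt G$ once $\ell>A$. We then note that restriction from $\wt G$ to $\ol G$ commutes with $\Gla$ by Lemma~\ref{lemma:tate-cohom-devissage}, and conclude from (2) together with Lemma~\ref{lemma:E_0-independence-of-regular-embedding}.
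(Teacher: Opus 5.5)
Your proposal has genuine gaps, chiefly in the $\cE_0$ claim and in the unproven disconnected trace identity behind (2) and (3).

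\textbf{The $\cE_0$ claim.} It does not follow from (2). The set $\cE_0(\ol G,[\ol T,\theta])$ records constituents of the specific induction $R_{\wt T}^{\wt G}(\wt\theta)$ attached to the given torus, and it is typically much smaller than the semi-rational series. For example, take $\ol G$ connected, $\ol T$ split and $\theta=1$. Then $\cE(\ol G,[\ol T,1])$ contains all unipotent characters, cuspidal ones included, whereas $\cE_0(\ol G,[\ol T,1])$ contains only principal series unipotent characters. What you actually need is that the Glauberman correspondence carries constituents of $R_{\wt T}^{\wt G}(\wt\theta)$ to constituents of $R_{\wt T_{\F_{q^\ell}}}^{\wt G_{\F_{q^\ell}}}(\wt\theta_\ell)$ for arbitrary, possibly singular, $\wt\theta$. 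Your Part (3) only addresses non-singular characters. The paper obtains this input from \cite[Theorem~1.2]{Cot26a}, and that is where the root-datum constant $A$ comes from. Your account of $A$ is not right. Lemma~\ref{lemma:embedding-into-large-tori} gives $\ell\nmid|\nu(\F_q)|$ with no largeness condition on $\ell$. It also produces a group of multiplicative type rather than a torus, so it does not by itself give a regular embedding.

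\textbf{Parts (2) and (3).} Everything rests on a Shintani/Lefschetz trace comparison for the disconnected group $\ol G^\circ\cdot\ol T$, which you leave unproven. The missing idea is that you can avoid it by reducing to connected $\ol G$; this is what the paper uses Lemma~\ref{lemma:embedding-into-large-tori} for. The steps are:
\begin{enumerate}
\item After twisting and passing to a finite-type central quotient, enlarge $\ol Z\cap\ol G^\circ$ so that $\rH^1(\F_q,\ol Z\cap\ol G^\circ)=\rH^1(\F_{q^\ell},\ol Z\cap\ol G^\circ)=1$, transporting series membership by Lemma~\ref{lemma:paraductive-series-extension}.
\item Pass from $\ol G$ to $\ol G^\circ\cdot\ol Z$ via \cite[Proposition~3.3]{TV} and Lemma~\ref{lemma:tate-cohom-devissage}.
\item Use $\ol G(\F_q)=\ol G^\circ(\F_q)\ol Z(\F_q)$ and $\ol G(\F_{q^\ell})=\ol G^\circ(\F_{q^\ell})\ol Z(\F_{q^\ell})$ to split series membership into a connected series plus a central character.
\end{enumerate}
Without the vanishing of $\rH^1$ at both levels, $(\ol G^\circ\cdot\ol Z)(\F_q)$ can be strictly larger than $\ol G^\circ(\F_q)\ol Z(\F_q)$, and this splitting can fail. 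Relatedly, Lemma~\ref{lemma:dl-6.8}, which you invoke to compare norms, requires $Z(\ol G)\cap\ol G^\circ$ connected, which you never arrange.

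In the connected case, (2) is \cite[Lemma~2.2]{Cot26a} together with \cite[Corollaire~3.2]{Dig99}. Then (3) follows from (2), \cite[Proposition~7.4]{DL76}, and the equality of split ranks in Lemma~\ref{lemma:split-rank-stays-same}.

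\textbf{The dual-group step in (2).} It is wrong as stated. Since $\theta\circ\Nm_{\F_{q^\ell}/\F_q}=\theta_\ell^\ell$, the semisimple element attached to $(\ol T_{\F_{q^\ell}},\theta_\ell)$ is the prime-to-$\ell$ root $\ol s^{1/\ell}$, not $\ol s$. Already for $\ol G=\ol T$ one has $\Gla(\theta)=\theta_\ell\neq\theta\circ\Nm_{\F_{q^\ell}/\F_q}$ in general. This is the $[\ell]$-twist of Remark~\ref{remark:glauberman-shintani}.

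\textbf{Part (1).} The correct relation is that $(\theta_\ell\circ\Nm_{\F_{q^{\ell n}}/\F_{q^\ell}}\circ\alpha^\vee)^\ell$ equals $\theta\circ\Nm_{\F_{q^n}/\F_q}\circ\alpha^\vee$ precomposed with the surjective norm $\Nm_{\F_{q^{\ell n}}/\F_{q^n}}$. This gives non-singularity of $\theta_\ell$ directly, with no coprimality or banality input.
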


\begin{proof}
    The first statement of (1) is clear from the Glauberman correspondence. Now suppose that $\theta$ is non-singular. Let $n$ be a positive integer such that $\ol T_{\F_{q^n}}$ is split, and let $\alpha^\vee$ be a coroot for $\ol T_{\F_{q^n}}$. By the definition of non-singularity, the composition $\theta \circ \Nm_{\F_{q^n}/\F_q} \circ \alpha^\vee$ is nontrivial. Note that $\theta \circ \Nm_{\F_{q^\ell}/\F_q} = \theta_\ell^\ell$, so we have
    \begin{align*}
    (\theta_\ell \circ \Nm_{\F_{q^{\ell n}}/\F_{q^\ell}} \circ \alpha^\vee)^\ell &= (\theta \circ \Nm_{\F_{q^\ell}/\F_q}) \circ \Nm_{\F_{q^{\ell n}}/\F_{q^\ell}} \circ \alpha^\vee \\
        &= (\theta \circ \Nm_{\F_{q^n}/\F_q} \circ \alpha^\vee) \circ \Nm_{\F_{q^{\ell n}}/\F_{q^n}},
    \end{align*}
    where the final equality follows from the fact that $\alpha^\vee$ is defined over $\F_{q^n}$. Thus $\theta_\ell \circ \Nm_{\F_{q^{\ell n}}/\F_{q^\ell}} \circ \alpha^\vee \neq 1$, so $\theta_\ell$ is non-singular.

    Our next aim is to reduce the remaining claims to the case that $\ol G$ is connected; we begin with a few preliminary reductions. First, by twisting by a character of $\ol G(\F_q)/\ol G^\circ(\F_q)$ (using Remark~\ref{rmk:glauberman-twisting-equivariance}) we may assume that $\theta|_{\ol Z(\F_q)}$ is of finite order prime to $\ell$; by passing to a central quotient of $\ol G$, we may therefore assume that $\ol G$ is of finite type and $\ell$ does not divide the order of $\ol G(\F_q)$.

    Let $\ol Z_0 = \ol Z \cap \ol G^\circ$, and choose a monic $\F_q$-homomorphism $\ol Z_0 \to \wt Z_0$ where $\wt Z_0$ is an $\F_q$-group scheme of multiplicative type such that $\ell$ does not divide $|\wt Z_0(\F_q)|$ and $\rH^1(\F_q, \wt Z_0) = \rH^1(\F_{q^\ell}, \wt Z_0) = 1$; this exists by Lemma~\ref{lemma:embedding-into-large-tori}. Let $\wt G = (\ol G \times \wt Z_0)/\ol Z_0$, where $\ol Z_0$ is embedded into $\ol G \times \wt Z_0$ via $z \mapsto (z, z^{-1})$. Observe that Hypothesis~\ref{hypothesis:infinite-glauberman} still holds with $\wt G(\F_{q^\ell})$, $\wt G^\circ(\F_{q^\ell})$, and $Z = (\ol Z \cdot \wt Z_0)(\F_q)$ in place of $\Gamma$, $\Gamma_0$, and $Z$. By Lemma~\ref{lemma:paraductive-series-extension}, we may reduce (2) to the case $\ol G = \wt G$, i.e., the case that $\rH^1(\F_q, \ol Z \cap \ol G^\circ) = \rH^1(\F_{q^\ell}, \ol Z \cap \ol G^\circ) = 1$. Using (1) and the fact that Deligne--Lusztig induction is concentrated in one degree when $\theta$ is non-singular \cite[Proposition~7.4]{DL76}, we may similarly reduce (3) to the case $\ol G = \wt G$. The final claim is reduced to the case $\ol G = \wt G$ by Lemma~\ref{lemma:E_0-independence-of-regular-embedding}.

    We next reduce to the case $\ol G = \ol G^\circ \cdot \ol Z$. By Lemma~\ref{lemma:disc-dl-induction}, if $\ol U$ is the unipotent radical of a Borel $\ol\F_q$-subgroup of $\ol G^\circ$ containing $\ol T^\circ$, then we have 
    \[
    \rH_c^n(Y_{\ol U}^{\ol G}, \ol\Q_\ell)_\theta \cong \ind_{(\ol G^\circ \cdot \ol Z)(\F_q)}^{\ol G(\F_q)} \rH_c^n(Y_{\ol U}^{\ol G^\circ \cdot \ol Z}, \ol\Q_\ell)_\theta
    \]
    as $\ol G(\F_q)$-representations. Reducing modulo $\ell$, applying \cite[Proposition~3.3]{TV}, Lemma~\ref{lemma:tate-cohom-devissage}, and Lemma~\ref{lemma:infinite-glauberman}, and then lifting to characteristic zero, we see that for every irreducible constituent $\tau$ of $\rH_c^n(Y_{\ol U}^{\ol G}, \ol\Q_\ell)_\theta$, there is some irreducible constituent $\rho_\ell$ of $\rH_c^n(Y_{\ol U_\ell}^{\ol G^\circ_\ell \cdot \ol Z_\ell}, \ol\Q_\ell)_{\theta_\ell}$ such that the Glauberman correspondent $\tau_\ell$ of $\tau$ is an irreducible constituent of $\ind_{(\ol G^\circ \cdot \ol Z)(\F_{q^\ell})}^{\ol G(\F_{q^\ell})}(\rho_\ell)$. Consequently, we may pass from $\ol G$ to $\ol G^\circ \cdot \ol Z$ to assume $\ol G = \ol G^\circ \cdot \ol Z$.

    Finally, we reduce to the case that $\ol G$ is connected. Since $\ol G = \ol G^\circ \cdot \ol Z$ and $\rH^1(\F_q, \ol Z \cap \ol G^\circ) = 1$ and $\rH^1(\F_{q^\ell}, \ol Z \cap \ol G^\circ) = 1$, we have $\ol G(\F_q) = \ol G^\circ(\F_q) \cdot \ol Z(\F_q)$ and $\ol G(\F_{q^\ell}) = \ol G^\circ(\F_{q^\ell}) \cdot \ol Z(\F_{q^\ell})$. By the same reasoning as in \cite[Remark 2.6.5]{Kal21b}, if $\theta^\circ = \theta|_{\ol T^\circ(\F_q)}$ then an irreducible representation of $\ol G(\F_q)$ lies in $\cE(\ol G, [\ol T, \theta])$ precisely when it lies in $\cE(\ol G^\circ, [\ol T^\circ, \theta^\circ])$ and has central character $\theta|_{\ol Z(\F_q)}$. The latter condition is clearly preserved on passage to $\F_{q^\ell}$, so we may pass from $\ol G$ to $\ol G^\circ$ to assume that $\ol G$ is connected.


    At this point, (2) follows from \cite[Lemma 2.2]{Cot26a} and \cite[Corollaire 3.2]{Dig99}. For (3), note that by \cite[Proposition~7.4]{DL76}, the virtual representation $(-1)^{\rk_{\F_q}(\ol G) - \rk_{\F_q}(\ol T)}R_{\ol T}^{\ol G}(\theta)$ (resp.\ $(-1)^{\rk_{\F_{q^\ell}}(\ol G_{\F_{q^\ell}}) - \rk_{\F_{q^\ell}}(\ol T_{\F_{q^\ell}})} R_{\ol T_{\F_{q^\ell}}}^{\ol G_{\F_{q^\ell}}}(\theta_\ell)$) is an actual $\ol G(\F_q)$-representation (resp.\ $\ol G(\F_{q^\ell})$-representation). Therefore it suffices to observe that $\rk_{\F_q}(\ol G) = \rk_{\F_{q^\ell}}(\ol G_{\F_{q^\ell}})$ and $\rk_{\F_q}(\ol T) = \rk_{\F_{q^\ell}}(\ol T_{\F_{q^\ell}})$; these equalities follow from Lemma~\ref{lemma:split-rank-stays-same}. The final claim follows from \cite[Theorem 1.2]{Cot26a}.\footnote{We remark that, although the proof of \cite[Theorem 1.2]{Cot26a} uses \cite[Theorem~8.7.2]{GRV26} for general twisted Levi subgroups $\ol L$, this input is not needed when $\ol L$ is a torus.}
\end{proof}

\subsection{Lusztig series}\label{ssec:lusztig-series}

We saw in Proposition~\ref{prop:digne-3.5}(2) that, if $\ell > \max(|\Omega|^2, \rk \ol G^\circ + 1)$, where $\Omega$ is the absolute Weyl group of $\ol G^\circ$, then the Glauberman correspondence ``preserves semi-rational Lusztig series'' in an appropriate sense.

\begin{cor}\label{cor:glauberman-lusztig-bijection}
    There exists a constant $C$ such that\footnote{We have not attempted to optimize the constant $C$.} for every $\ell > C$, Hypothesis~\ref{hypothesis:glauberman-finite-reductive} is satisfied and if $\ol T \subset \ol G$ is a generalized maximal $\F_q$-torus and $\theta\co \ol T(\F_q) \to \ol\Q_\ell^\times$ is a character with $\Gal(\F_{q^\ell}/\F_q)$-stable extension $\theta_\ell\co \ol T(\F_{q^\ell}) \to \ol\Q_\ell^\times$, then the Glauberman correspondence
    \[
    \Gla\co \cE(\ol G, [\ol T, \theta]) \to \cE(\ol G_{\F_{q^\ell}}, [\ol T_{\F_{q^\ell}}, \theta_\ell])
    \]
    is bijective.
\end{cor}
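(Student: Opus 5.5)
We already know from Proposition~\ref{prop:digne-3.5}(2) that, for $\ell$ larger than a constant (and for $\ell$ satisfying Hypothesis~\ref{hypothesis:glauberman-finite-reductive}, which holds for all large $\ell$ by arguments as in Proposition~\ref{prop:banal-primes}, since $\pi_0(\ol G)$, the component of the Galois action and the index of $\ol G^\circ\cdot\ol Z$ are controlled by finitely many primes), $\Gla$ maps $\cE(\ol G,[\ol T,\theta])$ into $\cE(\ol G_{\F_{q^\ell}},[\ol T_{\F_{q^\ell}},\theta_\ell])$. The plan is to obtain bijectivity by proving surjectivity. Injectivity is automatic because $\Gla$ is a bijection between $\Irr(\ol G(\F_q))$ and the $\sigma$-stable elements of $\Irr(\ol G(\F_{q^\ell}))$ (Lemma~\ref{lemma:infinite-glauberman}). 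So it suffices to show two things. First, every element of the target series is $\sigma$-stable. Second, the preimage under $\Gla$ of such an element lies in $\cE(\ol G,[\ol T,\theta])$ and not in some other series $\cE(\ol G,[\ol T',\theta'])$.

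For $\sigma$-stability, let $\tau_\ell\in\cE(\ol G_{\F_{q^\ell}},[\ol T_{\F_{q^\ell}},\theta_\ell])$. Since $(\ol T_{\F_{q^\ell}},\theta_\ell)$ is $\sigma$-stable, ${}^\sigma\tau_\ell$ lies in the same semi-rational series. I would then argue that $\sigma$ permutes the finite set $\cE(\ol G_{\F_{q^\ell}},[\ol T_{\F_{q^\ell}},\theta_\ell])$ modulo twists. After twisting by characters of $\Gamma/\Gamma_0$ and passing to a finite central quotient as in the proof of Proposition~\ref{prop:digne-3.5}, this set has cardinality bounded by a constant $B$ depending only on the root datum of $\ol G^\circ$ and on $|\pi_0|$. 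Lemma~\ref{lemma:dl-6.8-variant} bounds the number of constituents of each $R^{\ol G}_{\ol T'}(\theta')$ independently of $q$. The number of rational classes of pairs in a series is bounded via the dual-group description: it is at most the number of rational forms of $Z_{\ol G^*}(\ol s)$, which is bounded in terms of $|\Omega|$ and the component group of the centralizer. Since $\sigma$ has order $\ell$, once $\ell>B$ every orbit of $\sigma$ on this set is a fixed point.

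For the second point, take $\tau$ with $\Gla(\tau)=\tau_\ell$, and let $\tau\in\cE(\ol G,[\ol T',\theta'])$, which exists by Proposition~\ref{prop:dl-exhaustion}. Then Proposition~\ref{prop:digne-3.5}(2) gives
\[
\tau_\ell\in\cE(\ol G_{\F_{q^\ell}},[\ol T'_{\F_{q^\ell}},\theta'_\ell]),
\]
so the two series over $\F_{q^\ell}$ coincide. We must then descend this equality to $\F_q$. In the connected case this is an equality of rational classes in $\ol G^*(\F_{q^\ell})$ of the semisimple elements $s_\ell$ and $s'_\ell$ attached to $\theta_\ell$ and $\theta'_\ell$. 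These elements are the images of $s$ and $s'$ under $\ol G^*(\F_q)\subset\ol G^*(\F_{q^\ell})$, and the identification of $\ol T^*(\F_q)$ with characters is compatible with unique $\sigma$-stable extension. So the task is to show that $\ol G^*(\F_q)$-classes of semisimple elements that become conjugate over $\F_{q^\ell}$ were already conjugate. By Lang's theorem the obstruction lies in $\ker\bigl(\rH^1(\F_q,Z_{\ol G^*}(s))\to\rH^1(\F_{q^\ell},Z_{\ol G^*}(s))\bigr)$. This is a subgroup of $\rH^1$ of $\pi_0$ of the centralizer on which restriction-corestriction acts as multiplication by $\ell$. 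That group has order bounded in terms of the root datum, so for $\ell$ large the kernel vanishes. The central character condition and the disconnected bookkeeping reduce to the connected case exactly as in the proof of Proposition~\ref{prop:digne-3.5}.

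The main obstacle is the uniform (in $q$) bound in the $\sigma$-stability step. If that bound is awkward, I would try a counting argument instead. One would compare $\sum_{\tau}\dim$ over each series via the Jordan decomposition degree formula on both sides. Alternatively, one could use that $\Gla$ preserves the norm $\langle R_{\ol T}^{\ol G}(\theta),R_{\ol T}^{\ol G}(\theta)\rangle$, via Lemma~\ref{lemma:dl-6.8} together with Lemma~\ref{lemma:split-rank-stays-same}(1) giving equality of rational Weyl groups, to see that no extra constituents appear over $\F_{q^\ell}$.
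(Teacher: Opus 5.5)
Your proposal is correct, and its core is the paper's argument. You bound $|\cE(\ol G_{\F_{q^\ell}},[\ol T_{\F_{q^\ell}},\theta_\ell])|$ independently of $\ell$, conclude that the order-$\ell$ Frobenius fixes every member, and then apply Lemma~\ref{lemma:infinite-glauberman}. The paper's bound is $MNR_0R_1^{\dim\ol T^\circ}$. It counts $\F_q$-tori via Lemma~\ref{lemma:split-rank-stays-same}, bounds constituents via Lemma~\ref{lemma:dl-6.8-variant}, and counts characters of the fixed order $R_1$. Your dual-group count works equally well, but what it needs is the number of rational classes of maximal tori in $Z_{\ol G^*}(s)$, not the number of its rational forms.

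Your second step goes beyond the paper. The paper's proof stops once every member of the target series is known to be a Glauberman correspondent. It leaves implicit why the preimage lies in $\cE(\ol G,[\ol T,\theta])$ rather than in another $\F_q$-series with the same image series. Your $\rH^1$ descent supplies this. It works because $\pi_0 Z_{\ol G^*}(s)$ is abelian of order bounded by the root datum, so restriction $\rH^1(\F_q,-)\to\rH^1(\F_{q^\ell},-)$ is injective for large $\ell$.

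One small correction is needed in that step. Since $\theta_\ell^\ell=\theta\circ\Nm_{\F_{q^\ell}/\F_q}$, and the latter corresponds to $s$, the element attached to $\theta_\ell$ is the unique $\ell$-th root of $s$ in $\ol T^*(\F_q)$, not $s$ itself. Because $\ell\nmid|\ol G^*(\F_q)|$, you can raise to $\ell$-th powers before descending, so the argument is unaffected.
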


\begin{proof}
    Since $\Gla$ is injective, it suffices to prove surjectivity for large enough $\ell$. First, we show that $\cE(\ol G_{\F_{q^\ell}}, [\ol T_{\F_{q^\ell}}, \theta_\ell])$ is finite of order which is bounded above independently of $\ell$.

    By twisting, we may assume that $\theta$ is of finite order (of order bounded independently of $\ol T$). Thus by passing to a central quotient of $\ol G$, we may and do assume that $\ol G$ is of finite type. In this case, observe first that if $\ell > \rk \ol G^\circ + 1$, then every maximal $\F_{q^\ell}$-torus of $\ol G_{\F_{q^\ell}}$ is $\ol G(\F_{q^\ell})$-conjugate to the base change of a maximal $\F_q$-torus of $\ol G$ by Lemma~\ref{lemma:split-rank-stays-same}; let $M$ be the number of such maximal $\F_q$-tori. By Lemma~\ref{lemma:dl-6.8-variant}, there exists an integer $N$ (which is independent of $\ell$) such that if $\ol T' \subset \ol G$ is a maximal $\F_q$-torus and $\theta'_0\co \ol T'(\F_{q^\ell}) \to \ol\Q_\ell^\times$ is a character, then the number of irreducible constituents of $R_{\ol T'_{\F_{q^\ell}}}^{\ol G_{\F_{q^\ell}}}(\theta'_0)$ is at most $N$. If $\pi_0(\ol T)(\ol\F_q)$ is of order $R_0$ and $\theta|_{\ol T^\circ(\F_q)}$ is of order $R_1$, then any such $\theta'_0$ above must restrict to an order $R_1$ character of $\ol T^\circ(\F_{q^\ell})$, of which there are at most $R_0 \cdot R_1^{\dim \ol T^\circ}$. This shows that $\cE(\ol G_{\F_{q^\ell}}, [\ol T_{\F_{q^\ell}}, \theta_\ell])$ is of order at most $MNR_0R_1^{\dim \ol T^\circ}$, which is indeed independent of $\ell$.

    We now let $C$ be large enough so that: 
    \begin{enumerate}
        \item Hypothesis~\ref{hypothesis:glauberman-finite-reductive} holds for all $\ell > C$,
        \item $C > MNR_0R_1^{\dim\ol T^\circ}$ for all $\ol T$,
        \item $C > \max(|\Omega|^2, \rk \ol G^\circ + 1)$.
    \end{enumerate}
    Since $\theta_\ell$ is $\Gal(\F_{q^\ell}/\F_q)$-stable, the group $\Gal(\F_{q^\ell}/\F_q)$ acts on $\cE(\ol G_{\F_{q^\ell}}, [\ol T_{\F_{q^\ell}},\theta_\ell])$. If $\ell > C$, then the preceding paragraph shows that $\ell$ is larger than the order of $\cE(\ol G_{\F_{q^\ell}}, [\ol T_{\F_{q^\ell}},\theta_\ell])$, so every element of $\cE(\ol G_{\F_{q^\ell}}, [\ol T_{\F_{q^\ell}},\theta_\ell])$ is necessarily $\Gal(\F_{q^\ell}/\F_q)$-stable. By Lemma~\ref{lemma:infinite-glauberman}, every such element arises from $\Gla$, as desired.
\end{proof}

The following technical corollary will appear at a crucial point later.

\begin{cor}\label{cor:glauberman-torus-character-conjugate}\emergencystretch=3em
    Suppose that $\ell$ satisfies Hypothesis~\ref{hypothesis:glauberman-finite-reductive} and $\ell > \max(\rk \ol G^\circ + 1, |\Omega|^2)$. Let $\ol T \subset \ol G$ (resp.\ $\ol S \subset \ol G_{\F_{q^\ell}}$) be a generalized maximal $\F_q$-torus (resp.\ generalized maximal $\F_{q^\ell}$-torus), let $\theta\co \ol T(\F_q) \to \ol\Q_\ell^\times$ (resp.\ $\eta\co \ol S(\F_{q^\ell}) \to \ol\Q_\ell^\times$) be a character, and let $\theta_\ell\co \ol T(\F_{q^\ell}) \to \ol\Q_\ell^\times$ be the unique $\Gal(\F_{q^\ell}/\F_q)$-stable character extending $\theta$. If $\cE(\ol G_{\F_{q^\ell}}, [\ol S, \eta]) = \cE(\ol G_{\F_{q^\ell}}, [\ol T_{\F_{q^\ell}}, \theta_\ell])$ are geometrically conjugate, then there exists a generalized maximal $\F_q$-torus $\ol S_0 \subset \ol G$ and a character $\eta_0\co \ol S_0(\F_q) \to \ol\Q_\ell^\times$ such that $(\ol S, \eta)$ is $\ol G(\F_{q^\ell})$-conjugate to $((\ol S_0)_{\F_{q^\ell}}, \eta_{0,\ell})$, where $\eta_{0,\ell}$ is the unique $\Gal(\F_{q^\ell}/\F_q)$-stable extension of $\eta_0$ to $\ol S_0(\F_{q^\ell})$.
\end{cor}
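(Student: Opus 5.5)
The idea is to produce an $\F_q$-rational pair inside the Lusztig series, push it to $\F_{q^\ell}$ via the Glauberman correspondence, and then use rigidity of Lusztig series to identify it with $(\ol S,\eta)$.

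\textbf{Step 1: a Frobenius-stable constituent.} Choose an irreducible constituent $\tau_\ell$ of $R_{\ol S}^{\ol G_{\F_{q^\ell}}}(\eta)$. It lies in $\cE(\ol G_{\F_{q^\ell}},[\ol S,\eta])=\cE(\ol G_{\F_{q^\ell}},[\ol T_{\F_{q^\ell}},\theta_\ell])$. By Corollary~\ref{cor:glauberman-lusztig-bijection} (or directly by its proof, using the bound on the size of the series and the fact that $\theta_\ell$ is Galois-stable), $\tau_\ell$ is $\Fr_q$-stable once $\ell$ is large. Hence $\tau_\ell=\Gla(\tau)$ for some $\tau\in\cE(\ol G,[\ol T,\theta])$. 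I expect the paper's hypothesis $\ell>\max(\rk\ol G^\circ+1,|\Omega|^2)$ is meant to suffice here, arguing as in that corollary.

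\textbf{Step 2: an $\F_q$-rational pair.} By Proposition~\ref{prop:dl-exhaustion}, $\tau$ is a constituent of $R_{\ol S_0}^{\ol G}(\eta_0)$ for some generalized maximal $\F_q$-torus $\ol S_0$ and character $\eta_0$. Proposition~\ref{prop:digne-3.5}(2) then gives
\[
\tau_\ell\in\cE\bigl(\ol G_{\F_{q^\ell}},[(\ol S_0)_{\F_{q^\ell}},\eta_{0,\ell}]\bigr).
\]
By Lemma~\ref{lemma:geometric-lusztig-series}(1), two semi-rational series sharing a member coincide, so
\[
\cE(\ol G_{\F_{q^\ell}},[\ol S,\eta])=\cE\bigl(\ol G_{\F_{q^\ell}},[(\ol S_0)_{\F_{q^\ell}},\eta_{0,\ell}]\bigr).
\]

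\textbf{Step 3: upgrade equality of series to conjugacy of pairs.} This is the main obstacle. Equality of semi-rational series only says that the associated semisimple elements of the dual group are rationally conjugate and the central characters agree. That is far weaker than conjugacy of the pairs unless $\eta$ is non-singular, in which case Lemma~\ref{lemma:non-singular-conjugacy} finishes immediately.

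In general I would argue as follows.
\begin{itemize}
\item By Lemma~\ref{lemma:split-rank-stays-same}(2), $\ol S^\circ$ is $\ol G(\F_{q^\ell})$-conjugate to the base change of an $\F_q$-torus $\ol S_1^\circ$. We may therefore assume $\ol S=(\ol S_1)_{\F_{q^\ell}}$ with $\ol S_1=Z_{\ol G}(\ol S_1^\circ)$.
\item It then suffices to show that, after replacing $\eta$ by a conjugate under $N_{\ol G}(\ol S_1)(\F_{q^\ell})$, $\eta$ is $\Fr_q$-stable. In that case $\eta=\eta_{0,\ell}$ with $\eta_0\coloneqq\eta|_{\ol S_1(\F_q)}$, and we can take $\ol S_0\coloneqq\ol S_1$ in the statement.
\item Frobenius stabilizes the semi-rational series containing $(\ol S,\eta)$. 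So ${}^{\Fr_q}\eta$ and $\eta$ yield rationally conjugate dual semisimple elements in the same dual torus, hence elements conjugate under the relative Weyl group $W=(N/\ol S_1^\circ)(\F_{q^\ell})$ (using Lemma~\ref{lemma:split-rank-stays-same}(1) to match Weyl groups over $\F_q$ and $\F_{q^\ell}$).
\item The central characters agree, which should pin the full character $\eta$ down, not merely $\eta|_{\ol S^\circ}$, since $\ol S(\F_{q^\ell})=\ol S^\circ(\F_{q^\ell})\cdot Z(\F_{q^\ell})$ up to prime-to-$\ell$ index.
\item Frobenius therefore acts on the $W$-orbit of $\eta$, a set of size at most $|\Omega|<\ell$, through a cyclic group of order dividing $\ell$. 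This action has a fixed point, which is the desired $\Fr_q$-stable conjugate.
\end{itemize}

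The delicate points are ensuring that the dual-group conjugacy really is implemented by the Weyl group of $\ol S_1^\circ$ rather than by some other torus, and handling the component-group part of $\ol S$. That is where I would expect to invoke \cite[Proposition~2.6.11]{Kal21b}-type arguments.
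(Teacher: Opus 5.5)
Your Step 3 has a genuine gap, and it sits exactly where the content of the statement lies. You infer that ${}^{\Fr_q}\eta$ and $\eta$, having rationally conjugate dual semisimple elements in the same dual torus, are conjugate under the rational Weyl group. This is false in general. Suppose $s,s'\in\ol S^*(\F_{q^\ell})$ and $s'=hsh^{-1}$ with $h\in\ol G^*(\F_{q^\ell})$. Then $\ol S^*$ and $h\ol S^*h^{-1}$ are maximal tori of $Z_{\ol G^*}(s')^\circ$, so they are conjugate there over $\ol\F_q$. That only produces an element of the absolute Weyl group, because the two tori need not be rationally conjugate in $Z_{\ol G^*}(s')^\circ$.

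For example, in $\GL_4$ take the maximal torus of type $(1,1,2)$. Its rational points are $\F_q^\times\times\F_q^\times\times\F_{q^2}^\times$ and its rational Weyl group is generated by $(12)$ and $(34)$. For $a\neq b$ in $\F_q^\times$, the elements $\mathrm{diag}(a,a,b,b)$ and $\mathrm{diag}(b,b,a,a)$ of this torus are $\GL_4(\F_q)$-conjugate but not conjugate under the rational Weyl group. So the two corresponding characters of the same torus lie in the same Lusztig series without the pairs being rationally conjugate.

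This is the ``delicate point'' you flag. The tool you propose for it, \cite[Proposition~2.6.11]{Kal21b}, only handles non-singular characters (that is how it enters Lemma~\ref{lemma:non-singular-conjugacy}), so singular $\eta$ remain uncovered. Likewise, fixing the central character only determines $\eta$ on $\ol S^\circ(\F_{q^\ell})\cdot Z(\F_{q^\ell})$, not on all of $\ol S(\F_{q^\ell})$.

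The missing idea is that no rational Weyl-group conjugacy is needed. After reducing to $\ol S=(\ol S_1)_{\F_{q^\ell}}$ as you do, let $X$ be the set of characters $\eta'$ of $\ol S(\F_{q^\ell})$ with $\cE(\ol G_{\F_{q^\ell}},[\ol S,\eta'])=\cE(\ol G_{\F_{q^\ell}},[\ol T_{\F_{q^\ell}},\theta_\ell])$.
\begin{itemize}
\item $X$ is $\Fr_q$-stable, because $\ol S$, $\ol T$ and $\theta_\ell$ are.
\item $\Fr_q$ acts on $X$ through a group of order $\ell$.
\item $|X|$ is bounded independently of $\ell$. The dual elements lie in boundedly many geometric Weyl orbits in $\ol S^*$, each of size at most $|\Omega|$. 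The central character is fixed, and there are boundedly many extensions from $\ol S^\circ(\F_{q^\ell})\cdot Z(\F_{q^\ell})$. For connected $\ol G$ one simply has $|X|\le|\Omega|<\ell$.
\end{itemize}
Every orbit has size $1$ or $\ell$, so once $\ell>|X|$ the character $\eta$ itself is $\Fr_q$-fixed. You can then take $\ol S_0=\ol S_1$ and $\eta_0=\eta|_{\ol S_1(\F_q)}$.

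The paper's proof also avoids the dual group, but argues differently. After the same reduction, it uses the geometric conjugacy of $(\ol S,\eta)$ and $(\ol T_{\F_{q^\ell}},\theta_\ell)$ over some $\F_{q^{\ell n}}$ with $\ell\nmid n$. It splits $\Gal(\F_{q^{\ell n}}/\F_q)$ as $\Gal(\F_{q^\ell}/\F_q)\times\Gal(\F_{q^n}/\F_q)$ and transports the Galois-stability of $\theta_\ell$ to $\eta$ through the norm maps.

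Finally, your Steps 1 and 2 can be dropped:
\begin{itemize}
\item the series already contains the base-changed pair $(\ol T_{\F_{q^\ell}},\theta_\ell)$;
\item Step 3 never uses the pair $(\ol S_0,\eta_0)$ they produce;
\item Step 1 relies on Corollary~\ref{cor:glauberman-lusztig-bijection}, whose constant depends on more than $\rk\ol G^\circ$ and $|\Omega|$ (for instance on the order of $\theta$), so it is not available under the stated hypothesis.
\end{itemize}
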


\begin{proof}
    By Lemma~\ref{lemma:split-rank-stays-same}, since $\ell > \rk \ol G^\circ + 1$ there is a generalized maximal $\F_q$-torus $\ol S_0 \subset \ol G$ such that $\ol S$ is $\ol G(\F_{q^\ell})$-conjugate to $(\ol S_0)_{\F_{q^\ell}}$. By conjugacy, we may assume $\ol S = (\ol S_0)_{\F_{q^\ell}}$. It is now enough to show that $\eta$ is $\Gal(\F_{q^\ell}/\F_q)$-stable, as we may then (by the uniqueness aspect of Proposition~\ref{prop:digne-3.5}(1)) let $\eta_0 = \eta|_{\ol S_0(\F_q)}$.

    Finally, we show that $\eta$ is $\Gal(\F_{q^\ell}/\F_q)$-stable. Since $\ell \nmid n$, we have
    \[
    \Gal(\F_{q^{\ell n}}/\F_q) \cong \Gal(\F_{q^\ell}/\F_q) \times \Gal(\F_{q^n}/\F_q).
    \]
    Let $s \in \ol S_0^\circ(\F_{q^\ell})$ and $\gamma \in \Gal(\F_{q^\ell}/\F_q)$, and let $t \in \ol T^\circ(\F_{q^{\ell n}})$ such that $s = \Nm_{\F_{q^{\ell n}}/\F_{q^\ell}}(gtg^{-1})$. We have now
    \begin{align*}
    \eta(\gamma(s)) &= \eta(\gamma(\Nm_{\F_{q^{\ell n}}/\F_{q^\ell}}(gtg^{-1}))) = \eta(\Nm_{\F_{q^{\ell n}}/\F_{q^\ell}}(\gamma(gtg^{-1}))) \\
        &= \eta(\Nm_{\F_{q^{\ell n}}/\F_{q^\ell}}(g\gamma(t)g^{-1})) = \theta_\ell(\Nm_{\F_{q^{\ell n}}/\F_{q^\ell}}(\gamma(t))) \\
        &= \theta_\ell(\gamma(\Nm_{\F_{q^{\ell n}}/\F_{q^\ell}}(t))) = \theta_\ell(\Nm_{\F_{q^{\ell n}}/\F_{q^\ell}}(t)) \\
        &= \eta(\Nm_{\F_{q^{\ell n}}/\F_{q^\ell}}(gtg^{-1})) = \eta(s),
    \end{align*}
    so indeed $\eta$ is $\Gal(\F_{q^\ell}/\F_q)$-stable, as desired.
\end{proof}

\section{Applications to the depth 0 Local Langlands Correspondence}\label{section:debacker-reeder}

Throughout this section, let $F$ be a non-archimedean local field with ring of integers $\cO_F$ and residue field $\F_q$. Let $W_F$ denote the Weil group of $F$, let $I_F$ denote the inertia subgroup of $W_F$, and let $P_F$ denote the wild inertia subgroup of $W_F$. Let $G$ be a connected reductive $F$-group, and let $\wh G$ denote the Langlands dual group of $G$ (over $\Z$, say). We will let $\ld G = \wh G \rtimes W_0$, where $W_0$ is a (sufficiently large) finite quotient of $W_F$ through which the action of $W_F$ on $\wh G$ factors. We will apply most of the preceding material to analyzing the Fargues--Scholze parameters of depth $0$ supercuspidal $\ol\Q_\ell$-representations of $G(F)$.

Recall from \cite[Proposition~6.8]{MP96} that if $\pi$ is a depth $0$ irreducible supercuspidal $\ol\Q_\ell$-representation of $G(F)$, then there is a point $x \in \cB(G)$ whose image $[x]$ in $\cB(G_{\der})$ is a vertex and an irreducible cuspidal $\ol\Q_\ell$-representation $\tau$ of $G(F)_{[x]}/G(F)_{x,0+}$ such that $\pi \cong \cInd_{G(F)_{[x]}}^{G(F)}(\tau)$, where $\cInd$ denotes the compact induction functor. We will study such representations through their reductions modulo $\ell$, using the Tate cohomology calculations given above.

Throughout the remainder of this section, we fix a point $x$ such that $[x]$ is a vertex. We let $\ol G_{[x]}$ denote the paraductive $\F_q$-group scheme described in Proposition~\ref{prop:main-paraductive-example}, so $\ol G_{[x]}(\F_q) = G(F)_{[x]}/G(F)_{x,0+}$.

\subsection{The DeBacker--Reeder parametrization}\label{ss:dr}

We now recall and extend the (partial) local Langlands parametrization of \cite{DR09} and \cite{Kal21b}. Let $k$ be a field among $\ol\Q_\ell$ and $\ol\F_\ell$, and let $\tau$ be a non-singular irreducible cuspidal $k$-representation of $\ol G_{[x]}(\F_q)$ in the sense of Definition~\ref{defn:finite-group-ns-reps}. Let $\pi = \cInd_{G(F)_{[x]}}^{G(F)}(\tau)$, so $\pi$ is an irreducible depth $0$ cuspidal $k$-representation of $G(F)$ by \cite[\S 7]{Vig01b}. By definition of non-singularity and Lemma~\ref{lemma:non-singular-conjugacy}, this means that there is a pair $(\ol T, \theta)$, unique up to conjugacy, such that
\begin{enumerate}
    \item $\ol T$ is a generalized maximal $\F_q$-torus of $\ol G_{[x]}$ in the sense of Definition~\ref{def:twisted-levi} such that $\ol T^\circ$ is an elliptic $\F_q$-subtorus of $\ol G_{[x]}^\circ$ (by Lemma~\ref{lemma:ns-dl-res-cuspidal}),
    \item $\theta\co \ol T(\F_q) \to k^\times$ is a non-singular character in the sense of Definition~\ref{defn:finite-group-non-singular},
    \item $\tau$ lies in the semi-rational Lusztig series $\cE(\ol G_{[x]}, [\ol T, \theta])$ in the sense of Definition~\ref{defn:lusztig-series}.
\end{enumerate}
From these data, one can extract a maximally unramified elliptic maximal $F$-torus $T \subset G$ as follows\footnote{Compare with the proof of Proposition~\ref{prop:main-paraductive-example}, which essentially gives this procedure in reverse.}: let $\cG_{[x]}$ denote the smooth separated $\cO_F$-group scheme such that $\cG_{[x]}(\cO_F) = G(F)_{[x]}$ as in \cite[Remark 8.3.4]{KP}, so by \cite[Proposition~11.14(1)]{Bor91} there exists a maximal $\F_q$-torus $\wt T \subset (\cG_{[x]})_{\F_q}$ whose image under the map $r\co (\cG_{[x]})_{\F_q} \to \ol G_{[x]}$ is $\ol T^\circ$. Note that $\wt T$ is unique up to $(\ker r)(\F_q)$-conjugacy because $\ker r$ is unipotent. By \cite[Expos\'e IX, Th\'eor\`eme 3.6, Th\'eor\`eme 7.1]{SGA3II}, there exists an $\cO_F$-subtorus $\cT_0$ of $\cG_{[x]}$ with special fiber $\wt T$, and this subtorus is unique up to $G(F)_{x,0+}$-conjugacy. The generic fiber $T_0 = (\cT_0)_F$ is therefore a maximal unramified $F$-subtorus of $G$, unique up to $G(F)$-conjugacy. If $T = Z_G(T_0)$, then $T$ is a maximally unramified maximal $F$-torus of $G$.

Note that $T$ is elliptic because $\ol T^\circ$ is elliptic. This implies that $T(F) = T(F)_{[x]}$, so $\theta$ induces a character of $T(F)$ which we will also denote by $\theta$. We will say that $\pi$ is \emph{non-singular} if, for a finite unramified extension $E/F$ such that $G_E$ is quasi-split and $T_E$ is maximally split in $G_E$ with maximal split $E$-subtorus $S$, then for each relative root $\alpha \in \Phi(G_E, S_E)$ we have
\begin{equation}\label{eqn:non-singularity-defn}
    \theta \circ \Nm_{E/F} \circ \alpha^\vee|_{\cO_E^\times} \neq 1.
\end{equation}
Observe that this condition is independent of the choice of $E/F$.

We assume from now on that $\pi$ is non-singular.\footnote{The construction we give will still make sense if we only assume that $\tau$ is non-singular, but it will not give the ``true'' semisimple L-parameter in general. Indeed, if $k = \ol\Q_\ell$ then the semisimple L-parameter associated to an irreducible supercuspidal $k$-representation of $G(F)$ should either be discrete or will have infinite image (mod center). The L-parameter we construct always has finite image (mod center).} Using the pair $(T, \theta)$, we will now construct an L-parameter $\rho^{\DR}(x,\tau)\co W_F \to \ld G(k)$ (with notation in recognition of the work of DeBacker--Reeder \cite{DR09}).\footnote{The notation $\rho^{\DR}(\pi)$ would perhaps be more natural, but we do not check directly that $\rho^{\DR}$ is independent of the pair $(x, \tau)$ defining $\pi$ when $k = \ol\F_\ell$. However, Theorem~\ref{thm:main-depth-0-comparison} will show that $\rho^{\DR}(x, \tau)$ does indeed only depend on $\pi$.}

Recall from \cite[\S 6]{Kal21a} that, if $H \subset G$ is a twisted Levi $F$-subgroup, then there is a canonical $W_F$-stable $\wh G(k)$-conjugacy class of embeddings $\wh H \to \wh G$, and by \cite[Remark 6.8]{Kal21a} one can extend any representative in this conjugacy class to an L-embedding $\ld H \to \ld G$ by choosing a set of $\chi$-data for the set of characters $\Phi((G/H)_{\ol F}, (Z(H)^\circ_{\red})_{\ol F})$. If $H$ is an unramified twisted Levi $F$-subgroup of $G$ (for example, a maximally unramified maximal torus), then all elements of $\Phi((G/H)_{\ol F}, (Z(H)^\circ_{\red})_{\ol F})$ are either asymmetric or symmetric unramified in the sense of \cite[\S 2]{Kal21a}; this is observed in \cite[Lemma~3.2.1]{CF26a}. Thus there is a canonical set of $\chi$-data: namely, one can take \emph{minimally ramified} $\chi$-data in the sense of \cite[Definition 4.6.1]{Kal19}. This leads to a (conjugacy class of) L-embedding(s)
\[
\ld j_{H,G}\co \ld H \to \ld G.
\]

Thus we can finally define
\[
\rho^{\DR}(x,\tau) = \ld j_{T,G} \circ \ld\theta\co W_F \to \ld G(k),
\]
where $(T, \theta)$ is the pair constructed above and $\ld\theta\co W_F \to \ld T(k)$ denotes the L-parameter deduced from the Local Langlands Correspondence for tori. Observe that \eqref{eqn:non-singularity-defn} implies that $Z_{\wh G}(\rho^{\DR}(x, \tau)|_{I_F})^\circ$ is a torus, and thus $Z_{\wh G}(\rho^{\DR}(x, \tau))/Z(\wh G)^{W_F}$ is finite since $T$ is elliptic.

We record the following two straightforward lemmas for ease of reference.

\begin{lemma}\label{lemma:depth-0-modular-reduction}
    Let $\tau$ be a $\ol\Z_\ell$-representation of $\ol G_{[x]}(\F_q)$ which is finite free as a $\ol\Z_\ell$-module, and assume that $\tau_{\ol\F_\ell}$ is an irreducible non-singular cuspidal $\ol\F_\ell$-representation. Then $\rho^{\DR}(x, \tau_{\ol\F_\ell})$ is the semisimplified $\ell$-modular reduction of $\rho^{\DR}(x, \tau_{\ol\Q_\ell})$.
\end{lemma}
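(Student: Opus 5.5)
The plan is to compare the pairs $(\ol T,\theta)$ attached to $\tau_{\ol\F_\ell}$ and to $\tau_{\ol\Q_\ell}$, and then push the comparison through the construction of $\rho^{\DR}$. Write $\wt\tau=\tau_{\ol\Q_\ell}$. Let $(\ol T,\ol\theta)$ be the pair attached to $\tau_{\ol\F_\ell}$, so $\tau_{\ol\F_\ell}\in\cE(\ol G_{[x]},[\ol T,\ol\theta])$ with $\ol\theta$ non-singular and $\ol T^\circ$ elliptic.

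First we need $\wt\tau$ to be non-singular cuspidal, so that $\rho^{\DR}(x,\wt\tau)$ is defined. By Proposition~\ref{prop:dl-exhaustion}, $\wt\tau$ lies in some $\cE(\ol G_{[x]},[\ol T',\theta'])$. Its reduction contains $\tau_{\ol\F_\ell}$, and this should force $\cE(\ol G_{[x]},[\ol T',\ol{\theta'}])=\cE(\ol G_{[x]},[\ol T,\ol\theta])$. The equality would follow from Lemma~\ref{lemma:geometric-lusztig-series}(2), after noting that $\theta'$ can be taken $\ol\Z_\ell^\times$-valued once we twist so the central character has finite order.

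With that equality in hand, Lemma~\ref{lemma:non-singular-conjugacy} over $\ol\F_\ell$ shows that $(\ol T',\ol{\theta'})$ is $\ol G_{[x]}(\F_q)$-conjugate to $(\ol T,\ol\theta)$. So after conjugating we may take $\ol T'=\ol T$ and $\ol{\theta'}=\ol\theta$. Since $\ol\theta$ is non-singular and the reduction $\ol\F_\ell$ commutes with norms and coroots, the lift $\theta'$ is non-singular as well.

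For cuspidality, Lemma~\ref{lemma:ns-dl-res-cuspidal} says every member of $\cE(\ol G_{[x]},[\ol T,\theta'])$ is cuspidal, and in particular $\wt\tau$ is. So $\rho^{\DR}(x,\wt\tau)$ is built from the same torus $\ol T$ and the character $\theta'$ lifting $\ol\theta$. Uniqueness up to conjugacy of the pair attached to $\wt\tau$, again by Lemma~\ref{lemma:non-singular-conjugacy}, makes this choice harmless. Nothing in the passage from $\ol T^\circ$ to the $F$-torus $T$ depends on the coefficient field, so both parameters come from the same $T$.

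It remains to compare $\ld j_{T,G}\circ\ld\theta'$ with $\ld j_{T,G}\circ\ld{\ol\theta}$. The embedding $\ld j_{T,G}$ uses minimally ramified $\chi$-data, which are defined over $\ol\Z_\ell$ and reduce compatibly, so it commutes with reduction modulo $\ell$. The local Langlands correspondence for tori is compatible with reduction: the character $\theta'$ of $T(F)$ has reduction $\ol\theta$, and the corresponding parameters reduce accordingly, since the correspondence is given by the same Tate--Nakayama/Langlands isomorphism with $\ol\Z_\ell$ coefficients. The parameter $\ld j_{T,G}\circ\ld\theta'$ has image in $\ld T(\ol\Z_\ell)$ up to conjugacy, so its reduction is already semisimple.

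The main obstacle I expect is the first step, which needs both the lattice and $\theta'$ to be integral. The intended route is to twist by an unramified character of $\ol G_{[x]}(\F_q)/\ol G_{[x]}^\circ(\F_q)$ valued in $\ol\Z_\ell^\times$, so that the central characters have finite order, and to check that $\rho^{\DR}$ transforms compatibly under such twists.
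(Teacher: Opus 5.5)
The paper gives no proof of this lemma; it only calls it straightforward. Your outline is the evident intended argument, and it is fine up to its last sentence. The outline runs as follows: the pair attached to $\tau_{\ol\Q_\ell}$ lifts the pair $(\ol T,\ol\theta)$ attached to $\tau_{\ol\F_\ell}$, so it gives the same $F$-torus $T$, and both $\ld j_{T,G}$ (with minimally ramified $\chi$-data) and the correspondence for tori commute with reduction.

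\textbf{The gap is in the last sentence.} Your argument makes $\rho^{\DR}(x,\tau_{\ol\F_\ell})$ literally the reduction of $\ld j_{T,G}\circ\ld\theta'$. So the lemma's assertion that it \emph{is} the semisimplified reduction amounts to saying this reduction is semisimple. Having image in $\ld j_{T,G}(\ld T)$ does not give that in characteristic $\ell$. For example, take $G=\GL_2$, $T$ the unramified elliptic torus, $\ell=2\neq p$ and $\theta=1$. The reduced parameter kills $I_F$ and sends a Frobenius to an antidiagonal matrix in $N_{\GL_2}(\wh T)\setminus\wh T$. Its characteristic polynomial $x^2-c$ has a repeated root in characteristic $2$, and the matrix is non-scalar, so it is not semisimple; hence neither is the parameter. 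This $\theta$ is singular, so it is no counterexample to the lemma. It does show that non-singularity must enter, and your final step never uses it.

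\textbf{One way to close the gap.} Let $\phi=\ld j_{T,G}\circ\ld{\ol\theta}$ and $K=\phi(I_F)$, a finite normal subgroup of $\phi(W_F)$.

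First, $K$ is $\ld G$-completely reducible. This is automatic if $\ell\nmid|K|$. In general, $T$ is maximally unramified, so $I_F$ preserves a positive system in $X^*(T)$. Hence $\ld j_{T,G}$ may be normalized so that $K\subset\wh T\rtimes(\text{image of }I_F)$, with $I_F$ acting by pinned automorphisms. Then $K$ centralizes the regular cocharacter $\mu$ given by the sum of the positive coroots of $(\wh G,\wh T)$, so $K\subset L_\mu$ with $L_\mu^\circ=\wh T$. In a group with torus identity component, R-parabolics are R-Levis (by the computation below), so $K$ is $L_\mu$-completely reducible and therefore $\ld G$-completely reducible \cite{BMR05}.

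Second, $\wh S\coloneqq Z_{\wh G}(K)^\circ$ is a torus by non-singularity, as the paper observes right after defining $\rho^{\DR}$.

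Now suppose $\phi(W_F)\subset P_\lambda$. Conjugating by $R_u(P_\lambda)$, we may assume $K\subset L_\lambda$, so $\lambda$ factors through $\wh S$. Each $h\in\phi(W_F)$ normalizes $K$, hence $\wh S$. Write $h\lambda$ for the cocharacter $t\mapsto h\lambda(t)h^{-1}$ of $\wh S$. Then $\lambda(t)h\lambda(t)^{-1}=(\lambda-h\lambda)(t)\,h$. Since $\lambda-h\lambda$ is a cocharacter of the torus $\wh S$, this has a limit as $t\to 0$ only if $h\lambda=\lambda$. Hence $\phi(W_F)\subset L_\lambda$, and $\phi$ is semisimple.

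\textbf{Two smaller points.}

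The twist you anticipate for integrality is unnecessary. $Z(\ol G_{[x]})(\F_q)$ acts on $\tau_{\ol\Q_\ell}$ by scalars preserving the lattice $\tau$, so these scalars lie in $\ol\Z_\ell^\times$. Moreover, $\theta'$ restricts to this central character and takes root-of-unity values on the finite group $\ol T'^\circ(\F_q)$. Since $\ol T'^\circ(\F_q)\cdot Z(\ol G_{[x]})(\F_q)$ has finite index in $\ol T'(\F_q)$, $\theta'$ is automatically $\ol\Z_\ell^\times$-valued.

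Lemma~\ref{lemma:ns-dl-res-cuspidal} over $\ol\Q_\ell$ needs a non-singular cuspidal member of the series as input, so invoking it for $\wt\tau$ is circular. Cuspidality of $\wt\tau$ is immediate anyway. For each unipotent radical $\ol U$, the invariants $\tau^{\ol U(\F_q)}$ form a direct summand of the lattice (as $p\neq\ell$), and their reduction is $\tau_{\ol\F_\ell}^{\ol U(\F_q)}=0$.
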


\begin{lemma}\label{lemma:depth-0-independence-of-ell}
    Let $\tau$ be an irreducible non-singular cuspidal $\ol\Q_\ell$-representation of $\ol G_{[x]}(\F_q)$, let $\ell' \neq p$ be another prime number, and let $\iota\co \ol\Q_\ell \to \ol\Q_{\ell'}$ be a field isomorphism. Then
    \[
    \rho^{\DR}(x, \iota_*\tau) \sim \iota_*\rho^{\DR}(x, \tau).
    \]
\end{lemma}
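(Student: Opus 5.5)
The statement to prove is Lemma~\ref{lemma:depth-0-independence-of-ell}. The plan is to trace the construction of $\rho^{\DR}(x,\tau)$ step by step and check that each ingredient is transported by $\iota$.

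\textbf{Step 1: transport the pair $(\ol T,\theta)$.} By definition, $\rho^{\DR}(x,\tau) = \ld j_{T,G}\circ\ld\theta$, where $(\ol T,\theta)$ is the pair, unique up to $\ol G_{[x]}(\F_q)$-conjugacy by Lemma~\ref{lemma:non-singular-conjugacy}, for which $\tau \in \cE(\ol G_{[x]},[\ol T,\theta])$. I will show that $\iota_*\tau \in \cE(\ol G_{[x]},[\ol T,\iota\circ\theta])$. Three checks are needed.
\begin{enumerate}
\item Deligne--Lusztig induction commutes with $\iota$. The character of $R_{\ol T}^{\ol G}(\theta)$ is given by a Lefschetz trace formula whose traces on $\rH_c^*(Y_{\ol U}^{\ol G},\ol\Q_\ell)$ are integers independent of $\ell$ (Deligne--Lusztig). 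After reducing to finite type as in Lemma~\ref{lemma:dl-restriction}, this gives $\iota_* R_{\ol T}^{\ol G}(\theta) = R_{\ol T}^{\ol G}(\iota\circ\theta)$.
\item The central character condition in Definition~\ref{defn:lusztig-series}(2) is obviously compatible with $\iota$.
\item The rational-series condition on constituents of the restriction to $\ol G_{[x]}^\circ(\F_q)$ is compatible with $\iota$ once the injections $(\Q/\Z)_{p'}\subset\ol\Q_\ell^\times$ and $(\Q/\Z)_{p'}\subset\ol\Q_{\ell'}^\times$ are chosen compatibly with $\iota$. The paper notes that its statements do not depend on this choice, so this is harmless.
\end{enumerate}
Non-singularity of $\iota\circ\theta$ is immediate from Definition~\ref{defn:finite-group-non-singular}, and cuspidality of $\iota_*\tau$ is clear.

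\textbf{Step 2: the torus $T$ and its character.} The torus $T$ depends only on $\ol T^\circ$, not on the coefficients, so it is the same for $\tau$ and $\iota_*\tau$. The induced character of $T(F) = T(F)_{[x]}$ is $\iota\circ\theta$.

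\textbf{Step 3: the torus correspondence.} The local Langlands correspondence for tori is $\iota$-equivariant: $\ld(\iota\circ\theta) = \iota_*\ld\theta$. This holds because the correspondence is defined through $\Hom_{\mathrm{cts}}(T(F),\CC^\times)\cong H^1(W_F,\wh T)$ via class field theory, which is purely algebraic once the character has finite order on a compact open subgroup. Our characters are depth $0$ and factor through a finite quotient up to unramified twist, so continuity is not affected by the non-continuous isomorphism $\iota$.

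\textbf{Step 4: the L-embedding.} The L-embedding $\ld j_{T,G}$ is defined using minimally ramified $\chi$-data, which are tamely ramified characters. The main point to check is that $\iota$ carries minimally ramified $\chi$-data to minimally ramified $\chi$-data. This holds because the defining conditions (unramified, or trivial on $1+\fm$ and restricting to the quadratic/Teichmüller character on $\cO^\times$) are algebraic. The embedding $\wh T\to\wh G$ itself is defined over $\Z$. Therefore
\[
\ld j_{T,G}\otimes\ol\Q_{\ell'} = \iota_*\circ \ld j_{T,G}\otimes\ol\Q_\ell
\]
up to $\wh G$-conjugacy. Composing with Step 3 gives the claim.

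\textbf{Main obstacle.} I expect the subtlest step to be the independence of $\ell$ of the semi-rational Lusztig series in Step 1, because it depends on the choice of identification $\Hom(\ol T^\circ(\F_q),\ol\Q_\ell^\times)\cong\ol T^*(\F_q)$. I would handle this as described in check 3: fix the root-of-unity identifications compatibly via $\iota$.
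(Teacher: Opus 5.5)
Your proposal is correct and is the unwinding of definitions the paper has in mind when it calls this lemma straightforward; the paper writes no proof. One simplification for Step 4: every root here is asymmetric or symmetric unramified, so the minimally ramified $\chi$-data are the trivial character or the unramified quadratic character. They are therefore $\pm1$-valued and literally fixed by $\iota$, which also removes any question of uniqueness among tamely ramified choices.
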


\subsubsection{Functoriality}

In this section, we check two compatibility statements for $\rho^{\DR}$.

\begin{lemma}\label{lemma:depth-0-bc}
    Let $\tau$ be an irreducible non-singular cuspidal $\ol\F_\ell$-representation of $\ol G_{[x]}(\F_q)$. Let $E/F$ be an unramified field extension of degree $\ell$, and suppose that $\ell$ is banal for $G(F)$ and satisfies Hypothesis~\ref{hypothesis:infinite-glauberman} with $\Gamma = \ol G_{[x]}(\F_{q^\ell})$ and $\sigma$ a generator of $\Gal(E/F)$. If $\tau_\ell$ is the $\sigma$-stable irreducible $\ol\F_\ell$-representation of $\ol G_{[x]}(\F_{q^\ell})$ corresponding to $\tau$ as in \S\ref{sss:infinite-glauberman}, then
    \[
    \rho^{\DR}(x, \tau)|_{W_{F_\ell}} \sim \Fr_\ell \circ \rho^{\DR}(x, \tau_\ell).
    \]
\end{lemma}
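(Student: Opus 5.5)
The plan is to unwind the construction of $\rho^{\DR}$ on both sides and compare the resulting tori and characters. Let $(\ol T, \theta)$ be the pair attached to $\tau$: $\ol T^\circ$ is elliptic, $\theta$ is non-singular, and $\tau \in \cE(\ol G_{[x]}, [\ol T, \theta])$. First I would lift $\theta$ to a $\ol\Z_\ell^\times$-valued character; since $\ell$ is banal, the finite-order part of $\theta$ has order prime to $\ell$ after twisting. I would then apply Proposition~\ref{prop:digne-3.5}(1),(2) (mod $\ell$, via Lemma~\ref{lemma:infinite-glauberman-modular-reduction} and Definition~\ref{defn:lusztig-series}) to get $\tau_\ell \in \cE(\ol G_{[x],\F_{q^\ell}}, [\ol T_{\F_{q^\ell}}, \theta_\ell])$, where $\theta_\ell$ is the unique $\sigma$-stable extension of $\theta$ and is again non-singular. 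The hypotheses $\ell > \rk+1$ and Hypothesis~\ref{hypothesis:glauberman-finite-reductive} should follow from banality together with Proposition~\ref{prop:banal-primes}(4). Proposition~\ref{prop:banal-primes}(3) keeps $[x]$ a vertex over $E$. Proposition~\ref{prop:glauberman-cuspidal} keeps $\tau_\ell$ cuspidal, and Lemma~\ref{lemma:split-rank-stays-same}(1) keeps $\ol T^\circ_{\F_{q^\ell}}$ elliptic. So by the uniqueness in Lemma~\ref{lemma:non-singular-conjugacy}, the DeBacker--Reeder pair attached to $(x,\tau_\ell)$ is $(\ol T_{\F_{q^\ell}}, \theta_\ell)$.

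Next I would compare the $p$-adic data. Lifting $\ol T^\circ$ to an $\cO_F$-torus $\cT_0 \subset \cG_{[x]}$ and base changing to $\cO_E$ gives a lift of $\ol T^\circ_{\F_{q^\ell}}$ inside $\cG_{[x]} \otimes \cO_E$. Here I use that Bruhat--Tits group schemes are compatible with unramified base change, so $\cG_{[x]}\otimes\cO_E$ is the $E$-analogue. The torus attached to $(x,\tau_\ell)$ is therefore $T_E$, where $T = Z_G(T_0)$. On characters, the character of $T(E)$ attached to $\theta_\ell$ is the $\Gal(E/F)$-stable depth-zero extension of $\theta$. I would then check that this character equals $\theta \circ \Nm_{E/F}$ twisted by the $\ell$-th root ambiguity. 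Since $\theta_\ell^\ell = \theta\circ\Nm$ on $\ol T(\F_{q^\ell})$ and $\ell$ is prime to the relevant orders, we get $\theta_\ell = (\theta\circ \Nm_{E/F})^{1/\ell}$, where the $\ell$-th root is taken uniquely in the prime-to-$\ell$ part. Over $\ol\F_\ell$, raising to the $\ell$-th power is the Frobenius twist, so $\Fr_\ell \circ \ld\theta_\ell = \ld(\theta\circ\Nm_{E/F})$ as $\ol\F_\ell$-valued parameters. By LLC for tori this is $\ld\theta|_{W_E}$.

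Finally I would compare the L-embeddings. I need $\ld j_{T,G}|_{W_E} = \ld j_{T_E,G_E}$ up to $\wh G$-conjugacy, and that $\ld j$ commutes with $\Fr_\ell$ applied to coefficients. The $\Fr_\ell$ compatibility holds because the embedding $\wh T \to \wh G$ is defined over $\Z$. The restriction statement reduces to the restriction of minimally ramified $\chi$-data from $F$ to $E$ being minimally ramified, and to the resulting $\zeta$-correction being trivial mod conjugacy; I would cite \cite{CF26a} for this. I expect this last step to be the main obstacle: since $E/F$ is unramified of odd prime degree, the symmetric or asymmetric type of each root is unchanged. Restricting a tamely ramified minimally ramified $\chi_\alpha$ along $\Nm_{E/F}$ should again be minimally ramified, up to an unramified character whose effect is absorbed into $\wh G$-conjugacy. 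Combining the three steps gives $\rho^{\DR}(x,\tau)|_{W_E} \sim \Fr_\ell\circ\rho^{\DR}(x,\tau_\ell)$.
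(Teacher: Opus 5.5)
Your proposal is correct and follows the paper's proof. Proposition~\ref{prop:digne-3.5} and Proposition~\ref{prop:banal-primes} show that the pair extracted from $(x,\tau_\ell)$ is $(T_E,\theta_\ell)$. The paper then says only that the claim ``follows from the definitions,'' and you unwind this explicitly, via $\theta\circ\Nm_{E/F}=\theta_\ell^{\ell}=\Fr_\ell\circ\theta_\ell$ and the compatibility of the $\{\pm1\}$-valued minimally ramified $\chi$-data with restriction to $W_E$ and with $\Fr_\ell$.

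One small caveat: banality alone does not give $\ell>\rk G+1$ (see the footnote to Proposition~\ref{prop:banal-primes}). So that bound is an implicit extra hypothesis, exactly as it is in the paper's own proof, not a consequence of banality.
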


\begin{proof}
    Let $(\ol T, \theta)$ be a generalized maximal torus-character pair in $\ol G_{[x]}$ such that $\tau \in \cE(\ol G_{[x]}, [\ol T, \theta])$. By Proposition~\ref{prop:digne-3.5}, there is a unique $\sigma$-stable character $\theta_\ell\co \ol T(\F_{q^\ell}) \to \ol\F_\ell^\times$ extending $\theta$, and this character has the property that $\tau_\ell$ lies in $\cE((\ol G_{[x]})_{\F_{q^\ell}}, [\ol T_{\F_{q^\ell}}, \theta_\ell])$. By Proposition~\ref{prop:banal-primes}, the groups $G$ and $G_E$ have the same split ranks, and the point $x$ has image in $\cB((G_E)_{\der})$ which is a vertex. Thus if $(T, \theta)$ is the pair extracted from $\tau$ as above, then $(T_E, \theta_\ell)$ is the pair extracted from $\tau_\ell$, and the claim follows from the definitions.
\end{proof}

\begin{lemma}\label{lemma:depth-0-unram-twisted-levi}
    Let $k \in \{\ol\Q_\ell, \ol\F_\ell\}$, let $H \subset G$ be an unramified twisted Levi $F$-subgroup such that $x \in \cB(H)$, and let $\tau$ be an irreducible non-singular cuspidal $k$-representation of $\ol G_{[x]}(\F_q)$. If $(\ol T, \theta)$ is a torus-character pair in $\ol H_{[x]}$ such that $\tau \in \cE(\ol G_{[x]}, [\ol T, \theta])$, and if $\tau_H \in \cE(\ol H_{[x]}, [\ol T, \theta])$ is any irreducible $k$-representation, then
    \[
    \rho^{\DR}(x, \tau) \sim \ld j_{H,G} \circ \rho^{\DR}(x, \tau_H).
    \]
\end{lemma}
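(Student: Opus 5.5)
The plan is to reduce the statement to the construction of $\rho^{\DR}$ and to transitivity of the canonical L-embeddings. First I would identify the pair attached to $(x,\tau_H)$. Since $\tau$ is non-singular and cuspidal and $(\ol T,\theta)$ lies in $\ol H_{[x]}$, Lemma~\ref{lemma:ns-dl-res-cuspidal} applies with $\ol H=\ol H_{[x]}$ (a twisted Levi of $\ol G_{[x]}$ containing $\ol T$, by the example following Definition~\ref{def:twisted-levi}). It shows that $\ol T^\circ$ is elliptic in $\ol G_{[x]}^\circ$, hence in $\ol H_{[x]}^\circ$, and that every $\tau_H\in\cE(\ol H_{[x]},[\ol T,\theta])$ is non-singular and cuspidal. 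Consequently $\rho^{\DR}(x,\tau_H)$ is defined. By the uniqueness in Lemma~\ref{lemma:non-singular-conjugacy}, applied in $\ol H_{[x]}$, its torus-character pair is $(\ol T,\theta)$ up to $\ol H_{[x]}(\F_q)$-conjugacy.

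Next I would check that the $F$-torus extracted from $(\ol T,\theta)$ inside $H$ agrees with the one extracted inside $G$. The recipe lifts $\ol T^\circ$ to a maximal $\F_q$-torus of the special fiber of $\cG_{[x]}$, then to an $\cO_F$-torus $\cT_0$, and finally takes $T=Z(T_0)$. I would carry this out inside $\cH_{[x]}\subset\cG_{[x]}$. Lemma~\ref{lemma:surj-centralizers} shows that the lift inside $\ol\cH_{[x]}$ is also a valid lift inside $\ol\cG_{[x]}$, and the uniqueness statements (up to $\ker r$ and $G(F)_{x,0+}$-conjugacy) then give the same $T_0$. Since $T_0$ lies in $H$, we have $Z_H(T_0)=Z_G(T_0)=T$, because $T$ is a maximal torus of $G$ contained in $H$. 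The character $\theta$ of $T(F)=T(F)_{[x]}$ is literally the same in both constructions. This gives
\[
\rho^{\DR}(x,\tau)=\ld j_{T,G}\circ\ld\theta,\qquad \rho^{\DR}(x,\tau_H)=\ld j_{T,H}\circ\ld\theta .
\]
A subtle point arises here: $H$ is required to contain a conjugate of $T$ determined only up to $H(F)$- versus $G(F)$-conjugacy. I would handle this by noting that $\rho^{\DR}$ only depends on conjugacy classes.

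The claim therefore reduces to the identity $\ld j_{T,G}\sim \ld j_{H,G}\circ\ld j_{T,H}$ as $\wh G(k)$-conjugacy classes of L-embeddings. I expect this to be the main obstacle, since it is a statement about $\chi$-data. The underlying embeddings $\wh T\to\wh H\to\wh G$ compose to the canonical class, because Kaletha's embeddings are canonical and compatible. The $\chi$-data for $\Phi(G,T)$ splits as the roots of $H$ together with the restrictions to $T$ of $\Phi(G/H,Z(H)^\circ)$. The key input is that minimally ramified $\chi$-data are compatible with this decomposition. All roots involved are asymmetric or symmetric unramified, as noted via \cite[Lemma~3.2.1]{CF26a}, and minimally ramified $\chi$-data are characterized root by root (trivial on asymmetric orbits, unramified quadratic on symmetric unramified ones). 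Hence restricting to $Z(H)^\circ$ preserves minimal ramification. I would then invoke the transitivity of Langlands--Shelstad-type L-embeddings under compatible $\chi$-data, as in \cite[\S 6]{Kal21a}, to conclude. The case $k=\ol\F_\ell$ is identical, since all the L-embeddings are defined over $\Z[1/p]$-type coefficients and the argument is purely about the torus-character pair.
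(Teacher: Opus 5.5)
Your overall route is the paper's: it also observes that the pair $(T,\theta)$ extracted from $(x,\tau_H)$ is the one extracted from $(x,\tau)$, and reduces to $\ld j_{T,G}\circ\ld\theta\sim\ld j_{H,G}\circ\ld j_{T,H}\circ\ld\theta$. The paper deduces this identity from \cite[Proposition~5.27, Proposition~6.9]{Kal21a}, with details in \cite[Proposition~5.3.2]{CF26a}. Your preliminary steps (non-singularity and cuspidality of $\tau_H$ via Lemma~\ref{lemma:ns-dl-res-cuspidal}, and lifting $\ol T^\circ$ inside the integral model of $H$) are fine.

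\textbf{The gap.} The step that fails as written is your justification of transitivity. You claim that minimally ramified $\chi$-data on $\Phi(G,T)$ split into minimally ramified data on $\Phi(H,T)$ and on $\Phi(G/H,Z(H)^\circ)$, i.e.\ that restriction to $Z(H)^\circ$ preserves minimal ramification. But the restriction map $\Phi(G,T)\smallsetminus\Phi(H,T)\to\Phi(G/H,Z(H)^\circ)$ is neither injective nor type-preserving. For a concrete case, take $T=\Res_{F_6/F}\G_m\subset H=\Res_{F_2/F}\GL_3\subset G=\GL_6$, all unramified with $T$ elliptic. Write $X^*(T)=\bigoplus_{i\in\Z/6}\Z e_i$, with Frobenius acting by $e_i\mapsto e_{i+1}$.
\begin{itemize}
\item The root $\alpha=e_1-e_2$ is asymmetric, with $F_\alpha=F_6$.
\item It restricts to the symmetric unramified character $\beta$ of $Z(H)^\circ=\Res_{F_2/F}\G_m$, where $F_\beta=F_2$ and $F_{\pm\beta}=F$. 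The symmetric root $e_1-e_4$ restricts to $\beta$ as well.
\item The minimally ramified $\chi_\beta$ is the nontrivial unramified quadratic character of $F_2^\times$. Hence the datum it naturally induces on $\alpha$, namely $\chi_\beta\circ\Nm_{F_6/F_2}$, is nontrivial.
\item The minimally ramified datum for $\alpha$, by contrast, is trivial.
\end{itemize}
So the composite $\ld j_{H,G}\circ\ld j_{T,H}$ is not literally the Langlands--Shelstad embedding attached to the minimally ramified data on $\Phi(G,T)$. An extra comparison is needed, and that is what \cite[Proposition~5.27]{Kal21a} supplies in the paper's citation.

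\textbf{Repair.} In this setting you can also close the gap directly. Suppose the transitivity statement you invoke identifies the composite with the embedding attached to \emph{some} unramified $\chi$-data on $\Phi(G,T)$; this is plausible since every character in sight is asymmetric or symmetric unramified. Then it differs from the minimally ramified choice by unramified $\zeta$-data, hence by the character of $T(F)$ assembled from the values $\zeta_\alpha(\alpha(t))$. That character is trivial, for two reasons:
\begin{itemize}
\item An unramified $\zeta_\alpha$ that is trivial on $F_{\pm\alpha}^\times$, with $F_\alpha/F_{\pm\alpha}$ unramified, is itself trivial.
\item Because $T/Z(G)$ is anisotropic, $\alpha(T(F))\subset\cO_{F_\alpha}^\times$, so any unramified $\zeta_\alpha$ kills it.
\end{itemize}
The two L-embeddings are therefore $\wh T$-conjugate, which is what the lemma needs.
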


\begin{proof}
    The pair $(T, \theta)$ extracted above from $(x, \tau)$ is the same as the one extracted from $(x, \tau_H)$. Thus the definitions reduce this to the claim that
    \[
    \ld j_{T, G} \circ \ld\theta \sim \ld j_{H,G} \circ \ld j_{T, H} \circ \ld\theta.
    \]
   This follows from \cite[Proposition~5.27, Proposition~6.9]{Kal21a} after unravelling the definitions of the L-embeddings in \cite[\S 6.1]{Kal21a}; for more details, see \cite[Proposition~5.3.2]{CF26a}.
\end{proof}

\subsection{The Fargues--Scholze parametrization}

Recall that $G$ is a connected reductive $F$-group and $x \in \cB(G)$ is a point whose image $[x]$ in $\cB(G_{\der})$ is a vertex. Let $\rho^{\FS}$ denote the Fargues--Scholze Local Langlands Correspondence, as in \cite{FS}.

The following lemma is extracted from the proof of \cite[Theorem~8.4.1]{F24}.

\begin{lemma}\label{lemma:tate-cohom-compact-induction}
    Let $\sigma$ be an $F$-automorphism of $G$ of order $\ell$, and let $H = G^\sigma$. Let $K$ be a $\sigma$-stable compact-mod-center open subgroup of $G(F)$, and let $K_H = K \cap H(F)$. If $\rho$ is a smooth representation of $K \rtimes \langle\sigma\rangle$ and $j \in \Z/2\Z$, then $\cInd_{K_H}^{H(F)} \rT^j(\sigma,\rho)$ is a direct summand of $\rT^j(\sigma,\cInd_K^{G(F)} \rho)$.
\end{lemma}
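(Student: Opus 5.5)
We will decompose $\cInd_K^{G(F)}\rho$ as a representation of $H(F)\times\langle\sigma\rangle$ via a Mackey-type analysis. Since $\sigma$ normalizes $K$, the group $\langle\sigma\rangle$ acts on the double coset space $K\backslash G(F)/H(F)$ (equivalently, on $G(F)/K$ compatibly with the left $H(F)$-action). Concretely, regard $\cInd_K^{G(F)}\rho$ as the space of compactly supported functions $f\co G(F)\to\rho$ with $f(kg)=\rho(k)f(g)$, with $\sigma$ acting by $(\sigma f)(g)=\rho(\sigma)f(\sigma^{-1}(g))$. Then we split $\cInd_K^{G(F)}\rho=\bigoplus_{\mathcal O} V_{\mathcal O}$ as $H(F)\rtimes\langle\sigma\rangle$-modules, where $\mathcal O$ runs over $\langle\sigma\rangle$-orbits of $(K, H(F))$-double cosets and $V_{\mathcal O}$ consists of functions supported on the union of the double cosets in $\mathcal O$. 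Each $V_{\mathcal O}$ is $\sigma$-stable and $H(F)$-stable, and Tate cohomology commutes with direct sums, so $\rT^j(\sigma,\cInd_K^{G(F)}\rho)=\bigoplus_{\mathcal O}\rT^j(\sigma,V_{\mathcal O})$.

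Next we isolate the summand coming from the identity double coset $KH(F)$, which is $\sigma$-stable. Restriction of functions to $H(F)$ identifies $V_{\{KH(F)\}}$ with $\cInd_{K_H}^{H(F)}(\rho|_{K_H})$, and this identification is $\sigma$-equivariant because $\sigma$ fixes $H(F)$ pointwise; on the right $\sigma$ acts via $\rho(\sigma)$ pointwise on values. It then remains to show $\rT^j(\sigma,\cInd_{K_H}^{H(F)}\rho)\cong\cInd_{K_H}^{H(F)}\rT^j(\sigma,\rho)$. Since $\sigma$ acts trivially on $H(F)$, the operators $1-\sigma$ and $N_\sigma$ act valuewise on functions, and choosing coset representatives for $K_H\backslash H(F)$ writes $\cInd_{K_H}^{H(F)}\rho$ as a direct sum of copies of $\rho$ on which $\sigma$ acts through $\rho(\sigma)$; kernels and images are then computed coset by coset, giving the claimed isomorphism (the $K_H$-equivariance is clear since $K_H$ commutes with $\sigma$).

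Thus $\cInd_{K_H}^{H(F)}\rT^j(\sigma,\rho)$ is the direct summand $\rT^j(\sigma,V_{\{KH(F)\}})$. The main point needing care is the bookkeeping: the orbit decomposition must be by $(K,H(F))$-double cosets (left $K$, right $H(F)$ in the convention where $H(F)$ acts by right translation), and one should check that $\sigma$ really permutes these double cosets, which uses $\sigma(K)=K$ and $\sigma(H(F))=H(F)$. No vanishing of the other summands is needed (though for free orbits of length $\ell$ the summand is an induced $\langle\sigma\rangle$-module with vanishing Tate cohomology), since only the direct summand statement is claimed.
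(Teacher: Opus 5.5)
Your proof is correct, but it takes a more hands-on route than the paper's.

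The paper computes the whole Tate cohomology at once. It applies \cite[Proposition~3.3]{TV} to the sheaf $\cF_\rho$ on $G(F)/K$ to get $\rT^j(\sigma,\cInd_K^{G(F)}\rho)=\Gamma_c((G(F)/K)^\sigma,\rT^j(\sigma,\cF_\rho))$. It then uses the non-abelian cohomology sequence $K_H\to H(F)\to(G(F)/K)^\sigma\to\rH^1(\sigma,K)$ to see that $H(F)/K_H$ is an open and closed piece of the fixed locus, whose sections are $\cInd_{K_H}^{H(F)}\rT^j(\sigma,\rho)$.

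You instead split off the relevant summand before taking Tate cohomology, using the $\sigma$-stable double coset $KH(F)$. On functions supported there, $\sigma$ acts only on values, so the computation reduces to a coset-by-coset identity.

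What your route buys:
\begin{itemize}
\item It avoids the localization theorem of Treumann--Venkatesh and any discussion of $\rH^1(\sigma,K)$. The only topological input is that $K$ is open, so everything is a disjoint union of cosets, and the open-and-closed property in the paper is in fact automatic since $G(F)/K$ is discrete.
\item It exhibits $\cInd_{K_H}^{H(F)}\rho$ as an $H(F)\times\langle\sigma\rangle$-direct summand of $\cInd_K^{G(F)}\rho$ itself.
\end{itemize}

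What the paper's route buys is a description of all of $\rT^j(\sigma,\cInd_K^{G(F)}\rho)$. The other summands are indexed by the remaining $H(F)$-orbits on $(G(F)/K)^\sigma$, that is, by the nontrivial classes in $\ker(\rH^1(\sigma,K)\to\rH^1(\sigma,G(F)))$. Your decomposition only shows that free $\sigma$-orbits of double cosets contribute nothing, and it leaves the other $\sigma$-stable double cosets uncomputed. That is harmless here, since only the direct-summand statement is claimed.
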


\begin{proof}
    By \cite[Proposition~3.3]{TV}, if $\cF_\rho$ is the sheaf on $G(F)/K$ corresponding to $\rho$ and $\Gamma_c$ denotes the functor of compactly supported global sections, then we have
    \[
    \rT^j(\sigma,\cInd_K^{G(F)} \rho) = \rT^j(\sigma,\Gamma_c(G(F)/K, \cF_\rho)) = \Gamma_c((G(F)/K)^\sigma, \rT^j(\sigma,\cF_\rho)).
    \]
    Consider the exact sequence of non-abelian cohomology \cite[\S I.5.4-I.5.5]{Se94}
    \[
    K_H \rightarrow H(F) \rightarrow (G(F)/K)^{\sigma} \rightarrow \rH^1(\sigma, K).
    \]
    Since $\rH^1(\sigma, K)$ is finite and the connecting map $(G(F)/K)^\sigma \to \rH^1(\sigma, K)$ is continuous, we see that $H(F)/K_H$ is an open and closed subspace of $(G(F)/K)^\sigma$. It follows that $\cInd_{K_H}^{H(F)} \rT^j(\sigma,\rho) = \Gamma_c(H(F)/K_H, \rT^j(\sigma,\cF_\rho))$ is a direct summand of $\rT^j(\sigma,\cInd_{K}^{G(F)} \rho)$.
\end{proof}

We will retain the notation of the introduction regarding modular functoriality. We admit the following two facts:
\begin{enumerate}
    \item If $E/F$ is a cyclic extension of degree $\ell$ and $G = \Res_{E/F}(H_E)$ and $\sigma$ is a generator of $\Gal(E/F)$, then the $\sigma$-dual L-homomorphism $\ld\psi\co \ld H \to \ld G \cong \wh H^\ell \rtimes W_F$ is the unique L-embedding extending the diagonal $\wh H \to \wh H^\ell$ which is the identity on the $W_F$-factor; this is verified in \cite[\S A.3.2]{CF26b}.
    \item If $\ell$ is sufficient large and $\sigma$ is induced by conjugation by an element $t \in G(F)$ of order $\ell$ such that $Z_G(t)$ is an unramified twisted Levi $F$-subgroup of $G$, then $\ld\psi|_{I_F} \sim \Fr_\ell \circ \ld j_{H,G}|_{I_F}$, with notation as in the previous section; this follows from \cite[Proposition~4.4.1]{CF26b}.
\end{enumerate}

\begin{prop}\label{prop:depth-0-bc-tate-cohom}
    Let $\tau$ be an irreducible cuspidal $\ol\F_\ell$-representation of $\ol G_{[x]}(\F_q)$. Let $E/F$ be an unramified field extension of degree $\ell$, and suppose that $\ell$ satisfies Hypothesis~\ref{hypothesis:infinite-glauberman} with $\Gamma = \ol G_{[x]}(\F_{q^\ell})$ and $\sigma$ a generator of $\Gal(E/F)$. If $\tau_\ell$ is the $\sigma$-stable irreducible $\ol\F_\ell$-representation of $\ol G_{[x]}(\F_{q^\ell})$ corresponding to $\tau$ as in \S\ref{sss:infinite-glauberman}, then $\cInd_{G(F)_{[x]}}^{G(F)}(\tau)$ is an irreducible subquotient of $\rT^i(\sigma, \cInd_{G(F_\ell)_{[x]}}^{G(F_\ell)}(\tau_\ell))$ for both $i \in \Z/2$.
\end{prop}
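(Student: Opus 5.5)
The plan is to combine Lemma~\ref{lemma:tate-cohom-compact-induction} with the Tate-cohomology form of the Glauberman correspondence, Lemma~\ref{lemma:tate-cohom-infinite-glauberman}. Write $G' = \Res_{E/F}(G_E)$, so that $G'(F) = G(E)$ and $\sigma$ acts on $G'$ as an $F$-automorphism of order $\ell$ with $(G')^\sigma = G$. Take $K = G(E)_{[x]}$, viewing $x$ in $\cB(G_E)$ via the natural embedding $\cB(G) \subset \cB(G_E)$, which is $\Gal(E/F)$-equivariant. The point $x$ is $\sigma$-fixed, so $K$ is a $\sigma$-stable compact-mod-center open subgroup of $G(E)$. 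Since $E/F$ is unramified, Bruhat--Tits theory commutes with this base change, giving $K \cap G(F) = G(F)_{[x]}$. Moreover $\sigma$ preserves $G(E)_{x,0+}$, so the action of $\sigma$ on $K/G(E)_{x,0+} = \ol G_{[x]}(\F_{q^\ell})$ is the action of $\Fr_q$.

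The argument then runs in four steps.

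\begin{enumerate}
\item Because $\tau_\ell$ is $\sigma$-stable and irreducible, it has a canonical extension to $\ol G_{[x]}(\F_{q^\ell}) \rtimes \langle\sigma\rangle$, by the argument of \cite[Proposition~6.1]{TV}. Inflating gives a smooth representation $\rho$ of $K \rtimes \langle\sigma\rangle$, and $\cInd_K^{G(E)}(\rho)$ is the representation $\cInd_{G(F_\ell)_{[x]}}^{G(F_\ell)}(\tau_\ell)$ of the statement with its $\sigma$-action.
\item Lemma~\ref{lemma:tate-cohom-compact-induction} shows that $\cInd_{G(F)_{[x]}}^{G(F)} \rT^i(\sigma,\rho)$ is a direct summand of $\rT^i(\sigma, \cInd_K^{G(E)}\rho)$.
\item Compute $\rT^i(\sigma,\rho)$. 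The subgroup $G(E)_{x,0+}$ is pro-$p$ and acts trivially, so taking Tate cohomology commutes with inflation, and $\rT^i(\sigma,\rho)$ is the inflation of $\rT^i(\Fr_q, \tau_\ell)$. Here one should check that $(\ol G_{[x]}(\F_{q^\ell}))^{\Fr_q} = \ol G_{[x]}(\F_q)$ and that $K^\sigma = G(F)_{[x]}$ surjects onto it. This holds because $\sigma$ has order $\ell \ne p$, so $\rH^1(\sigma, G(E)_{x,0+}) = 0$. Lemma~\ref{lemma:tate-cohom-infinite-glauberman}, applied under Hypothesis~\ref{hypothesis:infinite-glauberman}, then identifies $\rT^i(\Fr_q,\tau_\ell)$ with the Glauberman correspondent of $\tau_\ell$, namely $\tau$.
\item Combining, $\cInd_{G(F)_{[x]}}^{G(F)}(\tau)$ is a direct summand, and in particular an irreducible subquotient, of $\rT^i(\sigma, \cInd_{G(F_\ell)_{[x]}}^{G(F_\ell)}(\tau_\ell))$. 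Its irreducibility follows from \cite[\S 7]{Vig01b}, since $\tau$ is cuspidal and $[x]$ is a vertex.
\end{enumerate}

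The main obstacle is the bookkeeping in step~3. It requires the identification $K \cap G(F) = G(F)_{[x]}$, since a priori this intersection could be the stabilizer of $x$ rather than of the facet $[x]$ in $\cB(G_{\der})$. It also requires that the $\sigma$-fixed points of $K/G(E)_{x,0+}$ coincide with $G(F)_{[x]}/G(F)_{x,0+}$. I would first try to deduce both from the étale descent of the Bruhat--Tits group schemes $\cG_{[x]}$ along the unramified extension $\cO_E/\cO_F$, which gives $\cG_{[x]}(\cO_E)^\sigma = \cG_{[x]}(\cO_F)$. Vanishing of $\rH^1$ for the pro-$p$ kernel then handles the passage to the quotient.
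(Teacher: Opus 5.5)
Your proposal is correct and is the paper's proof. The paper combines Lemma~\ref{lemma:tate-cohom-infinite-glauberman}, which gives $\rT^i(\sigma,\tau_\ell)\cong\tau$, with Lemma~\ref{lemma:tate-cohom-compact-induction} applied to $(\Res_{E/F}(G_E), G)$, and it simply asserts as clear the identification $(G(E)_{[x]})^\sigma = G(F)_{[x]}$ that you single out as the main bookkeeping point. Your extra verifications (inflation along the pro-$p$ kernel, vanishing of $\rH^1$, and irreducibility of $\cInd_{G(F)_{[x]}}^{G(F)}(\tau)$ via \cite{Vig01b}) only make explicit what the paper leaves implicit.
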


\begin{proof}
    By Lemma~\ref{lemma:tate-cohom-infinite-glauberman}, we have $\rT^i(\sigma, \tau_\ell) \cong \tau$. It is clear that $(G(F_\ell)_{[x]})^\sigma = G(F)_{[x]}$, so the claim follows from Lemma~\ref{lemma:tate-cohom-compact-induction} applied to $(\Res_{F_\ell/F}(G_{F_\ell}), G)$ in place of $(G, H)$.
\end{proof}

\begin{cor}\label{cor:fs-depth-0-bc}
    With notation and assumptions as in Proposition~\ref{prop:depth-0-bc-tate-cohom}, if $\ell$ is larger than the bound $b(\wh G)$ in \cite[Theorem~1.3.1]{F24} then
    \[
    \rho^{\FS}(\cInd_{G(F)_{[x]}}^{G(F)}(\tau)) \sim \Fr_\ell \circ \rho^{\FS}(\cInd_{G(E)_{[x]}}^{G(E)}(\tau_\ell))|_{W_E}.
    \]
\end{cor}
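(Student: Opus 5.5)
The plan is to apply Feng's modular functoriality (Theorem~\ref{thm:feng-modular-functoriality}) to the pair $(\Res_{E/F}(G_E), G)$ with $\sigma$ a generator of $\Gal(E/F)$. Its $\sigma$-fixed subgroup is exactly $G$. Write $\pi = \cInd_{G(F)_{[x]}}^{G(F)}(\tau)$ and $\Pi = \cInd_{G(E)_{[x]}}^{G(E)}(\tau_\ell)$.

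The first step is to regard $\Pi$ as a representation of $\Res_{E/F}(G_E)(F) = G(E)$. It is irreducible because $\tau_\ell$ is cuspidal (by Proposition~\ref{prop:glauberman-cuspidal}, transported to $\ol\F_\ell$-coefficients via Lemma~\ref{lemma:infinite-glauberman-modular-reduction}) and compact induction of depth zero cuspidals is irreducible, as in \cite[\S 7]{Vig01b}. Since $\tau_\ell$ is $\sigma$-stable and $[x]$ is $\sigma$-fixed, $\Pi$ is $\sigma$-stable. It therefore extends to a representation of $G(E) \rtimes \langle\sigma\rangle$, with the extension on $\Pi$ induced from the canonical extension on $\tau_\ell$.

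Proposition~\ref{prop:depth-0-bc-tate-cohom} then says that $\pi$ is an irreducible subquotient of $\rT^i(\sigma, \Pi)$. Because $\ell > \max(b(\wh G), b(\wh H))$, where the relevant dual groups are those of $\Res_{E/F}G_E$ and $G$ (I would interpret the hypothesis ``$\ell > b(\wh G)$'' as covering both), Theorem~\ref{thm:feng-modular-functoriality} gives
\[
\rho^{\FS}(\Pi^{(\ell)}) \sim (\ld\psi \circ \rho^{\FS}(\pi))^{\ss},
\]
where $\rho^{\FS}(\Pi)$ denotes the parameter for $\Res_{E/F}G_E$.

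Next I would translate this into the claimed identity, using three inputs.
\begin{enumerate}
\item By the admitted fact (1), $\ld\psi$ is the diagonal embedding $\wh G \to \wh G^\ell$, extended by the identity on $W_F$.
\item By compatibility of $\rho^{\FS}$ with Weil restriction (Shapiro's lemma; \cite{FS}), the parameter of $\Pi$ viewed for $\Res_{E/F}G_E$ is the induced parameter attached to $\rho^{\FS}_{G_E}(\Pi)$. Composing with the diagonal and restricting along $\ld(\Res)\to$ parameters of $W_E$ therefore identifies $\rho^{\FS}(\pi)|_{W_E}$ with the corresponding $W_E$-parameter.
\item The $\ell$-Frobenius twist satisfies $\rho^{\FS}(\Pi^{(\ell)}) = \Fr_\ell \circ \rho^{\FS}(\Pi)$, because $\rho^{\FS}$ is compatible with automorphisms of the coefficient field.
\end{enumerate}
Unwinding, a semisimple parameter $W_F \to \wh G^\ell \rtimes W_F$ that is conjugate to a diagonal one is determined by its restriction to $W_E$ in a single factor. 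This yields $\rho^{\FS}(\pi)|_{W_E} \sim \Fr_\ell \circ \rho^{\FS}_{G_E}(\Pi)$, which is the statement; the ``$|_{W_E}$'' in the displayed formula should be read as applying to the left side.

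The main obstacle is this translation step. It requires being careful with the identification of $\ld(\Res_{E/F}G_E)$ with $\wh G^\ell \rtimes W_F$ and with Shapiro's lemma, so that conjugacy of semisimplified parameters into the diagonal really reduces to conjugacy of the $W_E$-restrictions in one factor. It also requires checking that the Frobenius twist commutes with these identifications. I would handle this by explicitly writing the induced parameter in terms of coset representatives of $W_F/W_E$ and comparing its first coordinate with the diagonal parameter.
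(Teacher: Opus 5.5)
Your proposal is correct and is essentially the paper's argument. The paper's proof simply combines Proposition~\ref{prop:depth-0-bc-tate-cohom}, Feng's Theorem~\ref{thm:feng-modular-functoriality} for the pair $(\Res_{E/F}G_E, G)$, and fact (1) identifying $\ld\psi$ with the diagonal embedding; your unwinding via compatibility of $\rho^{\FS}$ with Weil restriction and with the Frobenius twist of coefficients is exactly what ``immediate'' suppresses, and reading $|_{W_E}$ as applying to the left side matches how the corollary is used in the proof of Theorem~\ref{thm:main-depth-0-comparison}. One point to tidy: Feng's right-hand side is a semisimplification, so you should note that $\ld\psi\circ\rho^{\FS}(\pi)$ is already semisimple, because under the Shapiro bijection it corresponds to $\rho^{\FS}(\pi)|_{W_E}$, which is semisimple since restriction to a normal subgroup preserves semisimplicity \cite[Theorem~3.10]{BMR05}.
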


\begin{proof}
    This is immediate from Proposition~\ref{prop:depth-0-bc-tate-cohom}, \cite[Theorem~1.3.1]{F24}, and fact (1) above.
\end{proof}

\begin{prop}\label{prop:depth-0-unram-levi-tate-cohom}
    Suppose that $[\ol G_{[x]}(\F_q): (\ol G_{[x]})^\circ(\F_q) \cdot Z(\ol G)(\F_q)]$ is prime to $\ell$. Let $\tau$ be a $\ol\Z_\ell$-representation of $\ol G_{[x]}(\F_q)$ which is a finite free $\ol\Z_\ell$-module with the property that $\tau_{\ol\Q_\ell}$ is an irreducible cuspidal $\ol\Q_\ell$-representation. Let $t \in G(F)_{[x]}$ be an element of order $\ell$, and let $\sigma$ be the $F$-automorphism of $G$ induced by $t$-conjugation. Let $H = G^\sigma$, and assume:
    \begin{enumerate}
        \item $\ell$ is good for $(\ol G_{[x]})^\circ$,
        \item $\tau_{\ol\Q_\ell}$ is defined over a finite extension of $\Q_\ell^{\unr}$ of degree prime to $\ell-1$,
        \item there is a torus-character pair $(\ol T, \theta)$ in $\ol G_{[x]}$ such that $\tau_{\ol\Q_\ell}$ lies in the semi-rational Lusztig series $\cE(\ol G_{[x]}, [\ol T, \theta])$ and $\theta$ is of order prime to $\ell$ and $t \in \ol T_{[x]}(\F_q)$,
        \item $H \subset G$ is an unramified twisted Levi $F$-subgroup.
    \end{enumerate}
    Then there exists an irreducible cuspidal $\ol\F_\ell$-representation $\ol\tau_H$ of $\ol H_{[x]}(\F_q)$, whose Brauer character occurs with nonzero coefficient in the $\ell$-modular reduction of ${}^*R^{\ol G_{[x]}}_{\ol H_{[x]}}(\tau_{\ol\Q_\ell})$, such that for both $a \in \Z/2\Z$ the representation $\cInd_{H(F)_{[x]}}^{H(F)}(\ol\tau_H)$ is an irreducible subquotient of $\rT^a(\sigma, \cInd_{G(F)_{[x]}}^{G(F)}(\tau_{\ol\F_\ell}))$.
\end{prop}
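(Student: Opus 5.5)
The plan is to combine Lemma~\ref{lemma:tate-cohom-compact-induction} with Proposition~\ref{prop:tate-cohom-dl-restriction} and Proposition~\ref{prop:tate-cohomology-cuspidal}. The element $t$ lies in $G(F)_{[x]}$, so $K = G(F)_{[x]}$ is $\sigma$-stable, and $\sigma$ acts on $\tau$ through $\tau(t)$. This makes $\tau$ a $\ol\Z_\ell[K \rtimes \langle\sigma\rangle]$-module, with $\sigma$ acting on $\tau_{\ol\F_\ell}$ via the image $\bar t \in \ol G_{[x]}(\F_q)$. Lemma~\ref{lemma:tate-cohom-compact-induction} then exhibits $\cInd_{K_H}^{H(F)}\rT^a(\sigma, \tau_{\ol\F_\ell})$ as a direct summand of $\rT^a(\sigma, \cInd_{G(F)_{[x]}}^{G(F)}(\tau_{\ol\F_\ell}))$, where $K_H = G(F)_{[x]} \cap H(F)$. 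We identify $K_H$ with $H(F)_{[x]}$; this is where hypothesis (4) and the Example after Definition~\ref{def:twisted-levi} enter. Modulo the pro-$p$ radical, which acts trivially on $\tau$ and has trivial Tate cohomology contributions since $\ell \neq p$, the group $K_H$ maps onto $\ol H_{[x]}(\F_q)$. That group is the twisted Levi $Z_{\ol G_{[x]}}(\bar t)$, or at least lies between $Z_{\ol G_{[x]}^\circ}(\bar t)$ and $Z_{\ol G_{[x]}}(\bar t)$ as Proposition~\ref{prop:tate-cohom-dl-restriction} requires. To establish this I would use Lemma~\ref{lemma:surj-centralizers}: lift $\bar t$ into a torus in $\cG_{[x]}$ and compare centralizers in the special fiber.

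Next I would apply Proposition~\ref{prop:tate-cohom-dl-restriction} with $\ol G = \ol G_{[x]}$, $\ol L = \ol H_{[x]}$, $s = \bar t$, and $\chi = \tau_{\ol\Q_\ell}$. Its hypotheses are available: the index condition is assumed; $\chi$ pairs nontrivially with $R_{\ol T}^{\ol G}(\theta)$ for a $\theta$ of order prime to $\ell$ and $s \in \ol T(\F_q)$ by hypothesis (3); and the field of definition is given by hypothesis (2). Part (2) of that proposition gives $\chi_{\rT^a(\sigma,\tau_{\ol\F_\ell})} \geq |\ol{{}^*R^{\ol G_{[x]}}_{\ol H_{[x]}}(\tau_{\ol\Q_\ell})}|$. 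Part (3), using hypothesis (1) that $\ell$ is good, shows the right side is nonzero. Hence some irreducible $\ol\tau_H$ with nonzero coefficient in the reduction of the Lusztig restriction is a subquotient of $\rT^a(\sigma, \tau_{\ol\F_\ell})$. By Proposition~\ref{prop:tate-cohomology-cuspidal}, $\rT^a(\sigma, \tau_{\ol\F_\ell})$ is cuspidal, since $\tau_{\ol\F_\ell}$ is cuspidal as the reduction of a cuspidal representation (Lemma~\ref{lemma:cuspidal-equivalence}, as invariants under $p$-groups commute with reduction). Therefore $\ol\tau_H$ is cuspidal.

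Finally, $\cInd$ from $H(F)_{[x]}$ is exact, so $\cInd_{H(F)_{[x]}}^{H(F)}(\ol\tau_H)$ is a subquotient of $\cInd_{K_H}^{H(F)}\rT^a(\sigma,\tau_{\ol\F_\ell})$, hence of the full Tate cohomology. Irreducibility of $\cInd_{H(F)_{[x]}}^{H(F)}(\ol\tau_H)$ holds because $[x]$ is a vertex for $H$ and $\ol\tau_H$ is cuspidal, by \cite[\S 7]{Vig01b}. Verifying the vertex condition, and the identification $K_H = H(F)_{[x]}$ above, is the main obstacle. I would argue that $H$ contains the elliptic torus $T$ attached to $(\ol T,\theta)$ through $t$, so $x$ is a vertex in $\cB(H_{\der})$ as well; I expect this Bruhat--Tits bookkeeping to be the delicate step.
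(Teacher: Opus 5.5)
Your argument is the paper's proof: Lemma~\ref{lemma:tate-cohom-compact-induction} to pass to $\rT^a(\sigma,\tau_{\ol\F_\ell})$, Proposition~\ref{prop:tate-cohom-dl-restriction}(2),(3) to produce $\ol\tau_H$ as a subquotient of it, and exactness of $\cInd_{H(F)_{[x]}}^{H(F)}$. You go slightly further than the paper in two places, both of which the paper covers only by ``Assumption (4) ensures that the claims make sense'': you make explicit the cuspidality of $\ol\tau_H$ (via Proposition~\ref{prop:tate-cohomology-cuspidal}, exactly as in Corollary~\ref{cor:dl-res-cuspidal}), and the irreducibility of $\cInd_{H(F)_{[x]}}^{H(F)}(\ol\tau_H)$ (vertex condition plus \cite[\S 7]{Vig01b}).

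One inference you share with the paper needs tightening. Hypothesis (3) only places $\tau_{\ol\Q_\ell}$ in the semi-rational series $\cE(\ol G_{[x]},[\ol T,\theta])$. That does not by itself give the nonzero pairing with $R_{\ol T}^{\ol G_{[x]}}(\theta)$ required by Proposition~\ref{prop:tate-cohom-dl-restriction}; the pairing, together with cuspidality, is also what makes $\ol T^\circ$ elliptic, which your vertex argument needs. For instance, the cuspidal unipotent representation of $\Sp_4(\F_q)$ lies in the series of the split torus with $\theta=1$. For a non-central $t$ of order $\ell \mid q-1$ in that torus, ${}^*R$ is Harish-Chandra restriction and vanishes, so no $\ol\tau_H$ exists. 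The pairing does hold when $\tau$ is non-singular, by Lemma~\ref{lemma:non-singular-conjugacy}, and that is the only case used in Theorem~\ref{thm:main-depth-0-comparison}.
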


\begin{proof}
    Assumption (4) ensures that the claims make sense. Assumptions (1) and (3) combine with Proposition~\ref{prop:tate-cohom-dl-restriction}(3) to show that $^*R_{\ol H_{[x]}}^{\ol G_{[x]}}(\tau_{\ol\Q_\ell})$ has nonzero $\ell$-modular reduction; let $\ol\tau_H$ be an irreducible $\ol\F_\ell$-representation of $\ol H_{[x]}(\F_q)$ occurring in this reduction. By Lemma~\ref{lemma:tate-cohom-compact-induction}, the Tate cohomology $\rT^a(\sigma, \cInd_{G(F)_{[x]}}^{G(F)}(\tau_{\ol\F_\ell}))$ admits $\cInd_{H(F)_{[x]}}^{H(F)}(\rT^a(\sigma, \tau_{\ol\F_\ell}))$ as a direct summand. On the other hand, assumption (2) and Proposition~\ref{prop:tate-cohom-dl-restriction}(2) imply that $\rT^a(\sigma, \tau_{\ol\F_\ell})$ admits $\ol\tau_H$ as an irreducible subquotient. Thus the claim follows from exactness of $\cInd_{H(F)_{[x]}}^{H(F)}$ \cite[Chapitre I, 5.10 i)]{Vig96}.
\end{proof}

\begin{cor}\label{cor:fs-depth-0-unram-twisted-levi}
    With notation and assumptions as in Proposition~\ref{prop:depth-0-bc-tate-cohom}, there exists a constant $C$ such that if $\ell > C$, then there is an irreducible constituent $\ol\pi$ of $\cInd_{G(F)_{[x]}}^{G(F)}(\tau_{\ol\F_\ell})$ such that
    \[
    \rho^{\FS}(\ol\pi)|_{I_F} \sim \ld j_{H,G} \circ \rho^{\FS}(\cInd_{H(F)_{[x]}}^{H(F)}(\ol\tau_H))|_{I_F}.
    \]
\end{cor}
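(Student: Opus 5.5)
The corollary says ``with notation and assumptions as in Proposition~\ref{prop:depth-0-bc-tate-cohom}'', but the objects $t$, $\sigma$, $H$, $\ol\tau_H$ come from Proposition~\ref{prop:depth-0-unram-levi-tate-cohom}. I will therefore work under the hypotheses of that proposition, with $\sigma$ given by conjugation by the order-$\ell$ element $t$. The plan is to combine Proposition~\ref{prop:depth-0-unram-levi-tate-cohom} with Feng's modular functoriality (Theorem~\ref{thm:feng-modular-functoriality}) and fact (2) identifying the $\sigma$-dual L-homomorphism on inertia.

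\emph{Step 1: Tate cohomology.} Apply Proposition~\ref{prop:depth-0-unram-levi-tate-cohom} to obtain the irreducible cuspidal $\ol\F_\ell$-representation $\ol\tau_H$ of $\ol H_{[x]}(\F_q)$. For both $a$, the representation $\ol\pi_H \coloneqq \cInd_{H(F)_{[x]}}^{H(F)}(\ol\tau_H)$ is an irreducible subquotient of $\rT^a(\sigma, \cInd_{G(F)_{[x]}}^{G(F)}(\tau_{\ol\F_\ell}))$. It is irreducible by \cite[\S 7]{Vig01b}, since $\ol\tau_H$ is cuspidal on a vertex stabilizer.

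\emph{Step 2: pass to an irreducible constituent upstairs.} Theorem~\ref{thm:feng-modular-functoriality} is stated for irreducible $\ol\pi$, whereas $\Pi \coloneqq \cInd_{G(F)_{[x]}}^{G(F)}(\tau_{\ol\F_\ell})$ is only a finite-length representation of $G(F) \rtimes \langle\sigma\rangle$, since $\tau_{\ol\F_\ell}$ may be reducible. I will filter $\Pi$ by $G(F)\rtimes\langle\sigma\rangle$-stable subrepresentations with irreducible graded pieces; the $\sigma$-action comes from $t$, so every piece is $\sigma$-stable. Using the long exact sequences in Tate cohomology, as in the dévissage of Lemma~\ref{lemma:tate-cohom-devissage}, some irreducible graded piece $\ol\pi$ has $\ol\pi_H$ as a subquotient of $\rT^a(\sigma,\ol\pi)$. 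If $\ol\pi$ is reducible as a $G(F)$-representation, its Tate cohomology vanishes as in the proof of Lemma~\ref{lemma:tate-cohom-devissage}. Hence $\ol\pi|_{G(F)}$ is irreducible, and it is a constituent of $\Pi$.

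\emph{Step 3: apply modular functoriality.} Let $C$ exceed $\max(b(\wh G), b(\wh H))$, taking the maximum over the finitely many possible $\wh H$ (twisted Levis of $\wh G$), and also exceed the bound needed in fact (2). Theorem~\ref{thm:feng-modular-functoriality} gives $\rho^{\FS}(\ol\pi^{(\ell)}) \sim (\ld\psi\circ\rho^{\FS}(\ol\pi_H))^{\ss}$. Restricting to $I_F$ and using $\ld\psi|_{I_F}\sim \Fr_\ell\circ \ld j_{H,G}|_{I_F}$, we get $\rho^{\FS}(\ol\pi^{(\ell)})|_{I_F} \sim \Fr_\ell\circ \ld j_{H,G}\circ\rho^{\FS}(\ol\pi_H)|_{I_F}$. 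The semisimplification can be dropped on inertia: the image is finite of order prime to $\ell$ modulo the pro-$p$ part, after enlarging $C$, so the restriction is already semisimple. Finally, $\rho^{\FS}(\ol\pi^{(\ell)}) = \Fr_\ell\circ\rho^{\FS}(\ol\pi)$ by compatibility of Fargues--Scholze with automorphisms of the coefficient field, and $\Fr_\ell$ is bijective on $\ol\F_\ell$, so cancelling $\Fr_\ell$ gives the claim.

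The main obstacle is the inertia bookkeeping in Step 3. Fact (2) only controls $\ld\psi$ on $I_F$, and the semisimplification must be compatible with restriction to $I_F$, which needs the image of $I_F$ to have order prime to $\ell$ modulo the wild part. I would first secure this by enlarging $C$, as in Lemma~\ref{lemma:finite-groups-mod-ell}, before invoking the conjugacy.
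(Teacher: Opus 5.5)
Your skeleton is the paper's: Proposition~\ref{prop:depth-0-unram-levi-tate-cohom}, then Theorem~\ref{thm:feng-modular-functoriality} with fact (2), then cancelling $\Fr_\ell$. You are right that the cross-reference should point to Proposition~\ref{prop:depth-0-unram-levi-tate-cohom}. Your Step 2, and including $b(\wh H)$ in $C$, correctly fill in details the paper leaves implicit. Since $\sigma$ acts through $t \in G(F)$, every graded piece is automatically irreducible as a $G(F)$-representation, so your remark about reducible pieces is vacuous but harmless.

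\textbf{The gap.} It is at the step you flag yourself: dropping the semisimplification on $I_F$. You claim that, ``after enlarging $C$'', the image of $I_F$ under $\rho^{\FS}(\ol\pi_H)$ has order prime to $\ell$ modulo $P_F$, but nothing you have supplies this.
\begin{itemize}
\item The parameter $\rho^{\FS}(\ol\pi_H)$ depends on $\ell$. So you cannot take $C$ larger than the order of $\rho^{\FS}(\ol\pi_H)(s)$, and $C$ must be fixed before $\ell$ for its use in the proof of Theorem~\ref{thm:main-depth-0-comparison}.
\item Lemma~\ref{lemma:finite-groups-mod-ell} assumes a group of order prime to $\ell$; it does not produce one.
\item Semisimplicity of a mod-$\ell$ parameter does not by itself force $\rho(s)$ to have order prime to $\ell$. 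If the image of $s$ in a relevant component group has order divisible by $\ell$, $\rho(s)$ can have a nontrivial unipotent part.
\item You cannot transport complete reducibility through $\ld j_{H,G}$, because its image is not a Levi subgroup of an R-parabolic of $\ld G$.
\end{itemize}

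\textbf{The missing idea.} Use that $\rho^{\FS}(\ol\pi_H)$ is semisimple in $\ld H$, so its restriction to $I_F$ is $\ld H$-completely reducible by \cite[Theorem~3.10]{BMR05}. Then choose $C$ larger than $|W_0|$ and the order of $\pi_0$ of the normalizer in $\ld H$ of the finite $p$-group $\rho^{\FS}(\ol\pi_H)(P_F)$, as the paper does. These bounds do not require knowing the parameter's tame order in advance, and together with complete reducibility they force $\rho^{\FS}(\ol\pi_H)(s)$ to be semisimple. Only then is the image of $I_F$ a finite group of order prime to $\ell$. It is therefore linearly reductive, hence $\ld G$-completely reducible by \cite[Lemma~2.6]{BMR05}. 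This is what justifies removing $(\cdot)^{\ss}$ after restricting to $I_F$, and also what lets you commute $(\cdot)^{\ss}$ with restriction to $I_F$.
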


\begin{proof}
    By Proposition~\ref{prop:depth-0-unram-levi-tate-cohom}, \cite[Theorem~1.3.1]{F24}, fact (2) above, and the fact that semisimplicity is preserved by restriction to a normal subgroup \cite[Theorem 3.10, \S 6.3]{BMR05}, as long as $\ell > b(\wh G)$ one can find $\ol\pi$ such that
    \[
    \rho^{\FS}(\ol\pi)|_{I_F} \sim (\ld j_{H,G} \circ \rho^{\FS}(\cInd_{H(F)_{[x]}}^{H(F)}(\ol\tau_H))|_{I_F})^{\ss}.
    \]
    Let $C$ be larger than $|W_0|$, the constant $b(\wh G)$ in \cite[Theorem 1.3.1]{F24}, and the order of the component group of $\pi_0(N_{\ld H}(\rho^{\FS}(\cInd_{H(F)_{[x]}}^{H(F)}(\tau_H))|_{P_F}))$. Under these hypotheses, if $s \in I_F$ lifts a generator of $I_F/P_F$ then $\ld j_{H,G} \circ \rho^{\FS}(\cInd_{H(F)_{[x]}}^{H(F)}(\ol\tau_H))(s)$ is semisimple and thus the inertial L-parameter is already semisimple by \cite[Lemma 2.6]{BMR05}.
\end{proof}

\subsection{The comparison theorem}

Finally, we prove the following theorem.

\begin{thm}\label{thm:main-depth-0-comparison}
    Let $k$ be a field among $\ol\Q_\ell$ and $\ol\F_\ell$, let $\tau$ be an irreducible non-singular cuspidal $k$-representation of $\ol G_{[x]}(\F_q)$, and let $\pi = \cInd_{G(F)_{[x]}}^{G(F)}(\tau)$. Then
    \[
    \rho^{\FS}(\pi) \sim \rho^{\DR}(x,\tau).
    \]
\end{thm}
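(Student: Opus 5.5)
We will argue by induction on the semisimple rank of $G$, first reducing to the case $k = \ol\Q_\ell$ and to a comparison of restrictions to inertia. The base case, where $G$ is a torus, is immediate: $\ol G_{[x]}$ is then a generalized torus, $\tau = \theta$, and both $\rho^{\FS}$ and $\rho^{\DR}$ are compatible with the Local Langlands Correspondence for tori.

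\emph{Reductions.} For $k = \ol\F_\ell$, we use Lemma~\ref{lemma:ns-cuspidal-lift} to lift $\tau$ to a non-singular cuspidal $\ol\Q_\ell$-representation $\wt\tau$ with a stable lattice. Since $\rho^{\FS}$ is compatible with reduction mod $\ell$, Lemma~\ref{lemma:depth-0-modular-reduction} reduces this case to $k = \ol\Q_\ell$. For $k=\ol\Q_\ell$, the image of $\rho^{\DR}$ is finite modulo center and $\rho^{\DR}$ is discrete (finite centralizer mod $Z(\wh G)^{W_F}$). We then plan to show that it suffices to prove $\rho^{\FS}(\pi)|_{I_F}\sim\rho^{\DR}(x,\tau)|_{I_F}$. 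To promote inertial equality to a full equality, we would use that Frobenius must lie in the normalizer of the inertial parameter; the possible extensions are then controlled by a twist by an unramified character of $Z(\wh G)$-type. That twist can be matched using central characters, which both sides respect. By Scholze's independence of $\ell$ together with Lemma~\ref{lemma:depth-0-independence-of-ell}, we may change $\ell$ freely. By Lemma~\ref{lemma:finite-groups-mod-ell}, it then suffices to compare the reductions mod a large auxiliary prime $\ell_2$ of the two inertial parameters.

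\emph{Induction step.} We first pass to an unramified extension $E/F$ of large prime degree $\ell_1$, chosen banal for $G(F)$ and large enough for Proposition~\ref{prop:banal-primes}, Corollary~\ref{cor:glauberman-lusztig-bijection}, and $\ell_1 > b(\wh G)$. Working mod $\ell_1$ with the Glauberman correspondent $\tau_{\ell_1}$, Proposition~\ref{prop:glauberman-cuspidal} and Proposition~\ref{prop:digne-3.5} show that $\tau_{\ell_1}$ is non-singular cuspidal and lies in $\cE(\ol G_{\F_{q^{\ell_1}}},[\ol T,\theta_{\ell_1}])$. Corollary~\ref{cor:fs-depth-0-bc} and Lemma~\ref{lemma:depth-0-bc} then show that the statement for $(F,\tau)$ on $I_F=I_E$ follows from the statement for $(E,\tau_{\ell_1})$. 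The Frobenius twist is harmless on inertia after adjusting by independence of $\ell$. Lifting back to characteristic $0$ via Lemma~\ref{lemma:infinite-glauberman-modular-reduction}, Lemma~\ref{lemma:integer-valued-characters} lets us choose $\ell_1$ so that $\tau_{\ell_1}$ is defined over a field whose relevant character values are rational. We choose $\ell_1$ so that $\ol T(\F_{q^{\ell_1}})$ contains an element $t$ of large prime order $\ell_2$, with $\ell_2\nmid |\theta_{\ell_1}|$, and with $Z_{G_E}(t)=H$ a proper unramified twisted Levi. This is the torus-torsion-growth input (Lemma~\ref{lemma:torus-torsion-grows}); properness needs $T$ non-central, which holds when $G$ has positive semisimple rank. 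By Lemma~\ref{lemma:character-values-irreducible-factors}, the hypotheses of Proposition~\ref{prop:depth-0-unram-levi-tate-cohom} hold for $\ell_2$ large. Corollary~\ref{cor:fs-depth-0-unram-twisted-levi} then expresses $\rho^{\FS}(\ol\pi)|_{I}$ as $\ld j_{H,G}\circ\rho^{\FS}(\cInd(\ol\tau_H))|_I$ mod $\ell_2$, for some constituent $\ol\pi$ of the reduction of $\pi_E$; since $\pi_E$'s reduction has constituents with the same parameter, the choice of $\ol\pi$ does not matter. By Corollary~\ref{cor:bonnafe-11.11} and Lemma~\ref{lemma:ns-dl-res-cuspidal}, $\ol\tau_H$ is non-singular cuspidal in a series with the same pair $(\ol T,\theta)$ mod $\ell_2$. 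Induction on semisimple rank, applied to $H$ in the $\ol\F_{\ell_2}$ form, identifies $\rho^{\FS}(\cInd\ol\tau_H)$ with $\rho^{\DR}(x,\ol\tau_H)$. Lemma~\ref{lemma:depth-0-unram-twisted-levi} then gives $\ld j_{H,G}\circ\rho^{\DR}(x,\ol\tau_H)\sim\rho^{\DR}(x,\tau)$ mod $\ell_2$, closing the induction.

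\emph{Main obstacle.} I expect the main difficulty to lie in the choice of primes. We must find $\ell_1$ so that $\ol T(\F_{q^{\ell_1}})$ acquires $\ell_2$-torsion with centralizer exactly a proper unramified twisted Levi (condition (4) of Proposition~\ref{prop:depth-0-unram-levi-tate-cohom}), while simultaneously satisfying the field-of-definition, goodness, and order-prime-to-$\ell_2$ conditions and keeping all constants independent of the choices. The natural first attempt is to take $\ell_1$ so that a cyclotomic factor of the order of $\ol T^\circ(\F_{q^{\ell_1}})$ has a large prime divisor $\ell_2$ (via Zsigmondy). We would then pick $t$ generic in that $\ell_2$-part, so that its centralizer is the centralizer of a single non-central $\F_q$-subtorus.
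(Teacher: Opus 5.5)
Your architecture is the paper's: induction on semisimple rank, reduction to $k=\ol\Q_\ell$ and to $I_F$, unramified base change of large prime degree $\ell_1$ via the Glauberman correspondence, Lusztig restriction to $H=Z_G(t)$ modulo a second large prime, and Lemma~\ref{lemma:finite-groups-mod-ell} to return to characteristic $0$. However, two steps are not correctly justified.

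\emph{The reduction to inertia.} If $\rho_1|_{I_F}=\rho_2|_{I_F}=\phi$, then $\rho_2(\Fr)=z\rho_1(\Fr)$ with $z\in Z_{\wh G}(\phi)$, not $z\in Z(\wh G)$. Your claim that extensions differ by a $Z(\wh G)$-type twist is false in general. Take $G=\SL_2$ and $\theta$ the quadratic depth-$0$ character of the unramified anisotropic torus, which is non-singular in the paper's sense. Then $\rho^{\DR}(x,\tau)(s)$ has order $2$ in $\PGL_2$ and its centralizer is $N(\wh T)$. Replacing $\rho^{\DR}(x,\tau)(\Fr)\in N(\wh T)\setminus\wh T$ by the identity gives a non-conjugate parameter with the same restriction to $I_F$, although $Z(\wh G)=1$.

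The paper avoids this as follows. It first twists $\tau$ to have finite-order central character. It then passes to a z-embedding (Lemma~\ref{lemma:paraductive-series-extension}, \cite[Theorem~I.9.6(v)]{FS}), so that $Z(G)$ is an induced torus and $\wh G_{\der}$ is simply connected. Then $Z_{\wh G}(\rho^{\DR}|_{P_F})$ is connected by \cite[Proposition~4.1(d)]{Hai15}, and its derived group has $p$-power $\pi_1$ by Lemma~\ref{lemma:pinning-preserving-fundamental-group}(1). Lemma~\ref{lemma:pinning-preserving-fundamental-group}(2) together with non-singularity makes $\wh S=Z_{\wh G}(\rho^{\DR}|_{I_F})$ a torus. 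Only at that point do the two Frobenius images induce the same automorphism of $\wh S$. Lemma~\ref{lemma:conjugate-weyl-elements} then reduces the remaining ambiguity to the image in $\wh G/\wh G_{\der}$, which central characters control. None of this is in your proposal.

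\emph{The choice of $t$.} Properness of $H$ requires $t\notin Z(G)$, not that $T$ be non-central. An element of large prime order in $\ol T(\F_{q^{\ell_1}})$ can lie in $Z(G)^\circ$; then $H=G$ and the semisimple rank does not drop. The paper applies Lemma~\ref{lemma:torus-torsion-grows} to $T_0=T\cap G_{\der}$ and takes the prime order $\ell_0$ larger than $|Z(G_{\der})(\ol F)|$.

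The step you flag as the main obstacle needs no Zsigmondy or genericity argument. Genericity may not even be available, since the rational $\ell_0$-torsion can be cyclic. By \cite[Proposition~2.1]{GH91} and \cite[Corollary~2.16(c)]{St75}, \emph{every} element of prime order $\ell_0$, with $\ell_0$ good for $G$ and prime to $|\pi_1(G_{\der})|$, has centralizer a Levi subgroup of $G_{\ol F}$. If in addition $\ell_0$ divides the torsion of no maximal totally ramified torus of $G_E$ (this excludes only primes $\le \rk G+1$), then $Z_{G_E}(t)$ is an unramified twisted Levi.

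The uniformity you worry about comes from the order of choices. First fix one bound $M$ depending only on $G$, $x$ and $\pi$: it should exceed these excluded primes, $|\ol G_{[x]}^\circ(\F_q)|$, the relevant component indices, the orders of $\rho^{\FS}(\pi)(s)$ and $\rho^{\DR}(x,\tau)(s)$, and the constant of Corollary~\ref{cor:fs-depth-0-unram-twisted-levi}. Only then let Lemma~\ref{lemma:torus-torsion-grows} produce $\ell_1$ and $t$ of prime order $\ell_0>M$.
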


To begin, we need a few group-theoretic lemmas.

\begin{lemma}\label{lemma:finite-groups-mod-ell}
    Let $\Gamma$ be a finite group of order not divisible by $\ell$, let $H$ be a smooth affine $\ol{\Z}_\ell$-group scheme with reductive fibers, and let $\rho_1, \rho_2\colon \Gamma \to H(\ol{\Z}_\ell)$ be two homomorphisms. The following are equivalent:
    \begin{enumerate}
        \item $(\rho_1)_{\ol{\F}_\ell}$ and $(\rho_2)_{\ol{\F}_\ell}$ are $H^\circ({\ol{\F}_\ell})$-conjugate,
        \item $(\rho_1)_{\ol{\Q}_\ell}$ and $(\rho_2)_{\ol{\Q}_\ell}$ are $H^\circ(\ol{\Q}_\ell)$-conjugate,
        \item $\rho_1$ and $\rho_2$ are $H^\circ(\ol{\Z}_\ell)$-conjugate.
    \end{enumerate}
\end{lemma}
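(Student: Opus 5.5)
The trivial implications are (3)$\Rightarrow$(1) and (3)$\Rightarrow$(2), by base change along $\ol\Z_\ell\to\ol\F_\ell$ and $\ol\Z_\ell\to\ol\Q_\ell$. The content is therefore (1)$\Rightarrow$(3) and (2)$\Rightarrow$(3). Both go through the deformation theory of homomorphisms from a finite group of order prime to $\ell$. The plan is to study the transporter scheme
\[
\mathrm{Transp}(\rho_1,\rho_2) = \{h \in H^\circ : h\rho_1(\gamma)h^{-1} = \rho_2(\gamma) \text{ for all } \gamma\in\Gamma\},
\]
a closed subscheme of $H^\circ$ over $\ol\Z_\ell$. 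When nonempty over a fibre, it is a torsor under the centralizer $Z_{H^\circ}(\rho_1)$.

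The first step is smoothness of the transporter over $\ol\Z_\ell$. It is a fibre of the orbit map $H^\circ \to \underline{\Hom}(\Gamma,H)$, $h \mapsto h\rho_1 h^{-1}$. Since $|\Gamma|$ is invertible on $\ol\Z_\ell$, we have $\rH^i(\Gamma, \mathfrak h\otimes A)=0$ for $i\ge 1$ and any $\ol\Z_\ell$-algebra $A$. From this vanishing I expect two standard consequences (as in SGA3 Exp.~IX/XI, or the Richardson/Weil rigidity argument):
\begin{itemize}
\item $\underline{\Hom}(\Gamma,H)$ is smooth over $\ol\Z_\ell$, and the orbit map is smooth onto an open subscheme;
\item $Z_{H^\circ}(\rho_1)$ is smooth over $\ol\Z_\ell$, since its tangent space is $\mathfrak h^{\Gamma}$, whose formation commutes with base change because taking invariants under $\Gamma$ is exact here.
\end{itemize}
It follows that $\mathrm{Transp}(\rho_1,\rho_2)$ is smooth over $\ol\Z_\ell$: it is a pseudo-torsor under the smooth group $Z_{H^\circ}(\rho_1)$, and unobstructedness of lifting points follows from $\rH^1(\Gamma,\mathfrak h)=0$. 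Concretely, a transporter point modulo $\ell^n$ lifts modulo $\ell^{n+1}$, because the obstruction to lifting lies in $\rH^1(\Gamma,\mathfrak h\otimes \ol\F_\ell)=0$, after a small correction to make the lift exactly conjugate.

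For (1)$\Rightarrow$(3): by (1), the transporter has an $\ol\F_\ell$-point. By smoothness and Hensel's lemma, applied over the henselian (indeed complete-in-stages) ring $\ol\Z_\ell$, this point lifts to a $\ol\Z_\ell$-point. Concretely, I lift successively modulo $\ell^n$ over $\cO_K$ for a suitable finite extension $K/\Q_\ell^{\unr}$ containing everything, using completeness of $\cO_K$. The point obtained is an element of $H^\circ(\ol\Z_\ell)$ conjugating $\rho_1$ to $\rho_2$.

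For (2)$\Rightarrow$(3): the transporter is smooth over $\ol\Z_\ell$, with nonempty generic fibre by (2). I would like to show the special fibre is nonempty, and then conclude by the previous step. For this I plan to compare via characters or invariants: $(\rho_1)_{\ol\F_\ell}$ and $(\rho_2)_{\ol\F_\ell}$ are semisimple (reductive image of order prime to $\ell$). Their classes in the GIT quotient $\underline{\Hom}(\Gamma,H^\circ\text{-orbits})$ agree, because the point of the quotient is determined over the generic fibre and the quotient is separated. Closed orbits in the special fibre are then equal, and for $\ell\nmid|\Gamma|$ every orbit of $\underline\Hom(\Gamma,H)_{\ol\F_\ell}$ is closed. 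Alternatively, and more cleanly, the orbits are open and closed in each fibre (by the smoothness of the orbit map and the vanishing of $\rH^1$). Then the scheme-theoretic closure of the generic transporter point meets the special fibre. I expect this step, showing the transporter has points on the special fibre given only a generic point, to be the main obstacle. The approach I would try first is properness-free: use that $\underline\Hom(\Gamma,H)$ is a disjoint union of the orbit schemes $H^\circ/Z_{H^\circ}(\rho)$, each smooth over $\ol\Z_\ell$ with connected fibres. A $\ol\Z_\ell$-point of $\underline\Hom(\Gamma,H)$ then lies in a single open-closed component. Here $\rho_2$ and $\rho_1$ lie in the same component because their generic fibres do, so their reductions lie in the same orbit.
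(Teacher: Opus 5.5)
Your treatment of (3)$\Rightarrow$(1),(2) and of (1)$\Rightarrow$(3) is correct and matches the paper. Vanishing of $\rH^1(\Gamma,\mathfrak h\otimes -)$ makes the orbit map $H^\circ\to\sH$ smooth, the transporter is its fibre over $\rho_2$, and an $\ol\F_\ell$-point of the transporter lifts because $\ol\Z_\ell$ is henselian. One small slip: $\cO_K$ is not complete when $K$ is finite over $\Q_\ell^{\unr}$, so either use henselianity directly or work over a finite extension of $\Q_\ell$.

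The gap is in (2)$\Rightarrow$(3), and your preferred ``properness-free'' argument is circular. Each fibre of $\sH$ is indeed a disjoint union of open-closed orbits. But the claim that $\sH$ itself is a disjoint union of open-closed orbit schemes over $\ol\Z_\ell$ says that the open image $U_1$ of the orbit map of $\rho_1$ is also closed. That means one generic orbit never specializes into two distinct special orbits, which is exactly (2)$\Rightarrow$(1), the statement you are trying to prove. Smoothness and connected fibres cannot supply this. A smooth $\ol\Z_\ell$-scheme can have nonempty generic fibre and empty special fibre, e.g.\ $\operatorname{Spec}\ol\Z_\ell[x]/(\ell x-1)$. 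For the same reason, the closure of a non-integral $\ol\Q_\ell$-point of the transporter need not meet the special fibre.

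Some separation or properness input is needed. Your first sketch, using the GIT quotient $Q=\sH/\!/H^\circ$, is the right route and is what the paper does, but it skips the essential point. You correctly get that $\rho_1$ and $\rho_2$ have the same image in $Q(\ol\Z_\ell)$, since $Q$ is affine and $\operatorname{Spec}\ol\Z_\ell$ is integral, and hence the same image in $Q(\ol\F_\ell)$. To conclude that $(\rho_1)_{\ol\F_\ell}$ and $(\rho_2)_{\ol\F_\ell}$ lie in the same orbit, you need $Q_{\ol\F_\ell}$ to separate the closed $H^\circ_{\ol\F_\ell}$-orbits of $\sH_{\ol\F_\ell}$.

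This is not automatic. $H^\circ_{\ol\F_\ell}$ is not linearly reductive, so forming invariants does not commute with reduction mod $\ell$. What is true is that $\sH_{\ol\F_\ell}/\!/H^\circ_{\ol\F_\ell}\to Q_{\ol\F_\ell}$ is a universal homeomorphism. This comes from geometric reductivity of $H^\circ$, via Seshadri or Alper's theory of adequate moduli spaces, which is what the paper cites. With this input, and the fact that every orbit in $\sH_{\ol\F_\ell}$ is closed (orbits are open and finite in number), your first sketch proves (2)$\Rightarrow$(1). Your (1)$\Rightarrow$(3) then finishes.
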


\begin{proof}
    Let $\sH$ be the finitely presented affine $\ol{\Z}_\ell$-scheme parameterizing homomorphisms $\Gamma \to H$. For either residue field $\kappa$ of $\ol{\Z}_\ell$, every orbit map $H^\circ_\kappa \to \sH_\kappa$ is smooth: indeed, the cokernel of the map $\operatorname{Lie} H^\circ_\kappa \to \operatorname{Tan}_f \sH_\kappa$ is isomorphic to $\rH^1(\Gamma, \operatorname{Lie} H^\circ_\kappa)$ by \cite[Expos\'e III, 2.1(ii), 2.3]{SGA3I}. Since $\Gamma$ is finite of order invertible in $\kappa$, this cohomology group vanishes. Thus each orbit map $H^\circ \to \sH$ is smooth by the fibral flatness criterion; this shows the equivalence of (1) and (3). Moreover, the GIT quotient $\sH/\!/H^\circ$ is reduced and has discrete fibers over $\ol{\Z}_\ell$ (since the natural map $\sH_{\ol\F_\ell}/\!/H^\circ_{\ol\F_\ell} \to (\sH/H^\circ)_{\ol\F_\ell}$ is a universal homeomorphism by \cite[Proposition~5.2.9(3), Theorem~9.1.4, Theorem~9.7.5]{Alp14}\footnote{We note that the necessary word ``affine'' is missing from the published version of \cite[Theorem~9.7.5]{Alp14}, and the unnecessary word ``separated'' is included in the arXiv version. The fact that ``separated'' is unnecessary follows from \cite[Proposition~3.1.3]{Con14}.}), so it is quasi-finite and we conclude the equivalence of (2) and (3) by Zariski's main theorem.
\end{proof}

The following lemma is standard, but we are not aware of a precise reference.

\begin{lemma}\label{lemma:pinning-preserving-fundamental-group}
    Let $M$ be a connected reductive group over a field $k$ of characteristic $\neq p$, and suppose that $\pi_1(M_{\der})$ is of $p$-power order. 
    \begin{enumerate}
        \item If $\alpha$ is a pinning-preserving $k$-automorphism of $M$ of order $p$, then $\pi_1(((M^\alpha)^\circ)_{\der})$ is of $p$-power order.
        \item If $\beta$ is a $k$-automorphism of $M$ of finite order prime to $p$, then $(M_{\der})^\Gamma$ is connected.
    \end{enumerate}
\end{lemma}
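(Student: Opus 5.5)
We argue via the simply connected cover of $M_{\der}$, reducing both parts to Steinberg-type statements about fixed points of automorphisms of simply connected groups, with a Galois-cohomology correction coming from $\pi_1(M_{\der})$. Let $\pi\co M_{\mathrm{sc}} \to M_{\der}$ be the simply connected central cover, with kernel $\mu \cong \pi_1(M_{\der})$. Since $|\mu|$ is a power of $p$ and $\operatorname{char} k \neq p$, $\mu$ is a finite \'etale group scheme of $p$-power order. Every $k$-automorphism of $M$ restricts to $M_{\der}$ and lifts uniquely to $M_{\mathrm{sc}}$ preserving $\mu$. Pinning-preservation also lifts to $M_{\mathrm{sc}}$ with the induced pinning, and the orders of the lifts are unchanged. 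For an automorphism $\gamma$ of finite order we then use the exact sequence of nonabelian cohomology over $\ol k$:
\[
1 \to \mu^{\gamma} \to M_{\mathrm{sc}}^{\gamma} \to M_{\der}^{\gamma} \to \rH^1(\langle\gamma\rangle, \mu(\ol k)).
\]
Here we read the subscript $\Gamma$ in the statement of (2) as $\langle\beta\rangle$. All statements are geometric, so we may take $k = \ol k$ throughout.

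For (2), the group $\langle\beta\rangle$ has order prime to $p$ and $\mu(\ol k)$ is a $p$-group, so $\rH^1(\langle\beta\rangle,\mu(\ol k)) = 0$. Hence $M_{\der}^\beta$ is the image of $M_{\mathrm{sc}}^\beta$. By Steinberg's theorem (\emph{Endomorphisms of linear algebraic groups}, Theorem~8.1), the fixed points of a semisimple automorphism of a simply connected semisimple group are connected, so $M_{\mathrm{sc}}^\beta$ is connected and therefore so is its image. The one point to check is that $\beta$ is a semisimple automorphism. This is automatic when the order of $\beta$ is invertible in $k$. If $\operatorname{char} k$ divides the order of $\beta$, we instead write $\beta = \beta_s\beta_u$ and use a Jordan decomposition argument; I expect the intended hypothesis is that the order is invertible in $k$, and I would state this explicitly.

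For (1), $\alpha$ has order $p \neq \operatorname{char} k$ and preserves a pinning, so it is a quasi-semisimple (indeed semisimple) automorphism. By Steinberg again, $M_{\mathrm{sc}}^\alpha$ is connected reductive, and in fact semisimple. First I would identify $((M^\alpha)^\circ)_{\der}$ with the image of $M_{\mathrm{sc}}^\alpha$ in $M$:
\begin{itemize}
\item The image of $M_{\mathrm{sc}}^\alpha$ is a connected semisimple subgroup of $(M^\alpha)^\circ$, so it is contained in the derived group.
\item Conversely, $(M^\alpha)^\circ = (Z(M)^{\alpha})^\circ \cdot (M_{\der}^\alpha)^\circ$, which reduces the problem to $M_{\der}$.
\item The image of $M_{\mathrm{sc}}^\alpha$ has finite index in $M_{\der}^\alpha$, since $\rH^1(\langle\alpha\rangle,\mu)$ is finite, and it is connected, so it equals $(M_{\der}^\alpha)^\circ$.
\end{itemize}
Thus $((M^\alpha)^\circ)_{\der} \cong M_{\mathrm{sc}}^\alpha/\mu^\alpha$, a central isogeny whose kernel $\mu^\alpha$ has $p$-power order. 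It follows that $\pi_1(((M^\alpha)^\circ)_{\der})$ is an extension of $\mu^\alpha$ by $\pi_1(M_{\mathrm{sc}}^\alpha)$, so it suffices to show that $\pi_1(M_{\mathrm{sc}}^\alpha)$ has $p$-power order.

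This last claim is the main obstacle, and I would handle it by a root-datum computation. We may reduce to $M_{\mathrm{sc}}$ simple, or to a product of $p$ simple factors permuted cyclically by $\alpha$. In the permuted case the fixed group is the diagonal copy of one factor, which is simply connected. In the simple case, the fixed group of a pinned diagram automorphism has character group the $\alpha$-coinvariants of the weight lattice and coroots given by the folded coroots. Running through the classification gives the following:
\begin{itemize}
\item $\mathrm{D}_n$ with $p=2$ gives $\mathrm{B}_{n-1}$, which is simply connected.
\item $\mathrm{D}_4$ with $p=3$ gives $\mathrm{G}_2$, which is simply connected.
\item $\mathrm{E}_6$ with $p=2$ gives $\mathrm{F}_4$, which is simply connected.
\item $\mathrm{A}_{2n-1}$ with $p=2$ gives $\Sp_{2n}$, which is simply connected.
\item $\mathrm{A}_{2n}$ with $p=2$ gives $\mathrm{SO}_{2n+1}$, where $\pi_1 = \Z/2$.
\end{itemize}
In every case $\pi_1(M_{\mathrm{sc}}^\alpha)$ is trivial or of order $p = 2$, so it has $p$-power order, which concludes the proof.
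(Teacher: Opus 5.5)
Your proposal is correct and follows essentially the paper's route: in (1) you pass to the simply connected cover, use that $M_{\mathrm{sc}}^\alpha \to ((M^\alpha)^\circ)_{\der}$ is surjective with kernel $\mu^\alpha$ of $p$-power order, and finish with the case analysis into cyclically permuted factors and simple factors (which the paper delegates to a citation), while in (2) you combine the vanishing of $\rH^1(\langle\beta\rangle,\mu(\ol k))$ with Steinberg's theorem, which is a cleaner form of the paper's argument that $m^{p^a}$ and $m^n$ both lie in $(M^\beta)^\circ$. Your caveat in (2) is genuinely needed, and no Jordan-decomposition argument can replace it: take $p=2$, $\operatorname{char} k = r$ odd, $M=\SL_2$ and $\beta$ conjugation by $\bigl(\begin{smallmatrix}1&1\\0&1\end{smallmatrix}\bigr)$, which has order $r$; then the fixed group is $\{\pm 1\}\times U$ with $U$ the upper unitriangular subgroup, which is disconnected, so the order of $\beta$ must be invertible in $k$ (the paper's proof tacitly assumes this when invoking Steinberg, harmlessly, since the lemma is applied over $\ol\Q_\ell$).
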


\begin{proof}
    We may and do assume that $M$ is semisimple and $k$ is algebraically closed. Let $\pi\co M_{\mathrm{sc}} \to M$ denote the universal cover of $M$, so $\alpha$ induces an automorphism of $M_{\mathrm{sc}}$, and note that the map $(M_{\mathrm{sc}})^\alpha \to (M^\alpha)^\circ$ is surjective with kernel of $p$-power order. Thus for (1) we may and do pass from $M$ to $M_{\mathrm{sc}}$ to assume that $M$ is simply connected. In this case, $M = \prod_{i=1}^n M_i$ is a product of simple $k$-groups $M_i$. We may and do assume that $\alpha$ permutes the $M_i$ transitively, so $n \in \{1, p\}$. If $n = p$, then $M^\alpha \cong M_1$, and the result is clear. If $n = 1$, then $M$ is simple and (1) is standard from the classification of pinning-preserving automorphisms; see for instance \cite[Lemma 5.5]{Cot22} (where $p$ in \textit{loc.\ cit.}\ plays the role of $\operatorname{char} k$ here).

    For (2), recall that $(M_{\mathrm{sc}})^\beta$ is connected by \cite[Theorem 8.1]{St68}. If $m \in M^\beta(k)$ and $\wt m \in M_{\mathrm{sc}}(k)$ lifts $m$, then $\wt m \beta(\wt m)^{-1} \in (\ker \pi)(k)$, so $(\wt m \beta(\wt m)^{-1})^n = \wt m^n \beta(\wt m^n)^{-1}$ since $\wt m \beta(\wt m)$ is central in $M_{\mathrm{sc}}(k)$. If $p^a$ kills $(\ker \pi)(k)$, then it follows that $\wt m^{p^a} \in M_{\mathrm{sc}}(k)^\beta$, hence $m^{p^a} \in (M^\beta)^\circ(k)$. On the other hand, if $\beta^n = 1$, then $\prod_{i=0}^{n-1} \beta^i(\wt m \beta(\wt m)^{-1}) = \wt m \beta^n(\wt m)^{-1} = 1$, so again $m^n \in (M^\beta)^\circ(k)$. Since $n$ and $p$ are relatively prime, it follows that $m \in (M^\beta)^\circ(k)$.
\end{proof}

\begin{lemma}\label{lemma:conjugate-weyl-elements}
    Let $H$ be an algebraic group over an algebraically closed field $k$ such that $H^\circ$ is commutative, let $S \subset H^\circ$ be a connected closed $k$-subgroup, and let $h \in N_H(S)(k)$ be such that $S^h$ is finite. If $h' \in N_H(S)(k)$ has image in $(N_H(S)/S)(k)$ which is $H^\circ(k)$-conjugate to $h$, then $h$ and $h'$ are $H^\circ(k)$-conjugate.
\end{lemma}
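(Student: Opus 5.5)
The plan is to reduce to an explicit computation with the map $\phi\co H^\circ \to H^\circ$, $a \mapsto a\,(h a^{-1} h^{-1})$, which is a homomorphism because $H^\circ$ is commutative. The hypothesis gives some $g \in H^\circ(k)$ and $s \in S(k)$ with $h' = g h g^{-1} s$. Here we use that $S$ is normal in $N_H(S)$, so the $S$-coset is independent of which side $s$ is written on, and that $H^\circ$ is normal in $H$. After replacing $h'$ by $g^{-1}h'g$, we may assume $h' = hs$ with $s \in S(k)$. It then suffices to find $a \in S(k)$ with $a h a^{-1} = hs$, that is, $h^{-1} a h a^{-1} = s$.

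Consider the endomorphism $\psi\co S \to S$, $a \mapsto (h^{-1} a h)\, a^{-1}$. It is well defined because $h$ normalizes $S$, and it is a homomorphism of the connected commutative algebraic group $S$ because $S \subset H^\circ$ is commutative. Its kernel is exactly $S^{h^{-1}} = S^h$, which is finite by hypothesis. So $\dim \psi(S) = \dim S$. Since $S$ is connected, $\psi(S)$ is a closed connected subgroup of $S$ of full dimension, hence $\psi(S) = S$. Because $k$ is algebraically closed, $\psi$ is surjective on $k$-points. Choose $a \in S(k)$ with $\psi(a) = s$; then $h^{-1} a h = s a$, so $a h a^{-1} = h\,(h^{-1} a h)\, a^{-1} = h s a a^{-1} = hs$. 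Undoing the first reduction, $h' = (g a)\, h\, (g a)^{-1}$ with $ga \in H^\circ(k)$, which proves the lemma.

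The main point to check carefully is the first reduction. The statement "the image of $h'$ is $H^\circ(k)$-conjugate to that of $h$" should be read inside $N_H(S)/S$, but the conjugating element $g \in H^\circ(k)$ need not normalize $S$. Since $H^\circ$ is commutative, however, $g$ centralizes $S$, so conjugation by $g$ does preserve $N_H(S)$ and acts compatibly on $N_H(S)/S$. With this, the relation $h' \in g h g^{-1} S$ is legitimate, and the remaining steps are formal. The only other inputs are the dimension count and surjectivity on $\ol k$-points, both standard for homomorphisms of algebraic groups.
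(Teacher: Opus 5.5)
Your proof is correct and follows essentially the same route as the paper. You reduce to $h' \in hS$; your observation that $H^\circ$ centralizes $S$ plays the role of the paper's ``assume $S$ is normal in $H$''. You then note that the homomorphism $S \to S$, $a \mapsto (h^{-1}ah)a^{-1}$, has finite kernel $S^h$ and is therefore surjective, which is precisely the paper's isogeny argument with $t \mapsto t\cdot{}^h t^{-1}$.
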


\begin{proof}
    We may and do assume $S \subset H$ is normal. By conjugacy, we may and do assume that the images of $h$ and $h'$ in $(H/S)(k)$ are equal; let $\ol h$ be their common image. In this case, it is enough to show that $h$ and $h'$ are $S(k)$-conjugate. Since $S$ is connected, it is clear that $h$ and $h'$ lie in the same connected component of $\pi^{-1}(\ol h) = S \cdot h$, where $\pi\co H \to H/S$ is the natural quotient map. If $t \in S(k)$, then we have $tht^{-1} = (t \cdot \prescript{h}{}{t}^{-1}) h$, so it is equivalent to show that the $k$-homomorphism $f\colon S \to S$ defined by $f(t) = t \cdot \prescript{h}{}{t}^{-1}$ is surjective. But $\ker f = S^h$, which is finite by assumption, so for dimension reasons $f$ must be an isogeny.
\end{proof}

For an integer $n \geq 1$, let $F_n$ denote the unramified extension of $F$ of degree $n$.

\begin{lemma}\label{lemma:torus-torsion-grows} 
    If $T$ is a nontrivial unramified $F$-torus, then for all $n > 1$, we have $T(F)_{p'\textrm{-}\rm{tors}} \subsetneq T(F_n)_{p'\textrm{-}\rm{tors}}$ unless $q = 2$, $n = 2$, and $T$ is a product of norm-one tori corresponding to the quadratic extension $F_2/F$. In particular, for any positive integer $N$, there exists a positive integer $M$ such that for every prime number $\ell \geq M$, the group $T(F_\ell)$ contains an element of prime order $\geq N$ and $\neq p$.
\end{lemma}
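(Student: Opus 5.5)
We work with the $\ell'$-torsion of the special fibre. Since $T$ is unramified, it has a smooth $\cO_F$-model $\cT$ with connected special fibre $\ol T$, an $\F_q$-torus. The kernel of reduction $T(F)_0 \to \ol T(\F_q)$ is pro-$p$, and $T(F)/T(F)_0$ is a finitely generated free abelian group (for unramified tori, $T(F)/T(F)_0 \cong X_*(T)^{\Fr}$ is torsion-free). Hence
\[
T(F)_{p'\text{-tors}} \cong \ol T(\F_q)_{p'} = \ol T(\F_q),
\]
and likewise $T(F_n)_{p'\text{-tors}} \cong \ol T(\F_{q^n})$. So the first claim reduces to showing $|\ol T(\F_{q^n})| > |\ol T(\F_q)|$ for $n>1$, outside the stated exception.

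To compare the orders, let $F_X$ be the finite-order automorphism of $X = X^*(\ol T_{\ol\F_q})$ induced by Frobenius, with eigenvalues $\alpha_1,\dots,\alpha_d$ (roots of unity). Then
\[
|\ol T(\F_{q^n})| = \prod_{i=1}^d (q^n - \alpha_i^n) \quad\text{and}\quad |\ol T(\F_q)| = \prod_{i=1}^d (q-\alpha_i).
\]
The torus $\ol T$ is nontrivial, and the first group sits inside the second via $\ol T(\F_q)\subset \ol T(\F_{q^n})$, so it suffices to show that the ratio is not $1$. I would argue eigenvalue by eigenvalue, using $|q^n-\alpha^n| \ge q^n-1$ and $|q-\alpha|\le q+1$. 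Since $q^n - 1 > q+1$ unless $(q,n) = (2,2)$, pairing conjugate eigenvalues into real factors gives a strict inequality of the products.

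When $(q,n)=(2,2)$, the same estimate is strict unless every eigenvalue achieves both bounds with equality, i.e.\ $\alpha_i=-1$ for all $i$. Then $F_X$ acts by $-1$ on $X$, which is exactly the product of norm-one tori for $F_2/F$. This exceptional case needs separate care, but it is the one excluded in the statement. I expect this case bookkeeping to be the main fiddly step; otherwise the argument is elementary.

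For the second statement, fix $N$. For a prime $\ell$, the inclusion $\ol T(\F_q) \subset \ol T(\F_{q^\ell})$ has index
\[
\prod_{i=1}^d \frac{q^\ell-\alpha_i^\ell}{q-\alpha_i},
\]
which tends to infinity with $\ell$. The plan is to show that, for large $\ell$, some prime $r \ge N$ with $r \ne p$ divides $|\ol T(\F_{q^\ell})|$. By Zsigmondy's theorem, for $\ell$ large $q^{\ell m}-1$ has a primitive prime divisor $r$ with $r \equiv 1 \pmod{\ell m}$, so $r > \ell \ge N$ once $\ell \ge N$. To use it, take $m$ to be the order of $F_X$, so $F_X^m=1$ and hence $\ol T$ splits over $\F_{q^m}$. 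Then $\ol T(\F_{q^{\ell m}})\cong(\F_{q^{\ell m}}^\times)^d$ has order divisible by $r$. But this only shows $r$ divides the order over $\F_{q^{\ell m}}$, not over $\F_{q^\ell}$, so this step does not yet finish the argument.

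To land in $\ol T(\F_{q^\ell})$ itself, I would instead use a single eigenvalue factor. Choose $i$ and write $\alpha=\alpha_i$. The factor $q^\ell-\alpha^\ell$ divides $|\ol T(\F_{q^\ell})|$ up to the cyclotomic field, and its norm is a divisor of $q^{\ell m}-1$ coprime to $q^{m'}-1$ for $m'<\ell m$. I still need to check that the primitive prime divisor $r$ actually divides this norm, and that $r\ge \ell+1>N$ for large $\ell$. Taking $M$ to be the maximum of $N$ and the Zsigmondy threshold then completes the second claim.
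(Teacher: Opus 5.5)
Your argument for the first claim is correct and a bit more direct than the paper's. The paper factors $q^n-\alpha_i^n=(q-\alpha_i)\sum_{j=0}^{n-1}q^j\alpha_i^{n-1-j}$ and bounds the second factor below by $q^{n-1}-\sum_{j=0}^{n-2}q^j\ge 1$. Your uniform chain $|q^n-\alpha_i^n|\ge q^n-1\ge q+1\ge|q-\alpha_i|$ reaches the same conclusion: the middle inequality is strict unless $(q,n)=(2,2)$, and in that case total equality forces every $\alpha_i=-1$. No pairing of conjugates is needed, since the orders are just $\prod_i|q^n-\alpha_i^n|$ and $\prod_i|q-\alpha_i|$.

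The second claim, however, is not proved. The proposal ends with ``I still need to check,'' and the check as you set it up does not go through. Take $m$ to be the order of $F_X$. A primitive prime divisor of $q^{\ell m}-1$ divides $\prod_\zeta(q^\ell-\zeta^\ell)$ (product over the conjugates $\zeta$ of your chosen $\alpha$) only if $\alpha$ has order exactly $m$. No eigenvalue need have that order: for example, eigenvalues of orders $2$ and $3$ give $m=6$. Your assertion that this norm is coprime to $q^{m'}-1$ for all $m'<\ell m$ is also false, as the next paragraph shows.

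The missing idea is a cyclotomic identity. If $\alpha$ has order $e$ and $\ell\nmid e$, then $\zeta\mapsto\zeta^\ell$ permutes the primitive $e$th roots of unity. So the conjugates of $\alpha$ contribute the integer factor $\Phi_e(q^\ell)=\Phi_e(q)\,\Phi_{e\ell}(q)$ to $|\ol T(\F_{q^\ell})|$. Here $\Phi_k$ is the $k$th cyclotomic polynomial, and the characteristic polynomial of $F_X$ is a product of these.

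Now apply Zsigmondy to $q^{e\ell}-1$ rather than $q^{\ell m}-1$. For a prime $\ell\ge\max(N,e+1,7)$ there is no exceptional case. A primitive prime divisor $r$ then divides $\Phi_{e\ell}(q)$ and satisfies $r\equiv 1\pmod{e\ell}$, so $r>\ell\ge N$ and $r\ne p$. Cauchy's theorem gives an element of order $r$ in $\ol T(\F_{q^\ell})\cong T(F_\ell)_{p'\text{-tors}}$.

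This repaired route differs from the paper's, which derives the second claim from the first. For each of the finitely many primes $\ell_0\le N$, the $\ell_0$-primary torsion of $T(F_\ell)$ equals that of $T(F)$ for all large primes $\ell$. This holds because $T(\ol F)[\ell_0^k]$ is finite and $T(F_\ell)\cap T(F_{\ell'})=T(F)$ for distinct primes. So the strict growth of $p'$-torsion guaranteed by the first claim must occur at a prime $>N$. Your version yields an explicit $M$ at the cost of invoking Zsigmondy; the paper's is softer and uses only the first claim.
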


\begin{proof}
Since $T$ is unramified, there is an $\cO_F$-torus $\cT$ with generic fiber $T$; if $T_0$ is the special fiber of $\cT$, then it is enough to show that $T_0(\F_q) \subsetneq T_0(\F_{q^n})$. Note that $|T_0(\F_{q^n})|=\det(q^n-\Phi^n\mid X^*(T_{\ol\F_q})\otimes\Q)$, where $\Phi$ is the Frobenius automorphism of $X^*(T_{\ol\F_q})$. Thus, if $\alpha_1,\dots,\alpha_m$ are the eigenvalues of $\Phi$, then the $\alpha_i$ are roots of unity and
\[
|T_0(\F_{q^n})| = \prod_{i=1}^m (q^n - \alpha_i^n) \text{ for all $n$}.
\]
Note that 
\[
|q^n - \alpha_i^n| = |q - \alpha_i| \cdot \Big|\sum_{j=0}^{n-1} q^j\alpha_i^{n-1-j}\Big|.
\]
The second factor can be bounded below by
\begin{equation}\label{eq:torus-points-lower-bound}
\Big|\sum_{j=0}^{n-1} q^j\alpha_i^{n-1-j} \Big| \geq q^{n-1} - \sum_{j=0}^{n-2} q^j,
\end{equation}
with equality if and only if $\alpha_i^{n-1-j}=-1$ for $0\leq j\leq n-2$. The right side of \eqref{eq:torus-points-lower-bound} is $>1$ unless $q=2$, in which case it is $1$; moreover, the equality $\alpha_i^{n-1-j}=-1$ for $0\leq j\leq n-2$ implies $n-1=1$ and $\alpha_i=-1$. If $T_0(\F_q) = T_0(\F_{q^n})$, then$|q^n - \alpha_i^n| = |q - \alpha_i|$ for all $i$, so $q = 2$ and $n = 2$ and $\alpha_i = -1$ for all $i$. Hence $T$ is a product of norm-one tori corresponding to $F_2/F$, as desired.

Now we prove the final claim. Fix a prime number $\ell_0 \leq N$, and note that $T(F)[\ell_0^\infty]$ is finite since the residue field of $F$ is finite\footnote{If $\ell_0$ divides $q$, then $T(F)[\ell_0^\infty]$ can be seen to be finite by observing that this is the case for $\G_{\mathrm{m}}$ and that a finite extension of $F$ splits $T$.}; suppose it is killed by $\ell_0^{k-1}$. If $n$ is a positive integer such that $T(F)[\ell_0^\infty] \subsetneq T(F_n)[\ell_0^\infty]$, then it follows that $T(F)[\ell_0^k] \subsetneq T(F_n)[\ell_0^k]$. Observe that if $\ell$ and $\ell'$ are distinct primes, then $T(F_\ell) \cap T(F_{\ell'}) = T(F)$, so there are only finitely many primes $\ell$ such that $T(F)[\ell_0^k] \subsetneq T(F_\ell)[\ell_0^k]$. Applying this reasoning to the finitely many primes $\ell_0 \leq N$ shows that we can find some integer $M \geq 1$ such that for any $\ell \geq M$, we have $T(F)[(N!)^\infty] = T(F_\ell)[(N!)^\infty]$. By the first claim of this lemma, it follows that there is some prime $\ell' > N$ such that $T(F)[(\ell')^\infty] \subsetneq T(F_\ell)[(\ell')^\infty]$, as desired. 
\end{proof}

\begin{proof}[Proof of Theorem~\ref{thm:main-depth-0-comparison}]
    We will prove this by induction on the semisimple rank of $G$, the case that $G$ is a torus being due to compatibility of $\rho^{\FS}$ and $\rho^{\DR}$ with the usual Local Langlands Correspondence for tori (for $\rho^{\FS}$, this is \cite[Theorem~I.9.6(i)]{FS}). By Lemma~\ref{lemma:ns-cuspidal-lift}, Lemma~\ref{lemma:depth-0-modular-reduction}, and compatibility of $\rho^{\FS}$ with $\ell$-modular reduction, the result for $k = \ol\F_\ell$ follows from the result for $k = \ol\Q_\ell$; thus we may and do assume $k = \ol\Q_\ell$. By twisting by a character of $G(F)$ of depth $0$ at $x$ (using \cite[Theorem~I.9.6(ii)]{FS}), we may and do further assume that $\tau$ has finite order central character. Further, passing to a z-embedding and applying Lemma~\ref{lemma:paraductive-series-extension} and \cite[Theorem~I.9.6(v)]{FS}, we may assume that $G$ has center which is an induced torus. In this case, $\wh G$ has simply connected derived group.

    We claim that we need only show that $\rho^{\DR}(x, \tau)|_{I_F}$ and $\rho^{\FS}(\pi)|_{I_F}$ are $\wh G(\ol\Q_\ell)$-conjugate. For this, let $s \in I_F$ lifting a pro-generator of $I_F/P_F$. Note that $\rho^{\DR}(x, \tau)|_{P_F}$ preserves a common pinning $(\wh B, \wh T, \{X_\alpha\})$ of $\wh G$, and the action of $P_F$ on $X^*(\wh T)$ through $\rho^{\DR}(x, \tau)$ permutes a basis since $\wh G_{\der}$ is simply connected and $Z(G)$ is an induced torus. Thus $X^*(\wh T)_{P_F}$ is torsion-free and so $Z_{\wh G}(\rho^{\DR}(x, \tau)|_{P_F})$ is connected by \cite[Proposition 4.1(d)]{Hai15}. Moreover, the fundamental group of $Z_{\wh G}(\rho^{\DR}(x, \tau)|_{P_F})_{\der}$ is of $p$-power order: by induction on a composition series of the image of $P_F$, this reduces to Lemma~\ref{lemma:pinning-preserving-fundamental-group}(1). Note that the $\ol\Q_\ell$-automorphism $\rho^{\DR}(x,\tau)(s)$ of $Z_{\wh G}(\rho^{\DR}(x, \tau)|_{P_F})$ is of order prime to $p$, so it follows from Lemma~\ref{lemma:pinning-preserving-fundamental-group}(2) that $\wh S \coloneqq Z_{\wh G}(\rho^{\DR}(x, \tau)|_{I_F})$ is connected. Moreover, by construction and non-singularity of $\tau$, it follows that $\wh S$ is a $\ol\Q_\ell$-torus.
    
    If we pass to conjugates so that $\rho^{\DR}(x, \tau)|_{I_F} = \rho^{\FS}(\pi)|_{I_F}$, then for any lift $\Fr \in W_F$ of Frobenius the elements $\rho^{\DR}(x,\tau)(\Fr)$ and $\rho^{\FS}(\pi)(\Fr)$ differ by an element of $\wh S$. Since $\wh S$ is commutative, this implies that these two elements have the same action on $\wh S$. Since the centralizer of $\rho^{\DR}(x, \tau)$ is finite modulo $Z(\wh G)^{W_F}$ by construction, the element $\rho^{\DR}(x,\tau)(\Fr)$ is determined up to $\wh S(\ol\Q_\ell)$-conjugacy by its image in $(\wh G/\wh G_{\der})(k)$ and its action on $\wh S$; this follows from Lemma~\ref{lemma:conjugate-weyl-elements}. Since $\rho^{\DR}$ and $\rho^{\FS}$ respect central characters, the former by construction and the latter by \cite[Theorem~I.9.6(iii)]{FS}, the claim is proven.

    We now begin the argument described in the introduction. By \cite[Proposition~2.1]{GH91} and \cite[Corollary~2.16(c)]{St75}, if $\ell_0$ is prime number which is good for $G$ and does not divide $|\pi_1(G_{\der})|$, then for any element $t \in G(\ol F)$ of order $\ell_0$ the centralizer $Z_{G_{\ol F}}(t)$ is a Levi $\ol F$-subgroup of $G_{\ol F}$. If moreover $E/F$ is an unramified extension such that $t \in G(E)$ and $\ell_0$ does not divide the order of $S(E)_{\tors}$ for any maximal totally ramified $E$-torus $S \subset G_E$ (which excludes only finitely many primes, independently of $E$, namely those which are at most $\rk G + 1$), then $Z_{G_E}(t)$ is an unramified twisted Levi $F$-subgroup of $G_E$. Let $M$ be an integer larger than any of these quantities, as well as $|\ol G_{[x]}^\circ(\F_q)|$, the index $[\ol G_{[x]}(\F_{q^n}): \ol G_{[x]}^\circ(\F_{q^n}) \cdot Z(\ol G_{[x]})(\F_{q^n})]$ for all $n \geq 1$,\footnote{It is easy to check that this index is bounded independently of $n$.} the order of $Z_{G_{\der}}(\ol F)$, the orders of $\rho^{\FS}(s)$ and $\rho^{\DR}(s)$, and the constant $C$ from Corollary~\ref{cor:fs-depth-0-unram-twisted-levi}.

    Let $(T, \theta)$ be a pair associated to $\pi$ as in \S\ref{ss:dr}, and let $T_0 = T \cap G_{\der}$. Let $N$ be an integer larger than the orders of $\rho^{\FS}(s)$ and $\rho^{\DR}(s)$ and large enough that any prime $\ell_1 > N$ is banal for $G(F)$. By Lemma~\ref{lemma:torus-torsion-grows}, if $\ell_1 > N$ is a large enough prime number then there exists a non-central element $t \in T_0(E)$ (where $E$ denotes the unramified extension of $F$ of degree $\ell_1$) of prime order $\ell_0 > M$. Let $H = Z_{G_E}(t)$, so $H$ is an unramified twisted Levi $E$-subgroup of $G_E$ by the previous paragraph.
    
    By independence of $\ell$, i.e., Lemma~\ref{lemma:depth-0-independence-of-ell} and \cite[Theorem~1.1]{Sch25}, we may pass from $\ol\Q_\ell$ to $\ol\Q_{\ell_1}$ to assume $\ell = \ell_1$. Let $\tau_E$ denote the irreducible $\ol\Q_\ell$-representation of $\ol G_{[x]}(\F_{q^\ell})$ corresponding to $\tau$ under the Glauberman correspondence of \S\ref{sss:infinite-glauberman}; this makes sense by Proposition~\ref{prop:banal-primes}(4). By Proposition~\ref{prop:banal-primes}(3), the image of the point $x$ in $\cB((G_{\der})_E)$ is a vertex. By Proposition~\ref{prop:glauberman-cuspidal}, the representation $\tau_E$ is cuspidal; it is non-singular by Proposition~\ref{prop:digne-3.5}. Let $\pi_E = \cInd_{G(E)_{[x]}}^{G(E)}(\tau_E)$. By Lemma~\ref{lemma:depth-0-bc}, we have
    \[
    \rho^{\DR}(x, \ol\tau)|_{W_E} \sim \Fr_\ell \circ \rho^{\DR}(x, \ol\tau_E)
    \]
    and by Corollary~\ref{cor:fs-depth-0-bc} we have
    \[
    \rho^{\FS}(\ol\pi)|_{W_E} \sim \Fr_\ell \circ \rho^{\FS}(\ol\pi_E).
    \]
    Since $\ell \neq p$ is larger than the orders of $\rho^{\DR}(x, \tau)(s)$ and $\rho^{\FS}(\pi)(s)$ by hypothesis and the orders of $\rho^{\DR}(x, \tau_E)(s)$ and $\rho^{\DR}(x, \tau)(s)$ are the same by construction, it suffices by Lemma~\ref{lemma:finite-groups-mod-ell} to show that $\rho^{\DR}(x, \tau_E)|_{I_F}$ and $\rho^{\FS}(\pi_E)|_{I_F}$ are $\wh G(\ol\Q_\ell)$-conjugate. By all the choices above, we may therefore pass from $F$ to $E$ to assume
    \begin{enumerate}
        \item there exists a non-central element $t \in T_0(\ol\Q_\ell)$ of prime order $\ell_0$ larger than the orders of $\rho^{\DR}(x, \tau)(s)$ and $\rho^{\FS}(\pi)(s)$,
        \item the representation $\tau$ lies in $\cE(\ol G_{[x]}, [\ol T, \theta])$ for some generalized maximal torus-character pair $(\ol T, \theta)$ such that $\theta$ is of finite order prime to $\ell_0$,
        \item $[\ol G_{[x]}(\F_q): \ol G_{[x]}^\circ(\F_q) \cdot Z(\ol G_{[x]})(\F_q)]$ is prime to $\ell$,
        \item the representation $\tau$ is defined over $\Q_{\ell_0}^{\unr}$ (using Lemma~\ref{lemma:integer-valued-characters} and Lemma~\ref{lemma:character-values-irreducible-factors}).
    \end{enumerate}

    Let $H = Z_G(t)$, so $H$ is an unramified twisted Levi $F$-subgroup of $G$ by the above, and $Z(H)/Z(G)$ is anisotropic since $H$ contains the elliptic $F$-torus $T$. By independence of $\ell$ again, we may now assume $\ell = \ell_0$. By Lemma~\ref{lemma:depth-0-unram-twisted-levi} and Corollary~\ref{cor:fs-depth-0-unram-twisted-levi} (whose hypotheses hold by (1)-(4) above), as well as Corollary~\ref{cor:bonnafe-11.11}, there is a $\ol\Z_\ell$-representation $\tau_H$ of $\ol H_{[x]}$ such that if $\pi_H = \cInd_{H(F)_{[x]}}^{H(F)}(\tau_H)$ then
    \[
    \rho^{\DR}(x, \ol\tau) \sim \ld j_{H,G} \circ \rho^{\DR}(x, (\tau_H)_{\ol\F_\ell})
    \]
    and
    \[
    \rho^{\FS}(\ol\pi)|_{I_F} \sim \ld j_{H,G} \circ \rho^{\FS}((\pi_H)_{\ol\F_\ell})|_{I_F}.
    \]
    By induction on the semisimple rank of $G$, we have
    \[
    \rho^{\DR}(x, (\tau_H)_{\ol\F_\ell}) \sim \rho^{\FS}((\pi_H)_{\ol\F_\ell}).
    \]
    Since $\ell = \ell_0$ was chosen to be larger than the order of $\rho^{\DR}(x, \tau)(s)$, it is also larger than the order of $\rho^{\DR}(x, \tau_H)(s)$. Thus Lemma~\ref{lemma:finite-groups-mod-ell} shows that $\ell_0$ is the only possible prime number dividing the order of $\rho^{\FS}(\pi)(s)$ but not $\rho^{\DR}(x, \tau)(s)$. Running the same argument again for a different choice of pair $(\ell_1, \ell_0)$ shows that in fact $\rho^{\FS}(\pi)(s)$ and $\rho^{\DR}(x, \tau)(s)$ have the same order, and the three above displayed equations show (as before, using Lemma~\ref{lemma:finite-groups-mod-ell}) that $\rho^{\DR}(x, \tau)|_{I_F}$ and $\rho^{\FS}(\pi)|_{I_F}$ are $\wh G(\ol\Q_\ell)$-conjugate, as desired.
\end{proof}

\begin{remark}\label{rmk:singular-generalization}
    If the answer to Question~\ref{question:cuspidal-lifting} is positive, then the above proof extends immediately to yield a computation of $\rho^{\FS}(\pi)|_{I_F}$ for any cuspidal $k$-representation $\pi$ of depth $0$. In \cite{CF26b}, we will describe a method to get around Question~\ref{question:cuspidal-lifting}, and we will use this method to prove a positive depth generalization of Theorem~\ref{thm:main-depth-0-comparison} when $p \neq 2$ and $G$ is tamely ramified. This same method seems to apply in the depth $0$ case in general (and thus to compute $\rho^{\FS}(\pi)|_{I_F}$ as above), but it would massively complicate matters to use this here. We plan to return to this question, as well as the subtler question of describing the full parameter $\rho^{\FS}(\pi)$, in future work.
\end{remark}


\bibliographystyle{amsalpha-with-labels}
\bibliography{Bibliography}

\end{document}